\documentclass[12pt,reqno]{amsart}
\usepackage[margin=1in]{geometry}
\usepackage{amsmath,amssymb,amsthm,graphicx,amsxtra, setspace}
\usepackage[utf8]{inputenc}
\usepackage{mathrsfs}
\usepackage{hyperref}
\usepackage{upgreek}
\usepackage{mathtools}
\usepackage{xcolor}
\allowdisplaybreaks

\usepackage[pagewise]{lineno}

\usepackage{graphicx,eurosym}
\usepackage{hyperref}
\usepackage{mathtools}

\usepackage[cyr]{aeguill}

\colorlet{darkblue}{blue!50!black}

\hypersetup{
	colorlinks,%
	citecolor=blue,%
	filecolor=red,%
	linkcolor=darkblue,%
	urlcolor=blue,%
	pdfnewwindow=true,%
	pdfstartview={FitH}
}


\colorlet{darkblue}{red!100!black}

\newtheorem{theorem}{Theorem}[section]
\newtheorem{lemma}[theorem]{Lemma}
\newtheorem{proposition}[theorem]{Proposition}

\newtheorem{definition}[theorem]{Definition}

\newtheorem{remark}[theorem]{Remark}
\newtheorem{hypothesis}[theorem]{Hypothesis}

\let\emptyset\varnothing

\let\originalleft\left
\let\originalright\right
\renewcommand{\left}{\mathopen{}\mathclose\bgroup\originalleft}
\renewcommand{\right}{\aftergroup\egroup\originalright}

\theoremstyle{definition}

\newtheorem{condition}{Condition}[section]

\def\1{\mathcal{O}}

\def\wi{\widehat}
\def\vi{\widetilde}

\def\h{\boldsymbol{h}}
\def\d{\mathrm{d}}
\def\bfX{\mathbf{X}}

\def\I{\mathrm{I}}
\def\B{\mathrm{B}}
\def\D{\mathrm{D}}
\def\A{\mathrm{A}}

\def\W{\mathrm{W}}
\def\R{\mathbb{R}}
\def\E{\mathbb{E}}

\def\H{\mathbb{H}}
\def\V{\mathbb{V}}
\def\e{\epsilon}
\def\2{\mathcal{E}}
\def\L{\mathrm{L}}
\def\q{\boldsymbol{q}}
\def\u{\boldsymbol{u}}
\def\v{\boldsymbol{v}}

\def\n{\boldsymbol{n}}

\def\p{\boldsymbol{p}}
\def\x{\boldsymbol{x}}
\def\y{\boldsymbol{y}}
\def\z{\boldsymbol{z}}
\def\bfe{\boldsymbol{e}}
\def\bf{\boldsymbol}
\def\mbf{\mathbf}

\def\C{{\mathrm{C}}}

\def\Y{\mathbf{Y}} 
\def\F{\mathrm{F}}
\def\P{\mathbb{P}}
\def\N{\mathbb{N}}
\def\G{\mathrm{G}}
\def\bfZ{\mathbf{Z}}
\def\Z{\mathrm{Z}}

\def\mb{\mathbb}

\def\fk{\mathfrak}
\def\cc{\mathfrak{C}}
\def\bb{\mathfrak{B}}

\newcommand{\Addresses}{{
		\footnote{
			\noindent \textsuperscript{1,2}Department of Mathematics, Indian Institute of Technology Roorkee-IIT Roorkee,
			Haridwar Highway, Roorkee, Uttarakhand 247667, INDIA.\par\nopagebreak
			\noindent  \textit{e-mail:} \texttt{Manil T. Mohan: maniltmohan@ma.iitr.ac.in, maniltmohan@gmail.com.}
			
			\textit{e-mail:} \texttt{Ankit Kumar: akumar14@mt.iitr.ac.in.}
			
			\noindent \textsuperscript{*}Corresponding author.
			
			\textit{Key words:} Stochastic partial differential equations, locally monotne, pseudo-monotone,  L\'evy noise, Wentzell-Freidlin large deviation, weak convergence.
			
			Mathematics Subject Classification (2020): Primary 60H15, 60F10; Secondary 35R60, 35Q35, 37L55.

}}}
\begin{document}	
	
	\title[LDP for a class of SPDEs with L\'evy noise]{Large deviation principle for a class of stochastic partial differential equations with fully local monotone coefficients perturbed by L\'evy noise
		\Addresses}

	\author[A. Kumar and M. T. Mohan]
	{Ankit Kumar\textsuperscript{1} and Manil T. Mohan\textsuperscript{2*}}

	\maketitle
	
\begin{abstract}
The asymptotic analysis of a class of stochastic partial differential equations (SPDEs)  with fully locally monotone coefficients covering  a large variety of physical systems, a wide class of quasilinear SPDEs and  a good number of fluid dynamic models is carried out in this work. The aim of this work is to develop the large deviation theory for small  Gaussian as well as Poisson noise perturbations of the above class of SPDEs.  We establish a Wentzell-Freidlin type large deviation principle for the strong solutions to such SPDEs  perturbed by L\'evy noise in a suitable Polish space using  a variational representation (based on a weak convergence approach) for nonnegative functionals of general Poisson random measures  and Brownian motions. The well-posedness of an associated deterministic control problem is established by exploiting pseudo-monotonicity arguments and the stochastic counterpart is obtained by an application of Girsanov's theorem. 
\end{abstract}
\section{Introduction}\label{Sec1}\setcounter{equation}{0}
Let us denote a separable Hilbert space and a reflexive Banach space by $\H$ and $\V$, respectively, and let  the embedding $\V\subset \H$ be continuous and dense. Denote the dual space of $\V$ by $\V^*$ and let $\H^*\cong \H$. The norms of $\H,\V$ and $\V^*$ are denoted by $\|\cdot\|_\H,\|\cdot\|_\V$ and $\|\cdot\|_{\V^*}$, respectively, and we have a Gelfand triplet $\V\subset \H\subset \V^*$. We  represent  $(\cdot,\cdot),$ the inner product in the Hilbert space $\H$ and $\langle\cdot,\cdot\rangle,$ the duality pairing between $\V$ and $\V^*$. Also, $\langle \u,\v\rangle =(\u,\v)$, whenever $\u\in\H$ and $\v\in\V$. The space of all Hilbert-Schmidt operators from $\H$ to itself is denoted by $\L_2(\H,\H)$ (for convenience, we use $\L_2(\H)$) with the norm $\|\cdot\|_{\L_2}$and the inner product $(\cdot,\cdot)_{\L_2}$. 

Let $(\Omega,\mathscr{F},\P)$ be a complete probability space equipped with an increasing family of sub-sigma fields $\{\mathscr{F}_t\}_{t\geq0}$ of $\mathscr{F}$ satisfying:
\begin{itemize}
	\item[(i)] $\mathscr{F}_0$ contains all elements $A \in\mathscr{F}$ with $\P(A)=0$.
	\item[(ii)] $\mathscr{F}_t=\mathscr{F}_{t+}=\cap_{s>t}\mathscr{F}_s$, for $t\geq 0$.
\end{itemize}  	In this work, we establish a Wentzell-Freidlin type  large deviation principle (LDP) for the following class of stochastic partial differential equations (SPDEs) with fully locally monotone coefficients in the Gelfand triplet $\V\subset\H\subset\V^*$ driven by L\'evy noise:
\begin{equation}\label{1.1}
	\left\{
	\begin{aligned}
		\mathrm{d} \mathbf{Y}^\epsilon(t)&=\mathrm{A}(t,\mathbf{Y}^\epsilon(t))\mathrm{d} t+\sqrt{\epsilon}\mathrm{B}(t,\mathbf{Y}^\epsilon(t))\mathrm{d} \mathrm{W}(t)+\epsilon\int_\mathrm{Z}\gamma(t,\mathbf{Y}^\epsilon(t-),z)\widetilde{N}^{\epsilon^{-1}}(\mathrm{d} t,\mathrm{d} z),\\
		\mathbf{Y}^\epsilon(0)&=\boldsymbol{x}\in\mathbb{H},
	\end{aligned}
	\right.\end{equation} for a.e. $t\in[0,T]$, where the mappings 
\begin{align*}
	\mathrm{A}:[0,T]\times \mathbb{V}\to\mathbb{V}^*,\; \mathrm{B}:[0,T]\times \mathbb{V}\to\mathrm{L}_2(\mathbb{H},\mathbb{H}), \text{ and } \gamma:[0,T]\times\mathbb{V}\times\mathrm{Z}\to\mathbb{H},
\end{align*}are measurable,  $\mathrm{Z}$ is a locally compact Polish space, $\mathrm{W}(\cdot)$ is an $\mathbb{H}$-valued cylindrical Wiener process on the filtered probability space $(\Omega,\mathscr{F},\{\mathscr{F}_t\}_{t\geq 0},\P)$  and  $\widetilde{N}^{\epsilon^{-1}}$ is a  Poisson random measure on $[0,T]\times\mathrm{Z}$ with a $\sigma$-finite intensity measure $\epsilon^{-1}\lambda_T\otimes\nu$, $\lambda_T$ is the Lebesgue measure on $[0,T]$ and $\nu$ is a $\sigma$-finite measure on $\mathrm{Z}$. Moreover,  $\widetilde{N}^{\epsilon^{-1}}([0,t]\times O)=N^{\epsilon^{-1}}([0,t]\times O)-\epsilon^{-1}t\nu(O)$, for all $O\in\mathcal{B}(\mathrm{Z})$ with $\nu(O)<\infty$, is the compensated Poisson random measure. 
The well-posedness of the problem \eqref{1.1} with $\e=0$ is discussed in \cite{WL4} and  the global solvability of the system \eqref{1.1} perturbed by Gaussian and L\'evy noises are considered in \cite{MRSSTZ,AKMTM4}, respectively.

\subsection{Hypothesis and solvability results} Let us first discuss the assumptions satisfied by the mappings  and recall the solvability results for the system \eqref{1.1}  from \cite{AKMTM4}.
The mappings $\A$, $\B$ and $\gamma$ satisfy the following Hypothesis: 
\begin{hypothesis}\label{hyp1}
	Let $\fk{a}\in\L^1(0,T;\R^+)$ and $\beta\in(1,\infty)$.
	\begin{itemize}
		\item[(H.1)] \textsf{(Hemicontinuity)}. The map $\R\ni\lambda \mapsto \langle \A(t,\x+\lambda \y),\z\rangle \in\R$ is continuous for any $\x,\y,\z \in \V$ and for a.e. $t\in[0,T]$.
		\item[(H.2)] \textsf{(Local monotonicity)}.  There exist  non negative constants $\zeta$ and $C$ such that for any $\x,\y\in\V$ and a.e. $t\in[0,T]$, 
		\begin{align}\label{3.5}\nonumber
			2\langle \A(t,\x)-\A(t,\y),\x-\y\rangle +&\|\B(t,\x)-\B(t,\y)\|_{\L_2}^2+\int_{\Z}\|\gamma(t,\x,z)-\gamma(t,\y,z)\|_{\H}^2\lambda(\d z) 		\\& \leq \big[\fk{a}(t)+\rho(\x)+\eta(\y)\big]\|\x-\y\|_{\H}^2, \\ \nonumber
			|\rho(\x)|+|\eta(\x)| &\leq C(1+\|\x\|_{\V}^\beta)(1+\|\x\|_{\H}^\zeta), 
		\end{align}where $\rho$ and $\eta$ are two measurable functions from $\V$ to $\R$.
		\item[(H.3)]	\textsf{(Coercivity)}. There exists a positive constant $C$ such that for any $\x\in\V$ and a.e.  $t\in[0,T]$, 
		\begin{align}\label{3.7}
			\langle \A(t,\x),\x\rangle 
			\leq \fk{a}(t)(1+\|\x\|_{\H}^2)-L_\A\|\x\|_{\V}^\beta.
		\end{align}
		\item[(H.4)] \textsf{(Growth)}. There exist non-negative constants  $\alpha$ and $C$ such that for any $\x\in\V$ and a.e. $t\in[0,T]$,
		\begin{align}\label{3.8}
			\|\A(t,\x)\|_{\V^*}^{\frac{\beta}{\beta-1}}\leq (\fk{a}(t)+C\|\x\|_{\V}^\beta)(1+\|\x\|_{\H}^\alpha).
		\end{align} 
		\item[(H.5)] For any sequence $\{\x_n\}_{n\in\mathbb{N}}$ and $\x$ in $\V$ with $\|\x_n-\x\|_\H\to0$, as $n\to\infty$, we have 
		\begin{align}\label{3.9}
			\|\B(t,\x_n)-\B(t,\x)\|_{\L_2}\to 0, \text{ for a.e. }t\in[0,T]. 
		\end{align}
		Moreover, there exists $\fk{b}\in \L^1(0,T;\R^+)$  such that for any $\x\in \V$ and a.e. $t\in[0,T]$, 
		\begin{align}\label{3.10}
			\|\B(t,\x)\|_{\L_2}^2\leq \fk{b}(t)(1+\|\x\|_{\H}^2). 
		\end{align}
		\item[(H.6)]  The jump noise coefficient $\gamma(\cdot,\cdot,\cdot)$ satisfies: \begin{enumerate}
			\item $\E\bigg[\int_0^T\int_\Z\|\zeta(t,\cdot,z)\|_\H^2\nu(\d z)\d t\bigg]<\infty.$
			\item For any sequence $\{\x_n\}_{n\in\mathbb{N}}$ and $\x$ in $\V$ with $\|\x_n-\x\|_\H\to0$, as $n\to\infty$, we have 
			\begin{align}\label{3.010}
				\int_\Z \|\gamma(t,\x_n,z)-\gamma(t,\x,z)\|_{\H}^2\lambda(\d z) 	\to 0, \text{ for a.e. }t\in[0,T]. 
			\end{align}
			\item There exist functions $\fk{h}_p\in \L^1(0,T;\R^+)$  such that for any $\x\in \V$, a.e. $t\in[0,T]$, 
			\begin{align}\label{3.11}
				\int_\Z \|\gamma(t,\x,z)\|_{\H}^p\lambda(\d z) \leq \fk{h}_p(t)(1+\|\x\|_{\H}^p), \text{ for } p\in[2,\infty).
			\end{align}
		\end{enumerate}
	\end{itemize}
\end{hypothesis}
Let us now introduce the concept  of solution and its uniqueness to the system \eqref{1.1}. We   denote $ \D([0,T];\H)$ for  the space of all c\`adl\`ag functions (right continuous with left limits) from $[0,T]$ to $\H$. 
\begin{definition}
	Let $(\Omega,\mathscr{F},\{\mathscr{F}_t\}_{t\geq 0},\P)$ be a given stochastic basis and the initial data $\x\in\H$. Then \eqref{1.1} has a \textsf{probabilistically strong solution} if and only if there exists a progressively measurable process $\Y^\e:[0,T]\times \Omega\to\H$, $\P$-a.s., with paths
		\begin{align*}\Y^\e(\cdot,\cdot)\in \D([0,T];\H)\cap \L^\beta(0,T;\V),\;\P\text{-a.s., for } \beta\in(1,\infty),
		\end{align*}
	and  the following equality holds $\P$-a.s., in $\V^*$,
		\begin{align*}
			\Y^\e(t)&=\x+\int_0^t\A(s,\Y^\e(s))\d s+\sqrt{\e}\int_0^t \B(s,\Y^\e(s))\d\W(s)\\&\quad+\e\int_0^t\int_\Z\gamma(s,\Y^\e(s-),z)\vi{N}^{\e^{-1}}(\d s,\d z), \; t\in[0,T].
		\end{align*}

\end{definition}
\begin{definition}
	We say that the system \eqref{1.1} admits a \textsf{pathwise unique solution} if any two $\H$-valued solutions $\Y_1(\cdot)$ and $\Y_2(\cdot)$ defined on the same probability space with respect to the same Poisson random measure and Wiener process starting from the same initial data $\x\in\H$ coincide almost surely.
\end{definition}
With a minor modification in Theorem 2.11, \cite{AKMTM4}, one can establish the following result on the existence of a unique pathwise strong solution  to the system \eqref{1.1}.
\begin{theorem}\label{ER1}
	Assume that Hypothesis \ref{hyp1} (H.1)-(H.6) hold and the embedding $\V\subset \H$ is compact. Then, for any initial data $\x\in\H$, there exists a \textsf{probabilistically strong (analytically weak) solution} to the system \eqref{1.1}. Furthermore, for any $p\geq 2$, the following estimate holds:
	\begin{equation}\label{4.8}
		\E\bigg[\sup_{t\in[0,T]}\|\Y(t)\|_\H^p\bigg]+\E\bigg[\int_0^T\|\Y(t)\|_\V^\beta\d t\bigg]^{\frac{p}{2}} <\infty.
	\end{equation}
\end{theorem}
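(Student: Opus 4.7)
The plan is to follow the route of \cite[Theorem 2.11]{AKMTM4} with a Galerkin approximation scheme, obtain uniform a priori bounds, extract a limit via compactness (using the compact embedding $\V\subset\H$), and identify the limit using the local monotonicity Hypothesis (H.2). Concretely, let $\{\bfe_k\}_{k\in\N}\subset\V$ be an orthonormal basis of $\H$ consisting of elements of $\V$, set $\H_n:=\mathrm{span}\{\bfe_1,\dots,\bfe_n\}$ with projection $\Pi_n$, and consider the finite-dimensional SDE
\begin{align*}
\d\Y_n^\e(t)=\Pi_n\A(t,\Y_n^\e(t))\d t+\sqrt{\e}\,\Pi_n\B(t,\Y_n^\e(t))\d\W(t)+\e\int_\Z \Pi_n\gamma(t,\Y_n^\e(t-),z)\vi{N}^{\e^{-1}}(\d t,\d z),
\end{align*}
with $\Y_n^\e(0)=\Pi_n\x$. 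Existence and uniqueness for the Galerkin system is classical, since in finite dimensions the hemicontinuity (H.1) together with the coercivity (H.3) and the local Lipschitz estimate one reads off from (H.2) with the local radius controlled by $\rho+\eta$, yields a unique strong c\`adl\`ag solution up to explosion, and the coercivity prevents explosion.

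Next I would derive uniform-in-$n$ a priori estimates. Apply It\^o's formula to $\|\Y_n^\e(t)\|_\H^p$ for $p\geq 2$ and combine (H.3) (giving the dissipative term $-L_\A\|\Y_n^\e\|_\V^\beta$), the growth bound (H.5) on $\B$, the bound \eqref{3.11} on $\gamma$, and Burkholder-Davis-Gundy for the Gaussian and Poisson stochastic integrals. Together with Gronwall's lemma this produces \eqref{4.8} uniformly in $n$. For the jump term, one also needs the small/large jump splitting typical for compensated Poisson integrals, using (H.6)(iii) with $p=2$ for the martingale part and $p\geq 2$ for the compensator.

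Having these bounds, the existence passage is the heart of the argument. Boundedness of $\Y_n^\e$ in $\L^\beta(0,T;\V)$ and of $\A(\cdot,\Y_n^\e)$ in $\L^{\beta/(\beta-1)}(0,T;\V^*)$ (by (H.4)), together with the compact embedding $\V\subset\H$, yields tightness in an appropriate path space; one extracts a subsequence converging weakly in $\L^\beta(\Omega\times(0,T);\V)$ to some $\Y^\e$, with $\A(\cdot,\Y_n^\e)\rightharpoonup\wi\A$, $\B(\cdot,\Y_n^\e)\rightharpoonup\wi\B$, and an analogous limit for the Poisson integrand. Passing to the limit in the equation gives a process satisfying
\begin{align*}
\Y^\e(t)=\x+\int_0^t\wi\A(s)\d s+\sqrt\e\int_0^t\wi\B(s)\d\W(s)+\e\int_0^t\!\!\int_\Z\wi\gamma(s,z)\vi N^{\e^{-1}}(\d s,\d z).
\end{align*}
The identification $\wi\A(\cdot)=\A(\cdot,\Y^\e(\cdot))$ (and analogously for $\B,\gamma$) is the main obstacle; one uses the local monotonicity trick. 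Apply It\^o's formula to $e^{-\int_0^t(\fk a(s)+\rho(\vphi(s))+\eta(\vphi(s)))\d s}\|\Y_n^\e(t)-\vphi(t)\|_\H^2$ for a test process $\vphi\in \L^\beta(0,T;\V)$, exploit (H.2) to cancel the troublesome terms against the exponential weight, and pass to the lower semicontinuous limit in $n$. Choosing $\vphi=\Y^\e-\lambda\w$ with $\w\in\L^\beta(0,T;\V)$ and using the hemicontinuity (H.1) together with $\lambda\to 0$ (the Minty-Browder / pseudo-monotone step) identifies $\wi\A=\A(\cdot,\Y^\e)$; (H.5)-(H.6) and the continuity statements \eqref{3.9}-\eqref{3.010} identify the noise terms.

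Uniqueness is then a direct consequence of the same local monotonicity argument: for two solutions $\Y_1,\Y_2$ with the same data, It\^o's formula on $e^{-\int_0^t[\fk a+\rho(\Y_1)+\eta(\Y_2)]\d s}\|\Y_1-\Y_2\|_\H^2$ combined with (H.2) forces $\Y_1=\Y_2$ almost surely, provided the exponential weight is $\P$-a.s.\ finite, which follows from the bound on $|\rho|+|\eta|$ in (H.2) and the $\L^\beta(0,T;\V)$ integrability already established. The final moment estimate \eqref{4.8} is obtained by a standard Fatou/weak lower semicontinuity argument from the Galerkin estimates. The main technical obstacle is the identification step with jump noise and with the unbounded comparison functions $\rho,\eta$; this is precisely what the ``minor modification'' to \cite[Thm.\ 2.11]{AKMTM4} addresses, since the earlier result already contains all the machinery needed to accommodate both Gaussian and Poisson noise in the local monotonicity framework.
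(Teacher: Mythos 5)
Your Galerkin scheme, the a priori estimates via It\^o's formula, BDG and Gronwall, and the uniqueness argument (where the exponential weight involves the two actual solutions, whose $\rho,\eta$-integrability follows from (H.2) and the $\L^\beta(0,T;\V)$ bound) are all sound and consistent with what the cited result \cite{AKMTM4} does. The genuine gap is in your identification step. You propose the classical Liu--R\"ockner trick: apply It\^o's formula to $e^{-\int_0^t(\fk a(s)+\rho(\vphi(s))+\eta(\vphi(s)))\d s}\|\Y_n^\e(t)-\vphi(t)\|_\H^2$ with a test process $\vphi$ and cancel the troublesome terms against the weight. But Hypothesis (H.2) is \emph{fully} locally monotone: applying it with $\x=\Y_n^\e(s)$, $\y=\vphi(s)$ produces the factor $\fk a(s)+\rho(\Y_n^\e(s))+\eta(\vphi(s))$, so the term $\rho(\Y_n^\e(s))\|\Y_n^\e(s)-\vphi(s)\|_\H^2$ depends on the Galerkin approximant and cannot be absorbed into a weight built from $\vphi$ alone. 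Replacing it via the growth bound $|\rho(\x)|\le C(1+\|\x\|_\V^\beta)(1+\|\x\|_\H^\zeta)$ makes the weight $n$-dependent and incompatible with passing to the weak limit. This is precisely the obstruction that kept well-posedness for fully locally monotone coefficients open, and it is why the theorem assumes the embedding $\V\subset\H$ to be \emph{compact} --- an assumption your argument never actually uses in an essential way, whereas the weighted-monotonicity route would not need it.

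The proof the paper relies on (Theorem 2.11 of \cite{AKMTM4}, following \cite{MRSSTZ}) takes a different route at exactly this point: from the uniform estimates one proves tightness of the laws of the Galerkin solutions in a space such as $\D([0,T];\V^*)$ (using the compact embedding), invokes Skorokhod's representation theorem to obtain almost sure strong convergence on a new probability space, identifies the drift limit by the \emph{pseudo-monotonicity} of $\A$ (the stochastic version being Lemma \ref{Cgslem00}, which needs the liminf energy inequality and the strong convergence just obtained, and (H.5), (H.6)(2) to identify $\B$ and $\gamma$), thereby producing a probabilistically weak (martingale) solution; pathwise uniqueness --- your uniqueness argument --- combined with a Yamada--Watanabe theorem then upgrades this to the probabilistically strong solution claimed in the statement. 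Your sketch mentions tightness in passing but then works only with weak convergence and the weighted Minty--Browder step, and it omits the martingale-solution/Skorokhod/Yamada--Watanabe structure; without replacing the identification step by the pseudo-monotonicity-plus-compactness argument, the proof as proposed does not go through under Hypothesis (H.2).
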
 

  \subsection{Literature survey}
   The existence and uniqueness of solutions for the deterministic problem corresponding to the system \eqref{1.1} has been established in \cite{WL4}, and the stochastic counterpart has been discussed in the works   \cite{MRSSTZ,AKMTM4}, where the authors considered the system \eqref{1.1} perturbed by multiplicative Gaussian and L\'evy noises, respectively.  The global solvability of   time fractional SPDEs with fully locally monotone coefficients driven by additive Gaussian noise is studied in \cite{WLRMJLDS1,WLRMJLDS2}.   
  Several authors proved the well-posedness of  SPDEs with locally monotone coefficients, cf. \cite{ZBWLJZ,WHSS,WL,WLMR1,WLMR2,WLMR3,CPMR}, etc.   Well-posedness of a class of SPDEs driven by L\'evy noise with generalized coercivity condition, and with  locally monotone coefficients is obtained in \cite{ZBWLJZ}, and with monotone coefficients is established in \cite{TKMR}. Solvability results for different types of SPDEs driven  by L\'evy noise have been established in \cite{ZBDG,ZBXPJZ,ZDRZ,MTM2,MTM4,MTMSSS2,ZTHWYW,JXJZ}, etc., and references therein.
  
  The theory of large deviations,  which provides asymptotic estimates for probabilities of rare events is one of the most classical areas  and important research topics in probability theory and has rightly received attention after the framework and applications provided by Stroock and Varadhan in  \cite{ST,VA}, respectively, and extended its applications to different areas (cf.  \cite{DZ,DE,ST}, etc). The authors in  \cite{PLC,KX,SW}, etc., proved Wentzell-Freidlin type LDP for different classes of SPDEs. The weak convergence approach (cf. \cite{ABPDVM}) has been explored in the works \cite{SCMR,WL2,ICAM1}  to obtain  large deviations principle for the small noise limit of the systems of stochastic reaction-diffusion equations with globally Lipschitz but unbounded coefficients, stochastic evolution equations with general monotone drift and  for a class of stochastic 2D hydrodynamical type systems,  respectively, driven by multiplicative Gaussian noise. 
  
  From the last few decades, several authors have established the LDP for different type of SPDEs perturbed by pure jump  and L\'evy noises. In the works \cite{ADA1,ADA2}, the authors obtained the LDP for  the solutions of stochastic differential equations (SDEs) perturbed by Poisson random  measures. The authors in \cite{ABJCPD,ABPFD} used  the results due to Varadhan and Bryc \cite{DZ}, and extended the earlier results of Budhiraja and Dupis \cite{ABPD1} to establish the LDP for SDEs with Poisson noises by first proving the Laplace principle in Polish spaces using the weak convergence approach. The large deviation theory for small Poisson noise perturbations of a general class of deterministic infinite dimensional models is developed in  \cite{ABPDVM}. The authors in \cite{MRTZ} established an LDP for a class of SPDEs perturbed by an addditive jump noise. The LDP  for the solutions of an abstract stochastic evolution equations driven by small L\'evy noise is obtained in \cite{ASJZ}. Using the methodology developed in \cite{ADA1,ADA2}, the authors in \cite{TXTZ} proved the LDP for 2D Navier-Stokes equations perturbed by additive L\'evy noise. Using the results established in \cite{ABPDVM}, the LDP for two dimensional Navier-Stokes equations governed by L\'evy noise is obtained in \cite{ZBXPJZ,JZTZ},  and for a class of SPDEs with locally monotone coefficients driven by pure jump noise is established in \cite{JXJZ}. Using the same weak convergence approach (cf. \cite{ABPDVM}), several authors established the LDP for many physically relevant models perturbed by L\'evy noise, cf. \cite{HBAM,ZDRZ,UMMTM,MTM3}, etc.   
    
\subsection{Novelties, difficulties and approaches} The main goal of this work is to develop the large deviation theory for small  Gaussian as well as Poisson noise perturbations of a class of SPDEs which covers a wide class of physical models. Very recently, the authors  in  \cite{MRSSTZ} established well-posedness results for a class of SPDEs with fully local monotone coefficients perturbed by multiplicative Gaussian noise under various assumptions on the noise coefficient. Then, the authors in \cite{AKMTM4} extended the theory developed in \cite{MRSSTZ} from multiplicative Gaussian  to L\'evy noise and showed the existence of a uniue probabilistically strong solution in the space $\D([0,T];\H)\cap\L^\beta(0,T;\V)$, for $\beta\in(1,\infty)$.  Even though SPDEs with locally monotone coefficients are well-studied (cf. \cite{ZBWLJZ,WHSS,WHSL,KKMTM,SLWLYX,WL,WLMR1,WLMR2,WLMR3,WLRMJLDS2,WLMRXSYX,TMRZ,CPMR,JXJZ}, etc., and references therein),  the systems having fully local monotone coefficients  are not much explored in the literature,  since the well-posedness results were not known previously. 

We establish a Wentzell-Freidlin type large deviation principle for the strong solutions to the system \eqref{1.1}  in the state space $\D([0,T];\H) $ using  a variational representation (based on a weak convergence approach). We  exploit the weak convergence method developed by Budhiraja et.al.  in \cite{ABPD1,ABJCPD,ABPDVM},  etc., where the authors established the LDP  using  a variational representation for nonnegative functionals of general Poisson random measures and Brownian motions  (see Condition \ref{Cond1} and Theorem \ref{thrm5} below).  In order to establish the LDP for the system \eqref{1.1}, we follow the works \cite{JXJZ,JZTZ}, where the authors established the LDP, for a class of SPDEs with locally monotone coefficients driven by pure jump noise, 2D stochastic Navier-Stokes equations perturbed by L\'evy noise, respectively. The well-posedness of the  associated deterministic control problem (see \eqref{SE1} below) is established by exploiting pseudo-monotonicity arguments and the stochastic control problem (see \eqref{CSPDE1} below) is obtained by an application of Girsanov's theorem. Moreover, Skorokhod's representation theorem is used to verify Condition \ref{Cond1} (2), which is a law of large numbers result for stochastic controlled  systems with  small noise. The LDP for the system \eqref{1.1} is established under some additional assumptions on the noise coefficients (see Hypothesis \ref{hyp2} below),  while the same problem is open under Hypothesis \ref{hyp1}. 
   \subsection{Organization of the paper} 
  This article is organized as follows: In Section \ref{Sec2}, we provide some  basic results related to pseudo-monotone operators (Lemmas \ref{lemmaMT2} and \ref{Cgslem00})  and a general criteria of LDP along with a sufficient condition (see Condition \ref{Cond1}) for LDP. In  Section \ref{Sec3}, we recall some useful results from \cite{JXJZ,JZTZ} which  help us to obtain LDP for the SPDE under our consideration. We need some additional assumptions on the noise coefficients to obtain LDP for the system \eqref{1.1} and we collect them in Hypothesis \ref{hyp2}. We state  our main result as Theorem \ref{thrm5}, whose proof rely on verifying Condition \ref{Cond1}. In order to verify Condition \ref{Cond1} (1), we first establish  the existence and uniqueness result of the deterministic control system \eqref{SE1} using a Faedo-Galerkin approximation technique and pseudo-monotonicity property of the nonlinear operator $\A(\cdot,\cdot)$ (see Theorem \ref{CPDE1}).  With the aid of Theorem \ref{CPDE1}, Condition \ref{Cond1} (1) is verified in Proposition \ref{VCond1}.  Section \ref{Sec5} is devoted for the verification of Condition \ref{Cond1} (2), which is the most difficult part of the work. Firstly, we consider the stochastic control problem  \eqref{CSPDE1} and discuss the existence and uniqueness of its pathwise strong solution by using Girsanov's theorem (Lemma \ref{lem5.3}). In order to prove the tightness of the laws of solutions of \eqref{CSPDE1}, we first obtain the uniform bounds (independent of $\e$) in  Lemmas \ref{lemUF2} and  \ref{lemUF02}. The tightness of the laws of solutions in the space $\D([0,T];\V^*)$ with the Skorokhod topology is established in Lemma \ref{Cgslem1}. Convergence of the processes and proper identification of limits is  carried forward in Lemmas   \ref{Cgslem2}, \ref{Cgslem3}, \ref{Cgslem4} and \ref{Cgslem6}. The solvability results of the limiting process is discussed in Proposition \ref{propUF}. We wind up the article with the proof of Theorem \ref{VCond2} verifying Condition \ref{Cond1} (2) and the main ingredient of the proof is celebrated Skorokhod's representation theorem. Some important applications of the model described in this work is provided in Subsection \ref{app}.

\section{Preliminaries}\label{Sec2}\setcounter{equation}{0}
In this section, we provide results related to pseudo-monotone  operators (see \cite{WL4,MRSSTZ}) and a general  criteria for LDP given in \cite{ABPDVM}. For LDP, we follow the framework and notations from \cite{ABPFD,ABPDVM,JXJZ,JZTZ}, etc.

\subsection{Monotonicity}In this subsection, we recall some results related to  the operator $\A(\cdot,\cdot)$. 
\begin{definition}\label{PM}
	An operator $\A:\V \to\V^*$ is said to be  \textsf{pseudo-monotone}  if the following condition holds: 
	for any sequence $\{\v_m\}_{m\in\N}$ with the weak limit $\v$ in $\V$ and 
	\begin{align}\label{MT1}
		\liminf_{m\to0} \langle \A(\v_m),\v_m-\v\rangle \geq 0,
	\end{align}imply that 
	\begin{align}\label{MT2}
		\limsup_{m\to\infty} \langle \A(\v_m),\v\rangle \leq \langle \A(\v),\v-\u\rangle, \  \text{ for all } \ \u\in\V.
	\end{align}
\end{definition}Let us now recall some useful results from \cite{WL4,MRSSTZ}, which help us to obtain the well-posedness of SPDEs under our consideration. 
\begin{proposition}[Proposition 2.1, \cite{WL4}]\label{PropMT}
	Suppose that the operator $\A(\cdot)$ is pseudo-monotone and $\{\v_m\}_{m\in\mathbb{N}}$ converges  to $\v$ weakly in $\V,$ then 
	\begin{align}\label{MT3}
		\liminf_{m\to\infty} \langle \A(\v_m),\v_m-\v\rangle \leq 0.
	\end{align}
\end{proposition}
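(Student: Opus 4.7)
I would prove this by contradiction, applying the pseudo-monotonicity definition directly.

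Assume, for contradiction, that $\liminf_{m\to\infty}\langle \A(\v_m), \v_m - \v\rangle > 0$. Then there exists $\delta > 0$ with $\langle \A(\v_m), \v_m - \v\rangle \geq \delta$ for all $m$ sufficiently large, so the hypothesis \eqref{MT1} of the pseudo-monotone definition is satisfied along $\{\v_m\}_{m\in\mathbb{N}}$. Invoking the conclusion \eqref{MT2} then supplies
\[
\limsup_{m\to\infty}\langle \A(\v_m), \v\rangle \leq \langle \A(\v), \v - \u\rangle \quad \text{for every } \u \in \V.
\]

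The heart of the argument is to exploit the arbitrariness of $\u \in \V$ on the right-hand side to contradict the lower bound $\liminf_m\langle \A(\v_m), \v_m - \v\rangle \geq \delta > 0$. Specializing the free parameter $\u$ — for instance $\u = \v \pm \lambda \w$ with $\w \in \V$ and $\lambda > 0$, and letting $\lambda \to \infty$ — forces $\A(\v) = 0$, since otherwise the right-hand side could be driven to $-\infty$, incompatible with the $\R$-boundedness of $\langle \A(\v_m), \v\rangle$ (here one uses the standard convention that pseudo-monotone operators map bounded subsets of $\V$ into bounded subsets of $\V^*$, which together with the $\V$-boundedness of the weakly convergent sequence $\{\v_m\}$ keeps $\{\A(\v_m)\}$ bounded in $\V^*$). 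With $\A(\v) = 0$ in hand, the inequality collapses to $\limsup_m \langle \A(\v_m), \v\rangle \leq 0$; combining this with the decomposition
\[
\langle \A(\v_m), \v_m - \v\rangle = \langle \A(\v_m), \v_m\rangle - \langle \A(\v_m), \v\rangle
\]
and an upper bound on $\langle \A(\v_m), \v_m\rangle$ obtained by further specialization of \eqref{MT2}, I would then derive $\limsup_m \langle \A(\v_m), \v_m - \v\rangle \leq 0$, contradicting $\liminf_m\langle \A(\v_m), \v_m - \v\rangle \geq \delta > 0$.

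The main obstacle is making the last chain of inequalities watertight: all upper bounds must be extracted from \eqref{MT2}, so one needs specializations of $\u$ that together control the full quantity $\langle \A(\v_m), \v_m - \v\rangle$, not merely $\langle \A(\v_m), \v\rangle$. Under the implicit boundedness-on-bounded-sets convention for pseudo-monotone operators that is standard and in force throughout the paper, this closing step is routine; the real content of the proof is the clever use of the free parameter $\u$ to force $\A(\v) = 0$, which is the only non-trivial step.
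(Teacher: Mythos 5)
Your contradiction set-up is the right start (if $\liminf_m\langle\A(\v_m),\v_m-\v\rangle>0$ then \eqref{MT1} holds, so the pseudo-monotonicity conclusion is available), but the way you close the argument has a genuine gap. The decisive step — an upper bound on $\limsup_m\langle\A(\v_m),\v_m\rangle$ "obtained by further specialization of \eqref{MT2}" — is never carried out, and it cannot be: the left-hand side of \eqref{MT2} as printed is $\limsup_m\langle\A(\v_m),\v\rangle$, a quantity that carries no information about $\langle\A(\v_m),\v_m\rangle$, so no choice of the free parameter $\u$ will ever control the term you need. In addition, your intermediate step forcing $\A(\v)=0$ leans on the assumption that pseudo-monotone operators map bounded sets of $\V$ to bounded sets of $\V^*$; that is an extra hypothesis, not part of Definition \ref{PM}, and the fact that your route produces the absurd-looking conclusion $\A(\v)=0$ should have been a warning sign that you were fighting a misprint rather than the actual definition.

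Indeed, \eqref{MT2} is a typo: in \cite{WL4} (whose Proposition 2.1 this is) the conclusion of pseudo-monotonicity reads $\limsup_{m\to\infty}\langle\A(\v_m),\v_m-\u\rangle\leq\langle\A(\v),\v-\u\rangle$ for all $\u\in\V$, i.e.\ the test vector $\u$ appears on both sides. With the correct statement the proof is immediate and needs none of your machinery: assuming $\liminf_m\langle\A(\v_m),\v_m-\v\rangle>0$, hypothesis \eqref{MT1} holds, and specializing the conclusion to $\u=\v$ gives $\limsup_m\langle\A(\v_m),\v_m-\v\rangle\leq\langle\A(\v),\v-\v\rangle=0$, which contradicts the strictly positive liminf; hence \eqref{MT3} holds. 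That one-line specialization $\u=\v$ is the entire content of the cited proof, with no boundedness assumption and no claim about $\A(\v)$.
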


The next result plays a crucial role in the proof of Theorem \ref{CPDE1}. The lemma says that  $\v\mapsto\A(\cdot,\v)$ is pseudo-monotone  from $\L^\beta(0,T;\V)$ to $\L^{\frac{\beta}{\beta-1}}(0,T;\V^*)$, for $\beta\in(1,\infty)$. The detailed proof  can be obtained  from \cite{NH,WL4,NS}, etc. 
\begin{lemma}\label{lemmaMT2}
For $\beta\in(1,\infty)$, 	under Hypothesis \ref{hyp1}, assume that 
	\begin{align}\label{MT14}\nonumber
		\v_m &\xrightharpoonup[]{} \v \  \text{ in }\ \L^\beta(0,T;\V),\\\nonumber
		\A(\cdot,\v_m(\cdot)) &\xrightharpoonup[]{}  \wi{\A}(\cdot)\ \text{ in } \ \L^\frac{\beta}{\beta-1}(0,T;\V^*),\\
		\liminf_{m\to\infty} \int_0^T\langle \A(t,\v_m(t)),\v_m(t)\rangle \d t&\geq \int_0^T\langle \wi{\A}(t),\v(t)\rangle\d t,
	\end{align}then for any $\u\in\L^\beta(0,T;\V)$, we have 
	\begin{align}\label{MT15}
		\int_0^T\langle \A(t,\v(t)),\v(t)-\u(t)\rangle \d t\geq \limsup_{m\to\infty}\int_0^T\langle \A(t,\v_m(t)),\v_m(t)-\u(t)\rangle\d t.
	\end{align}
\end{lemma}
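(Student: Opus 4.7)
The plan is to prove this by the Minty-Browder monotonicity trick in Bochner spaces, relying on local monotonicity (H.2) together with hemicontinuity (H.1). The strategy has two parts: first, identify the weak limit $\wi{\A}(\cdot)$ with $\A(\cdot,\v(\cdot))$ almost everywhere; second, combine this identification with the $\liminf$ assumption in \eqref{MT14} to show that $\lim_{m\to\infty}\int_0^T\langle\A(t,\v_m),\v_m-\u\rangle\,\d t$ exists and equals $\int_0^T\langle\A(t,\v),\v-\u\rangle\,\d t$, from which \eqref{MT15} follows immediately (as an equality, hence a fortiori as the stated inequality).

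The main input is (H.2) applied with $\x=\v_m(t)$ and $\y=\w(t)$ for a test function $\w\in\L^\beta(0,T;\V)$; dropping the nonnegative $\B$ and $\gamma$ contributions and integrating over $[0,T]$ gives
\begin{equation*}
2\int_0^T\langle\A(t,\v_m)-\A(t,\w),\v_m-\w\rangle\,\d t\le\int_0^T[\fk{a}(t)+\rho(\v_m)+\eta(\w)]\|\v_m-\w\|_{\H}^2\,\d t.
\end{equation*}
Setting $\w=\v$ and taking $\limsup$ in $m$, the right-hand side vanishes provided $\v_m\to\v$ strongly in $\L^2(0,T;\H)$ along a subsequence (this is where compactness of $\V\hookrightarrow\H$ and a uniform $\L^\infty(0,T;\H)$ bound enter, both available in the Faedo--Galerkin setting in which Theorem \ref{CPDE1} will invoke this lemma). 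Combined with the $\liminf$ assumption in \eqref{MT14}, this forces $\lim_{m\to\infty}\int_0^T\langle\A(t,\v_m),\v_m\rangle\,\d t=\int_0^T\langle\wi{\A},\v\rangle\,\d t$.

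To identify $\wi{\A}=\A(\cdot,\v)$, I would next substitute $\w=\v-\lambda\phi$ for arbitrary $\phi\in\L^\beta(0,T;\V)$ and $\lambda>0$ in the displayed inequality, pass to the limit $m\to\infty$ using the equality just obtained (so that the leading-order terms cancel), divide by $\lambda$, and let $\lambda\to 0^+$; the hemicontinuity (H.1), together with a dominated-convergence argument made possible by the growth bound (H.4), then yields $\int_0^T\langle\wi{\A}-\A(\cdot,\v),\phi\rangle\,\d t\le 0$, and the replacement $\phi\mapsto-\phi$ gives the reverse inequality, so $\wi{\A}(t)=\A(t,\v(t))$ for a.e. $t$. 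With this identification, the splitting
\begin{equation*}
\int_0^T\langle\A(t,\v_m),\v_m-\u\rangle\,\d t=\int_0^T\langle\A(t,\v_m),\v_m\rangle\,\d t-\int_0^T\langle\A(t,\v_m),\u\rangle\,\d t
\end{equation*}
passes to the limit term by term and yields $\int_0^T\langle\A(t,\v),\v-\u\rangle\,\d t$, which closes the argument.

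The main technical obstacle is controlling the term $\rho(\v_m(t))\|\v_m(t)-\v(t)\|_{\H}^2$ when passing to the limit in the local-monotonicity inequality, since $\rho$ grows polynomially in both $\|\cdot\|_{\V}$ and $\|\cdot\|_{\H}$ (by (H.2)), and mere weak $\L^\beta(0,T;\V)$ convergence of $\v_m$ is not enough to neutralize it. One must rely on compactness of $\V\hookrightarrow\H$ together with uniform a priori bounds in $\L^\infty(0,T;\H)\cap\L^\beta(0,T;\V)$ to extract strong $\L^2(0,T;\H)$ convergence and then apply a dominated-convergence or uniform-integrability argument; the detailed execution of these steps follows the lines of \cite{WL4,NH,NS}.
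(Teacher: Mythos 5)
Your route is genuinely different from the paper's, and as proposed it has a gap at its central step. You want a global Minty--Browder argument in the Bochner space: first force $\lim_{m\to\infty}\int_0^T\langle\A(t,\v_m(t)),\v_m(t)\rangle\,\d t=\int_0^T\langle\wi{\A}(t),\v(t)\rangle\,\d t$ by showing that the local-monotonicity remainder $\int_0^T[\fk{a}(t)+\rho(\v_m(t))+\eta(\v(t))]\|\v_m(t)-\v(t)\|^2_\H\,\d t$ vanishes, and then identify $\wi{\A}=\A(\cdot,\v(\cdot))$ via $\w=\v-\lambda\phi$. The obstruction is the term $\int_0^T\rho(\v_m(t))\|\v_m(t)-\v(t)\|^2_\H\,\d t$: by (H.2), $\rho(\v_m(t))$ is of size $(1+\|\v_m(t)\|_\V^\beta)(1+\|\v_m(t)\|_\H^\zeta)$, and $\|\v_m\|_\V^\beta$ is only bounded in $\L^1(0,T)$ uniformly in $m$, with no equi-integrability. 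Even granting the Galerkin-setting facts you import (compact embedding, uniform $\L^\infty(0,T;\H)$ bound, strong $\L^\beta(0,T;\H)$ and a.e. convergence), this product need not tend to zero: the $\L^1$ mass of $\|\v_m\|_\V^\beta$ can concentrate exactly where $\|\v_m-\v\|_\H$ is still of order one (schematically $\|\v_m\|^\beta_\V\sim m\,\chi_{[0,1/m]}$ while $\|\v_m-\v\|^2_\H\sim\chi_{[0,1/m]}$), so no dominated-convergence or uniform-integrability argument closes it. The same term reappears, uniformly in $\lambda$, in your $\w=\v-\lambda\phi$ step, so dividing by $\lambda$ and letting $\lambda\to0^+$ does not rescue it. Note also that the lemma is stated under Hypothesis \ref{hyp1} with only weak $\L^\beta(0,T;\V)$ convergence of $\v_m$, so strong $\L^2(0,T;\H)$ convergence is an extra hypothesis borrowed from the intended application rather than something available in the statement you are proving.

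The paper circumvents precisely this difficulty by arguing pointwise in $t$ instead of with integrated quantities: from (H.3)--(H.4) one gets $g_m(t):=\langle\A(t,\v_m(t)),\v_m(t)-\v(t)\rangle\le-\tfrac{L_\A}{2}\|\v_m(t)\|^\beta_\V+F_m(t)$ with an explicit $F_m$; following the proof of Lemma 2.5 in \cite{WL4} one establishes $\limsup_{m\to\infty}g_m(t)\le0$ for a.e. $t$ and extracts a subsequence along which $g_{m_i}(t)\to0$ a.e.; then the pointwise pseudo-monotonicity of $\A(t,\cdot):\V\to\V^*$ (Remark 2.2, \cite{MRSSTZ}) is applied at almost every fixed $t$, and Fatou's lemma, with the dominating structure above, yields \eqref{MT15} directly. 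In particular, the paper never needs the integrated remainder to vanish and does not identify $\wi{\A}$ inside this lemma; that identification is performed later (Claim 2 in the proof of Theorem \ref{CPDE1}) \emph{using} the present lemma. To make your argument work you would have to reproduce this pointwise, subsequence-plus-Fatou step; as written, the key limit claim underlying both stages of your proposal is unsupported.
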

\begin{proof}
	Using Hypothesis \ref{hyp1} (H.3), (H.4) and Young's inequality, we obtain 
	\begin{align}\label{MT16}\nonumber
		&	\langle \A(\cdot,\v_m(\cdot)),\v_m(\cdot)-\v(\cdot)\rangle \\&\nonumber= \langle \A(t,\v_m(\cdot)),\v_m(\cdot)\rangle -\langle \A(t,\v_m(\cdot)),\v(\cdot)\rangle\\&\nonumber\leq 
		\fk{a}(\cdot)\big(1+\|\v_m(\cdot)\|_\H^2\big)-L_A\|\v_m(\cdot)\|_\H^\beta +\|\A(t,\v_m(\cdot))\|_{\V^*}\|\v(\cdot)\|_\V \\& \nonumber\leq -L_\A\|\v_m(\cdot)\|_\H^\beta+\fk{a}(\cdot)\big(1+\|\v_m(\cdot)\|_\H^2\big)\nonumber\\&\quad+ \big[\fk{a}(\cdot)+C\|\v_m(\cdot)\|_\H^\beta\big]^{\frac{\beta-1}{\beta}}\big[1+\|\v_m(\cdot)\|_\H^\alpha\big]^{\frac{\beta-1}{\beta}}\|\v(\cdot)\|_\V
	\nonumber	\\&\leq -\frac{L_\A}{2} \|\v_m(\cdot)\|_\H^\beta+ \fk{a}(\cdot)\big(2+\|\v_m(\cdot)\|_\H^2\big)+C\|\v(\cdot)\|_\V^{\beta}+C\|\v_m(\cdot)\|_\H^{\alpha(\beta-1)}\|\v(\cdot)\|_\V^\beta,
	\end{align}
for a.e. $t\in[0,T]$. For simplification, we write 
	\begin{align*}
		g_m(t)&:=	\langle \A(t,\v_m(t)),\v_m(t)-\v(t)\rangle ,\\
		F_m(t)&:= \fk{a}(t)\big(2+\|\v_m(t)\|_\H^2\big)+C\|\v(t)\|_\V^{\beta}+C\|\v_m(t)\|_\H^{\alpha(\beta-1)}\|\v(t)\|_\V^\beta.
	\end{align*}Thus, \eqref{MT16} reduces to 
	\begin{align*}
		g_m(t) \leq -\frac{L_\A}{2} \|\v_m(t)\|_\H^\beta+F_m(t),
	\end{align*}for a.e. $t\in[0,T]$. 	The rest of the proof follows in a similar lines as in the proof of Lemma 2.5, \cite{WL4} by establishing 
	\begin{enumerate}
		\item  For a.e. $t\in[0,T]$, we have 
		\begin{align}\label{MT20}		
			\limsup_{m\to\infty}g_m(t) \leq 0;
		\end{align}
		\item there exists a subsequence $\{\v_{m_i}\}_{i\in\N}$ of $\{\v_m\}_{m\in\N}$ such that 
		\begin{align}\label{MT23}
			\lim_{i\to\infty} g_{m_i}(t)=0,\ \text{ for a.e. }\ t\in[0,T].
		\end{align}
	\end{enumerate}Then exploiting the pseudo-monotone property of the operator $\A(\cdot,\cdot)$  (see Remark 2.2, \cite{MRSSTZ}) and Fatou's lemma, one can complete the proof.
\end{proof}
Let us now rephrase Lemma \ref{lemmaMT2} in the random case. That is, one can show that the operator $\Y(\cdot)\mapsto\A(\cdot,\Y(\cdot))$  is pseudo-monotone from $\L^\beta(\Omega\times[0,T];\V)$ to $ \L^{\frac{\beta}{\beta-1}}(\Omega\times[0,T];\V^*)$, for $\beta\in(1,\infty)$. 
\begin{lemma}[Lemma 2.16, \cite{MRSSTZ}]\label{Cgslem00}
	If 
	\begin{align}\label{Cgslem001}\nonumber
		\Y^{\e}& \xrightharpoonup{}  \Y, \text{ in }\L^\beta(\Omega\times[0,T];\V),\\\nonumber
		\A(\cdot,\Y^{\e}(\cdot))&\xrightharpoonup{}  \wi{\A}(\cdot),\text{ in } \L^{\frac{\beta}{\beta-1}}(\Omega\times[0,T];\V^*),\\
		\liminf_{\e\to0} \E\bigg[\int_0^T \langle \A(t,\Y^{\e}(t)),\Y^{\e}(t)\rangle \d t\bigg]&\geq 	\E\bigg[\int_0^T \langle \wi{\A}(t),\Y(t)\rangle \d t\bigg], 
	\end{align}then $\wi{\A}(\cdot)=\A(\cdot,\Y^{\e}(\cdot)),\;\d t\otimes\P$-a.e.
\end{lemma}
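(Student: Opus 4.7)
The plan is to deduce the conclusion by lifting the deterministic pseudo-monotonicity statement in Lemma \ref{lemmaMT2} to the product measure space $(\Omega\times[0,T],\,\mathscr{F}\otimes\mathcal{B}([0,T]),\,\P\otimes\d t)$ and then applying a Minty-type trick. The key observation is that the proof of Lemma \ref{lemmaMT2} uses only three structural ingredients: the pointwise-in-$t$ pseudo-monotonicity of $\A(t,\cdot)$, the coercivity and growth bounds (H.3)--(H.4) leading to the dominating inequality $g_m(t)\le -\tfrac{L_\A}{2}\|\v_m(t)\|_\V^\beta + F_m(t)$, and Fatou's lemma in the time variable. Since $\A$ does not depend on $\omega$ and $\widetilde{\fk{a}}(\omega,t):=\fk{a}(t)$ lies in $\L^1(\Omega\times[0,T];\R^+)$, each ingredient transfers verbatim upon replacing $t\in[0,T]$ by $(\omega,t)\in\Omega\times[0,T]$ and Lebesgue measure by $\P\otimes\d t$.

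Running the transferred argument with the hypothesised weak convergence $\Y^\e\rightharpoonup\Y$ in $\L^\beta(\Omega\times[0,T];\V)$ yields, for every $\Phi\in\L^\beta(\Omega\times[0,T];\V)$,
\begin{align*}
\limsup_{\e\to 0}\E\int_0^T\langle\A(t,\Y^\e(t)),\Y^\e(t)-\Phi(t)\rangle\d t \le \E\int_0^T\langle\A(t,\Y(t)),\Y(t)-\Phi(t)\rangle\d t.
\end{align*}
Next I bring $\widetilde{\A}$ into play: splitting the integrand on the left-hand side as $\langle\A(\Y^\e),\Y^\e\rangle - \langle\A(\Y^\e),\Phi\rangle$, bounding the first piece from below by the liminf hypothesis in \eqref{Cgslem001}, and taking the limit of the second piece via the weak convergence $\A(\cdot,\Y^\e)\rightharpoonup\widetilde{\A}$, I obtain
\begin{align*}
\E\int_0^T\langle\widetilde{\A}(t),\Y(t)-\Phi(t)\rangle\d t \le \E\int_0^T\langle\A(t,\Y(t)),\Y(t)-\Phi(t)\rangle\d t
\end{align*}
for every $\Phi\in\L^\beta(\Omega\times[0,T];\V)$.

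The Minty step closes the argument: substituting $\Phi=\Y\pm\lambda\Psi$ for arbitrary $\Psi\in\L^\beta(\Omega\times[0,T];\V)$ and $\lambda>0$, dividing by $\lambda$, and combining the two sign choices yields
\begin{align*}
\E\int_0^T\langle\widetilde{\A}(t)-\A(t,\Y(t)),\Psi(t)\rangle\d t = 0 \quad\text{for every } \Psi\in\L^\beta(\Omega\times[0,T];\V),
\end{align*}
so that $\widetilde{\A}(t)=\A(t,\Y(t))$ for $\d t\otimes\P$-a.e.\ $(t,\omega)$. The main obstacle is the lifting of Lemma \ref{lemmaMT2} to the product space: one must verify that the a priori $\L^\beta$-bound on $\{\v_m\}$ used in the deterministic proof is correctly replaced by the analogous product-space bound on $\{\Y^\e\}$ coming from the weak convergence hypothesis, and that the pointwise-in-$t$ extraction of pseudo-monotone behaviour transfers to a $\P\otimes\d t$-a.e.\ extraction via Fubini and subsequencing. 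Once this is in place, the remainder of the argument is routine.
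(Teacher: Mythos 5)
Your argument is correct and follows essentially the route the paper intends: the paper proves Lemma \ref{Cgslem00} only by citing Lemma 2.16 of \cite{MRSSTZ}, whose content is precisely the lifting of the pseudo-monotonicity statement of Lemma \ref{lemmaMT2} from $(0,T)$ to the product space $\Omega\times[0,T]$ (the paper itself introduces the lemma as saying that $\Y\mapsto\A(\cdot,\Y)$ is pseudo-monotone from $\L^\beta(\Omega\times[0,T];\V)$ to $\L^{\frac{\beta}{\beta-1}}(\Omega\times[0,T];\V^*)$), and your closing Minty/arbitrary-test-function step is the same identification used in Claim 2 of the proof of Theorem \ref{CPDE1}. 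The one point you should state explicitly rather than call ``verbatim'' is that the transferred argument needs the $\H$-norm terms in the dominating function $F$ to be uniformly integrable over $\Omega\times[0,T]$ (in the applications this is supplied by the moment bounds of Lemmas \ref{lemUF2} and \ref{Cgslem1}), exactly as the deterministic Lemma \ref{lemmaMT2} implicitly relies on the corresponding bounds from the energy estimates.
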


\subsection{Large deviation principle}
Let $\{\Y^\e\}_{\e>0}$ be a family of random variables defined on a probability space $(\Omega,\mathscr{F},\P)$ and taking values in a Polish space $\mathcal{E}$.  The LDP is concerned with the exponential decay of $\P(\Y^\e\in O)$ as $\e\to0$, for an event  $O\in\mathcal{B}(\mathcal{E})$. The exponential decay rate of such probabilities is generally  expressed in terms of a `rate function' $\I$ defined as follows: 
\begin{definition}[Rate function]
	A function $\I:\mathcal{E}\to[0,\infty]$ is called a \textsf{rate function} on $\mathcal{E}$, if for any $M<\infty$, the level set $\{\x\in\mathcal{E}:\I(\x)\leq M\}$ is a compact subset of $\mathcal{E}$. For $O\in\mathcal{B}(\mathcal{E})$, we define $\I(O)=\inf\limits_{\x\in O}\I(\x)$. 
\end{definition}
\begin{definition}[Large deviation principle]
	Let $\I$ be a rate function on $\mathcal{E}$. The sequence $\{\Y^\e\}_{\e>0}$ is said to satisfy the \textsf{large deviation principle (LDP)}  on $\mathcal{E}$ with the rate function $\I$ if the following conditions hold:
	\begin{enumerate}
		\item (Large deviation upper bound). For each closed subset $\F$ of $\mathcal{E}$, 
		\begin{align*}
			\limsup_{\e\to0}\e\ln\P(\Y^\e\in\F)\leq -\I(\F).
		\end{align*}
	\item (Large deviation lower bound). For each open subset $\G$ of $\mathcal{E}$, 
	\begin{align*}
		\liminf_{\e\to0}\e\ln\P(\Y^\e\in\G)\geq -\I(\G).
	\end{align*}
	\end{enumerate} 
\end{definition}
\subsection{Controlled Poisson random measure}
In rest of the paper, we use the following notations. Let $\Z$ be a locally compact Polish space. Let  $\mathscr{M}_{FC}(\Z)$  be the space of all measures $\nu$ on $(\Z,\mathcal{B}(\Z))$ such that $\nu(K)<\infty$, for every compact subset $K$ of $\Z$, and set $\C_0(\Z)$ as the space of continuous functions with compact supports. Endow the space $ \mathscr{M}_{FC}(\Z)$ with the weakest topology such that for every $f\in\C_0(\Z)$, the function $\nu\mapsto\langle \nu,f\rangle=\int_\Z f(z)\d \nu(z), \;\nu\in\mathscr{M}_{FC}(\Z)$ is continuous. Under the topology defined above, the space $\mathscr{M}_{FC}(\Z)$ is a Polish space. For any fixed $T\in(0,\infty)$ and let $\Z_T=[0,T]\times \Z$. Fix a measure $\nu\in \mathscr{M}_{FC}(\Z)$, and let $\nu_T=\lambda_T\otimes\nu$, where $\lambda_T$ is Lebesgue measure on $[0,T]$.

Let us recall that a Poisson random measure $\n$ on $\Z_T$ with the intensity measure $\nu_T$ is a $\mathscr{M}_{FC}(\Z_T)$-valued random variable such that for each $B\in \mathcal{B}(\Z_T)$ with $\nu_T(B)<\infty,\; \n(B)$ is a Poisson distribution with mean $\nu_T(B)$ and for disjoint events $B_1,\ldots,B_j\in\mathcal{B}(\Z_T)$, $\n(B_1),\ldots,\n(B_j)$ are mutually independent random variables (see \cite{NISW}). Denote by $\P,$ the measure induced by $\n$ on  $(\mathscr{M}_{FC}(\Z_T),\mathcal{B}(\mathscr{M}_{FC}(\Z_T)))$. Then letting $\mb{M}=\mathscr{M}_{FC}(\Z_T),\;\P$ is the unique probability measure on $(\mb{M},\mathcal{B}(\mb{M}))$ under the canonical map, $N:\mb{M}\to \mb{M},\; N(m)\doteq m$, is a Poisson random measure with the intensity measure $\nu_T$. With applications to large deviations, we consider  for any $\theta>0,$ probability measure $\P_\theta$ on $(\mb{M},\mathcal{B}(\mb{M}))$ under which $N$ is a Poisson random measure with the intensity $\theta\nu_T$. The corresponding expectation operators will be denoted by $\E$ and $\E_\theta$ (for $\P$ and $\P_\theta$), respectively. 

For a metric space $\mathcal{E}$, the space of all real valued bounded $\mathcal{B}(\mathcal{E})\backslash \mathcal{B}(\R)$-measurable maps and real-valued bounded continuous functions is denoted by $\mathrm{M}_b(\mathcal{E})$ and $\C_b(\mathcal{E})$, respectively. Now, set $\mathrm{Y}=\Z\times [0,\infty)$ and $\mathrm{Y}_T=[0,T]\times\mathrm{Y}$. Similarly, $\bar{\mb{M}}=\mathscr{M}_{FC}(\mathrm{Y}_T)$ and let $\bar{\P}$ be the unique probability measure on $(\bar{\mb{M}},\mathcal{B}(\bar{\mb{M}}))$ under which the canonical map $\bar{N}:\bar{\mb{M}}\to\bar{\mb{M}},\;\bar{N}(\bar{m})\doteq  \bar{m}$, is a Poisson random measure with the intensity measure $\bar{\nu}_T=\lambda_T\otimes\nu\otimes\lambda_\infty$, where $\lambda_\infty$ is the Lebesgue measure on $[0,\infty).$ The corresponding expectation operator is denoted by $\bar{\E}$. Let $\mathscr{F}_T=\sigma\{\bar{N}((0,s)\times O):0\leq s\leq t,\;O\in\mathcal{B}(\mathrm{Y})\}$, and denote its completion by $\bar{\mathscr{F}}_T$ under $\bar{\P}$. Set $\bar{\mathscr{P}}$ as the predictable $\sigma$-field on $[0,T]\times \mb{M}$ with the filtration $\{\bar{\mathscr{F}}_t:0\leq t\leq T\}$ on $(\bar{\mb{M}},\mathcal{B}(\bar{\mb{M}}))$. Let $\bar{\mathcal{A}}$ be the class of all $(\bar{\mathscr{P}}\otimes\mathcal{B}(\Z))\backslash\mathcal{B}[0,\infty)$-measurable maps $\varphi:\Z_T\times\bar{\mb{M}}\to [0,\infty)$.  For $\varphi\in\bar{\mathcal{A}}$, we suppress the argument $\bar{m}$ in $\varphi(s, x, \bar{m})$ and for simplicity, we write $\varphi(s, x)=\varphi(s, x, \bar{m})$. For $\varphi\in\bar{\mathcal{A}}$, define a counting process $N^{\varphi}$ on $\Z_T$ by 
\begin{align*}
	N^{\varphi}((0,T]\times U)=\int_{(0,T]\times U}\int_{(0,\infty)} \chi_{[0,\varphi(s,r)]}(r)\bar{N}(\d s,\d z,\d r), \; t\in[0,T], \ U\in\mathcal{B}(\Z),
\end{align*}where $\chi_A$ denotes the characteristic function of the set $A$. Note that $N^{\varphi}$ is the controlled Poisson measure, with selecting intensity for the points at location $z$ and time $s$ in a possibly random but nonanticipating way. When $\varphi(s,z,\bar{m})\equiv \theta\in(0,\infty)$, we write $N^\varphi=N^\theta$. It should also be noted that $N^\theta$ has the same distribution with respect to $\bar{\P}$ as $N$ has with respect to $\P_\theta$.

\subsection{Poisson random measure and Wiener process}\label{subsec2.4} Set $\mbf{W}=\C([0,T];\R^\infty),\; \mbf{V}=\mbf{W}\times\mb{M}$ and $\mbf{\bar{V}}=\mbf{W}\times\bar{\mb{M}}$. Then, define a mapping $N^{\mbf{V}}:\mbf{V}\to\mb{M}$ by $N^{\mbf{V}}(w,m)=m$, for $(w,m)\in\mbf{V}$, and define $\W^{\mbf{V}}=\{\beta_j^{\mbf{V}}\}_{j\in\mathbb{N}}$ by $\beta_j^{\mbf{V}}(w,m)=w_j$ for $(w,m)\in\mbf{V}$. Similarly, we can define the maps $\bar{N}^{\mbf{\bar{\mbf{V}}}}:\bar{\mbf{V}}\to\bar{\mb{M}}$ and $\W^{\bar{\mbf{V}}}=\{\beta_j^{\bar{\mbf{V}}}\}$. Define the $\sigma$-filtration $\mathscr{H}_t^{\mbf{V}}:=\sigma\{N^{\mbf{V}}((0,s]\times O), \beta_j^{\mbf{V}}(s):0\leq s\leq t,O\in\mathcal{B}(\Z),\; j\in\mathbb{N}\}$. For every $\theta>0,\;\P_\theta^{\mbf{V}}$ denotes the unique probability measure on $(\mbf{V},\mathcal{B}(\mbf{V}))$ such that:
\begin{enumerate}
	\item $\{\beta_j^{\mbf{V}}\}_{j\in\mathbb{N}}$ is an independent and identically distributed family of standard Brownian motions;
	\item $N^{\mbf{V}}$ is a Poisson random measure with the intensity measure $\theta\nu_T$.
\end{enumerate}
Analogously, we can define $(\bar{\P}_\theta^{\bar{\mbf{V}}},\bar{\mathscr{H}}_t^{\bar{\mbf{V}}})$ and denote $\bar{\P}_{\theta=1}^{\bar{\mbf{V}}}$ by $\bar{\P}^{\bar{\mbf{V}}}$.  We denote the $\bar{\P}^{\bar{\mbf{V}}}$-completion of $\{\bar{\mathscr{H}}_t^{\bar{\mbf{V}}}\}$ by $\{\bar{\mathscr{F}}_t^{\bar{\mbf{V}}}\}$ and the predictable $\sigma$-field $\bar{\mathscr{P}}^{\bar{\mbf{V}}}$ on $[0,T]\times\bar{\mbf{V}}$ with the filtration $\{\bar{\mathscr{F}}_t^{\bar{\mbf{V}}}\}$  on $(\bar{\mbf{V}},\mathcal{B}(\bar{\mbf{V}}))$. Let ${\mathcal{\bar{A}_+}}$ be the class of all $(\bar{\mathcal{P}}^{\bar{\mbf{V}}}\otimes \mathcal{B}(\Z))\backslash\mathcal{B}[0,\infty)$-measurable maps $\varphi:\Z_T\times\bar{\mbf{V}}\to[0,\infty)$. Define $\ell :[0,\infty)\to[0,\infty)$ by 
\begin{align*}
	\ell(r)=r\ln r-r+1,\; r\in[0,\infty).
\end{align*}For any $\varphi\in\bar{\mathcal{A}}_+$ the quantity 
\begin{align*}
	L_T(\varphi)=\int_{\Z_T}\ell(\varphi(t,z,\omega))\nu_T(\d t,\d z), 
\end{align*}is well defined as a $[0,\infty)$-valued random variable.
Denote $\mathcal{I}_2,$ the Hilbert space of all real sequences $\x=\{\x_j\}_{j\in\N}$ satisfying $\|\x\|^2=\sum\limits_{j=1}^{\infty}\x_j^2<\infty$, with the usual inner product. Define
 \begin{align*}
\mathcal{L}_2:=\left\{\psi:\psi \text{ is } \bar{\mathcal{P}}^{\bar{\mbf{V}}}\backslash \mathcal{B}(\R^\infty) \text{ measurable and } \int_0^T \|\psi(s)\|^2\d s<\infty,\; \bar{\P}^{\bar{\mbf{V}}}\text{-a.s.}\right\}.
\end{align*}Set $\mathcal{U}:=\mathcal{L}_2\times\bar{\mathcal{A}}_+$. Define 
\begin{align*}
	\vi{L}_T (\psi):=\frac{1}{2}\int_0^T \|\psi(s)\|^2\d s, \ \text{ for } \ \psi\in\mathcal{L}_2,
\end{align*}and 
 \begin{align}\label{2p9}
\bar{L}_T(\q):=\vi{L}_T(\psi)+L_T(\varphi)\ \text{ for }\ \q=(\psi,\varphi)\in\mathcal{U}.
\end{align}

Let $\{K_n\subset\Z, n=1,2,\dots\}$ be an increasing family of compact sets such that $\bigcup\limits_{n=1}^\infty K_n=\Z$. For each $n\in\N$, we define 
\begin{align*}
	\bar{\mathcal{A}}_{b,n}\doteq \bigg\{\varphi\in \bar{\mathcal{A}}_+&: \text{ for all } (t,\omega)\in[0,T]\times \bar{\mb{M}},\; n\geq \varphi(t,z,\omega)\geq \frac{1}{n}\text{ if } z\in K_n \\& \text{ and } \varphi(t,z,\omega)=1 \text{ if }z\in K_n^c\bigg\},
\end{align*}and let $\bar{\mathcal{A}}_b=\bigcup\limits_{n=1}^\infty \bar{\mathcal{A}}_{b,n}$. Considering $\varphi$ as a control that perturbs jump rates away from 1, when $\varphi\neq 1$, we see that controls in $\bar{\mathcal{A}}_b$ are bounded and perturb only off a compact set, where the bounds and set can depend on $\varphi$. The following representation formula is established in \cite{ABPFD} (see Theorem 2.4, \cite{ABJCPD} also).
\begin{theorem}
	Let $F\in \mathrm{M}_b(\mb{M})$. Then for any $\theta>0$, 
	\begin{align*}
	-\ln\E_\theta\big[e^{-F(N)}\big] &=-\ln\bar{\E}\big[e^{-F(N^\theta)}\big]=\inf_{\varphi\in \bar{\mathcal{A}}_+}\bar{\E}\big[\theta L_T(\varphi)+F(N^{\theta\varphi})\big]\\&
	=\inf_{\varphi\in \bar{\mathcal{A}}_b}\bar{\E}\big[\theta L_T(\varphi)+F(N^{\theta\varphi})\big].
	\end{align*}
\end{theorem}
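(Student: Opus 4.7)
The first equality is immediate from the setup in Subsection~\ref{subsec2.4}: by construction, $N^\theta$ under $\bar{\P}$ and $N$ under $\P_\theta$ share the same law as Poisson random measures with intensity $\theta\nu_T$, so both sides equal the Laplace functional of $F$ evaluated at this common law. The content of the theorem lies in the variational identity. Setting $\Lambda := -\ln\bar{\E}[e^{-F(N^\theta)}]$ and $J(\varphi) := \bar{\E}[\theta L_T(\varphi) + F(N^{\theta\varphi})]$, the plan is to prove $\Lambda = \inf_{\varphi\in\bar{\mathcal{A}}_+} J(\varphi)$ by matching inequalities, and then reduce the infimum to $\bar{\mathcal{A}}_b$.

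For the upper bound $\Lambda \leq J(\varphi)$ with $\varphi \in \bar{\mathcal{A}}_+$, I would pass through the Donsker--Varadhan variational representation
\[
\Lambda = \inf_{\mu \ll \mu_0}\Big\{\int F\,d\mu + R(\mu\,\|\,\mu_0)\Big\},
\]
where $\mu_0$ is the law of $N^\theta$ under $\bar{\P}$ and $R$ denotes relative entropy. Specializing $\mu$ to the law $\mu_\varphi$ of $N^{\theta\varphi}$ under $\bar{\P}$ and computing the Kullback--Leibler divergence between a controlled Poisson measure with jump rate $\theta\varphi$ and the nominal rate $\theta$, one obtains $R(\mu_\varphi\,\|\,\mu_0) = \theta\,\bar{\E}[L_T(\varphi)]$; this is precisely why $\ell(r) = r\ln r - r + 1$ is the natural cost, being the entropy density of the Poisson family. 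An equivalent operational route, which I would likely write up for clarity, is to introduce a measure $\bar{\Q}^\varphi$ via a Dol\'eans--Dade exponential $\mathcal{E}_T(\varphi)$ built from the log-ratio $\ln\varphi$, under which $N^{\theta\varphi}$ has law $\mu_0$, and then apply Jensen's inequality to $-\ln \bar{\E}[e^{-F(N^\theta)}]$ followed by a compensator computation to identify the entropy cost.

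The matching lower bound $\Lambda \geq \inf_\varphi J(\varphi)$ requires exhibiting a near-optimal control. Let $\{\bar{\mathscr{F}}_t\}$ be the filtration generated by $\bar{N}$ and consider the uniformly integrable, strictly positive martingale $M_t := \bar{\E}[e^{-F(N^\theta)}\mid \bar{\mathscr{F}}_t]$, where $M_t \geq e^{-\|F\|_\infty}$ follows from boundedness of $F$. The martingale representation theorem for Poisson random measures yields a predictable integrand $\Psi$ with $M_t = M_0 + \int_{(0,t]\times \mathrm{Y}}\Psi\,d(\bar{N} - \bar{\nu}_T)$, and setting $\varphi^*(t,z)$ to be the corresponding predictable jump-rate ratio produces an admissible control in $\bar{\mathcal{A}}_+$. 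An It\^o expansion of $-\ln M_T$ then identifies $\Lambda = -\ln M_0 = J(\varphi^*)$. The final equality $\inf_{\bar{\mathcal{A}}_+} J = \inf_{\bar{\mathcal{A}}_b} J$ follows from the truncation $\varphi_n := ((\varphi\wedge n)\vee\tfrac{1}{n})\chi_{K_n} + \chi_{K_n^c}\in\bar{\mathcal{A}}_{b,n}$, combined with monotone/dominated convergence on $\ell$ and Skorokhod-type distributional convergence of $N^{\theta\varphi_n}$ to $N^{\theta\varphi}$, which passes through the bounded measurable $F$.

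The main obstacle is the lower bound: establishing the martingale representation for $\bar{N}$ in the Polish setting, extracting a $(\bar{\mathcal{P}}^{\bar{\mbf{V}}}\otimes\mathcal{B}(\Z))$-measurable, $\nu_T$-a.e.\ positive and finite $\varphi^*$, and justifying the It\^o expansion of $-\ln M$ together with the precise identification of its dual predictable compensator as $\theta L_T(\varphi^*)$. The truncation analysis is comparatively routine, but upgrading the distributional convergence of $N^{\theta\varphi_n}$ to an almost-sure statement suitable for passing through $F$ is best handled via an auxiliary coupling produced by Skorokhod's representation theorem.
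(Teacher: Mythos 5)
First, a point of comparison: the paper does not prove this theorem at all — it is quoted from the literature, with the proof attributed to \cite{ABPFD} (see also Theorem 2.4 of \cite{ABJCPD}). So your proposal can only be measured against that cited argument, and in outline it moves in the same circle of ideas: a relative-entropy/Girsanov tilting bound for one direction, a near-optimal control extracted from the conditional-expectation martingale $M_t=\bar{\E}\big[e^{-F(N^\theta)}\mid\bar{\mathscr{F}}_t\big]$ for the other, and a truncation argument to pass from $\bar{\mathcal{A}}_+$ to $\bar{\mathcal{A}}_b$.

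Two places in your sketch are genuinely incomplete. First, the asserted identity $R(\mu_\varphi\,\|\,\mu_0)=\theta\,\bar{\E}[L_T(\varphi)]$ is in general only an inequality ($\leq$) when $\varphi$ is a random predictable control: projecting the joint law of $(\bar{N},\varphi)$ down to the law of the image $N^{\theta\varphi}$ can only decrease relative entropy. Since the inequality points the right way, the upper bound survives, but the statement as written is not correct. The substantive gap is in the lower bound. The It\^o expansion of $-\ln M_t$ produces an identity in which $F$ is evaluated at $N^{\theta}$ (under $\bar{\P}$, or equivalently under the exponentially tilted measure built from $\varphi^*$), whereas $J(\varphi^*)=\bar{\E}\big[\theta L_T(\varphi^*)+F(N^{\theta\varphi^*})\big]$ requires $F$ evaluated at the \emph{controlled} measure $N^{\theta\varphi^*}$ under $\bar{\P}$. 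Converting one into the other is precisely the hard step of the Budhiraja--Dupuis proof: one must show that the law of $N^{\theta}$ under the tilted measure is realized, on the original space, by a thinning of $\bar{N}$ driven by a $(\bar{\mathscr{P}}\otimes\mathcal{B}(\Z))$-measurable control, and this superposition/thinning identification — together with the integrability of $\ell(\varphi^*)$ and the fact that in general one only obtains $\epsilon$-optimal controls rather than exact attainment — is not delivered by the martingale representation theorem plus It\^o's formula alone. As written, the sentence ``an It\^o expansion then identifies $\Lambda=J(\varphi^*)$'' conceals exactly the content of the cited theorem; flagging it as ``the main obstacle'' is accurate, but it means the proposal is an outline of the known strategy rather than a proof.
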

\subsection{A general large deviation result}
In this subsection, we recall a general criterion on the LDP established in \cite{ABPDVM}. Let $\{\mathscr{G^\e}\}_{\e>0}$ be a family of measurable maps from $\bar{\mbf{V}}$ to $\mb{U}$, where $\mb{U}$ is some Polish space. Let us present a sufficient condition for the LDP to hold for the family $\big\{\mathscr{G}^\e(\sqrt{\e}\W,\e N^{\e^{-1}})\big\}$ as $\e\to0$.

Let us define 
\begin{align*}
	S^{\Upsilon}=\big\{g:\Z_T\to[0,\infty):L_T(g)\leq \Upsilon\}
\end{align*}and 
\begin{align*}
	\vi{S}^{\Upsilon}=\big\{f:\L^2([0,T];\mathcal{I}_2):\vi{L}_T(f)\leq \Upsilon\}.
\end{align*}A function $g\in S^\Upsilon$ can be identified with a measure $\nu_T^g\in\mb{M}$ defined by 
\begin{align*}
	\nu_T^g(O)=\int_Og(s,z)\nu_T(\d s,\d z) \ \text{ for } \  O\in\mathcal{B}(\Z_T).
\end{align*}From Appendix A.1, \cite{ABJCPD}, we know that this identification induces a topology on $S^\Upsilon$ under which $S^\Upsilon$ is a compact space and in the sequel, we use this topology on $S^\Upsilon$. Set $\bar{S}^\Upsilon=\vi{S}^\Upsilon\times S^\Upsilon$. We define $\mb{S}=\bigcup\limits_{\Upsilon\geq 1}\bar{S}^\Upsilon$, and let 
\begin{align*}
	\mathcal{U}^\Upsilon=\{\boldsymbol{q}=(\psi,\varphi)\in\mathcal{U}: \q(\omega)\in \bar{S}^\Upsilon, \bar{\P}^{\bar{\mbf{V}}}\text{-a.e. } \omega\}.
\end{align*}
From \cite{ABJCPD,ABPFD}, we know that the following condition is sufficient for proving the  LDP for a family $\big\{\mathscr{G}^\e(\sqrt{\e}\W,\e N^{\e^{-1}})\big\}$ as $\e\to0$.
\begin{condition}\label{Cond1}
	There exists a measurable map $\mathscr{G}^0:\bar{\mbf{V}}\to\mb{U}$ such that the following hold:
	\begin{enumerate}
		\item For any $\Upsilon\in\N$, let $\p_n=(f_n,g_n), \p=(f,g)\in \bar{S}^\Upsilon$ be such that $\p_n\to\p$ as $n\to\infty$. Then 
		\begin{align*}
			\mathscr{G}^0\left( \int_0^{\cdot}f_n(s)\d s, \nu_T^{g_n}\right) \to \mathscr{G}^0\left( \int_0^{\cdot}f(s)\d s, \nu_T^{g}\right) \  \text{ as } \  n\to\infty.
		\end{align*}
	\item For any $\Upsilon\in\N$, let $\q_\e=(\psi_\e,\varphi_\e),\ \q=(\psi,\varphi)\in  \mathcal{U}^\Upsilon$ be such that $\q_\e$ converges in distribution to $\q$ as $\e\to0$. Then, 
	\begin{align*}
		\mathscr{G}^\e\left(\sqrt{\e}\W(\cdot)+\int_0^{\cdot}\psi_\e\d s,\e N^{\e^{-1}\varphi_\e}\right) \Rightarrow \mathscr{G}^0\left(\int_0^{\cdot}\psi(s)\d s,\nu_T^\varphi\right) \  \text{ as } \ \e\to0,
	\end{align*} where $``\Rightarrow"$ denotes the convergence in distribution.
	\end{enumerate}
\end{condition}It should be noted that 
\begin{enumerate}
	\item the condition (1) requires continuity in the control for deterministic controlled systems,
	\item the condition (2) is a law of large numbers result for stochastic controlled  systems with  small noise.
\end{enumerate}In both cases, we are allowed to assume the controls take values in a compact set.

For $\phi \in\mb{U}$, define $\mb{S}_\phi=\left\{\p=(f,g)\in\mb{S}: \phi =\mathscr{G}^0\big(\int_0^{\cdot}f(s)\d s ,\nu_T^g\big)\right\}$. Let $\I:\mb{U}\to[0,\infty]$ be defined by 
\begin{align}\label{RF}
	\I(\phi)= \inf_{\p=(f,g)\in\mb{S}_\phi} \bar{L}_T(\p), \ \text{ for } \  \phi\in\mb{U},
\end{align}with the convention $\I(\phi)=\infty$ if $\mb{S}_\phi=\emptyset$, where $\bar{L}_T(\cdot)$ is defined in \eqref{2p9}. 
 
 Let us recall an important result form \cite{ABPDVM}. 
 \begin{theorem}\label{thrm1}
 	For $\e>0$, let $\mathcal{Z}^\e$ be defined by $\mathcal{Z}^\e=\mathscr{G}^\e(\sqrt{\e}\W,\e N^{\e^{-1}})$ and suppose that  Condition \ref{Cond1} holds true. Then $\I$ defined in \eqref{RF} is a rate function on $\mb{U}$ and the family $\{\mathcal{Z}^\e\}_{\e>0}$ satisfies the LDP with the rate function $\I$.
 \end{theorem}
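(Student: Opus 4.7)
The plan is to deduce the LDP from the Laplace principle, which is equivalent to it on a Polish space once $\I$ is shown to be a rate function. The main tool is the variational representation quoted just before this subsection, which in the present combined Wiener/Poisson setting yields, for every $F\in\C_b(\mb{U})$,
\[
-\e\ln\E\left[\exp\!\left(-\tfrac{F(\mathcal{Z}^\e)}{\e}\right)\right]=\inf_{\q=(\psi,\varphi)\in\mathcal{U}}\E\!\left[\bar{L}_T(\q)+F\!\left(\mathscr{G}^\e\!\left(\sqrt{\e}\W+\int_0^{\cdot}\psi(s)\d s,\,\e N^{\e^{-1}\varphi}\right)\right)\right].
\]

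First, I would check that $\I$ is a rate function. Given $M<\infty$ and $\{\phi_n\}\subset\{\I\leq M\}$, select near-minimizers $\p_n=(f_n,g_n)\in\mb{S}_{\phi_n}$ with $\bar{L}_T(\p_n)\leq M+1/n$, so that $\p_n\in\bar{S}^{M+1}$. Compactness of $\bar{S}^{M+1}$ in the topology described in the excerpt yields a subsequence $\p_n\to\p=(f,g)$, and Condition \ref{Cond1} (1) gives $\phi_n\to\phi:=\mathscr{G}^0\!\left(\int_0^{\cdot}f(s)\d s,\nu_T^g\right)$. Lower semicontinuity of $\bar{L}_T$ (from convexity of $\ell$ and the Hilbert-norm cost $\vi{L}_T$) then forces $\I(\phi)\leq M$, so $\{\I\leq M\}$ is compact in $\mb{U}$.

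For the \emph{Laplace upper bound}, fix $\phi$ with $\I(\phi)<\infty$ and $\delta>0$; pick $\p=(f,g)\in\mb{S}_\phi$ with $\bar{L}_T(\p)\leq\I(\phi)+\delta$. Using the deterministic control $\q_\e\equiv(f,g)\in\mathcal{U}$ in the variational representation and applying Condition \ref{Cond1} (2) together with bounded convergence gives
\[
\limsup_{\e\to0}-\e\ln\E\!\left[e^{-F(\mathcal{Z}^\e)/\e}\right]\leq \bar{L}_T(\p)+F(\phi)\leq \I(\phi)+F(\phi)+\delta.
\]
Taking $\delta\to 0$ and the infimum over $\phi$ yields the upper bound.

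For the \emph{Laplace lower bound}, fix $\delta>0$ and pick $\q_\e=(\psi_\e,\varphi_\e)\in\mathcal{U}$ that are $\delta$-near the infimum in the variational representation. Since $\|F\|_\infty<\infty$, any such near-optimizer must satisfy $\E[\bar{L}_T(\q_\e)]\leq 2\|F\|_\infty+\delta$; a standard stopping-time truncation (replace $\q_\e$ by $\q_\e\mathbf{1}_{\{\bar{L}_T(\q_\e)\leq \Upsilon\}}+(0,1)\mathbf{1}_{\{\bar{L}_T(\q_\e)>\Upsilon\}}$) allows us to assume $\q_\e\in\mathcal{U}^\Upsilon$ for $\Upsilon=\Upsilon(\|F\|_\infty,\delta)$, at the cost of an additional error $o_\Upsilon(1)$. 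Compactness of $\bar{S}^\Upsilon$ gives tightness of $\{\q_\e\}$, so along a subsequence $\q_\e\Rightarrow\q=(\psi,\varphi)\in\mathcal{U}^\Upsilon$. Condition \ref{Cond1} (2) then produces
\[
\mathscr{G}^\e\!\left(\sqrt{\e}\W+\int_0^{\cdot}\psi_\e\,\d s,\,\e N^{\e^{-1}\varphi_\e}\right)\Rightarrow \phi^*:=\mathscr{G}^0\!\left(\int_0^{\cdot}\psi\,\d s,\,\nu_T^{\varphi}\right),
\]
so $\q\in\mb{S}_{\phi^*}$ and hence $\bar{L}_T(\q)\geq\I(\phi^*)$. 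Lower semicontinuity of $\bar{L}_T$, Fatou's lemma, and continuity of $F$ give
\[
\liminf_{\e\to0}-\e\ln\E\!\left[e^{-F(\mathcal{Z}^\e)/\e}\right]\geq \E[\bar{L}_T(\q)+F(\phi^*)]-\delta\geq \inf_{\phi\in\mb{U}}\{F(\phi)+\I(\phi)\}-\delta.
\]
Letting $\delta\to 0$ completes the Laplace principle, hence the LDP.

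The main obstacle is the lower bound: we must control the near-optimal $\q_\e$ to lie in a compact sublevel set of the cost $\bar{L}_T$ (otherwise Condition \ref{Cond1} (2) does not apply) while simultaneously preserving the infimum up to the prescribed error. The truncation step and the careful bookkeeping of the error terms constitute the technical core; everything else is an orderly application of Condition \ref{Cond1}, the compactness built into the topology on $S^\Upsilon$, and the Varadhan--Bryc equivalence between the Laplace principle and the LDP on Polish spaces.
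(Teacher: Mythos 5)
The paper does not prove Theorem \ref{thrm1} at all; it is recalled verbatim from \cite{ABPDVM}, and your argument is essentially the standard weak-convergence proof given there (and in \cite{ABJCPD}): compactness of the level sets of $\I$ via Condition \ref{Cond1} (1) together with lower semicontinuity of $\bar{L}_T$ on the compact sets $\bar{S}^\Upsilon$, the Laplace upper bound via a near-optimal deterministic control and Condition \ref{Cond1} (2), and the Laplace lower bound via tightness of near-optimal controls in $\mathcal{U}^\Upsilon$, Condition \ref{Cond1} (2), and the Varadhan--Bryc equivalence. The one imprecision is your truncation in the lower bound: the event $\{\bar{L}_T(\q_\e)\le \Upsilon\}$ is only $\mathscr{F}_T$-measurable, so the control you write down is not predictable; the correct (and intended) device, as in \cite{ABPDVM,ABJCPD}, is to switch to the null control $(0,1)$ at the stopping time when the accumulated cost first reaches $\Upsilon$, which is adapted and incurs the same $o_\Upsilon(1)$ error you describe.
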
The following strengthened form of Theorem \ref{thrm1} is useful in applications (see Appendix A.2, \cite{ABJCPD}). Define $\vi{\mathcal{U}}^\Upsilon=\mathcal{U}^\Upsilon \cap \{\q=(\psi,\varphi):\varphi\in \bar{\mathcal{A}}_b\}$.
\begin{theorem}\label{thrm2}
	Suppose that Condition \ref{Cond1} holds true with $\vi{\mathcal{U}}^\Upsilon$ instead of $\mathcal{U}^\Upsilon$. Then the conclusions of Theorem \ref{thrm1} continue to hold.
\end{theorem}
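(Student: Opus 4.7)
The plan is to deduce Theorem~\ref{thrm2} from Theorem~\ref{thrm1} by showing that if Condition~\ref{Cond1} is verified on the smaller class $\vi{\mathcal{U}}^\Upsilon$, then it in fact also holds on the full class $\mathcal{U}^\Upsilon$. Condition~\ref{Cond1}(1) concerns only the deterministic set $\bar{S}^\Upsilon$ and is untouched by the restriction, so the entire task is to promote Condition~\ref{Cond1}(2) from $\vi{\mathcal{U}}^\Upsilon$ to $\mathcal{U}^\Upsilon$; Theorem~\ref{thrm1} will then apply verbatim.

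The key construction is a truncation of Poisson controls. Given $\q = (\psi,\varphi) \in \mathcal{U}^\Upsilon$, I would leave $\psi$ unchanged and use the exhausting compacts $\{K_n\}$ to define
\begin{align*}
    \varphi_n(t,z,\omega) := \bigl((n^{-1} \vee \varphi(t,z,\omega)) \wedge n\bigr)\chi_{K_n}(z) + \chi_{\Z \setminus K_n}(z),
\end{align*}
so that $\varphi_n \in \bar{\mathcal{A}}_{b,n}$. Since $\ell(r) = r \ln r - r + 1 \geq 0$ with minimum $\ell(1)=0$ and $\ell$ is monotone on each side of $1$, clipping $\varphi$ into $[n^{-1},n]$ on $K_n$ and resetting to $1$ on $K_n^c$ can only decrease $\ell$ pointwise; hence $L_T(\varphi_n) \leq L_T(\varphi) \leq \Upsilon$, and $\q_n := (\psi,\varphi_n) \in \vi{\mathcal{U}}^\Upsilon$. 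For the convergence $\nu_T^{\varphi_n} \to \nu_T^\varphi$ in $\mb{M}$, I would test against $f \in \C_0(\Z_T)$ with $\mathrm{supp}(f) \subset [0,T] \times K_N$; the difference splits into tail regions $\{\varphi > n\}$ and $\{\varphi < n^{-1}\}$ within $K_N$. The superlinear growth $\ell(r) \geq r(\ln n - 1)$ for $r > n$ yields $\int_{\{\varphi > n\}} \varphi\,\d\nu_T \leq \Upsilon/(\ln n - 1) \to 0$, while the low tail is bounded by $|\varphi_n - \varphi| \leq n^{-1}$ integrated against the finite measure $\nu_T|_{[0,T]\times K_N}$.

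With this truncation in hand, I would verify Condition~\ref{Cond1}(2) on $\mathcal{U}^\Upsilon$ as follows. Let $\q_\e \in \mathcal{U}^\Upsilon$ converge in distribution to $\q \in \mathcal{U}^\Upsilon$, and form $\q_\e^n \in \vi{\mathcal{U}}^\Upsilon$ by the same truncation, with joint convergence $\q_\e^n \Rightarrow \q^n$ as $\e \to 0$. The hypothesized Condition~\ref{Cond1}(2) restricted to $\vi{\mathcal{U}}^\Upsilon$ yields, for each fixed $n$,
\begin{align*}
    \mathscr{G}^\e\Bigl(\sqrt{\e}\W + \int_0^\cdot \psi_\e\,\d s,\; \e N^{\e^{-1}\varphi_\e^n}\Bigr) \Rightarrow \mathscr{G}^0\Bigl(\int_0^\cdot \psi\,\d s,\; \nu_T^{\varphi^n}\Bigr).
\end{align*}
Sending $n \to \infty$ on the right via Condition~\ref{Cond1}(1) and the truncation convergence $\q^n \to \q$ in $\bar{S}^\Upsilon$ recovers the target limit $\mathscr{G}^0(\int_0^\cdot \psi\,\d s, \nu_T^\varphi)$. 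What remains is to exchange $\varphi_\e^n$ for $\varphi_\e$ on the left through a diagonal extraction $n_\e \to \infty$.

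The main obstacle is precisely this uniform-in-$\e$ control of the truncation error on the stochastic side, i.e., showing that $\mathscr{G}^\e(\cdot, \e N^{\e^{-1}\varphi_\e}) - \mathscr{G}^\e(\cdot, \e N^{\e^{-1}\varphi_\e^{n_\e}}) \to 0$ in probability for a suitable $n_\e \to \infty$. I would handle this through a Girsanov-type change of measure between the intensities $\e^{-1}\varphi_\e$ and $\e^{-1}\varphi_\e^{n_\e}$, whose Radon--Nikodym density is controlled by the bound $L_T(\varphi_\e) - L_T(\varphi_\e^{n_\e}) \leq \Upsilon$, together with tightness of the family $\{\mathscr{G}^\e(\cdot,\cdot)\}$ in $\mb{U}$ inherited from Condition~\ref{Cond1}(1) and the compactness of $\bar{S}^\Upsilon$. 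Once Condition~\ref{Cond1} is established on the full class $\mathcal{U}^\Upsilon$, Theorem~\ref{thrm1} delivers the LDP with the rate function $\I$ defined in \eqref{RF}.
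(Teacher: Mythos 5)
Your strategy---upgrade Condition \ref{Cond1} (2) from $\vi{\mathcal{U}}^\Upsilon$ to $\mathcal{U}^\Upsilon$ by truncating the jump controls and then invoke Theorem \ref{thrm1} as a black box---is not the route taken for this statement (the paper relies on Appendix A.2 of \cite{ABJCPD}), and it has two genuine gaps. First, your assertion that $\q_\e^n=(\psi_\e,\varphi_\e^n)$ converges in distribution to $\q^n$ as $\e\to0$ does not follow from $\q_\e\Rightarrow\q$: the clipping map $g\mapsto g_n$ is a pointwise nonlinear operation and is not continuous on $S^\Upsilon$ for the topology in which $g$ is identified with the measure $\nu_T^g$ (weak convergence of $\nu_T^{g_\e}$ gives no control on the sets $\{g_\e>n\}$ or $\{g_\e<n^{-1}\}$), so the continuous mapping theorem is not available and this step is unjustified. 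Second, and more seriously, the step you yourself flag as the main obstacle---the uniform-in-$\e$ estimate that $\mathscr{G}^\e(\cdot,\e N^{\e^{-1}\varphi_\e})-\mathscr{G}^\e(\cdot,\e N^{\e^{-1}\varphi_\e^{n_\e}})\to0$ in probability along some $n_\e\to\infty$---is not delivered by the Girsanov argument you sketch. The relative entropy between the laws of the controlled Poisson random measures with intensities $\e^{-1}\varphi_\e\nu_T$ and $\e^{-1}\varphi_\e^{n}\nu_T$ is of order $\e^{-1}\int_{\Z_T}\varphi_\e^{n}\,\ell(\varphi_\e/\varphi_\e^{n})\,\d\nu_T$, which carries the factor $\e^{-1}$ and is in no way bounded by $\Upsilon$; moreover, even closeness of the two laws would not control the difference of the two controlled \emph{solutions} driven by the same noise. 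Making this step rigorous would require moment and stability estimates for the controlled equation under unbounded jump controls, which is exactly the difficulty Theorem \ref{thrm2} is designed to let one avoid; in this sense the reduction is circular.

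The established argument (Budhiraja--Chen--Dupuis) does not attempt to extend Condition \ref{Cond1} (2) to $\mathcal{U}^\Upsilon$ at all. Instead one goes back into the proof of Theorem \ref{thrm1}, which proceeds via the Laplace principle: for the upper bound, the variational representation stated just before Theorem \ref{thrm1} already expresses $-\ln\bar{\E}[e^{-F(N^\theta)}]$ as an infimum over $\varphi\in\bar{\mathcal{A}}_b$, so the near-optimal controls arising in the proof can be taken bounded and perturbing only off a compact set, i.e.\ in $\vi{\mathcal{U}}^\Upsilon$, and Condition \ref{Cond1} (2) restricted to $\vi{\mathcal{U}}^\Upsilon$ suffices; the lower bound uses only Condition \ref{Cond1} (1) with deterministic controls. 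If you want a self-contained proof of Theorem \ref{thrm2}, that is the structure to reproduce; as written, your proposal does not close.
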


\section{LDP for the system \eqref{1.1}}\label{Sec3}\setcounter{equation}{0}
Our goal in this work is to obtain a Wentzell-Freidlin type  LDP for the solution to the system \eqref{1.1} as $\e\to0$ on $\D([0,T];\H)$, the space of c\`adl\`ag functions from $[0,T]$ to $\H$.  Let $\Y^\e(\cdot)$ be the unique solution to the system \eqref{1.1} with the initial condition $\x\in\H$. In this section, we state the LDP on  $\D([0,T];\H)$  for the solution $\Y^\e(\cdot)$ under appropriate assumptions.

Choose $\mb{U}=\D([0,T];\H)$ in  Condition \ref{Cond1} with the Skorokhod topology $\mb{U}_S$. We already know that $(\mb{U},\mb{U}_S)$ is a Polish space. For any $p>0$, we define
\begin{align*}
	\mathcal{H}_p=\bigg\{&h:\Z_T\to\R^+: \text{ there exists } \delta>0, \text{ such that for all } E\in \mathcal{B}([0,T])\otimes\mathcal{B}(\Z) \\&\qquad\text{ with }\nu_T(E)<\infty, \text{ we have }\int_E\exp\big(\delta h^p(s,z)\big)\nu(\d z)\d t<\infty \bigg\}.
\end{align*} 
It is easy to verify that $\mathcal{H}_p\subset \mathcal{H}_q$, for any $q\in(0,p)$ and \begin{align}\label{LDP001}
	\bigg\{h:\Z_T\to\R^+: \sup_{(t,z)\in\Z_T}h(t,z)<\infty\bigg\}\subset \mathcal{H}_p, \ \text{ for all } \ p>0.
\end{align}
In the sequel, we need the following lemmas. The proof of the following lemma can be obtained from Lemma 3.4, \cite{ABJCPD}.
\begin{lemma}\label{lemUF}
	For any $h\in \mathcal{H}_p\cap \L^{p'}(\nu_T),\;p'\in(0,p]$, there exists a constant $C_{h,p,p',\Upsilon}$ such that 
	\begin{align*}
		C_{h,p,p',\Upsilon}:=\sup_{g\in S^\Upsilon}\int_{\Z_T}h^{p'}(s,z)(g(s,z)+1)\nu(\d z)\d s<\infty.
	\end{align*}For any $h\in \mathcal{H}_2\cap \L^2(\nu_T)$, there exists a constant $C_{h,\Upsilon}$ such that 
	\begin{align*}
		C_{h,\Upsilon}:=\sup_{g\in S^\Upsilon}\int_{\Z_T}h(s,z)|g(s,z)-1|\nu(\d z)\d s<\infty.
	\end{align*}
\end{lemma}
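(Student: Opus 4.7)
The plan is to exploit the Legendre duality between $\ell(y)=y\log y - y + 1$ and its convex conjugate $\ell^{*}(x)=e^{x}-1$, which yields, for every $\sigma>0$, $x\in\mathbb{R}$ and $y\geq 0$, the Young-type inequality
\begin{equation*}
xy \;\leq\; \tfrac{1}{\sigma}\bigl(e^{\sigma x}-1\bigr) + \tfrac{1}{\sigma}\ell(y).
\end{equation*}
This is the natural device for converting the entropy constraint $L_{T}(g)\leq \Upsilon$ on the multiplier $g$ into an exponential-integrability requirement on $h$, which is precisely what the class $\mathcal{H}_{p}$ encodes.

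For the first estimate, I apply this inequality pointwise with $x=h^{p'}(s,z)$ and $y=g(s,z)$, integrate against $\nu_{T}$, and invoke $L_{T}(g)\leq\Upsilon$ to reduce matters to showing that $\int_{\Z_{T}}(e^{\sigma h^{p'}}-1)\,\d\nu_{T}<\infty$ for some admissible $\sigma>0$. I split this integral along $\{h\leq 1\}$ and $\{h>1\}$. On $\{h\leq 1\}$ one has $\sigma h^{p'}\in[0,\sigma]$, and convexity of $x\mapsto e^{x}-1$ gives $e^{\sigma h^{p'}}-1\leq (e^{\sigma}-1)\,h^{p'}$, whose integral is finite because $h\in\L^{p'}(\nu_{T})$. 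On $\{h>1\}$ the hypothesis $p'\leq p$ yields $h^{p'}\leq h^{p}$, so fixing $\sigma=\delta$ with $\delta$ supplied by the $\mathcal{H}_{p}$ condition gives $e^{\sigma h^{p'}}\leq e^{\delta h^{p}}$; Chebyshev's inequality forces $\nu_{T}(\{h>1\})\leq \|h\|_{\L^{p'}(\nu_{T})}^{p'}<\infty$, which puts $E=\{h>1\}$ within the scope of the $\mathcal{H}_{p}$ hypothesis and delivers the required finiteness. Adding the trivial bound $\int h^{p'}\,\d\nu_{T}=\|h\|_{\L^{p'}(\nu_{T})}^{p'}$ to account for the summand $+1$ in $(g+1)$ then gives $C_{h,p,p',\Upsilon}<\infty$ uniformly in $g\in S^{\Upsilon}$.

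For the second estimate, I would invoke the elementary pointwise inequality $(y-1)^{2}\leq 2(y+1)\ell(y)$ for all $y\geq 0$, which follows at once from Taylor's formula with integral remainder applied to $\ell$ (using $\ell''(t)=1/t$, so that $\ell(y)\geq (y-1)^{2}/(2\max(y,1))$). Taking square roots and applying Cauchy--Schwarz yields
\begin{equation*}
\int_{\Z_{T}} h\,|g-1|\,\d\nu_{T} \;\leq\; \sqrt{2}\left(\int_{\Z_{T}} h^{2}(g+1)\,\d\nu_{T}\right)^{\!1/2}\sqrt{L_{T}(g)}.
\end{equation*}
The second factor is at most $\sqrt{\Upsilon}$, and the first is uniformly bounded over $g\in S^{\Upsilon}$ by the first part of the lemma applied with $p=p'=2$, which is legitimate because the hypothesis $h\in\mathcal{H}_{2}\cap\L^{2}(\nu_{T})$ is exactly what that case requires.

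The main obstacle I anticipate is the bookkeeping that ties together the two distinct hypotheses in the first part: the $\L^{p'}$-integrability is what upgrades the merely local exponential bound in the definition of $\mathcal{H}_{p}$ to a bound on $\{h>1\}$, and the comparison $p'\leq p$ is precisely what allows replacing $h^{p'}$ by $h^{p}$ on that set so that the constant $\delta$ furnished by $\mathcal{H}_{p}$ can be used as the Young parameter $\sigma$. Once that chain of implications is set up cleanly, the second part is essentially a corollary of the first via the Taylor-based pointwise inequality and Cauchy--Schwarz.
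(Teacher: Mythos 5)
Your proof is correct. Note that the paper itself does not prove this lemma; it simply cites Lemma 3.4 of Budhiraja--Chen--Dupuis, so what you have produced is a self-contained argument in the same spirit as that reference. Your first part is exactly the standard mechanism: the Legendre duality $xy\le \frac{1}{\sigma}(e^{\sigma x}-1)+\frac{1}{\sigma}\ell(y)$ converts the entropy bound $L_T(g)\le\Upsilon$ into an exponential-integrability requirement, and your handling of the two hypotheses is the right bookkeeping --- Chebyshev with $h^{p'}>1$ gives $\nu_T(\{h>1\})<\infty$, which is what legitimizes invoking the $\mathcal{H}_p$ condition (whose exponential bound is only local, i.e.\ on sets of finite $\nu_T$-measure), while $p'\le p$ lets you use the $\mathcal{H}_p$ parameter $\delta$ as the Young parameter on that set, and the $\L^{p'}$ bound handles both $\{h\le1\}$ and the $+1$ summand. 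Your second part deviates mildly from the usual route: the reference argument typically splits according to whether $|g-1|$ is small or large and uses the bounds $|g-1|^2\le c\,\ell(g)$ and $|g-1|\le c\,\ell(g)$ on the respective regions, whereas you use the single global inequality $(g-1)^2\le 2(g+1)\ell(g)$ (correctly justified via Taylor with integral remainder, since $\ell(y)\ge (y-1)^2/(2\max(y,1))$) followed by Cauchy--Schwarz, which reduces the second claim to the first with $p=p'=2$; this is slightly cleaner and buys you the second statement as a genuine corollary of the first, at no loss of generality.
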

The proof of the following lemma can be obtained from Lemma 3.3, \cite{XYJZTZ}.
\begin{lemma}
	\begin{enumerate}
		\item If $\sup\limits_{t\in[0,T]}\|\Y(t)\|_\H<\infty$, for any $\p=(f,g)\in\mb{S}$, then 
		\begin{align*}
			\B(\cdot,\Y(\cdot))f(\cdot)\in \L^1(0,T;\H),\; \int_{\Z}\gamma(\cdot,\Y(\cdot),z)(g(\cdot,z)-1)\nu(\d z)\in \L^1(0,T;\H).
		\end{align*}
		\item If the family of mappings $\{\Y_n:[0,T]\to\H,\ n\in\N\}$ satisfies $C=\sup\limits_n\sup\limits_{s\in[0,T]}\|\Y_n(s)\|_\H<\infty$, then 
		\begin{align*}
			\vi{C}^\Upsilon&:=\sup_{\p=(f,g)\in\bar{S}^\Upsilon}\sup_n\bigg[\int_0^T\bigg\|\int_\Z\gamma(s,\Y_n(s),z)(g(s,z)-1)\nu(\d z)\bigg\|_\H\d s \\&\qquad+\int_0^T\|\B(s,\Y_n(s))f(s)\|_\H\d s\bigg]\\&<\infty.
		\end{align*}
	\end{enumerate}
\end{lemma}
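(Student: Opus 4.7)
The plan is to reduce both parts to a single combined estimate: since $\mb S=\bigcup_{\Upsilon\ge 1}\bar S^\Upsilon$, any $\p=(f,g)\in\mb S$ lies in some $\bar S^\Upsilon$, so part~(1) is the specialization of part~(2) to the singleton family consisting of $\Y$ itself with $C=\sup_{t\in[0,T]}\|\Y(t)\|_\H$. Hence it suffices to prove the uniform bound of part~(2) with $C$ as given, and then apply it to $\Y$ to obtain part~(1).

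For the Gaussian term, I would start from $\|\B(s,\u)\v\|_\H\le\|\B(s,\u)\|_{\L_2}\|\v\|$ (valid for $\v\in\mathcal{I}_2$ since $\B(s,\u)\in\L_2(\H,\H)$), combine it with the growth bound~\eqref{3.10} from~(H.5), and apply Cauchy--Schwarz in $s$ against $f\in\vi S^\Upsilon$. Using $\|\Y_n(s)\|_\H\le C$ and $\int_0^T\|f(s)\|^2\d s\le 2\Upsilon$, this yields
\[
\int_0^T\|\B(s,\Y_n(s))f(s)\|_\H\d s\le\sqrt{(1+C^2)\|\fk b\|_{\L^1(0,T)}}\,\sqrt{2\Upsilon},
\]
which is finite and depends on $n$ only through $C$, giving the desired uniformity.

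For the Poisson term, the triangle inequality bounds the inner norm by
\[
\int_\Z\|\gamma(s,\Y_n(s),z)\|_\H\,|g(s,z)-1|\,\nu(\d z),
\]
so one needs a pointwise envelope for $\gamma$. The integrated growth~\eqref{3.11} from~(H.6) alone is insufficient, because $\int_{\Z_T}(g-1)^2\d\nu_T$ need not be finite for $g\in S^\Upsilon$; only the entropy quantity $L_T(g)\le\Upsilon$ is available. This is exactly where the sharper structural hypothesis from the forthcoming Hypothesis~\ref{hyp2} enters: one assumes a control of the form $\|\gamma(s,\x,z)\|_\H\le h(s,z)(1+\|\x\|_\H)$ with $h\in\mathcal H_2\cap\L^2(\nu_T)$. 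Substituting and using $\|\Y_n(s)\|_\H\le C$ reduces the double integral to $(1+C)\int_{\Z_T}h(s,z)|g(s,z)-1|\nu(\d z)\d s$, which is bounded by $(1+C)\,C_{h,\Upsilon}$ uniformly in $g\in S^\Upsilon$ by the second statement of Lemma~\ref{lemUF}.

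The main obstacle is precisely this $g-1$ factor, and it is already handled inside Lemma~\ref{lemUF} by a splitting $\Z_T=\{g\le 2\}\cup\{g>2\}$: on the first piece the convexity estimate $(g-1)^2\lesssim\ell(g)$ produces an $\L^2$--$\L^2$ pairing of order $\Upsilon^{1/2}\|h\|_{\L^2(\nu_T)}$, while on the second piece the Fenchel--Young inequality $ab\le e^{\sigma a}-1+\sigma^{-1}\ell(b)$ is applied and finiteness is secured by the exponential integrability encoded in $h\in\mathcal H_2$. Once Lemma~\ref{lemUF} and the pointwise envelope on $\gamma$ are in place, the present statement is a straightforward two-term estimate; part~(1) and part~(2) differ only in whether $C$ comes from the pointwise-in-time hypothesis on the single path $\Y$ or from the uniform-in-$n$ hypothesis on the family $\{\Y_n\}$.
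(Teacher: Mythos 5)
Your proposal is correct and is essentially the argument the paper intends: the paper gives no proof of this lemma, citing Lemma 3.3 of \cite{XYJZTZ}, and your route — the Hilbert--Schmidt bound with the growth condition \eqref{3.10} plus Cauchy--Schwarz against $f\in\vi{S}^\Upsilon$ for the Gaussian term, and the pointwise envelope $\|\gamma(t,\x,z)\|_\H\le L_\gamma(t,z)(1+\|\x\|_\H)$ from (H.8) combined with the second statement of Lemma \ref{lemUF} for the jump term — is exactly how the paper estimates these same integrals elsewhere (e.g.\ in \eqref{CPDE03}, \eqref{UE25}). You also correctly identify the one genuine subtlety, namely that the integrated growth (H.6)(3) alone cannot control the $|g-1|$ factor and that the lemma really rests on the (later-stated) Hypothesis \ref{hyp2} (H.8) together with the entropy bound $L_T(g)\le\Upsilon$, which is consistent with the hypotheses of the cited result.
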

The proof of which can be obtained from Lemma 3.11, \cite{ABJCPD} (see Lemma 2.8, \cite{MBPD} also).
\begin{lemma}\label{lemUF3}
	Let $h:\Z_T\to\R$ be a measurable function such that 
	\begin{align*}
		\int_{\Z_T}|h(s,z)|^2\nu(\d z)\d s<\infty,
	\end{align*}and for all $\delta\in(0,\infty)$ and $K\in \mathcal{B}( \Z_T)$ satisfying $\nu_T(K)<\infty$, 
	\begin{align*}
		\int_K \exp \big(\delta|h(s,z)|\big)\nu(\d z)\d s<\infty.
	\end{align*}
	\begin{enumerate}
		\item Fix $\Upsilon\in\N$, and let $g_n,g\in S^\Upsilon$ be such that $g_n\to g$ as $n\to\infty$. Then
		\begin{align*}
			\lim_{n\to\infty} \int_{\Z_T} h(s,z)(g_n(s,z)-1)\nu(\d z)\d s= \int_{\Z_T} h(s,z)(g(s,z)-1)\nu(\d z)\d s.
		\end{align*}
		\item Fix $\Upsilon\in\N$. Given $\e>0$, there exists a compact set $K_\e\subset \Z$, such that 
		\begin{align*}
			\sup_{g\in S^\Upsilon} \int_0^T\int_{K_\e^c}|h(s,z)||g(s,z)-1|\nu(\d z)\d s\leq \e.
		\end{align*}
		\item For every $\e>0$, there exists $\delta>0$ such that for any  $K\subset \mathcal{B}([0,T])$ satisfying $\lambda_T(K)<\delta$,  we have 
		\begin{align*}
			\sup_{g\in S^\Upsilon} \int_K\int_\Z h(s,z)\big|g(s,z)-1\big|\nu(\d z)\d s\leq\e.
		\end{align*}
	\end{enumerate}
\end{lemma}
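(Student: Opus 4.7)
The plan rests on the Young-Fenchel inequality coming from the Legendre duality between $\phi(x)=e^x-1$ and $\ell(y)=y\log y-y+1$:
\begin{equation*}
\sigma a b\leq e^{\sigma a}-1+\ell(b),\qquad a,b\geq 0,\;\sigma>0,
\end{equation*}
together with the quadratic bound $(1-g)_+^2\leq 2\ell(g)$ for $g\geq 0$, which follows from $\ell(1)=\ell'(1)=0$ and $\ell''(g)=1/g\geq 1$ on $(0,1]$. Throughout I would split $|h||g-1|=|h|(g-1)_++|h|(1-g)_+$, and prove (2) and (3) first, since (1) will rest on them.

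For the negative piece, Cauchy-Schwarz against the quadratic bound yields, for any measurable $E\subset\Z_T$ and any $g\in S^\Upsilon$,
\begin{equation*}
\int_E|h|(1-g)_+\,\d\nu_T\leq\sqrt{2\Upsilon}\,\Bigl(\int_E h^2\,\d\nu_T\Bigr)^{1/2}.
\end{equation*}
Choosing $E=K_\e^c\times[0,T]$ with compacts $K_\e\nearrow\Z$ and using $h\in\L^2(\nu_T)$ together with $\sigma$-finiteness of $\nu$ gives the negative half of (2); choosing $E=K\times\Z$ with $\lambda_T(K)\to 0$ and invoking absolute continuity of $s\mapsto\int_\Z h^2\,\d\nu$ gives the negative half of (3). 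For the positive piece I would truncate $h=h_M+h^M$ with $h_M=h\mathbf{1}_{|h|\leq M}$. The tail $h^M$ is supported in $\{|h|>M\}\subset\{|h|>1\}$, a set of finite $\nu_T$-measure by Markov's inequality; the Young-Fenchel inequality with parameter $\sigma>0$ then gives
\begin{equation*}
\int_E|h^M|(g-1)_+\,\d\nu_T\leq \tfrac{1}{\sigma}\int_{\{|h|>M\}\cap E}(e^{\sigma|h|}-1)\,\d\nu_T+\tfrac{\Upsilon}{\sigma},
\end{equation*}
whose right-hand side is made small by taking $\sigma$ then $M$ large, thanks to the exponential-integrability hypothesis and dominated convergence. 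For the bulk $h_M$ (bounded by $M$) one additionally decomposes $g$ by $\{g\leq 1+A\}$ versus $\{g>1+A\}$, using that $g\leq \ell(g)/(\log(1+A)-1)$ on the latter set for $A>e-1$, and trades $M,A$ against the size of $E$ to extract smallness.

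For (1), recall that convergence $g_n\to g$ in $S^\Upsilon$ means $\int\phi\, g_n\,\d\nu_T\to\int\phi\, g\,\d\nu_T$ for every $\phi\in\C_0(\Z_T)$. Given $\eta>0$, I would use (2) to pick a compact $K\subset\Z$ absorbing the tail uniformly over $S^\Upsilon$; on the finite-measure space $[0,T]\times K$, where $h$ enjoys all exponential moments, approximate $h$ by some $\phi\in\C_0(\Z_T)$ supported in $[0,T]\times K^\circ$ with $h-\phi$ small both in $\L^2(\nu_T)$ and in the Orlicz sense associated to $e^{\sigma|\cdot|}-1$. Then $\int\phi(g_n-1)\,\d\nu_T\to\int\phi(g-1)\,\d\nu_T$ by definition of the topology, while $\int|h-\phi|\,|g_n-1|\,\d\nu_T$ and $\int|h-\phi|\,|g-1|\,\d\nu_T$ are uniformly small by the estimates from (2) and (3) applied to $h-\phi$. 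An $\eta/4$-collation closes the argument.

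The delicate point is the positive piece of (2): after truncating $h$ to the bounded bulk $h_M$, the domain $K_\e^c\times[0,T]$ may still carry infinite $\nu_T$-mass (when $\nu(\Z)=\infty$), so the naive bound $\int|h_M|g\,\d\nu_T\leq M\,\nu_T(E)$ is useless; the cooperation between the $\L^2$-control of $h$ (handling $|h|\leq 1$) and the local exponential integrability (handling $|h|>1$), together with the $g$-truncation at level $A$, is the technical heart of the argument, with the full execution available in Lemma 3.11 of \cite{ABJCPD}.
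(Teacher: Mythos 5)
Your proposal is correct and coincides with the argument the paper relies on: the paper offers no proof of this lemma beyond citing Lemma 3.11 of \cite{ABJCPD} (and Lemma 2.8 of \cite{MBPD}), and your sketch — splitting $|h||g-1|$ into the pieces controlled by the quadratic bound $(1-g)_+^2\le 2\ell(g)$, the Legendre--Young inequality $\sigma ab\le e^{\sigma a}-1+\ell(b)$ on the finite-measure set $\{|h|>1\}$, and the cut $\{g>1+A\}$ where $g\le \ell(g)/(\log(1+A)-1)$ — is precisely the mechanism of that cited proof, including the approximation by $\C_0$ functions for part (1). The only point left implicit, the comparison $(g-1)^2\le \kappa(A)\,\ell(g)$ on $\{g\le 1+A\}$ needed for the bounded bulk, is standard and does not constitute a gap.
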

In order to prove  LDP for the solution to  the system \eqref{1.1}, we need some additional  assumptions  on  the noise coefficients stated below: 
\begin{hypothesis}\label{hyp2} The coefficients $\B$ and $\gamma$ satisfy the following assumptions: 
\begin{enumerate}
	\item[(H.7)] There exists a function  $L_\B\in\L^1(0,T;\R^+)$ such that 
	\begin{align*}
		\|\B(t,\x)-\B(t,\y)\|_{\L_2}^2 \leq L_\B(t)\|\x-\y\|_{\H}^2, \  \text{ for all }\ (t,\x), (t,\y)\in[0,T]\times\V.
	\end{align*}
	\item[(H.8)] There exist positive constants $\eta_0$, $p\geq M$ with $M:=\frac{2\alpha(\beta-1)(\beta+\eta_0)}{\beta}\vee\frac{4(\beta-1)(\beta+\eta_0)}{\beta}\vee4\vee(\alpha+2)$ and $L_\gamma \in \L^2(\nu_T)\cap\L^4(
	\nu_T)\cap \L^{\alpha+2}(\nu_T)\cap \L^M(\nu_T)\cap \L^{\frac{M}{2}}(\nu_T)\cap \mathcal{H}_p$ such that 
	\begin{align*}
		\|\gamma(t,\x,z)\|_{\H} \leq L_\gamma(t,z)(1+\|\x\|_{\H}),\  \text{ for all } \ (t,\x,z)\in [0,T]\times \V\times\Z.
	\end{align*}
\item[(H.9)] There exists a function $R_\gamma\in \L^2(\nu_T)\cap\mathcal{H}_2 $ such that 
	\begin{align*}
	\|\gamma(t,\x,z)-\gamma(t,\y,z)\|_{\H} \leq R_\gamma(t,z)\|\x-\y\|_\H, \ \text{ for all } \  (t,z)\in[0,T]\times\Z,\; \x,\y\in\V.
		\end{align*}
\end{enumerate}
\begin{remark}
	The assumptions (H.7) and (H.9) are stronger than (H.5) and (H.6) (2) in Hypothesis \ref{hyp1}, respectively. The assumption (H.8) implies Hypothesis \ref{hyp1} (H.6) (3). 
\end{remark}
\end{hypothesis}

Let $\I:\D([0,T];\H)\to[0,\infty]$ be defined as in \eqref{RF}. Our main result of this paper can be stated as follows:
\begin{theorem}\label{thrm5}
	Assume that Hypotheses \ref{hyp1} and \ref{hyp2} hold. Then the family $\{\Y^\e\}_{\e>0}$ satisfies the LDP on the space $\D([0,T];\H)$ with the rate function $\I(\cdot)$  with respect to the topology of uniform convergence.
\end{theorem}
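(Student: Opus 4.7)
The plan is to deduce Theorem \ref{thrm5} from the general criterion of Theorem \ref{thrm2}, applied with the Polish space $\mb{U}=\D([0,T];\H)$. Since $\Y^\e$ is a measurable functional of the driving pair $(\sqrt{\e}\W,\e N^{\e^{-1}})$ by Theorem \ref{ER1}, there is a measurable map $\mathscr{G}^\e:\bar{\mbf{V}}\to\mb{U}$ such that $\Y^\e=\mathscr{G}^\e(\sqrt{\e}\W,\e N^{\e^{-1}})$. The map $\mathscr{G}^0$ will be defined by the deterministic skeleton equation \eqref{SE1}: for $\p=(f,g)\in\mb{S}$, $\mathscr{G}^0(\int_0^\cdot f(s)\d s,\nu_T^g)$ is the unique solution $\Y^{\p}$ of
\begin{equation*}
\Y^{\p}(t)=\x+\int_0^t\A(s,\Y^{\p}(s))\d s+\int_0^t\B(s,\Y^{\p}(s))f(s)\d s+\int_0^t\int_\Z\gamma(s,\Y^{\p}(s),z)(g(s,z)-1)\nu(\d z)\d s.
\end{equation*}
Existence and uniqueness for this controlled equation is Theorem \ref{CPDE1}: I would establish it via a Faedo--Galerkin scheme, using the coercivity (H.3) and growth (H.4) bounds to extract weak limits in $\L^\beta(0,T;\V)\cap\L^\infty(0,T;\H)$, the pseudo-monotonicity Lemma \ref{lemmaMT2} to identify $\A(\cdot,\Y^\p(\cdot))$ in the limit, the global Lipschitz bounds (H.7) and (H.9) on the $\B$- and $\gamma$-terms together with Lemmas \ref{lemUF}--\ref{lemUF3} to handle the controlled noise terms, and finally local monotonicity (H.2) plus a Gronwall argument to obtain uniqueness.

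For Condition \ref{Cond1}(1), I would fix $\Upsilon$ and $\p_n=(f_n,g_n)\to\p=(f,g)$ in $\bar S^\Upsilon$ and show $\Y^{\p_n}\to\Y^\p$ in $\D([0,T];\H)$. A priori bounds on $\Y^{\p_n}$ in $\L^\infty(0,T;\H)\cap\L^\beta(0,T;\V)$, uniform in $n$, follow from energy estimates that absorb the controlled drift using (H.3), (H.7)--(H.9) and the level-set bounds in Lemma \ref{lemUF}. Extracting a subsequence with $\Y^{\p_n}\rightharpoonup\Y$ weakly in $\L^\beta(0,T;\V)$ and weak-$*$ in $\L^\infty(0,T;\H)$, passing to the limit in the $\B$- and $\gamma$-terms using the convergence statements in Lemma \ref{lemUF3} and (H.7), (H.9), and identifying the limiting $\A$-term via Lemma \ref{lemmaMT2} yields $\Y=\Y^\p$. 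Uniform convergence in $\H$ is then obtained by applying (H.2) to the difference $\Y^{\p_n}-\Y^\p$ and using Gronwall; this is Proposition \ref{VCond1}.

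For Condition \ref{Cond1}(2), working under Theorem \ref{thrm2} with $\vi{\mathcal{U}}^\Upsilon$, I would fix $\q_\e=(\psi_\e,\varphi_\e)$ converging in distribution to $\q=(\psi,\varphi)\in\vi{\mathcal{U}}^\Upsilon$. Girsanov's theorem produces (Lemma \ref{lem5.3}) a unique probabilistically strong solution $\Y^{\e,\q_\e}=\mathscr{G}^\e(\sqrt\e\W+\int_0^\cdot\psi_\e\d s,\e N^{\e^{-1}\varphi_\e})$ of the stochastic controlled equation \eqref{CSPDE1}. Using the coercivity/growth hypothesis together with the bounds from Lemma \ref{lemUF} on $\vi S^\Upsilon\times S^\Upsilon$, I would prove in Lemmas \ref{lemUF2} and \ref{lemUF02} the $\e$-independent moment bounds of the form
\begin{equation*}
\sup_{\e>0}\E\Bigl[\sup_{t\in[0,T]}\|\Y^{\e,\q_\e}(t)\|_\H^p\Bigr]+\sup_{\e>0}\E\Bigl[\int_0^T\|\Y^{\e,\q_\e}(t)\|_\V^\beta\d t\Bigr]<\infty,
\end{equation*}
for $p$ in the range required by (H.8). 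These, together with an Aldous-type criterion controlling the increments in $\V^*$ via (H.4), (H.7), (H.8) and the Burkholder--Davis--Gundy inequality for the compensated jump part (exploiting that $\varphi_\e$ has logarithmic $L_T$-cost bounded by $\Upsilon\e$), yield tightness of $\{\Y^{\e,\q_\e}\}$ on $\D([0,T];\V^*)$ equipped with Skorokhod topology (Lemma \ref{Cgslem1}).

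Skorokhod's representation theorem then gives, on some auxiliary space, almost-sure convergence of $(\Y^{\e,\q_\e},\q_\e)$ to $(\Y^\q,\q)$. The identification $\Y^\q=\mathscr{G}^0(\int_0^\cdot\psi\d s,\nu_T^\varphi)$ and the passage to the limit in each term of \eqref{CSPDE1} is the most delicate step and is split across Lemmas \ref{Cgslem2}--\ref{Cgslem6} and Proposition \ref{propUF}: the small-noise Brownian term $\sqrt\e\int\B(s,\Y^{\e,\q_\e})\d\W$ vanishes by an It\^o isometry using (H.5)/(H.7); the small-noise compensated Poisson term $\e\int\gamma(s,\Y^{\e,\q_\e-},z)\vi N^{\e^{-1}\varphi_\e}$ vanishes by the BDG inequality combined with Lemma \ref{lemUF} applied to $L_\gamma\in\L^2(\nu_T)$; the drift of the controlled Brownian term converges by (H.7) and weak convergence of $\psi_\e$; the drift of the controlled Poisson term converges by (H.9) together with Lemma \ref{lemUF3}; and the $\A$-term is identified via the random pseudo-monotonicity Lemma \ref{Cgslem00}. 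I expect this identification step, and especially passing to the limit in the controlled Poisson integrand through the joint convergence of $\Y^{\e,\q_\e}$ and $\varphi_\e$, to be the main obstacle, since one must reconcile Skorokhod-topology convergence of $\H$-valued c\`adl\`ag paths with the weak convergence of $\varphi_\e$ in $S^\Upsilon$. Once Proposition \ref{propUF} yields $\Y^\q=\mathscr{G}^0(\int_0^\cdot\psi\d s,\nu_T^\varphi)$ in $\D([0,T];\H)$, Condition \ref{Cond1}(2) follows, and Theorem \ref{thrm2} completes the proof.
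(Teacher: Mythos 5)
Your proposal is correct and follows essentially the same route as the paper: reduce to the Budhiraja--Dupuis--Maroulas criterion (Theorems \ref{thrm1}--\ref{thrm2}), verify Condition \ref{Cond1} (1) via the skeleton equation well-posedness (Theorem \ref{CPDE1}) and Proposition \ref{VCond1}, and verify Condition \ref{Cond1} (2) via Girsanov, the uniform estimates of Lemmas \ref{lemUF2}--\ref{lemUF02}, tightness, Skorokhod representation and pseudo-monotone identification of the drift, exactly as in Sections \ref{Sec4}--\ref{Sec5}. The only cosmetic discrepancies are attributions (tightness in $\D([0,T];\V^*)$ is Proposition \ref{propUF} rather than Lemma \ref{Cgslem1}, and the paper uses Kolmogorov's criterion with Arzel\`a--Ascoli rather than an Aldous-type argument), which do not affect the argument.
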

\begin{proof}
	The proof of this Theorem is straight forward from Theorem \ref{thrm1}. In view of Theorem \ref{thrm1}, it is sufficient to verify Condition \ref{Cond1} (1) and (2). Part (1) of Condition \ref{Cond1}  is verified in Proposition \ref{VCond1}, and the remaining part (2) is established in Theorem \ref{VCond2}.
\end{proof}

\section{Verification of Condition \ref{Cond1} (1)}\label{Sec4}\setcounter{equation}{0}
In this section, we verify Condition \ref{Cond1} (1),   so that first part of the proof of Theorem \ref{thrm5} can be completed. We first consider a deterministic control problem for a class of partial differential equations with fully local monotone coefficients and discuss its solvability results using Lemmas \ref{lemmaMT2}, \ref{lemUF} and \ref{lemUF3}. 

\subsection{Skeleton equations} We start by introducing  the measurable map $\mathscr{G}^0$ that is used in the definition of rate function $\I(\cdot)$ given in \eqref{RF} and also used for the verification of Condition \ref{Cond1}. Under appropriate assumptions, for every $\p=(f,g)\in \mb{S}$, we show that the following deterministic control problem in $\V^*$:
\begin{equation}\label{SE1}
	\left\{
	\begin{aligned}
	\d \vi{\Y}^{\p} (t)&=\bigg[\A(t,	\vi{\Y}^{\p} (t))+\B(t,	\vi{\Y}^{\p} (t))f(t)+\int_\Z \gamma(t,	\vi{\Y}^{\p} (t),z)\big(g(t,z)-1\big)\nu(\d z)\bigg]\d t,\\
	 \vi{\Y}^{\p} (0)&=\x\in\H,
\end{aligned}\right.	 
\end{equation}
for a.e. $t\in[0,T]$ has a unique continuous weak solution. 

We first prove the existence and uniqueness of weak  solution of the deterministic control problem \eqref{SE1}, using a Faedo-Galerkin approximation technique and exploiting the pseudo-monotonicity property of the nonlinear operator $\A(\cdot,\cdot)$.
\begin{theorem}\label{CPDE1}
	Assume $\x\in\H$, and $\p=(f,g)\in\mb{S}$. Under Hypotheses \ref{hyp1} and \ref{hyp2},  there exists a \textsf{unique weak  solution} $\vi{\Y}^{\p}\in\C([0,T];\H)\cap \L^\beta(0,T;\V)$, for $\beta\in(1,\infty)$ such that 
	\begin{align}\label{SE2}\nonumber
		\vi{\Y}^{\p} (t)&=\x +\int_0^t\A(s,	\vi{\Y}^{\p} (s))\d s+\int_0^t\B(s,	\vi{\Y}^{\p} (s))f(s)\d s\\&\quad+\int_0^t\int_\Z \gamma(s,	\vi{\Y}^{\p} (s),z)\big(g(s,z)-1\big)\nu(\d z)\d s,\  \text{ in } \ \V^*.
	\end{align}Moreover, for fixed $\Upsilon\in\N$, there exists a positive constant $C$ such that 
\begin{align}\label{CPDE01}
	\sup_{\p\in \bar{S}^{\Upsilon}}\bigg[\sup_{t\in[0,T]}\|\vi{\Y}^{\p}(s)\|_\H^2+\int_0^T\|\vi{\Y}^{\p}(s)\|_\V^\beta \d s\bigg]\leq C\left(1+\|\x\|_{\H}^2\right).
\end{align} 
\end{theorem}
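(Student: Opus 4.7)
The plan is a classical Faedo--Galerkin scheme adapted to the fully locally monotone setting, with the pseudo-monotonicity lemmas from Section~2 doing the heavy lifting to pass to the limit in the nonlinear drift. Let $\{\bfe_i\}_{i\in\N}\subset\V$ be an orthonormal basis of $\H$ (for instance, the eigenvectors of a compact self-adjoint operator associated to the embedding $\V\subset\H$), and let $\PP_n$ denote the orthogonal projection onto $\H_n:=\mathrm{span}\{\bfe_1,\dots,\bfe_n\}$. For $\p=(f,g)\in\bar{S}^{\Upsilon}$ and $n\in\N$, I would consider the finite-dimensional ODE
\[
\tfrac{\d}{\d t}\vi{\Y}_n^{\p}(t)=\PP_n\A(t,\vi{\Y}_n^{\p}(t))+\PP_n\B(t,\vi{\Y}_n^{\p}(t))f(t)+\PP_n\!\!\int_{\Z}\!\gamma(t,\vi{\Y}_n^{\p}(t),z)(g(t,z)-1)\nu(\d z),
\]
with $\vi{\Y}_n^{\p}(0)=\PP_n\x$. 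Hemicontinuity (H.1), the demi-continuity of $\B$ following from (H.7), (H.9) combined with the lemmas in Section~3, and the integrability of $f\in\vi{S}^{\Upsilon}$ guarantee existence of a local absolutely continuous solution via Carathéodory's theorem.

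For the a priori estimates, I would apply the chain rule to $\|\vi{\Y}_n^{\p}(t)\|_{\H}^2$, use coercivity (H.3) on the $\A$ term, and bound the two control terms using Cauchy--Schwarz together with (H.7), (H.8) to produce
\[
\|\B(t,\vi{\Y}_n^{\p}(t))f(t)\|_{\H}\leq \tfrac{1}{2}\fk{b}(t)(1+\|\vi{\Y}_n^{\p}(t)\|_{\H}^2)+\tfrac{1}{2}\|f(t)\|^2,
\]
and
\[
\Bigl\|\!\int_{\Z}\!\gamma(t,\vi{\Y}_n^{\p},z)(g(t,z)-1)\nu(\d z)\Bigr\|_{\H}\leq (1+\|\vi{\Y}_n^{\p}\|_{\H})\!\int_{\Z}\! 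L_{\gamma}(t,z)|g(t,z)-1|\nu(\d z).
\]
Lemma~\ref{lemUF} applied with $h=L_{\gamma}$ (or $h=L_{\gamma}^2$, using $L_{\gamma}\in\L^2(\nu_T)\cap\mathcal{H}_2$) gives a uniform bound in $\p\in\bar{S}^{\Upsilon}$ for the integrals in $t$ of these coefficients, so Gronwall's inequality yields \eqref{CPDE01} for the Galerkin approximants, and then (H.4) gives a uniform bound for $\A(\cdot,\vi{\Y}_n^{\p})$ in $\L^{\beta/(\beta-1)}(0,T;\V^*)$.

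By weak/weak-$\ast$ compactness I would extract a subsequence such that $\vi{\Y}_n^{\p}\xrightharpoonup{}\vi{\Y}^{\p}$ in $\L^{\beta}(0,T;\V)$ and weakly-$\ast$ in $\L^{\infty}(0,T;\H)$, and $\A(\cdot,\vi{\Y}_n^{\p})\xrightharpoonup{}\wi{\A}$ in $\L^{\beta/(\beta-1)}(0,T;\V^*)$. Since the embedding $\V\subset\H$ is compact and the time derivatives are uniformly bounded in $\L^{\beta/(\beta-1)}(0,T;\V^*)$ after adding the control terms (which are bounded in $\L^1(0,T;\H)$ uniformly in $n$ and $\p\in\bar{S}^{\Upsilon}$), the Aubin--Lions--Simon lemma gives strong convergence $\vi{\Y}_n^{\p}\to\vi{\Y}^{\p}$ in $\L^2(0,T;\H)$, hence pointwise a.e.~in $t$ along a further subsequence. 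Using the Lipschitz-type bounds (H.7) and (H.9) together with Lemmas~\ref{lemUF} and~\ref{lemUF3}, one identifies the limits of the $\B$- and $\gamma$-terms as $\B(\cdot,\vi{\Y}^{\p})f$ and $\int_{\Z}\gamma(\cdot,\vi{\Y}^{\p},z)(g(\cdot,z)-1)\nu(\d z)$ respectively. The main obstacle is identifying $\wi{\A}=\A(\cdot,\vi{\Y}^{\p})$: this is precisely the situation addressed by Lemma~\ref{lemmaMT2}. To apply it, I would test the limiting equation against $\vi{\Y}^{\p}$, use the energy equality for the limit in $\V^*$, and combine with the analogous energy identity for $\vi{\Y}_n^{\p}$ and the strong $\H$-convergence to obtain
\[
\liminf_{n\to\infty}\int_0^T\!\!\langle\A(t,\vi{\Y}_n^{\p}(t)),\vi{\Y}_n^{\p}(t)\rangle\d t\geq\int_0^T\!\!\langle\wi{\A}(t),\vi{\Y}^{\p}(t)\rangle\d t,
\]
which by \eqref{MT15} forces $\wi{\A}=\A(\cdot,\vi{\Y}^{\p})$.

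Uniqueness follows from the local monotonicity hypothesis (H.2): for two solutions $\vi{\Y}_1,\vi{\Y}_2$ with the same initial data and the same control $\p$, set $\Y=\vi{\Y}_1-\vi{\Y}_2$ and compute $\tfrac{\d}{\d t}\|\Y(t)\|_{\H}^2$ using \eqref{3.5}, absorbing the $\B$- and $\gamma$-contributions via (H.7), (H.9); the term $[\fk{a}(t)+\rho(\vi{\Y}_1)+\eta(\vi{\Y}_2)]$ is integrable in $t$ because $\vi{\Y}_1,\vi{\Y}_2\in\L^{\beta}(0,T;\V)\cap\L^{\infty}(0,T;\H)$ by the uniform estimate, so Gronwall closes the argument. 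Finally, $\vi{\Y}^{\p}\in\C([0,T];\H)$ follows from $\vi{\Y}^{\p}\in\L^{\beta}(0,T;\V)$ and $\partial_t\vi{\Y}^{\p}\in\L^{\beta/(\beta-1)}(0,T;\V^*)+\L^1(0,T;\H)$ by the standard Lions--Magenes trace theorem. Since the a priori bound \eqref{CPDE01} was derived for each Galerkin level with constants depending only on $\Upsilon$, $T$, $\|\x\|_{\H}$, $\fk{a}$, $\fk{b}$ and $L_{\gamma}$, it passes to the limit by lower semicontinuity of the norms, yielding the uniform-in-$\p$ bound claimed in the theorem.
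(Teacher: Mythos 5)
Your proposal is correct and follows essentially the same route as the paper: a Faedo--Galerkin scheme, uniform energy estimates over $\p\in\bar{S}^{\Upsilon}$ via coercivity, Lemma \ref{lemUF} and Gronwall, weak compactness plus Aubin--Lions, identification of the $\B$- and $\gamma$-limits through (H.7), (H.9) and Lemma \ref{lemUF3}, identification of the drift limit via the pseudo-monotonicity Lemma \ref{lemmaMT2} and the liminf energy inequality, and uniqueness from the local monotonicity (H.2) with Gronwall. The only cosmetic difference is that the paper packages the drift and the two control terms into a single operator $\cc(\cdot)$ and verifies its coercivity, hemicontinuity and local monotonicity before discretizing, whereas you treat the three terms directly; the substance of the argument is identical.
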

\begin{proof}The proof of this theorem is divided into the following steps: 
	
	\vspace{2mm}
	\noindent
	\textbf{Step 1:} \textsf{An abstract operator.} First we define the operator $\cc(\cdot):\V\to\V^*$ by
	\begin{align}\label{CPDE02}
		\langle \cc(\cdot,\phi),\xi\rangle =-	\langle \A(\cdot,\phi),\xi\rangle -	\big( \B(\cdot,\phi)f(\cdot),\xi\big) -\bigg(\int_\Z \gamma(\cdot,\phi,z)\big(g(\cdot,z)-1\big)\nu(\d z),\xi\bigg) ,
	\end{align}for all $\phi,\xi\in\V$. For simplicity of notations, in this step, we suppress the dependency of the first variable  in each of the operators and functions. 

\vspace{2mm}
\noindent
\textsl{The operator $\cc(\cdot)$ is well-defined:} By the definition of the operator $\cc(\cdot)$, we have for all $\xi\in\V$
\begin{align}\label{CPDE03}
	&	\big|\langle \cc(\phi),\xi\rangle \big|\nonumber\\&\leq \big|\langle \A(\phi),\xi\rangle \big|+\big|	\big( \B(\phi)f,\xi\big)\big|+\bigg|\bigg(\int_\Z \gamma(\phi,z)\big(g(z)-1\big)\nu(\d z),\xi\bigg)\bigg|\nonumber\\&
		\leq \|\A(\phi)\|_{\V^*}\|\xi\|_\V^{\frac{1}{\beta}}\|\xi\|_\V^{\frac{\beta-1}{\beta}}+\|\B(\phi)\|_{\L_2}\|f\|_\H\|\xi\|_\H +\int_\Z \|\gamma(\phi,z)\|_\H\big|g(z)-1\big|\|\xi\|_\H\nu(\d z)\nonumber\\&\leq C\big(\|\A(\phi)\|_{\V^*}^{\frac{\beta-1}{\beta}}+1\big)\|\xi\|_\V +\sqrt{\fk{b}}\big(1+\|\phi\|_\H\big)\|f\|_\H\|\xi\|_\H\nonumber\\&\quad+\big(1+\|\phi\|_\H\big)\|\xi\|_\H\int_\Z L_\gamma(z)\big|g(z)-1\big|\nu(\d z)\nonumber\\&\leq \bigg\{C\big[\big(\fk{a}+C\|\phi\|_\V^\beta\big)\big(1+\|\phi\|_\H^\alpha\big)+1 \big]+\sqrt{\fk{b}}\|f\|_\H\big(1+\|\phi\|_\H\big)\nonumber\\&\qquad+\big(1+\|\phi\|_\H\big)\int_\Z L_\gamma(z)\big|g(z)-1\big|\nu(\d z)\bigg\}\|\xi\|_\V,
\end{align}
where we have used Hypothesis \ref{hyp1} (H.4)-(H.6), Young's and H\"older's inequalities.
Therefore from \eqref{CPDE03}, we conclude
\begin{align}\label{CPDE08}\nonumber
	\|\cc(\phi)\|_{\V^*} &\leq \bigg\{C\big(\fk{a}+C\|\phi\|_\V^\beta\big)\big(1+\|\phi\|_\H^\alpha\big)+1+\sqrt{\fk{b}}\|f\|_\H\big(1+\|\phi\|_\H\big)\\&\qquad+ \big(1+\|\phi\|_\H\big)\int_\Z L_\gamma(z)\big|g(z)-1\big|\nu(\d z)\bigg\}.
\end{align}
Hence, we rewrite the integral equation  \eqref{SE2} in the following form:
\begin{align}\label{CPDE09}
	\vi{\Y}^{\p}(t)=\x-\int_0^t\cc(	s,\vi{\Y}^{\p}(s))\d s, \ \text{ for all } \ t\in[0,T],
\end{align}in the triplet $\V\subset\H\subset\V^*$. Let us establish some properties of the operator $\cc(\cdot)$, which will help us to obtain the existence of  a unique weak solution of the problem \eqref{SE2}.

\vspace{2mm}
\noindent
\textsl{The operator $\cc(\cdot)$ is coercive:} By using Hypotheses \ref{hyp1} (H.3), (H.5), \ref{hyp2} (H.8), H\"older's and Young's inequalities, we have for all $\phi\in\H$
\begin{align}\label{CPDE10}\nonumber
	\langle \cc(\phi),\phi\rangle&\nonumber \geq L_\A\|\phi\|_\V^2- \bigg[\fk{a}+\sqrt{\fk{b}}\|f\|_\H+\int_\Z L_\gamma(z)\big|g(z)-1\big|\nu(\d z)\\&\qquad+\bigg\{\fk{a}+2\sqrt{\fk{b}}\|f\|_\H+ 2\int_\Z L_\gamma(z)\big|g(z)-1\big|\nu(\d z)\bigg\}\|\phi\|_\H^2\bigg],
\end{align}which implies that the operator $\cc(\cdot)$ is coercive.

\vspace{2mm}
\noindent
\textsl{The operator $\cc(\cdot)$ is fully  locally monotone:} The following local monotnocity condition holds for the operator $\cc(\cdot)$:
\begin{align}\label{CPDE11}	\nonumber
&	\langle \cc(\phi_1)-\cc(\phi_2),\phi_1-\phi_2\rangle \\&\nonumber\qquad+\bigg\{\frac{1}{2}\big(\fk{a}+\rho(\phi_1)+\eta(\phi_2)\big)+\|f\|_\H^2+\int_\Z R_\gamma(z)\big|g(z)-1\big|\nu(\d z)\bigg\}\|\phi_1-\phi_2\|_\H^2\\&\quad\geq 0\ \text{ for all }\  \phi_1,\phi_2\in\V.
\end{align}The above condition \eqref{CPDE11} can be verified as follows: From the definition of $\cc(\cdot)$, we have 
\begin{align}\label{CPDE12}\nonumber
	&	\langle \cc(\phi_1)-\cc(\phi_2),\phi_1-\phi_2\rangle \\&\nonumber=	-\langle \A(\phi_1)- \A(\phi_2),\phi_1-\phi_2\rangle-\big((\B(\phi_1)-\B(\phi_2))f,\phi_1-\phi_2\big)\\&\quad-\bigg(\int_\Z \big(\gamma(\phi_1,z)-\gamma(\phi_2,z)\big)\big(g(z)-1\big)\nu(\d z),\phi_1-\phi_2\bigg).
\end{align}
Using the Cauchy-Schwarz  and Young's inequalities, we estimate  the penultimate term in the right hand side of \eqref{CPDE12} as
\begin{align}\label{CPDE13}	\nonumber
	\big| \big((\B(\phi_1)-\B(\phi_2))f,\phi_1-\phi_2\big)\big| &\leq \|(\B(\phi_1)-\B(\phi_2))f\|_\H\|\phi_1-\phi_2\|_\H \\&\leq 
	\frac{1}{2}\|\B(\phi_1)-\B(\phi_2)\|_{\L_2}^2+\frac{1}{2}\|f\|_\H^2\|\phi_1-\phi_2\|_\H^2.
\end{align}Making use of Hypothesis \ref{hyp2} (H.9), the Cauchy-Schwarz and H\"older's inequalities for the final term  in the right hand side of \eqref{CPDE12}, we obtain 
\begin{align}\label{CPDE14}\nonumber
&	\bigg|\bigg(\int_\Z \big(\gamma(\phi_1,z)-\gamma(\phi_2,z)\big)\big(g(z)-1\big)\nu(\d z),\phi_1-\phi_2\bigg)\bigg|\\&\leq \|\phi_1-\phi_2\|_\H^2\int_\Z R_\gamma(z)\big|g(z)-1\big|\nu(\d z).
\end{align}Substituting \eqref{CPDE13} and \eqref{CPDE14} in \eqref{CPDE12}, then using Hypothesis \ref{hyp1} (H.2),  we deduce the required condition \eqref{CPDE11}.

\vspace{2mm}
\noindent
\textsl{The operator $\cc(\cdot)$ is hemicontinuous:}  Let the operator $\bb(\cdot)$ be  defined as follows:
\begin{align*}
	\langle \bb(\phi),\xi\rangle =-	\big( \B(\phi)f(\cdot),\xi\big) -\bigg(\int_\Z \gamma(\phi,z)\big(g(\z)-1\big)\nu(\d z),\xi\bigg)\  \text{ for all } \ \phi,\xi\in\V.
\end{align*} 
Consider a sequence $\{\phi_n\}_{m\in\N}\in\V$ such that $\|\phi_n-\phi\|_\V\to0$ as $n\to\infty$. For any $\xi\in\V$, we find
\begin{align}\label{CPDE15}\nonumber
&|\langle \bb(\phi_n)-\bb(\phi),\xi\rangle|\\&\nonumber\leq |((\B(\phi_n)-\B(\phi))f,\xi)| +\bigg|\bigg(\int_\Z\big( \gamma(\phi_n,z)-\gamma(\phi,z)\big(g(z)-1\big)\nu(\d z),\xi\bigg)\bigg|\\&\nonumber\leq \|\B(\phi_n)-\B(\phi)\|_{\L_2}\|f\|_\H\|\xi\|_\H+\int_\Z \|\gamma(\phi_n,z)-\gamma(\phi,z)\|_\H |g(z)-1|\|\xi\|_\H\nu(\d z) \\&\nonumber\leq L_\B\|\phi_n-\phi\|_\H \|f\|_\H\|\xi\|_\H+\|\phi_n-\phi\|_\H\|\xi\|_\H\int_\Z R_\gamma(z)|g(z)-1|\nu(\d z)\\& \to 0\ \text{ as } n\to\infty,
\end{align}
where we have used Hypothesis \ref{hyp2} (H.7), (H.9). This implies  the demiconitnuity of $\bb(\cdot)$ and hence hemicontinuity also. Using the hemicontinuity property (see Hypothesis \ref{hyp1} (H.1)) of the operator $\A(\cdot)$, we conclude that  the operator $\cc(\cdot)= - \A(\cdot) +	 \bb(\cdot)$ is hemicontinuous.

\vspace{2mm}
\noindent
\textbf{Step 2:} \textsf{Faedo-Galerkin approximation and finite dimensional problem.}  Let us prove the existence of a weak solution to the equation \eqref{SE2} using a Faedo-Galerkin approximation, local monotonicity  and hemicontinuity properties of the operator $\cc(\cdot)$. 

 Let $\{\bfe_1,\bfe_2,\ldots,\bfe_n,\ldots\}\subset \V$ be a complete orthonormal system in  $\H$ and let $\H_n$ be the $n$-dimensional subspace of $\H$ spanned by $\{\bfe_1,\bfe_2,\ldots,\bfe_n\}$. Let us denote the projection of the space $\V^*$ into $\H_n$ by $\mathbf{P}_n$, that is, $\mathbf{P}_n\x =\sum\limits_{j=1}^{n}\langle \x,\bfe_j\rangle\bfe_j $. As we know that every element $\x\in\H$ induces a functional $\x^*\in\H$ by the formula $\langle \x^*,\y\rangle =(\x,\y),\; \y\in\V$, then $\mathbf{P}_n\big|_{\H}$, the orthogonal projection of $\H$ onto $\H_n$ is given by $\mathbf{P}_n\x=\sum\limits_{j=1}^{n}(\x,\bfe_j)\bfe_j$. Clearly,  $\mathbf{P}_n$ is the orthogonal projection from $\H$ onto $\H_n$.
 
 Define $\cc_n(\cdot,\vi{\Y}^{\p}_n(\cdot))=\mathbf{P}_n\cc(\cdot,\vi{\Y}^{\p}_n(\cdot))$. With the above setting, we consider the following system of ODEs:
 \begin{equation}\label{CPDE16}
 	\left\{ 	\begin{aligned}
 		\langle \partial_t \vi{\Y}^{\p}_n(t), \bf{\phi}\rangle& =-\langle \cc_n(\vi{\Y}^{\p}_n(t)),\bf{\phi}\rangle, \ \text{ for }\ t\in[0,T],\\
 		\big(\vi{\Y}^{\p}_n(0),\bf{\phi}\big)&=\big(\x_0^n,\bf{\phi}\big),
 	\end{aligned}
 \right.
 \end{equation}where $\x_0^n=\mathbf{P}_n \x$ for all $\bf{\phi}\in \H_n$. By a classical result from  Theorem 1.2., \cite{NVK} (cf. Theorem 3.3.1, \cite{CPMR}), there exists a unique local solution $\vi{\Y}^{\p}_n\in \C([0,T^*];\H_n)$, for some $0<T^*\leq T$. Let now show that $T=T^*$ by establishing  some uniform energy estimates. 

\vspace{2mm}
\noindent
\textsl{Energy estimates:}
Let us prove the uniform energy estimates satisfied by the system \eqref{CPDE16}, which extend this local solution to the global one. Taking the inner product with $\vi{\Y}^{\p}_n(\cdot)$ to the first equation of the system \eqref{CPDE16}, we find  
\begin{align*}
	\|\vi{\Y}^{\p}_n(t)\|_\H^2&=\|\x_0^n\|_\H^2-2\int_0^t\langle \cc_n(\vi{\Y}^{\p}_n(s)),\vi{\Y}^{\p}_n(s)\rangle\d s, 
\end{align*}
for all $t\in[0,T]$. Using the coercivity condition \eqref{CPDE10} and Young's inequality,  we get for all $t\in[0,T]$,
\begin{align}\label{CPDE17}\nonumber
&	\|\vi{\Y}^{\p}_n(t)\|_\H^2 +2L_\A\int_0^t\|\vi{\Y}^{\p}_n(s)\|_\V^2\d s\\&\nonumber\leq \|\x_0^n\|_\H^2
+2\int_0^t \bigg(\fk{a}(s)+\sqrt{\fk{b}(s)}\|f(s)\|_\H+\int_\Z L_\gamma(s,z)\big|g(s,z)-1\big|\nu(\d z)\bigg)\d s\\&\nonumber\qquad+2\int_0^t\bigg(\fk{a}(s)+2\sqrt{\fk{b}(s)}\|f(s)\|_\H+ 2\int_\Z L_\gamma(s,z)\big|g(s,z)-1\big|\nu(\d z)\bigg)\|\vi{\Y}^{\p}_n(s)\|_\H^2\d s
\\&\nonumber\leq \|\x\|_\H^2+\int_0^t\bigg(2\fk{a}(s)+\fk{b}(s)+\|f(s)\|_\H^2+2\int_\Z L_\gamma(s,z)\big|g(s,z)-1\big|\nu(\d z)\bigg)\d s\\&\qquad+2\int_0^t\bigg(\fk{a}(s)+\fk{b}(s)+\|f(s)\|_\H^2+ 2\int_\Z L_\gamma(s,z)\big|g(s,z)-1\big|\nu(\d z)\bigg)\|\vi{\Y}^{\p}_n(s)\|_\H^2\d s,
\end{align}
An application of Gronwall's inequality in \eqref{CPDE17} yields
\begin{align}\label{CPDE19}\nonumber
&	\sup_{t\in[0,T]}\|\vi{\Y}^{\p}_n(t)\|_\H^2\\&\nonumber \leq \bigg\{ \|\x\|_\H^2+\int_0^T\bigg(2\fk{a}(s)+ \fk{b}(s)+\|f(s)\|_\H^2 +2\int_\Z L_\gamma(t,z)\big|g(s,z)-1\big|\nu(\d z)\bigg)\d s\bigg\}\\&\qquad\times\exp\bigg\{2\int_0^T\bigg( \fk{a}(s)+\fk{b}(s)+\|f(s)\|_\H^2+2\int_\Z L_\gamma(t,z)\big|g(s,z)-1\big|\nu(\d z)\bigg)\d s \bigg\},
\end{align} and the right hand side is finite for all $\p=(f,g)\in\mathbb{S}$ (see Lemma \ref{lemUF} also). Substituting the above inequality \eqref{CPDE19} in \eqref{CPDE17}, we obtain 
\begin{align}\label{CPDE20}\nonumber
	&\sup_{t\in[0,T]}\|\vi{\Y}^{\p}_n(t)\|_\H^2+2L_\A\int_0^T\|\vi{\Y}^{\p}_n(s)\|_\V^\beta\d s \\&\nonumber \leq \bigg\{ \|\x\|_\H^2+\int_0^T\bigg(2\fk{a}(s)+ \fk{b}(s)+\|f(s)\|_\H^2 +2\int_\Z L_\gamma(t,z)\big|g(s,z)-1\big|\nu(\d z)\bigg)\d s\bigg\}\\&\qquad\times\exp\bigg\{4\int_0^T\bigg( \fk{a}(s)+\fk{b}(s)+\|f(s)\|_\H^2+\int_\Z L_\gamma(t,z)\big|g(s,z)-1\big|\nu(\d z)\bigg)\d s \bigg\}.
\end{align}

\vspace{2mm}
\noindent
\textsl{Estimate for time derivative:} For $\bf{\xi}\in \L^\infty(0,T;\H)\cap\L^\beta(0,T;\V)$ for $\beta\in(1,\infty)$,
using the Cauchy-Schwarz, H\"older's and Young's inequalities,  and Hypotheses \ref{hyp1} (H.4), (H.5) and \ref{hyp2} (H.8), we obtain
\begin{align}\label{CPDE22}\nonumber
\int_0^T	&\big|\langle \partial_t \vi{\Y}^{\p}_n(t), \bf{\xi}(t)\rangle \big|\d t \\&\nonumber\leq \int_0^T \big|\langle \A_n(t,\vi{\Y}^{\p}_n(t)),\bf{\xi}(t)\rangle\big|\d t+
\int_0^T	\big|\big(\B_n(t,\vi{\Y}^{\p}_n(t))f(t),\bf{\xi}(t)\big)\big|\d t\\&\nonumber\quad +\int_0^T\bigg| \bigg(\int_\Z\gamma_n(t,\vi{\Y}^{\p}_n(t),z)\big(g(t,z)-1\big)\nu(\d z),\bf{\xi}(t)\bigg)\bigg|\d t \\&\nonumber \leq 
\underbrace{\bigg(\int_0^T\|\A_n(t,\vi{\Y}^{\p}_n(t))\|_{\V^*}^{\frac{\beta}{\beta-1}}\d t\bigg)^{\frac{\beta-1}{\beta}}}_{I_1}\bigg(\int_0^T\|\bf{\xi}(t)\|_\V^\beta\d t\bigg)^{\frac{1}{\beta}}\\&\nonumber\quad+\bigg(\sup_{t\in[0,T]}\|\bf{\xi}(t)\|_\H\bigg)\underbrace{\bigg(\int_0^T\|\B(t,\vi{\Y}^{\p}_n(t))f(t)\|_\H\d t\bigg)}_{I_2}\\&\nonumber\quad +
\bigg(\sup_{t\in[0,T]}\|\bf{\xi}(t)\|_\H\bigg)\underbrace{\bigg(\int_{\Z_T}\|\gamma(t,\vi{\Y}^{\p}_n(t),z)\|_\H\big|g(t,z)-1\big|\nu(\d z)\d t\bigg)}_{I_3}
\\&\nonumber \leq 
\bigg(\int_0^T  \big(\fk{a}(t)+C\|\vi{\Y}^{\p}_n(t))\|_\V^\beta\big)\big(1+\|\vi{\Y}^{\p}_n(t)\|_\H^\alpha\big)  \d t\bigg)^{\frac{\beta-1}{\beta}}\bigg(\int_0^T\|\bf{\xi}(t)\|_\V^\beta\d t\bigg)^{\frac{1}{\beta}}\\&\nonumber\quad+\bigg(\sup_{t\in[0,T]}\|\bf{\xi}(t)\|_\H\bigg)\bigg(\int_0^T\sqrt{\fk{b}(t)}\big(1+\|\vi{\Y}^{\p}_n(t)\|_\H\big)\|f(t)\|_\H\d t\bigg)\\&\nonumber\quad +
\bigg(\sup_{t\in[0,T]}\|\bf{\xi}(t)\|_\H\bigg)\bigg(\int_{\Z_T}L_\gamma(t,z)\big|g(t,z)-1\big|\big(1+\|\vi{\Y}^{\p}_n(t)\|_\H\big)\nu(\d z)\d t\bigg)
\\&\nonumber \leq 
\bigg(1+\sup_{t\in[0,T]}\|\vi{\Y}^{\p}_n(t)\|_\H^\alpha \bigg)^{\frac{\beta-1}{\beta}}\bigg(\int_0^T  \big(\fk{a}(t)+C\|\vi{\Y}^{\p}_n(t))\|_\V^\beta\big) \d t\bigg)^{\frac{\beta-1}{\beta}}\bigg(\int_0^T\|\bf{\xi}(t)\|_\V^\beta\d t\bigg)^{\frac{1}{\beta}}\\&\nonumber\quad+\frac{1}{2}
\bigg(1+\sup_{t\in[0,T]}\|\vi{\Y}^{\p}_n(t)\|_\H \bigg)\bigg(\int_0^T\big(\fk{b}(t)+\|f(t)\|_\H^2\big)\d t\bigg)\bigg(\sup_{t\in[0,T]}\|\bf{\xi}(t)\|_\H\bigg)\\&\quad +
\bigg(1+\sup_{t\in[0,T]}\|\vi{\Y}^{\p}_n(t)\|_\H \bigg)
\bigg(\int_{\Z_T}L_\gamma(t,z)\big|g(t,z)-1\big|\nu(\d z)\d t\bigg)\bigg(\sup_{t\in[0,T]}\|\bf{\xi}(t)\|_\H\bigg).
\end{align}
 In view of the energy estimate \eqref{CPDE20} and Lemma \ref{lemUF}, we find that the right hand of \eqref{CPDE22} is finite and independent of $n$, which implies that
\begin{align}\label{CPDE23}
	\partial_t \vi{\Y}^{\p}_n\in \L^1(0,T;\H)+\L^\frac{\beta}{\beta-1}(0,T;\V^*),
\end{align}for $\beta\in(1,\infty)$. 

\vspace{2mm}
\noindent
\textbf{Step 3:} \textsf{Existence of a weak solution.} 
Using the estimates \eqref{CPDE20} and \eqref{CPDE22}, and an application of the Banach-Alaoglu theorem guarantee  the existence of a subsequence $\{\vi{\Y}^{\p}_n\}$ (still denoting by same index) such that 
\begin{equation}\label{CPDE24}
\left\{	\begin{aligned}
		\vi{\Y}^{\p}_n&\xrightharpoonup{\ast} \vi{\Y}^{\p}, &&\text{ in } \L^\infty(0,T;\H),\\
		\vi{\Y}^{\p}_n &\xrightharpoonup{}\vi{\Y}^{\p}, &&\text{ in } \L^\beta(0,T;\V),\;\beta\in(1,\infty)\\
			\vi{\Y}^{\p}_n(T)&\xrightharpoonup{}\wi{\Y}^{\p}(T), &&\text{ in } \H,\\
			\A_n(\cdot,	\vi{\Y}^{\p}_n(\cdot))&\xrightharpoonup{} \wi{\A}(\cdot), &&\text{ in } \L^\frac{\beta}{\beta-1}(0,T;\V^*),\\
			\B_n(\cdot,	\vi{\Y}^{\p}_n(\cdot))f(\cdot)&\xrightharpoonup{}\wi{\B}(\cdot)f(\cdot), && \text{ in }\L^1(0,T;\H),\\
				\partial_t 	\vi{\Y}^{\p}_n &\xrightharpoonup{} \partial_t\vi{\Y}^{\p}, &&\text{ in } \L^1(0,T;\H)+\L^\frac{\beta}{\beta-1}(0,T;\V^*),
	\end{aligned}
		\right.
	\end{equation}and \begin{align}\label{CPDE024}
	\int_\Z\gamma_n(\cdot,	\vi{\Y}^{\p}_n(\cdot))\big(g(\cdot,z)-1\big)\nu(\d z)\xrightharpoonup{}\int_\Z\wi{\gamma}(\cdot,z)\big(g(\cdot,z)-1\big)\nu(\d z),  \text{ in }\L^1(0,T;\H). 
\end{align} 
The convergences for $\A_n,\B_n$ and $\gamma_n$ are justified by the estimates for $I_1,I_2$ and $I_3$ in \eqref{CPDE22}.  By an application of the Aubin-Lions compactness lemma, we conclude that 
\begin{align}\label{CPDE25}
	\vi{\Y}^{\p}_n \to \vi{\Y}^{\p}, \text{ in } \L^\beta(0,T;\H), \ \text{ for }\ \beta\in(1,\infty). 
\end{align}Furthermore, along a subsequence (still denoting by same index), we have
\begin{align}\label{CPDE26}
\vi{\Y}^{\p}_n\to \vi{\Y}^{\p}, \ \text{ for a.e. }\ t\in[0,T] \ \text{ in }\ \H. 
\end{align}By Proposition 23.23, \cite{EZ} (see Theorem 1.8, pp. 33, \cite{VVCMIV} and Lemma 1.2, pp. 179, \cite{RT}  also), we have the map  $t\mapsto \|\vi{\Y}^{\p}(t)\|_\H^2$ is absolutely continuous with $$\frac{1}{2}\frac{\d}{\d t}\|\vi{\Y}^{\p}(t)\|_\H^2=\langle \partial_t \vi{\Y}^{\p}(t),\vi{\Y}^{\p}(t)\rangle, \ \text{ for a.e. }\ t\in[0,T],$$ which is straightforward since   $\vi{\Y}^{\p} \in \L^\beta(0,T;\V)$ and $\partial_t \vi{\Y}^{\p} \in \L^1(0,T;\H)+\L^\frac{\beta}{\beta-1}(0,T;\V^*)$. 

Now, our aim is to identify the limiting function obtained in the weak convergences \eqref{CPDE24} and \eqref{CPDE024}.

\vspace{2mm}
\noindent
\textbf{Claim 1:} The functions $\vi{\Y}^p,\wi{\A}(\cdot), \wi{\B}(\cdot)$ and $\wi{\gamma}(\cdot,\cdot)$ satisfy $\vi{\Y}^p\in \mathcal{M}:= \{\u\in\L^\infty(0,T;\H)\cap\L^\beta(0,T;\V):\partial_t\u\in\mathrm{L}^1(0,T;\H)+\mathrm{L}^{\frac{\beta}{\beta-1}}(0,T;\V^*)\}$ and 
\begin{align}\label{424}
	\partial_t\vi{\Y}^p(t)=\wi{\A}(t)+\wi{\B}(t)f(t)+\int_\Z\wi{\gamma}(t,z)\big(g(t,z)-1\big)\nu(\d z), \ \text{ for a.e. }\ t\in(0,T), 
\end{align}also $\vi{\Y}^p(0)=\x$, and $ \vi{\Y}^p(T)=\wi{\Y}^p(T)$.

	The proof of Claim 1 for a similar model can be obtained from Lemma 30.5, \cite{EZ} (cf. Lemma 2.4, \cite{WL4} also). We include it here for completeness. We know that $\vi{\Y}^p(\cdot)$ satisfies the following  integration by parts formula (Proposition 1.7.2, \cite{PCAM}):
	\begin{align}\label{425}
		\big(\vi{\Y}^p(T),\u(T)\big)-\big(\vi{\Y}^p(0),\u(0)\big)=\int_0^T \langle \partial_t\vi{\Y}^p(t),\u(t)\rangle \d t+\int_0^T \langle \partial_t\u(t),\vi{\Y}^p(t)\rangle \d t, 
	\end{align}for $\u,\vi{\Y}^p\in  \mathcal{M}$.
	
	For any $\psi\in \C^\infty([0,T])$, $\u\in \H_n$, from  \eqref{CPDE16}, we have 
	\begin{align}\label{IP1}\nonumber
		&\big(\vi{\Y}^p_n(T),\psi(T)\u\big)-\big(\x_0^n,\psi(0)\u\big)\\&\nonumber=\int_0^T \langle \partial_t\vi{\Y}^p_n(t),\psi(t)\u\rangle \d t+\int_0^T \langle \psi'(t)\u,\vi{\Y}^p_n(t)\rangle \d t\\& \nonumber=
		\int_0^T \bigg\langle \A_n(t,\vi{\Y}^p_n(t))+\B_n(t,\vi{\Y}^p_n(t))f(t)+\int_\Z\gamma_n(t,\vi{\Y}^p_n(t),z)\big(g(t,z)-1\big)\nu(\d z),\psi(t)\u\bigg\rangle\d t\\&\quad+\int_0^T \langle \psi'(t)\u,\vi{\Y}^p_n(t)\rangle \d t. 
	\end{align}
Passing $n\to\infty$, we obtain for all $\u\in\bigcup\limits_n\H_n$,
	\begin{align}\label{IP2}\nonumber
		\big(\wi{\Y}^p(T),\psi(T)\u\big)-\big(\x,\psi(0)\u\big)&=\int_0^T\bigg\langle 
		\wi{\A}(t)+\wi{\B}(t)f(t)+\int_\Z\wi{\gamma}(t,z)\big(g(t,z)-1\big)\nu(\d z) ,\psi(t)\u\bigg\rangle \d t\\& \quad + \int_0^T \langle \psi'(t)\u,\vi{\Y}^p(t)\rangle \d t. 
	\end{align}We know that $\bigcup\limits_n\H_n$ is dense in $\V$. Therefore \eqref{IP2} remains valid for all $\u\in\V,\;\psi \in\C^\infty([0,T])$.
	
	If we fix $\psi(T)=\psi(0)=0$, then we have
	\begin{align*}
		\int_0^T\bigg\langle 
		\wi{\A}(t)+\wi{\B}(t)f(t)+\int_\Z\wi{\gamma}(t,z)\big(g(t,z)-1\big)\nu(\d z) ,\u\bigg\rangle\psi(t) \d t=-\int_0^T\langle \vi{\Y}^p(t),\u\rangle \psi'(t)\d t,
	\end{align*} and it  gives \eqref{424} and $\vi{\Y}^p\in  \mathcal{M}$. Using integration by parts formula \eqref{425}, we obtain 
	\begin{align}\label{IP3}\nonumber
		&\big(\vi{\Y}^p(T),\psi(T)\u\big)-\big(\vi{\Y}^p(0),\psi(0)\u\big)\\&\nonumber=\int_0^T \langle \partial_t\vi{\Y}^p(t),\psi(t)\u\rangle \d t+\int_0^T \langle \psi'(t)\u,\vi{\Y}^p(t)\rangle \d t\\& =
		\int_0^T \bigg\langle \wi{\A}(t)+\wi{\B}(t)f(t)+\int_\Z\wi{\gamma}(t,z)\big(g(t,z)-1\big)\nu(\d z),\psi(t)\u\bigg\rangle\d t+\int_0^T \langle \psi'(t)\u,\vi{\Y}^p(t)\rangle \d t,
	\end{align}  for all $\u\in\V,\;\psi \in\C^\infty([0,T])$.
 Using \eqref{IP2}, we find 
	\begin{align*}
		&\big(\vi{\Y}^p(T),\psi(T)\u\big)-\big(\vi{\Y}^p(0),\psi(0)\u\big)= 	\big(\wi{\Y}^p(T),\psi(T)\u\big)-\big(\x,\psi(0)\u\big).
	\end{align*}Then, by choosing $\psi(T)=1,\;\psi(0)=0$ and $\psi(T)=0,\;\psi(0)=1$, respectively, we deduce 
	\begin{align*}
		\vi{\Y}^p(T)=\wi{\Y}^p(T), \ \text{ and }\ \vi{\Y}^p(0)=\x,
	\end{align*}hence the proof of Claim 1 is over.

\vspace{2mm}
\noindent
\textbf{Claim 2:} $\wi{\A}(\cdot)=\A(\cdot,\vi{\Y}^p(\cdot))$, as an element in $\L^\frac{\beta}{\beta-1}(0,T;\V^*)$, for $\beta\in(1,\infty)$.

Using the integration by parts formula, we get 
\begin{align*}
	&	\|\vi{\Y}^p_n(T)\|_\H^2-\|\x_0^n\|_\H^2 \\&= 2\int_0^T\bigg\langle\A_n(t,\vi{\Y}^p_n(t))+\B_n(t,\vi{\Y}^p_n(t))f(t)+\int_\Z\gamma_n(t,\vi{\Y}^p_n(t),z)\big(g(t,z)-1\big)\nu(\d z),\vi{\Y}^p_n(t)\bigg\rangle \d t ,
\end{align*}and 
\begin{align*}
	\|\vi{\Y}^p(T)\|_\H^2-\|\x\|_\H^2 &= 2\int_0^T\bigg\langle\wi{\A}(t)+\wi{\B}(t)f(t)+\int_\Z\wi{\gamma}(t,z)\big(g(t,z)-1\big)\nu(\d z),\vi{\Y}^p(t)\bigg\rangle \d t .
\end{align*}	
Making use of the convergence \eqref{CPDE24} and the lower semicontinuity of $\H$-norm, we obtain 
\begin{align*}
\|\vi{\Y}^p(T)\|_\H^2= \|\wi{\Y}^p(T)\|_\H^2\leq \liminf_{n\to\infty}\|\vi{\Y}^p_n(T)\|_\H^2.
\end{align*}
Thus, we have 
\begin{align}\label{429}
	&\liminf_{n\to\infty} \int_0^T \langle\A_n(t,\vi{\Y}^p_n(t)),\vi{\Y}^p_n(t)\rangle \d t \nonumber\\&\geq \frac{1}{2}\big(	\|\vi{\Y}^p(T)\|_\H^2-\|\x\|_\H^2\big)-\int_0^T\bigg(\wi{\B}(t)f(t)+\int_\Z\wi{\gamma}(t,z)\big(g(t,z)-1\big)\nu(\d z),\vi{\Y}^p(t)\bigg)\d t\nonumber \\& = \int_0^T \langle \wi{\A}(t),\vi{\Y}^p(t)\rangle \d t.
\end{align}
From the convergences \eqref{CPDE24} and \eqref{429}, one can apply  Lemma \ref{lemmaMT2} to find  for any $\u\in\L^\beta(0,T;\V)$
\begin{align*}
	\int_0^T \langle\A(t,\vi{\Y}^p(t)),\vi{\Y}^p(t)-\u(t)\rangle \d t &\geq \limsup_{n\to\infty}\int_0^T\langle \A_n(t,\vi{\Y}^p_n(t)), \vi{\Y}^p_n(t)-\u(t)\rangle \d t \\&\geq	\liminf_{n\to\infty}\int_0^T\langle \A_n(t,\vi{\Y}^p_n(t)), \vi{\Y}^p_n(t)-\u(t)\rangle \d t 
	\\&\geq 	\int_0^T \langle \wi{\A}(t),\vi{\Y}^p(t)-\u(t)\rangle\d t. 
\end{align*}
Due to the arbitrary choice of $\u\in\L^\beta(0,T;\V)$, and uniqueness of the limit,  we get $\A(\cdot,\vi{\Y}^p(\cdot))=\wi{\A}(\cdot)$ as an element in the space $\L^\frac{\beta}{\beta-1}(0,T;\V^*)$.

\vspace{2mm}
\noindent
\textbf{Claim 3:} $\wi{\B}(\cdot)f(\cdot)=\B(\cdot,\vi{\Y}^p(\cdot))f(\cdot)$ and $\wi{\gamma}(\cdot,z)\big(g(\cdot,z)-1\big)=\gamma(\cdot,\vi{\Y}^p(\cdot),z)\big(g(\cdot,z)-1\big)$ in $\L^1(0,T;\H)$:

Our first aim is to prove the following:
\begin{align}\label{B1}
	\lim_{n\to\infty}\int_0^T\big\|\big(\B_n(t,\vi{\Y}^{\p}_n(t))-\B(t,\Y^{\p}(t)\big)f(t)\big\|_\H\d t=0.
\end{align} For any $\delta>0$, define $A_{\delta,n}=\{t\in[0,T]:\|\vi{\Y}^{\p}_{n'}(t)-\vi{\Y}^{\p}(t)\|_\H>\delta\}$. Then  by using  the measure theoretic version of Markov's inequality and the strong convergence \eqref{CPDE25}, we have 
\begin{align}\label{B2}
	\lim_{n\to\infty}\lambda_T\big(A_{\delta,n}\big)\leq \lim_{n\to\infty}\frac{\int_0^T\|\vi{\Y}^{\p}_{n}(t)-\vi{\Y}^{\p}(t)\|_\H^\beta\d t}{\delta^\beta}=0.
\end{align}Let us fix $K=\sup\limits_{n\in\N}\sup\limits_{t\in[0,T]}\|\vi{\Y}^{\p}_{n}(t)\|_\H\vee \sup\limits_{t\in[0,T]}\|\vi{\Y}^{\p}(t)\|_\H<\infty$.	We consider 
\begin{align}\label{B02}\nonumber
	&	\int_0^T\|(\mathbf{P}_{n}\B(t,\vi{\Y}_{n}^{\p}(t))-\B(t,\vi{\Y}^{\p}(t)))f(t)\|_\H\d t \\&\leq \int_0^T\big\|\mathbf{P}_{n}\big(\B(t,\vi{\Y}^{\p}_{n}(t))-\B(t,\Y^{\p}(t)\big)f(t)\big\|_\H\d t  +\int_0^T\big\|(\I-\mathbf{P}_{n})\B(t,\vi{\Y}^{\p}_{n}(t))f(t)\big\|_\H\d t .
\end{align}We consider the first term from the right hand side of the above inequality \eqref{B02} and estimate it using  Hypothesis \ref{hyp2} (H.7), \eqref{B2}, H\"older's and Young's inequalities as
\begin{align}\label{B3}\nonumber
	&	\int_0^T\big\|\mathbf{P}_{n}\big(\B(t,\vi{\Y}^{\p}_{n}(t))-\B(t,\Y^{\p}(t)\big)f(t)\big\|_\H\d t  \\&\nonumber\leq \int_0^T\|\B(t,\vi{\Y}^{\p}_{n}(t))-\B(t,\Y^{\p}(t)\|_{\L_2}\|f(t)\|_\H\d t \\&\nonumber\leq \int_0^T\sqrt{L_\B(t)}\|\vi{\Y}^{\p}_{n}(t)-\vi{\Y}^{\p}(t)\|_\H\|f(t)\|_\H\d t \\&\nonumber \leq \int_{A_{\delta,n}}\sqrt{L_\B(t)}\|f(t)\|_\H\big(\|\vi{\Y}^{\p}_{n}(t)\|_\H+\|\vi{\Y}^{\p}(t)\|_\H\big)\d t+\delta\int_{A_{\delta,n}^c}\sqrt{L_\B(t)}\|f(t)\|_\H\d t \\& \leq 
	2K \bigg(\int_{A_{\delta,n}}L_\B(t)\d t\bigg)^{\frac{1}{2}}\bigg(\int_0^T\|f(t)\|_\H^2\d t\bigg)^{\frac{1}{2}}+\delta\bigg(\int_0^TL_\B(t)\d t+\int_0^T\|f(t)\|_\H^2\d t\bigg). 
\end{align}
Note that the function $L_\B\in\L^1(0,T;\R^+)$. Using the absolute continuity of the Lebesgue integral, we obtain 
\begin{align}\label{B4}
	\lim_{n\to\infty}\int_{A_{\delta,n}}L_\B(t)\d t=0.
\end{align}
To estimate  the final term in the right hand side of \eqref{B02}, we use Hypothesis \ref{hyp2} (H.7) and H\"older's inequality as follows:
\begin{align}\label{B03}\nonumber
	&	\int_0^T\big\|(\I-\mathbf{P}_{n})\B(t,\vi{\Y}^{\p}_{n}(t))f(t)\big\|_\H\d t \\&\nonumber \leq  \|\I-\mathbf{P}_{n}\|_{\mathcal{L}(\H)} \int_0^T\|\B(t,\vi{\Y}^{\p}_{n}(t))f(t)\big\|_\H\d t \\&\nonumber\leq \|\I-\mathbf{P}_{n}\|_{\mathcal{L}(\H)}  \bigg(\int_0^T \|\B(t,\vi{\Y}^{\p}_{n}(t))\|_{\L_2}^2\d t\bigg)^{\frac{1}{2}}\bigg(\int_0^T \|f(t)\|_\H^2\d t\bigg)^{\frac{1}{2}} \\&\nonumber\leq   \|\I-\mathbf{P}_{n}\|_{\mathcal{L}(\H)}  \bigg(\int_0^T L_\B(t)\big(1+\|\vi{\Y}^{\p}_{n}(t)\big)\|_\H^2)\d t\bigg)^{\frac{1}{2}}\bigg(\int_0^T \|f(t)\|_\H^2\d t\bigg)^{\frac{1}{2}} \\&\leq  (1+K) \|\I-\mathbf{P}_{n}\|_{\mathcal{L}(\H)}  \bigg(\int_0^T L_\B(t)\d t\bigg)^{\frac{1}{2}}\bigg(\int_0^T \|f(t)\|_\H^2\d t\bigg)^{\frac{1}{2}} .
\end{align}
Substituting \eqref{B3}-\eqref{B03} in \eqref{B02}, we obtain the required result \eqref{B1} by passing $n\to\infty$ and the arbitrariness of $\delta$. 

 Our final aim of this Claim is   to prove the following:
\begin{align}\label{G1}
&\lim_{n\to\infty}	\int_{\Z_T} \big\|\big(\gamma_{n}(t,\vi{\Y}^{\p}_{n}(t),z)-\gamma(t,\vi{\Y}^{\p}(t),z)\big)\big(g(t,z)-1\big)\big\|_\H\nu(\d z)\d t=0.
\end{align}We consider 
\begin{align}\label{G01}\nonumber
	&	\int_{\Z_T} \big\|\big(\mathbf{P}_{n}\gamma(t,\vi{\Y}^{\p}_{n}(t),z)-\gamma(t,\vi{\Y}^{\p}(t),z)\big)\big(g(t,z)-1\big)\big\|_\H\nu(\d z)\d t	
	\\& \nonumber\leq \int_{\Z_T} \big\|\mathbf{P}_{n}\big(\gamma(t,\vi{\Y}^{\p}_{n}(t),z)-\gamma(t,\vi{\Y}^{\p}(t),z)\big)\big(g(t,z)-1\big)\big\|_\H\nu(\d z)\d t\\&\quad + 	\int_{\Z_T} \big\|(\I-\mathbf{P}_{n})\gamma(t,\vi{\Y}^{\p}_{n}(t),z)\big(g(t,z)-1\big)\big\|_\H\nu(\d z)\d t.
\end{align}Using Hypothesis \ref{hyp2} (H.9) and the definition of $A_{\delta,n}$, we estimate the first term of the right hand side of the above inequality \eqref{G01} as
\begin{align}\label{G2}\nonumber&
	\int_{\Z_T} \big\|\mathbf{P}_{n}\big(\gamma(t,\vi{\Y}^{\p}_{n}(t),z)-\gamma(t,\vi{\Y}^{\p}(t),z)\big)\big(g(t,z)-1\big)\big\|_\H\nu(\d z)\d t
	 \\& \nonumber\leq 
	\int_{\Z_T}\|\gamma(t,\vi{\Y}^{\p}_{n}(t),z)-\gamma(t,\vi{\Y}^{\p}(t),z)\|_\H\big|g(t,z)-1\big|\nu(\d z)\d t \\&\nonumber\leq \int_{\Z_T} R_\gamma(t,z)\|\vi{\Y}^{\p}_{n}(t)-\vi{\Y}^{\p}(t)\|_\H\big|g(t,z)-1\big|\nu(\d z)\d t \\& \leq \delta\int_{A_{\delta,n}^c}\int_\Z R_\gamma(t,z)\big|g(t,z)-1\big|\nu(\d z)\d t +2K\int_{A_{\delta,n}}\int_\Z R_\gamma (t,z)\big|g(t,z)-1\big|\nu(\d z)\d t. 
\end{align}To estimate  the final term in the right hand side of \eqref{G01}, we use Hypothesis \ref{hyp2} (H.8) and H\"older's inequality as follows:
\begin{align}\label{G02}\nonumber
	&	\int_{\Z_T} \big\|(\I-\mathbf{P}_{n})\gamma(t,\vi{\Y}^{\p}_{n}(t),z)\big(g(t,z)-1\big)\big\|_\H\nu(\d z)\d t \\&\nonumber\leq \|\I-\mathbf{P}_{n}\|_{\mathcal{L}(\H)}\int_0^T \|\gamma(t,\vi{\Y}^{\p}_{n}(t),z)\|_\H \big|g(t,z)-1\big|\nu(\d z)\d t \\&\leq 
	 (1+K)\|\I-\mathbf{P}_{n}\|_{\mathcal{L}(\H)}\int_0^TL_\gamma(t,z)\big|g(t,z)-1\big|\nu(\d z)\d t .
\end{align}Substituting \eqref{G2} and \eqref{G02} in \eqref{G01}, passing $n\to\infty$  and applying Lemma \ref{lemUF3} (3), we obtain the required result  \eqref{G1}. Due to the uniqueness of limits, we find $\wi{\B}(\cdot)f(\cdot)=\B(\cdot,\vi{\Y}^{\p}(\cdot))f(\cdot)$ and $\wi{\gamma}(\cdot,z)\big(g(\cdot,z)-1\big)=\gamma(\cdot,\vi{\Y}^{\p}(\cdot),z)\big(g(\cdot,z)-1\big)$ in $\L^1(0,T;\H)$.

 Using Claims 1, 2 and 3, we can pass the limit in \eqref{CPDE16} to obtain the following   system for a.e. $t\in[0,T]$: 
\begin{equation}\label{CPDE27}
	\left\{ 	\begin{aligned}
		\langle \partial_t \vi{\Y}^{\p}(t), \bf{\phi}\rangle& =-\langle\cc(t,\vi{\Y}^{\p}(t)),\bf{\phi}\rangle,\\
		\big(\vi{\Y}^{\p}(0),\bf{\phi}\big)&=\big(\x,\bf{\phi}\big),
	\end{aligned}
	\right.
\end{equation}for all $\bf{\phi}\in\V$.  
Moreover, we have the following  energy equality:
\begin{align}\label{CPDE29}
	\|\vi{\Y}^{\p}(t)\|_\H^2=\|\x\|_\H^2-2\int_0^t\langle \cc(\vi{\Y}^{\p}(s)),\vi{\Y}^{\p}(s)\rangle\d s,
\end{align}for all $t\in[0,T]$. 
Therefore, the equation \eqref{CPDE09}  has a weak solution $\vi{\Y}^{\p}\in \L^\infty(0,T;\H)\cap\L^\beta(0,T;\V)$, for $\beta\in(1,\infty)$, such that the following energy equality satisfied:
\begin{align}\label{CPDE39}\nonumber
		\|\vi{\Y}^{\p}(t)\|_\H^2 &\nonumber=\|\x\|_\H^2+2\int_0^t \bigg\{\langle\A(t,\vi{\Y}^{\p}(s)),\vi{\Y}^{\p}(s)\rangle+\big(\B(s,\vi{\Y}^{\p}(s))f(s),\vi{\Y}^{\p}(s)\big)\\&\qquad +\bigg(\int_\Z\gamma(t,\vi{\Y}^{\p}(s),z)\big(g(s,z)-1\big)\nu(\d z),\vi{\Y}^{\p}(s)\bigg)\bigg\}\d s ,
	\end{align}for all $t\in[0,T]$.

\vspace{2mm}
\noindent
\textbf{Step 4:} \textsf{Uniqueness of weak solution.} Let $\vi{\Y}^{\p}_1(\cdot)$ and $\vi{\Y}^{\p}_2(\cdot)$ be any two weak solution of \eqref{CPDE09} with the same initial data $\x\in \H$. From \eqref{CPDE09}, for all $t\in[0,T],$ we have
\begin{align*}
	\big(\vi{\Y}^{\p}_1(t)-\vi{\Y}^{\p}_1(t), \bf{\phi}\big)=-\int_0^t\langle \cc(\vi{\Y}^{\p}_1(s))-\cc(\vi{\Y}^{\p}_2(s)),\bf{\phi}\rangle\d s, \ \text{ for all } \ \bf{\phi}\in\V.
\end{align*}We know that the energy equality \eqref{CPDE29} holds, and using the local monotoncity condition \eqref{CPDE11}, we obtain 
\begin{align}\label{CPDE40}\nonumber
	\|\vi{\Y}^{\p}_1(t)-\vi{\Y}^{\p}_2(t)\|_\H^2 &=-2\int_0^t\langle \cc(\vi{\Y}^{\p}_1(s))-\cc(\vi{\Y}^{\p}_2(s)),\vi{\Y}^{\p}_1(s)-\vi{\Y}^{\p}_2(s)\rangle\d s \\&\nonumber\leq \int_0^t\bigg\{\big(\fk{a}(s)+\rho(\vi{\Y}^{\p}_1(s))+\eta(\vi{\Y}^{\p}_2(s))\big)+2\|f(s)\|_\H^2\\&\qquad+\int_\Z R_\gamma(s,z)\big|g(s,z)-1\big|\nu(\d z)\bigg\}\|\vi{\Y}^{\p}_1-\vi{\Y}^{\p}_2(s)\|_\H^2\d s,
\end{align}for all $t\in[0,T]$. Using the fact that $\vi{\Y}^{\p}_1,\vi{\Y}^{\p}_2\in \L^\infty(0,T;\H)\cap\L^\beta(0,T;\V)$ for $\beta\in(1,\infty)$,  $\fk{a}\in\L^1(0,T;\R^+)$, $f\in\L^2(0,T;\H)$ and Lemma \ref{lemUF}, the uniqueness is straightforward by an application of Gronwall's inequality in \eqref{CPDE40}.
\end{proof} 
Let us define 
\begin{align*}
	\mathscr{G}^0\left(\int_0^{\cdot} f (s)\d s,\nu_T^g\right)=\vi{\Y}^{\p},\ \text{ for }\ \p=(f,g)\in\bar{S}^\Upsilon,
\end{align*}
where $\vi{\Y}^{\p}(\cdot)$  is the unique weak solution of \eqref{SE2} obtained in Theorem \ref{CPDE1}.  Now, we are in a position to verify Condition \ref{Cond1} (i).
\begin{proposition}\label{VCond1}
Fix $\Upsilon\in\mathbb{N}$.	Let $\p_n=(f_n,g_n),\ \p=(f,g)\in \bar{S}^\Upsilon$ be such that $\p_n\to\p$ as $n\to\infty$. Then we have 
	\begin{align}\label{Vcond01}
		\mathscr{G}^0\left(\int_0^{\cdot} f_n (s)\d s,\nu_T^{g_n}\right)\to\mathscr{G}^0\left(\int_0^{\cdot} f (s)\d s,\nu_T^g\right)\ \text{ as } \ n\to\infty,
	\end{align}in $\C([0,T];\H)$.
\end{proposition}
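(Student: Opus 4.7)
The plan is to follow the compactness-and-identification scheme already used in the proof of Theorem \ref{CPDE1}, combined with the uniqueness of the skeleton equation, and to upgrade subsequential weak limits to uniform convergence on $[0,T]$ by a Gronwall argument based on the full local monotonicity (H.2) together with (H.7) and (H.9). Since uniqueness of \eqref{SE2} is already established, it suffices to show that every subsequence of $\{\vi{\Y}^{\p_n}\}$ admits a further subsequence converging in $\C([0,T];\H)$ to $\vi{\Y}^{\p}$; then the full sequence converges.

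\textbf{Compactness and identification.} Since each $\p_n\in\bar S^\Upsilon$, the uniform bound \eqref{CPDE01} implies $\{\vi{\Y}^{\p_n}\}$ is bounded in $\L^\infty(0,T;\H)\cap\L^\beta(0,T;\V)$ uniformly in $n$. Growth bound (H.4) gives a uniform bound for $\A(\cdot,\vi{\Y}^{\p_n})$ in $\L^{\beta/(\beta-1)}(0,T;\V^*)$, while Hypothesis \ref{hyp2} and Lemma \ref{lemUF} give uniform bounds for $\B(\cdot,\vi{\Y}^{\p_n})f_n$ and for $\int_\Z \gamma(\cdot,\vi{\Y}^{\p_n},z)(g_n(\cdot,z)-1)\nu(\d z)$ in $\L^1(0,T;\H)$, whence $\partial_t\vi{\Y}^{\p_n}$ is bounded in $\L^1(0,T;\H)+\L^{\beta/(\beta-1)}(0,T;\V^*)$. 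By Banach-Alaoglu and Aubin-Lions, along a subsequence, $\vi{\Y}^{\p_n}\rightharpoonup\Y$ in $\L^\beta(0,T;\V)$ and weak-$*$ in $\L^\infty(0,T;\H)$, with $\vi{\Y}^{\p_n}\to\Y$ in $\L^\beta(0,T;\H)$ and a.e. in $t$. To pass to the limit in \eqref{SE2}, I would identify the weak limit of $\A(\cdot,\vi{\Y}^{\p_n})$ with $\A(\cdot,\Y)$ via Lemma \ref{lemmaMT2}, exactly as in Claim 2 of Theorem \ref{CPDE1}; the $\B$-term is handled by (H.7) for $\B(\cdot,\vi{\Y}^{\p_n})-\B(\cdot,\vi{\Y}^{\p})$ combined with the weak $\L^2(0,T;\mathcal{I}_2)$ convergence $f_n\to f$; the $\gamma$-term is treated by splitting
\[
\int_\Z\gamma(\cdot,\vi{\Y}^{\p_n},z)(g_n-1)\nu(\d z)-\int_\Z\gamma(\cdot,\Y,z)(g-1)\nu(\d z)
\]
into a Lipschitz piece controlled by (H.9) with the strong $\L^\beta(0,T;\H)$ convergence, and a ``test-function'' piece $\int_\Z\gamma(\cdot,\Y,z)(g_n-g)\nu(\d z)$ handled by Lemma \ref{lemUF3}(1), with (H.8) supplying the required integrability on $\nu_T$ and the exponential moment. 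By uniqueness of \eqref{SE2}, $\Y=\vi{\Y}^{\p}$, so the whole sequence converges in $\L^\beta(0,T;\H)$ and each marginal $\vi{\Y}^{\p_n}(t)\to\vi{\Y}^{\p}(t)$ weakly in $\H$.

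\textbf{Upgrade to $\C([0,T];\H)$ and main obstacle.} Writing $Z_n=\vi{\Y}^{\p_n}-\vi{\Y}^{\p}$, the chain rule (Proposition 23.23, \cite{EZ}) applied as in \eqref{CPDE29} yields
\begin{align*}
\|Z_n(t)\|_\H^2 &= 2\int_0^t\langle\A(s,\vi{\Y}^{\p_n})-\A(s,\vi{\Y}^{\p}),Z_n\rangle\d s\\
&\quad+2\int_0^t\bigl(\B(s,\vi{\Y}^{\p_n})f_n-\B(s,\vi{\Y}^{\p})f,Z_n\bigr)\d s\\
&\quad+2\int_0^t\!\!\int_\Z\bigl(\gamma(s,\vi{\Y}^{\p_n},z)(g_n-1)-\gamma(s,\vi{\Y}^{\p},z)(g-1),Z_n\bigr)\nu(\d z)\d s.
\end{align*}
The plan is to split each noise-coefficient difference $\B(s,\vi{\Y}^{\p_n})f_n-\B(s,\vi{\Y}^{\p})f$ (and its $\gamma$-analogue) into the ``same-control'' part $[\B(s,\vi{\Y}^{\p_n})-\B(s,\vi{\Y}^{\p})]f_n$ plus the ``frozen-state'' residual $\B(s,\vi{\Y}^{\p})(f_n-f)$; then absorb $2\langle\A(s,\vi{\Y}^{\p_n})-\A(s,\vi{\Y}^{\p}),Z_n\rangle$ together with $\|\B(s,\vi{\Y}^{\p_n})-\B(s,\vi{\Y}^{\p})\|_{\L_2}^2$ and the Lipschitz $\gamma$-term into a factor $[\fk{a}(s)+\rho(\vi{\Y}^{\p_n})+\eta(\vi{\Y}^{\p})+C\|f_n(s)\|^2+C\int_\Z R_\gamma(s,z)|g_n(s,z)-1|\nu(\d z)]\|Z_n\|_\H^2$ using (H.2), (H.7), (H.9) and Young's inequality. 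Since the growth bound on $\rho,\eta$ in (H.2) and the uniform bounds on $\vi{\Y}^{\p_n},\vi{\Y}^{\p}$ in $\L^\beta(0,T;\V)\cap\L^\infty(0,T;\H)$, together with Lemma \ref{lemUF}, make this coefficient an $\L^1(0,T)$-function with a bound uniform in $n$, Gronwall's inequality reduces the matter to showing that the residual
\[
R_n(t):=\sup_{t\in[0,T]}\Bigl|\int_0^t\bigl(\B(s,\vi{\Y}^{\p})(f_n-f),Z_n\bigr)\d s + \int_0^t\!\!\int_\Z\bigl(\gamma(s,\vi{\Y}^{\p},z)(g_n-g),Z_n\bigr)\nu(\d z)\d s\Bigr|
\]
tends to zero. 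This is the hard part: one needs uniformity in $t\in[0,T]$, not merely Lebesgue convergence. For the Gaussian residual, this follows from the weak convergence $f_n\to f$ on the bounded set $\vi S^\Upsilon$ and uniform Cauchy control of the Bochner integrals (standard once $\B(\cdot,\vi{\Y}^{\p}(\cdot))\in\L^2(0,T;\L_2)$ is established). For the Poisson residual, Lemma \ref{lemUF3}(1)--(3) provides equicontinuity of $t\mapsto\int_0^t\int_\Z h(s,z)(g_n-g)\nu(\d z)\d s$ uniformly in $g_n\in S^\Upsilon$, with $h(s,z)=L_\gamma(s,z)(1+\sup_s\|\vi{\Y}^{\p}(s)\|_\H)\in\L^2(\nu_T)\cap\mathcal{H}_2$ thanks to (H.8); combined with the uniform boundedness of $Z_n$ in $\L^\infty(0,T;\H)$ this yields $R_n(t)\to0$ uniformly. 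Plugging back into Gronwall completes the proof.
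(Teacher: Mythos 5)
Your compactness-and-identification step and the Gronwall skeleton of the upgrade to $\C([0,T];\H)$ follow the paper's own route (uniform bounds from \eqref{CPDE01}, Banach--Alaoglu and Aubin--Lions, identification of the drift through the pseudo-monotonicity Lemma \ref{lemmaMT2}, uniqueness of \eqref{SE2}, then an energy identity for $Z_n=\vi{\Y}^{\p_n}-\vi{\Y}^{\p}$ with the ``same-control'' pieces absorbed via (H.2), (H.7), (H.9)). The genuine gap is in the very step you single out as the hard one: the proof that the residual $R_n$ vanishes. For the Poisson residual you invoke equicontinuity of $t\mapsto\int_0^t\int_\Z h(s,z)\big(g_n(s,z)-g(s,z)\big)\nu(\d z)\d s$ together with $\sup_n\sup_{t}\|Z_n(t)\|_\H<\infty$; this does not suffice, because $Z_n(s)$ sits inside the $s$-integral and depends on $n$. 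To pull it out by its sup-norm you must bound the integrand by $h(s,z)\,|g_n(s,z)-g(s,z)|\,\sup_s\|Z_n(s)\|_\H$, and the convergence $g_n\to g$ in $S^\Upsilon$ (weak convergence of the measures $g_n\nu_T$) gives no control on $\int_{\Z_T}h\,|g_n-g|\,\nu(\d z)\d s$: Lemma \ref{lemUF3}(1) only handles the signed integral against a \emph{fixed} $h$, not the absolute value, and not $n$-dependent test factors. The same objection hits the Gaussian residual: weak convergence $f_n\rightharpoonup f$ paired against the $n$-dependent factor $\B(s,\vi{\Y}^{\p}(s))^*Z_n(s)$ proves nothing, and the multiplication operator $f\mapsto\B(\cdot,\vi{\Y}^{\p}(\cdot))f(\cdot)$ is not compact between the relevant Bochner spaces (already for a constant rank-one $\B$ and $f_n-f=\sin(nt)\,e_1$ the image does not converge strongly in $\L^1(0,T;\H)$), so ``weak convergence plus uniform Cauchy control'' does not yield $R_n\to0$ uniformly in $t$.

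The paper closes this step by a mechanism that uses no convergence of the controls at all, and which you already have available from Step 1 but do not exploit: the strong convergence $Z_n\to 0$ in $\L^\beta(0,T;\H)$ gives $\lambda_T(A_{\delta,n})\to0$ for $A_{\delta,n}=\{t:\|Z_n(t)\|_\H>\delta\}$. In the residual terms one then bounds $\|f_n-f\|_\H\le\|f_n\|_\H+\|f\|_\H$ and $|g_n-g|\le|g_n-1|+|g-1|$, estimates the contribution of $A_{\delta,n}^c$ by $\delta$ times a constant furnished by Lemma \ref{lemUF}, and the contribution of $A_{\delta,n}$ by the absolute continuity of $\int L_\B(t)\d t$ (Gaussian part) and by Lemma \ref{lemUF3}(3) (Poisson part), letting $n\to\infty$ and then $\delta\to0$. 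Equivalently, a Vitali-type argument based on $\|Z_n(\cdot)\|_\H\to0$ in measure plus the uniform bound \eqref{CPDE01} works. Rewriting your residual estimate along these lines repairs the proof; as stated, the argument for $R_n\to0$ does not go through.
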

\begin{proof}
Let us fix $	\mathscr{G}^0\left(\int_0^{\cdot} f_n (s)\d s,\nu_T^{g_n}\right):=\vi{\Y}^{\p_n}$. Then, $\vi{\Y}^{\p_n}(\cdot)$ satisfies the following deterministic control problem:
\begin{equation}\label{VCond02}
	\left\{
	\begin{aligned}
		\d \vi{\Y}^{\p_n}(t)&= \bigg[\A(t,\vi{\Y}^{\p_n}(t))+\B(t,\vi{\Y}^{\p_n}(t))f_n(t)+\int_\Z\gamma(t,\vi{\Y}^{\p_n}(t),z)\big(g_n(t,z)-1\big)\nu(\d z)\bigg]\d t,\\
		\vi{\Y}^{\p_n}(0)&=\x\in \H.
	\end{aligned}
\right.
\end{equation}Since $\p_n\in\bar{S}^\Upsilon$, using the energy estimate \eqref{CPDE01} and the Banach-Alaoglu theorem, we obtain the following convergence along a subsequence  (which is denoted by the same)
\begin{equation}\label{VCond03}
	\left\{
	\begin{aligned}
		\vi{\Y}^{\p_n} &\xrightharpoonup{\ast} 	\vi{\Y}, &&\text{ in } \L^\infty(0,T;\H),\\
			\vi{\Y}^{\p_n} &\xrightharpoonup{} 	\vi{\Y}, &&\text{ in } \L^\beta(0,T;\V), \\
	\partial_t	\vi{\Y}^{\p_n} &\xrightharpoonup{} 	\partial_t\vi{\Y}, &&\text{ in } \L^1(0,T;\H)+\L^{\frac{\beta}{\beta-1}}(0,T;\V)	, \ \text{ for }\ \beta\in(1,\infty),
		\end{aligned}
\right.
\end{equation}as $n\to\infty$. Using the Aubin-Lions compactness result, we obtain the strong convergence along a subsequence (still denoted by the same)
\begin{align}\label{VCond04}
		\vi{\Y}^{\p_n} \to 	\vi{\Y}, \text{ in } \L^\beta(0,T;\H), \ \text{ for } \ \beta\in(1,\infty),
\end{align}
and along a further subsequence 
\begin{align}\label{448}
	\|\vi{\Y}^{\p_n}(t)\|_{\H}\to \|\vi{\Y}(t)\|_{\H},\ \text{ for a.e. } \ t\in[0,T], 
\end{align}
as $n\to\infty$. 
Since $\p=(f,g)\in \bar{S}^\Upsilon$, there exists a unique weak solution $\vi{\Y}^{\p}=	\mathscr{G}^0\left(\int_0^{\cdot} f (s)\d s,\nu_T^g\right)$ of the problem \eqref{SE2}. 

\vspace{2mm}
\noindent
\textbf{Step 1:} \textsf{To prove $\vi{\Y}=\vi{\Y}^{\p}$.} 
Let us choose a continuously differentiable function $\Psi$ on $[0,T]$ with $\Psi(T)=0$. Taking the inner product  with $\Psi(t)\bfe_j$ to the first equation in \eqref{VCond02}, and then integrating by parts, we find 
\begin{align}\label{VC1}\nonumber
	-&\int_0^T \big(\vi{\Y}^{\p_n}(t),\Psi'(t)\bfe_j\big)\d t\\&\nonumber=\big(\x,\Psi(0)\bfe_j\big)+\int_0^T\langle \A(t,\vi{\Y}^{\p_n}(t)),\Psi(t)\bfe_j\rangle \d t + \int_0^T \big(\B(t,\vi{\Y}^{\p_n}(t))f_n(t),\Psi(t)\bfe_j\big) \d t\\&\quad+ \int_0^T \bigg(\int_\Z \gamma(t,\vi{\Y}^{\p_n}(t),z)\big(g_n(t,z)-1\big)\nu(\d z),\Psi(t)\bfe_j\bigg)\d t.
\end{align}
From \eqref{CPDE24} and Step 3, Claim 2 in the proof of Theorem \ref{CPDE1}, we easily have 
\begin{align}\label{450}
	\lim_{n\to\infty}\int_0^T\langle \A(t,\vi{\Y}^{\p_n}(t)),\Psi(t)\bfe_j\rangle \d t=\int_0^T\langle \A(t,\vi{\Y}(t)),\Psi(t)\bfe_j\rangle \d t.
\end{align}
The convergence of the penultimate term in the right hand side of the equation \eqref{VC1} can be justified as follows:
\begin{align}\label{VC2}\nonumber
&	\bigg|\int_0^T\big( \B(t,\vi{\Y}^{\p_n}(t))f_n(t),\Psi(t)\bfe_j\big) \d t-\int_0^T\big( \B(t,\vi{\Y}(t))f(t),\Psi(t)\bfe_j\big) \d t\bigg|\\&\nonumber\nonumber \leq 
	\bigg|\int_0^T\big( (\B(t,\vi{\Y}^{\p_n}(t))-\B(t,\vi{\Y}(t)))f_n(t),\Psi(t)\bfe_j\big) \d t\bigg|\\&\quad+\bigg|\int_0^T\big( \B(t,\vi{\Y}(t))(f_n(t)-f(t)),\Psi(t)\bfe_j\big) \d t\bigg|. 
\end{align}Let us take $C_\Psi=\sup\limits_{t\in[0,T]}|\Psi(t)|$.  Now, we consider the first term in the right hand side of the  inequality \eqref{VC2} and estimate it using Hypothesis \ref{hyp2} (H.7) as 
\begin{align}\label{VC3}\nonumber
	&	\bigg|\int_0^T\big( (\B(t,\vi{\Y}^{\p_n}(t))-\B(t,\vi{\Y}(t)))f_n(t),\Psi(t)\bfe_j\big) \d t\bigg|\\&\nonumber\leq \sup_{t\in[0,T]}|\Psi(t)| \int_0^T\sqrt{ \L_B(t)}\| \vi{\Y}^{\p_n}(t)-\vi{\Y}(t)\|_\H \|f_n(t)\|_\H \d t \\&\nonumber\leq 
	C_{\Psi}\int_0^T \big(L_\B(t)+\|f_n(t)\|_\H^2\big)\|\vi{\Y}^{\p_n}(t)-\vi{\Y}(t)\|_\H\d t\nonumber\\&\to 0 \text{ as } n\to\infty,
\end{align}where we have used the   convergence \eqref{448}, the fact that $\|f_n(t)\|_{\H}\to\|f(t)\|_{\H}$, for a.e. $t\in[0,T]$  and Dominated Convergence Theorem. Note that 
\begin{align*}
&	\int_0^T \big(L_\B(t)+\|f_n(t)\|_\H^2\big)\|\vi{\Y}^{\p_n}(t)-\vi{\Y}(t)\|_\H\d t\nonumber\\&\leq \sup_{t\in[0,T]}\left(\|\vi{\Y}^{\p_n}(t)\|_{\H}+\|\vi{\Y}(t)\|_\H\right)\left(\int_0^T L_\B(t)\d t+\Upsilon\right)\leq C. 
\end{align*}
The final term of the right hand side of  \eqref{VC2} can be estimates as follows:
\begin{align}\label{VC4}\nonumber
\bigg|\int_0^T\big( \B(t,\vi{\Y}(t))(f_n(t)-f(t)),\Psi(t)\bfe_j\big) \d t\bigg| &\leq C_\Psi \int_0^T\|\B(t,\vi{\Y}(t))(f_n(t)-f(t))\|_\H\d t \\&\to 0\  \text{ as }\ n\to\infty,
\end{align}where we have used the fact that  the operator $\B(\cdot,\vi{\Y}(\cdot))$ is Hilbert-Schmidt in $\H$ and hence it is compact in $\H$ and we know that a compact operator maps weakly convergent sequences into  strongly  convergent sequences. One can show the above convergence in the following way also:
\begin{align*}
&	\int_0^T\|\B(t,\vi{\Y}(t))(f_n(t)-f(t))\|_\H\d t\nonumber\\& \leq \bigg(1+\sup_{t\in[0,T]}\|\vi{\Y}(t)\|_{\H}\bigg)\left( \int_0^T\mathfrak{b}(t)\d t\right)^{1/2}\left(\int_0^T\|f_n(t)-f(t)\|_{\H}^2\d t\right)^{1/2}\nonumber\\&\to 0 \ \text{ as }\ n\to\infty,
\end{align*}
where we have used Hypothesis \ref{hyp1} (H.5) and H\"older's inequality. 

The convergence of the final term in the right hand side of the equation \eqref{VC1} can be justified as follows:
\begin{align}\label{VC5}\nonumber
&\bigg|	\int_0^T \bigg(\int_\Z \gamma(t,\vi{\Y}^{\p_n}(t),z)\big(g_n(t,z)-1\big)\nu(\d z),\Psi(t)\bfe_j\bigg)\d t\\&\nonumber\quad -	\int_0^T \bigg(\int_\Z \gamma(t,\vi{\Y}(t),z)\big(g(t,z)-1\big)\nu(\d z),\Psi(t)\bfe_j\bigg)\d t\bigg|\\&\nonumber\leq  \bigg|
	\int_0^T \bigg(\int_\Z \big(\gamma(t,\vi{\Y}^{\p_n}(t),z)-\gamma(t,\vi{\Y}(t),z)\big)\big(g_n(t,z)-1\big)\nu(\d z),\Psi(t)\bfe_j\bigg)\d t\bigg|\\&\quad +
	\bigg|
	\int_0^T \bigg(\int_\Z \gamma(t,\vi{\Y}(t),z)\big((g_n(t,z)-1)-(g(t,z)-1)\big)\nu(\d z),\Psi(t)\bfe_j\bigg)\d t\bigg|  .
\end{align}	For any $\delta>0$, define $A_{\delta,n}=\{t\in[0,T]:\|\vi{\Y}^{\p_n}(t)-\vi{\Y}(t)\|_\H>\delta\}$. Then,  using a similar argument as in \eqref{B2}, we conclude
\begin{align}\label{VC6}
\lim_{n\to\infty}\lambda_T\big(A_{\delta,n}\big)=0.
\end{align}Let us fix $K=\sup\limits_{n\in\N}\sup\limits_{t\in[0,T]}\|\vi{\Y}^{\p_n}(t)\|_\H\vee \sup\limits_{t\in[0,T]}\|\vi{\Y}(t)\|_\H<\infty$. Now, we consider the penultimate term in the right hand side of the  inequality \eqref{VC5} and estimate it using the fact that $g_n\in S^\Upsilon$ and Hypothesis \ref{hyp2} (H.9) as 
\begin{align}\label{VC7}\nonumber
	& \bigg|
	\int_0^T \bigg(\int_\Z \big(\gamma(t,\vi{\Y}^{\p_n}(t),z)-\gamma(t,\vi{\Y}(t),z)\big)\big(g_n(t,z)-1\big)\nu(\d z),\Psi(t)\bfe_j\bigg)\d t\bigg| \\&\nonumber\leq \int_{\Z_T} R_\gamma(t,z)\|\vi{\Y}^{\p_n}(t)-\vi{\Y}(t)\|_\H \big|g_n(t,z)-1\big||\Psi(t)|\nu(\d z)\d t \\&\nonumber\leq C_\Psi \int_{\Z_T} R_\gamma(t,z)\|\vi{\Y}^{\p_n}(t)-\vi{\Y}(t)\|_\H \big|g_n(t,z)-1\big|\nu(\d z)\d t \\&\leq \delta C_\Psi \int_{A_{\delta,n}^c}\int_\Z R_\gamma(t,z)\big|g_n(t,z)-1\big|\nu(\d z)\d t +2K C_\Psi \int_{A_{\delta,n}}\int_\Z R_\gamma (t,z)\big|g_n(t,z)-1\big|\nu(\d z)\d t.
\end{align}
Using the fact that $g_n,g\in S^\Upsilon$, the definition of $A_{\delta,n}$ and Hypothesis \ref{hyp2} (H.8), we estimate the final term of \eqref{VC5} as
\begin{align}\label{VC8}\nonumber
	&\bigg|
	\int_0^T \bigg(\int_\Z \gamma(t,\vi{\Y}(t),z)\big((g_n(t,z)-1)-(g(t,z)-1)\big)\nu(\d z),\Psi(t)\bfe_j\bigg)\d t\bigg|  \\&\nonumber\leq \int_{\Z_T}L_\gamma(t,z)\|\vi{\Y}(t)\|_\H \big|(g_n(t,z)-1)-(g(t,z)-1)\big||\Psi(t)|\nu(\d z)\d t \\&\nonumber\leq C_\Psi \int_{\Z_T}L_\gamma(t,z)\|\vi{\Y}(t)\|_\H \big|(g_n(t,z)-1)-(g(t,z)-1)\big|\nu(\d z)\d t  \\&\nonumber\leq \delta  C_\Psi \int_{A_{\delta,n}^c}\int_\Z L_\gamma(t,z) \big|(g_n(t,z)-1)-(g(t,z)-1)\big|\nu(\d z)\d t\\&\nonumber\quad+ K C_\Psi \int_{A_{\delta,n}}\int_\Z L_\gamma(t,z)\big|(g_n(t,z)-1)-(g(t,z)-1)\big|\nu(\d z)\d t 
	\\&\nonumber \leq C_\Psi \bigg\{\delta \int_{A_{\delta,n}^c}\int_\Z L_\gamma(t,z) \big|g_n(t,z)-1\big|\nu(\d z)\d t+ K  \int_{A_{\delta,n}}\int_\Z L_\gamma(t,z)\big|g_n(t,z)-1\big|\nu(\d z)\d t \bigg\}\\&\quad + C_\Psi \bigg\{\delta \int_{A_{\delta,n}^c}\int_\Z L_\gamma(t,z) \big|g(t,z)-1\big|\nu(\d z)\d t+ K  \int_{A_{\delta,n}}\int_\Z L_\gamma(t,z)\big|g(t,z)-1\big|\nu(\d z)\d t \bigg\}.
\end{align}Substituting \eqref{VC7} and \eqref{VC8} in \eqref{VC5},  passing $n\to\infty$ and applying  Lemma \ref{lemUF3} (3),  we obtain
\begin{align}\label{VC08}\nonumber
		&\bigg|	\int_0^T \bigg(\int_\Z \gamma(t,\vi{\Y}^{\p_n}(t),z)\big(g_n(t,z)-1\big)\nu(\d z),\Psi(t)\bfe_j\bigg)\d t\\&\quad -	\int_0^T \bigg(\int_\Z \gamma(t,\vi{\Y}(t),z)\big(g(t,z)-1\big)\nu(\d z),\Psi(t)\bfe_j\bigg)\d t\bigg| \to 0 \ \text{ as } \ n\to\infty,
\end{align}since $\delta>0$ is arbitrary. Combining \eqref{450}-\eqref{VC4} and  \eqref{VC08} with \eqref{VC1}, we deduce 
\begin{align}\label{VC9}\nonumber
	-&\int_0^T \big(\vi{\Y}(t),\Psi'(t)\bfe_j\big)\d t\\&\nonumber=\big(\x,\Psi(0)\bfe_j\big)+\int_0^T\langle \A(t,\vi{\Y}(t)),\Psi(t)\bfe_j\rangle \d t + \int_0^T \big(\B(t,\vi{\Y}(t))f(t),\Psi(t)\bfe_j\big) \d t\\&\quad+ \int_0^T \bigg(\int_\Z \gamma(t,\vi{\Y}(t),z)\big(g(t,z)-1\big)\nu(\d z),\Psi(t)\bfe_j\bigg)\d t.
\end{align}
Now, one can conclude by using similar arguments as in the proof of Theorem 3.1, \cite{RT} (see Section 3, Chapter III). Since the equality \eqref{VC9} holds for any linear combination of $\{\bfe_1,\bfe_2,\ldots\}$, therefore it is holds for any $\v\in\V$ also by a continuity argument. By choosing $\Psi\in\mathcal{D}([0,T))$ (the space of test functions over $[0,T)$), we find that $\vi{\Y}(\cdot)$ satisfies the equation \eqref{VC9} in the sense of distributions.   Therefore, by the uniqueness of weak solutions of \eqref{SE1}, we have  $\vi{\Y}=\vi{\Y}^{\p}$. 

\vspace{2mm}
\noindent
\textbf{Step 2:} \textsf{To prove $	\vi{\Y}^{\p_n}\to \vi{\Y}^{\p}$ in $\C([0,T];\H)$ as $n\to\infty$.} 
Our aim is to establish the following convergence:
\begin{align}\label{VCond05}
	\sup_{t\in[0,T]}\|\vi{\Y}^{\p_n}(t)-\vi{\Y}^{\p}(t)\|_\H^2
	\to 0\  \text{ as } \ n\to\infty.
\end{align}Recall that for the system \eqref{SE1}, the energy equality \eqref{CPDE39} holds true. Define $ \vi{\bfX}^{\p_n}_{\p}(\cdot):=  \vi{\Y}^{\p_n} (\cdot)- \vi{\Y}^{\p} (\cdot)$, so that $ \vi{\bfX}^{\p_n}_{\p}(\cdot)$ satisfies the following SPDEs for a.e. $t\in[0,T]$: 
\begin{equation}\label{VCond06}
	\left\{
	\begin{aligned}
		\d \vi{\bfX}^{\p_n}_{\p}(t)&= \bigg[\A(t,\vi{\Y}^{\p_n}(t))-\A(t,\vi{\Y}^{\p}(t))+\B(t,\vi{\Y}^{\p_n}(t))f_n(t)-\B(t,\vi{\Y}^{\p}(t))f(t)\\&\quad+\int_\Z\gamma(t,\vi{\Y}^{\p_n}(t),z)\big(g_n(t,z)-1\big)\nu(\d z)-\int_\Z\gamma(t,\vi{\Y}^{\p}(t),z)\big(g(t,z)-1\big)\nu(\d z)\bigg]\d t,\\
		\vi{\bfX}^{\p_n}_{\p}(0)&=\bf{0}.
	\end{aligned}
	\right.
\end{equation}
It should be noted that both $ \vi{\Y}^{\p_n} (\cdot)$ and $ \vi{\Y}^{\p} (\cdot)$ satisfy the estimate \eqref{CPDE01}. Taking the inner product with $\vi{\bfX}^{\p_n}_{\p}(\cdot)$ to the first equation of the system \eqref{VCond06} and then using Hypothesis \ref{hyp1} (H.2), we get 
\begin{align}\label{VCond07}\nonumber
&	\|\vi{\bfX}^{\p_n}_{\p}(t)\|_\H^2 \\&\nonumber=2\int_0^t \langle \A(s,\vi{\Y}^{\p_n}(s))-\A(s,\vi{\Y}^{\p}(s)),\vi{\bfX}^{\p_n}_{\p}(s)\rangle \d s\\&\nonumber\quad +2\int_0^t \big(\B(s,\vi{\Y}^{\p_n}(s))f_n(s)-\B(s,\vi{\Y}^{\p}(s))f(s),\vi{\bfX}^{\p_n}_{\p}(s)\big)\d s\\&\nonumber\quad + 2\int_0^t\bigg(\int_\Z \big\{\gamma(s,\vi{\Y}^{\p_n}(s),z)\big(g_n(s,z)-1\big)-\gamma(s,\vi{\Y}^{\p}(s),z)\big(g(s,z)-1\big)\big\}\nu(\d z),\vi{\bfX}^{\p_n}_{\p}(s)\bigg) \\&\nonumber \leq 
\int_0^t \big\{\fk{a}(s)+\rho(\vi{\Y}^{\p_n}(s))+\eta(\vi{\Y}^{\p}(s))\big\}\|\vi{\bfX}^{\p_n}_{\p}(s)\|_\H^2\d s\\&\nonumber\quad+ 2\int_0^t\big((\B(s,\vi{\Y}^{\p_n}(s))-\B(s,\vi{\Y}^{\p}(s)))f_n(s), \vi{\bfX}^{\p_n}_{\p}(s)\big)\d s \\&\nonumber\quad+2\int_0^t\big(\B(s,\vi{\Y}^{\p}(s))\big(f_n(s)-f(s)\big), \vi{\bfX}^{\p_n}_{\p}(s)\big)\d s\\&\nonumber\quad
+ 2\int_0^t\int_\Z \left(\big(\gamma(s,\vi{\Y}^{\p_n}(s),z)-\gamma(s,\vi{\Y}^{\p}(s),z)\big)\big(g_n(s,z)-1\big),\vi{\bfX}^{\p_n}_{\p}(s) \right)\nu(\d z)\d s\\&\nonumber\quad +
2\int_0^t\int_\Z \left(\gamma(s,\vi{\Y}^{\p_n}(s),z)\big((g_n(s,z)-1)-(g(s,z)-1)\big),\vi{\bfX}^{\p_n}_{\p}(s) \right)\nu(\d z)\d s\\&\leq \int_0^t \big\{\fk{a}(s)+\rho(\vi{\Y}^{\p_n}(s))+\eta(\vi{\Y}^{\p}(s))\big\}\|\vi{\bfX}^{\p_n}_{\p}(s)\|_\H^2\d s+\sum_{i=1}^{4}|J_i^n(t)|,
\end{align}
for all $t\in[0,T]$, where $J_i^n$, for $i=1,\ldots,4$ represent the final four terms. Using the Cauchy-Schwarz inequality, Hypothesis \ref{hyp2} (H.7) and Young's inequality, we estimate  $|J_1^n(t)|$ as
\begin{align}\label{VCond08}\nonumber
	|J_1^n(t)|&\leq 2\int_0^t\|\B(s,\vi{\Y}^{\p_n}(s))-\B(s,\vi{\Y}^{\p}(s))\|_{\L_2}\|f_n(s)\|_\H\|\vi{\bfX}^{\p_n}_{\p}(s)\|_\H\d s	\\&\nonumber\leq 2\int_0^t\sqrt{L_\B(s)}\|f_n(s)\|_\H\|\vi{\bfX}^{\p_n}_{\p}(s)\|_\H^2\d s \\&\leq \int_0^t\big\{L_\B(s)+\|f_n(s)\|_\H^2\big\}\|\vi{\bfX}^{\p_n}_{\p}(s)\|_\H^2\d s.
\end{align}Now, we consider the term $|J_3^n(t)|$, and estimate it using the Cauchy-Schwarz inequality, Hypothesis \ref{hyp2} (H.9) and Lemma \ref{lemUF3} as 
\begin{align}\label{VCond09}\nonumber
	|J_3^n(t)| &\leq  2\int_0^t\int_\Z \|\gamma(s,\vi{\Y}^{\p_n}(s),z)-\gamma(s,\vi{\Y}^{\p}(s),z)\|_\H\big|g_n(s,z)-1\big|\|\vi{\bfX}^{\p_n}_{\p}(s)\|_\H\nu(\d z)\d s \\& \leq 
	 2\int_0^t\int_\Z R_\gamma(s,z)\big|g_n(s,z)-1\big|\|\vi{\bfX}^{\p_n}_{\p}(s)\|_\H^2\nu(\d z)\d s.
\end{align}
 Substituting \eqref{VCond08} and \eqref{VCond09} in \eqref{VCond07}, we obtain
\begin{align}\label{VCond10}\nonumber
	\|\vi{\bfX}^{\p_n}_{\p}(t)\|_\H^2 & \nonumber\leq \int_0^t \bigg\{\fk{a}(s)+\rho(\vi{\Y}^{\p_n}(s))+\eta(\vi{\Y}^{\p}(s))+L_\B(s)+\|f_n(s)\|_\H^2\\&\qquad+2\int_\Z R_\gamma(s,z)\big|g_n(s,z)-1\big|\nu(\d z)\bigg\}\|\vi{\bfX}^{\p_n}_{\p}(s)\|_\H^2\d s+|J_2^n(t)|+|J_4^n(t)|,
\end{align}
for all $t\in[0,T]$. An application of Gronwall's inequality in \eqref{VCond10} yields
\begin{align}\label{VCond11}\nonumber
	\sup_{t\in[0,T]}	\|\vi{\bfX}^{\p_n}_{\p}(t)\|_\H^2& \leq \bigg\{\sup_{t\in[0,T]}\big[|J_2^n(t)|+|J_4^n(t)|\big]\bigg\}\\&\nonumber\quad\times\exp\bigg\{\int_0^T \bigg[\fk{a}(t)+\rho(\vi{\Y}^{\p_n}(t))+\eta(\vi{\Y}^{\p}(t))+L_\B(t)+\|f_n(t)\|_\H^2\\&\qquad+2\int_\Z R_\gamma(t,z)\big|g_n(t,z)-1\big|\nu(\d z)\bigg]\d t\bigg\}. 
	\end{align}
Note that the integral appearing in the exponential is independent of $n,$  since $\p_n,\p\in \bar{S}^\Upsilon$ $\rho(\cdot)$ satisfies Hypothesis \ref{hyp1} (H.2) and  $\vi{\Y}^{\p_n}(\cdot)$ has the bound given in \eqref{CPDE01}. 

Now, for any $\delta>0$, let us define  $A_{\delta,n}=\big\{t\in[0,T]: \|\vi{\bfX}^{\p_n}_{\p}(t)\|_\H>\delta\}$. Then, by using the similar calculations as in  \eqref{B2}, we have $\lim\limits_{n\to\infty}\lambda_T(A_{\delta,n})=0$.

Splitting the interval $[0,T]$ into two parts, we estimate $\sup\limits_{t\in[0,T]}|J_2^n(t)|$ as
\begin{align}\label{VCond12}\nonumber
	&\sup_{t\in[0,T]}|J_2^n(t)| \\&\nonumber\leq 2 \int_0^T \sqrt{L_\B(t)}\big(1+\|\vi{\Y}^{\p}(t)\|_\H\big)\|f_n(t)-f(t)\|_\H\|\vi{\bfX}^{\p_n}_{\p}(t)\|_\H\d t\\&\nonumber \leq 
	2\bigg\{\int_0^T\sqrt{L_\B(t)}\big(1+\|\vi{\Y}^{\p}(t)\|_\H\big)\|f_n(t)\|_\H\|\vi{\bfX}^{\p_n}_{\p}(t)\|_\H\d t\\&\nonumber\qquad +\int_0^T\sqrt{L_\B(t)}\big(1+\|\vi{\Y}^{\p}(t)\|_\H\big)\|f(t)\|_\H\|\vi{\bfX}^{\p_n}_{\p}(t)\|_\H\d t\bigg\}\\&\nonumber \leq 2\bigg\{C\int_{A_{\delta,n}}\sqrt{L_\B(t)}\|f_n(t)\|_\H \d t+\delta C\int_{A_{\delta,n}^c}\sqrt{L_\B(t)}\|f_n(t)\|_\H \d t\bigg\}\\&\nonumber\quad +
	2\bigg\{C\int_{A_{\delta,n}}\sqrt{L_\B(t)}\|f(t)\|_\H \d t+\delta C\int_{A_{\delta,n}^c}\sqrt{L_\B(t)}\|f(t)\|_\H \d t\bigg\} \\&\nonumber\leq
	C\bigg\{\bigg(\int_{A_{\delta,n}}L_\B(t)\d t\bigg)^{\frac{1}{2}}\bigg(\int_0^T\|f_n(t)\|_\H^2\d t\bigg)^{\frac{1}{2}}+\delta \bigg(\int_0^TL_\B(t)\d t+\int_0^T\|f_n(t)\|_\H^2\d t\bigg)\\&\quad+ C\bigg\{\bigg(\int_{A_{\delta,n}}L_\B(t)\d t\bigg)^{\frac{1}{2}}\bigg(\int_0^T\|f(t)\|_\H^2\d t\bigg)^{\frac{1}{2}}+\delta \bigg(\int_0^TL_\B(t)\d t+\int_0^T\|f(t)\|_\H^2\d t\bigg),
\end{align}where we have used the Cauchy-Schwarz inequality, Hypothesis \ref{hyp2} (H.7) and Young's inequality. Using the absolute continuity of the Lebesgue integral in \eqref{VCond12}, we find
\begin{align}\label{VCond13}
	\lim_{n\to\infty}\sup_{t\in[0,T]}|J_2^n(t)|=0.
\end{align}Again, we split the interval $[0,T]$ into two parts, and using the Cauchy-Schwarz inequality, Hypothesis \ref{hyp2} (H.8) and Young's inequality. we estimate $\sup\limits_{t\in[0,T]}|J_4^n(t)|$ as
 \begin{align}\label{VCond14}\nonumber
 &	\sup_{t\in[0,T]}|J_4^n(t)| \\&\leq 2\int_{\Z_T} \|\gamma(t,\vi{\Y}^{\p_n}(t),z)\|_\H\big|g_n(t,z)-1\big|\|\vi{\bfX}^{\p_n}_{\p}(t)\|_\H\nu(\d z)\d t\nonumber\\&\nonumber\quad +2\int_{\Z_T} \|\gamma(t,\vi{\Y}^{\p}(t),z)\|_\H\big|g(t,z)-1\big|\|\vi{\bfX}^{\p_n}_{\p}(t)\|_\H\nu(\d z)\d t \\&\nonumber\leq 
 	2\int_{\Z_T} L_\gamma(t,z)\big(1+\|\vi{\Y}^{\p_n}(t)\|_\H\big)\big|g_n(t,z)-1\big|\|\vi{\bfX}^{\p_n}_{\p}(t)\|_\H\nu(\d z)\d t\\&\nonumber\quad+	2\int_{\Z_T} L_\gamma(t,z)\big(1+\|\vi{\Y}^{\p}(t)\|_\H\big)\big|g(t,z)-1\big|\|\vi{\bfX}^{\p_n}_{\p}(t)\|_\H\nu(\d z)\d t \\&\nonumber\leq 
 	C\int_{A_{\delta,n}}\int_\Z L_\gamma(t,z)\big|g_n(t,z)-1\big|\nu(\d z)\d t +\delta C \int_{A_{\delta,n}^c}\int_\Z L_\gamma(t,z)\big|g_n(t,z)-1\big|\nu(\d z)\d t
 	\\&\quad +
 	C\int_{A_{\delta,n}}\int_\Z L_\gamma(t,z)\big|g(t,z)-1\big|\nu(\d z)\d t +\delta C \int_{A_{\delta,n}^c}\int_\Z L_\gamma(t,z)\big|g(t,z)-1\big|\nu(\d z)\d t. 
 \end{align}Using Lemma \ref{lemUF3} in \eqref{VCond14}, we deduce 
\begin{align}\label{VCond15}
		\lim_{n\to\infty}\sup_{t\in[0,T]}|J_4^n(t)|=0.
\end{align}Substituting \eqref{VCond13} and \eqref{VCond15} in \eqref{VCond11}, we obtain the required result \eqref{VCond05}.
\end{proof}

\section{Verification of Condition \ref{Cond1} (2)}\label{Sec5}\setcounter{equation}{0} Let us now verify Condition (2) in \ref{Cond1}. A similar verification  result for a class of SPDEs with locally monotone coefficients has been obtained in  \cite{JXJZ}. 
Fix $\Upsilon\in\N$, for any $\boldsymbol{q}_\e=(\psi_\e,\varphi_\e)\in\vi{\mathcal{U}}^\Upsilon$, we consider the following controlled SPDEs for a.e. $t\in[0,T]$ in $\V^*$:
\begin{equation}\label{CSPDE1}
	\left\{
	\begin{aligned}
		\d \vi{\Y}^\e(t)&=  \A(t, \vi{\Y}^\e(t))\d t+\B(t, \vi{\Y}^\e(t))\psi_\e(t)\d t+\sqrt{\e}\B(t, \vi{\Y}^\e(t))\d\W(t)\\&\quad+\int_\Z\gamma(t, \vi{\Y}^\e(t),z)\big(\varphi_\e(t,z)-1\big)\nu(\d z)\d t+\e \int_\Z\gamma(t, \vi{\Y}^\e(t-),z)\vi{N}^{\e^{-1}\varphi_\e}(\d t,\d z),\\
		\vi{\Y}^\e(0)&=\x\in\H.
	\end{aligned}
	\right.
\end{equation}
Remember that $\vi{N}^{\e^{-1}\varphi_\e}=N^{\e^{-1}\varphi_\e}-\e^{-1}\varphi_\e\lambda_T\otimes\nu$, where $\lambda_T$ is the Lebesgue measure and $\nu$ is the intensity measure.

\begin{definition}\label{SSCPDE}
	Let $(\bar{\mathbf{V}},\mathcal{B}(\bar{\mathbf{V}}),\{\bar{\mathscr{F}}_t^{\bar{\mathbf{V}}}\}_{t\geq 0},\bar{\P}^{\bar{\mathbf{V}}})$  be the filtered probability space (see subsection \ref{subsec2.4}). Assume the initial data $\x\in\H$. Then \eqref{CSPDE1} has a \textsf{probabilistically strong solution} if and only if there exists a progressively measurable  process $\Y^\e:[0,T]\times\bar{\mathbf{V}}\to\H$, $\bar{\P}^{\bar{\mathbf{V}}}$-a.s., paths 
	\begin{align*}
		\Y^\e(t,\omega)\in\D([0,T];\H)\cap \L^\beta(0,T;\V),\; \bar{\P}^{\bar{\mathbf{V}}}\text{-a.s., for } \beta\in(1,\infty), 
	\end{align*}and  the following equality holds $\bar{\P}^{\bar{\mathbf{V}}}$-a.s.,  in $\V^*$:
	\begin{align*}
		\Y^\e(t)&=\x+\int_0^t\A(s,\Y^\e(s))\d s+\int_0^t\B(s, \vi{\Y}^\e(s))\psi_\e(s)\d s+\sqrt{\e}\int_0^t \B(s,\Y^\e(s))\d\W(s)\\&\quad+\int_0^t\int_\Z\gamma(s, \vi{\Y}^\e(s),z)\big(\varphi_\e(s,z)-1\big)\nu(\d z)\d s+\e\int_0^t\int_\Z\gamma(s,\Y^\e(s-),z)\vi{N}^{\e^{-1}\varphi_\e}(\d s,\d z), 
	\end{align*}for all $t\in[0,T]$.
	
\end{definition}
Let us take $\boldsymbol{q}_\e=(\psi_\e,\varphi_\e)\in\vi{\mathcal{U}}^\Upsilon$ and define $\nu_\e=\frac{1}{\varphi_\e}$. Then, we have the following result (for proof see Theorem III.3.24, \cite{JJANS}, Lemma 2.3, \cite{ABPFD}):
\begin{lemma}
	The processes 
	\begin{align*}
		\mathcal{E}_t^\e(\nu_\e)&:=\exp\bigg\{\int_{(0,t]\times\Z\times[0,\e^{-1}]}\ln\big(\nu_\e(s,z)\big)\bar{N}(\d s,\d z,\d r)\\&\qquad+\int_{(0,t]\times\Z\times[0,\e^{-1}]}\big(-\nu_\e(s,z)+1\big)\bar{\nu}_T(\d s,\d z,\d r)\bigg\},
	\end{align*}
	and 
	\begin{align*}
		\wi{\mathcal{E}}_t^\e(\psi_\e)&:=\exp\bigg\{-\frac{1}{\sqrt{\e}}\int_0^t\psi_\e(s)\d\W(s)-\frac{1}{2\e}\int_0^t\|\psi_\e(s)\|^2\d s\bigg\}, 
	\end{align*}are $\{\bar{\mathscr{F}}_t^{\bar{\mbf{V}}}\}$-martingales. Set 
	\begin{align*}
		\vi{\mathcal{E}}_t^\e(\psi_\e,\nu_\e):=\mathcal{E}_t^\e(\nu_\e)\wi{\mathcal{E}}_t^\e(\psi_\e).
	\end{align*}Then 
	\begin{align*}
		\mb{Q}_t^\e(\mathrm{G})= \int_{\mathrm{G}}\vi{\mathcal{E}}_t^\e(\psi_\e,\nu_\e)\d \bar{\P}^{\bar{\mbf{V}}}, \ \text{ for } \ \mathrm{G}\in\mathcal{B}(\bar{\mbf{V}}), 
	\end{align*}defines a probability measure in $\bar{\mbf{V}}$.
\end{lemma}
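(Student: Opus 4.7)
The strategy is to treat the Wiener factor and the Poisson factor separately, exploiting the independence of $\W$ and $\bar N$ under $\bar{\P}^{\bar{\mbf{V}}}$, and then to combine them through the It\^o product rule. The crucial observation is that the bounds built into the definitions of $\vi{\mathcal{U}}^\Upsilon$ and $\bar{\mathcal{A}}_b$ supply exactly the integrability needed to upgrade the obvious local martingale property of each factor to a true martingale.

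First I would verify that $\wi{\mathcal{E}}^\e(\psi_\e)$ is a martingale via Novikov's criterion. Since $\q_\e\in\vi{\mathcal{U}}^\Upsilon\subset\mathcal{U}^\Upsilon$, we have $\q_\e(\omega)\in \bar S^\Upsilon$ for $\bar{\P}^{\bar{\mbf{V}}}$-a.e.\ $\omega$, whence $\tfrac12\int_0^T\|\psi_\e(s)\|^2\,\d s\le\Upsilon$ almost surely. Therefore
\begin{align*}
\bar{\E}\bigg[\exp\bigg(\frac{1}{2\e}\int_0^T\|\psi_\e(s)\|^2\,\d s\bigg)\bigg]\le e^{\Upsilon/\e}<\infty,
\end{align*}
so Novikov's condition holds in a strong form and $\wi{\mathcal{E}}^\e(\psi_\e)$ is a genuine $\{\bar{\mathscr{F}}_t^{\bar{\mbf{V}}}\}$-martingale.

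Second, the factor $\mathcal{E}^\e(\nu_\e)$ is the Dol\'eans--Dade exponential of the $\bar{\P}^{\bar{\mbf{V}}}$-compensated jump integral $M^\e_t:=\int_{(0,t]\times\Z\times[0,\e^{-1}]}(\nu_\e(s,z)-1)(\bar N-\bar\nu_T)(\d s,\d z,\d r)$; by It\^o's formula for jump processes it solves $\d\mathcal{E}^\e_t(\nu_\e)=\mathcal{E}^\e_{t-}(\nu_\e)\,\d M^\e_t$, so it is automatically a nonnegative local martingale starting at $1$. Since $\varphi_\e\in\bar{\mathcal{A}}_b$, there exist $n\in\N$ and a compact $K_n\subset\Z$ with $1/n\le\varphi_\e\le n$ on $K_n$ and $\varphi_\e\equiv 1$ on $K_n^c$, so $\nu_\e=1/\varphi_\e$ is bounded on $K_n$ and equal to $1$ off $K_n$. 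This yields the standard integrability bounds
\begin{align*}
\int_0^T\!\int_\Z|\nu_\e(s,z)-1|^2\,\nu(\d z)\,\d s+\int_0^T\!\int_\Z\ell(\nu_\e(s,z))\,\nu(\d z)\,\d s\le C(\Upsilon,n,T)<\infty,
\end{align*}
which, by Theorem III.3.24 of \cite{JJANS}, promotes $\mathcal{E}^\e(\nu_\e)$ to a uniformly integrable true martingale on $[0,T]$.

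Finally, because $\W$ and $\bar N$ are independent under $\bar{\P}^{\bar{\mbf{V}}}$, the two factors are strongly orthogonal martingales: $\wi{\mathcal{E}}^\e(\psi_\e)$ is continuous while $\mathcal{E}^\e(\nu_\e)$ is of pure-jump type, so their quadratic covariation vanishes and the It\^o product rule gives
\begin{align*}
\vi{\mathcal{E}}^\e_t=1+\int_0^t\mathcal{E}^\e_{s-}(\nu_\e)\,\d\wi{\mathcal{E}}^\e_s(\psi_\e)+\int_0^t\wi{\mathcal{E}}^\e_{s-}(\psi_\e)\,\d\mathcal{E}^\e_s(\nu_\e).
\end{align*}
Thus $\vi{\mathcal{E}}^\e$ is a nonnegative local martingale with $\vi{\mathcal{E}}^\e_0=1$; combined with the uniform integrability inherited from each factor it is a true martingale, so $\bar{\E}[\vi{\mathcal{E}}^\e_t]=1$ for every $t\in[0,T]$, and $\mb{Q}^\e_t$ is therefore a probability measure on $(\bar{\mbf{V}},\mathcal{B}(\bar{\mbf{V}}))$. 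The main obstacle is the jump step: ensuring the exponential integrability of $M^\e$ when $\nu$ is only $\sigma$-finite, which is precisely why we require $\varphi_\e\in\bar{\mathcal{A}}_b$ rather than merely $\varphi_\e\in\bar{\mathcal{A}}_+$, so that the perturbation is localised to a compact set on which $\varphi_\e$ is bounded away from $0$ and $\infty$.
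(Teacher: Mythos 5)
Your decomposition (Novikov for the Wiener factor, Dol\'eans--Dade plus Theorem III.3.24 of \cite{JJANS} for the jump factor, then a product argument) is the natural one, and the first two steps are sound: $\q_\e\in\vi{\mathcal{U}}^\Upsilon$ gives $\frac12\int_0^T\|\psi_\e(s)\|^2\,\d s\le \Upsilon$ a.s., so Novikov applies to $\wi{\mathcal{E}}^\e(\psi_\e)$, and $\varphi_\e\in\bar{\mathcal{A}}_b$ makes $\nu_\e=1/\varphi_\e$ bounded above and below on a compact set and $\equiv 1$ off it, which (together with $\nu(K_n)<\infty$ and the finite factor $\e^{-1}$ from the $r$-variable) yields the integrability needed to upgrade the jump exponential to a true martingale. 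The paper gives no argument beyond citing Theorem III.3.24 of \cite{JJANS} and Lemma 2.3 of \cite{ABPFD}, so in substance you are reconstructing exactly what those references provide.

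The weak link is your final step. ``Uniform integrability inherited from each factor'' does not make the product a true martingale: a product of two uniformly integrable martingales which is a nonnegative local martingale can still be a strict supermartingale. Nor can you invoke independence of the two factors: although $\W$ and $\bar N$ are independent under $\bar{\P}^{\bar{\mbf{V}}}$, the controls $\psi_\e,\varphi_\e$ are $\bar{\mathscr{P}}^{\bar{\mbf{V}}}$-predictable and may depend on both noises, so $\wi{\mathcal{E}}^\e(\psi_\e)$ and $\mathcal{E}^\e(\nu_\e)$ need not be independent; the orthogonality $[\wi{\mathcal{E}}^\e(\psi_\e),\mathcal{E}^\e(\nu_\e)]=0$ is correct but only re-delivers the local martingale property you already have. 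A clean repair: $\vi{\mathcal{E}}^\e$ is a nonnegative local martingale with $\vi{\mathcal{E}}^\e_0=1$, hence a supermartingale, so it suffices to show $\bar{\E}[\vi{\mathcal{E}}^\e_T(\psi_\e,\nu_\e)]=1$. Change measure in two stages: set $\d\mathbb{Q}_1=\mathcal{E}^\e_T(\nu_\e)\,\d\bar{\P}^{\bar{\mbf{V}}}$ (legitimate, since the jump factor is a true martingale); because this density has no continuous martingale part, $\W$ remains a Brownian motion under $\mathbb{Q}_1$, and the almost sure bound $\int_0^T\|\psi_\e(s)\|^2\,\d s\le 2\Upsilon$ survives the equivalent change of measure, so Novikov under $\mathbb{Q}_1$ gives $\E_{\mathbb{Q}_1}\big[\wi{\mathcal{E}}^\e_T(\psi_\e)\big]=1$, i.e. $\bar{\E}[\vi{\mathcal{E}}^\e_T(\psi_\e,\nu_\e)]=1$, as required. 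Alternatively, apply the cited Theorem III.3.24 of \cite{JJANS} (or Lemma 2.3 of \cite{ABPFD}) directly to the combined exponential, which is what the paper implicitly does.
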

Define the process $\W_{\mathbb{Q}^{\e}_t}(\cdot)$ and the random measure $\vi{N}_{\mathbb{Q}^{\e}_t}^{\e^{-1}\varphi_\e}(\cdot,\cdot)$ by   $\d\W_{\mathbb{Q}^{\e}_t}(t):=\frac{1}{\sqrt{\e}}\psi_{\e}(t)\d t+\d\W(t)$ and  $\vi{N}_{\mathbb{Q}^{\e}_t}^{\e^{-1}\varphi_\e}(\d t,\d z):=\e^{-1}(\phi_{\e}(t,z)-1)\nu(\d z)\d t+\vi{N}^{\e^{-1}\varphi_\e}(\d s,\d z)$, respectively. Then an  application of Girsanov's theorem (see Theorem 1.29, \cite{BOAS}) yields   $\W_{\mathbb{Q}^{\e}_t}(\cdot)$  is a Brownian motion with respect to $\{\bar{\mathcal{F}}_t^{\bar{\mathbf{V}}}\}_{t\geq 0}$ and $\mathbb{Q}^{\e}_t$  and $\vi{N}_{\mathbb{Q}^{\e}_t}^{\e^{-1}\varphi_\e}(\cdot,\cdot)$  is the $(\{\bar{\mathcal{F}}_t^{\bar{\mathbf{V}}}\}_{t\geq 0},\mathbb{Q}^{\e}_t)$-compensated Poisson random measure of ${N}_{\mathbb{Q}^{\e}_t}^{\e^{-1}\varphi_\e}(\cdot,\cdot)$. Under the above change in measures, one can transform the system \eqref{CSPDE1} into the following form: 
\begin{align}\label{5.2}
	\d \vi{\Y}^\e(t&)=  \A(t, \vi{\Y}^\e(t))\d t+\sqrt{\e}\B(t, \vi{\Y}^\e(t))\d\W_{\mathbb{Q}^{\e}_t}(t)+\e \int_\Z\gamma(t, \vi{\Y}^\e(t-),z)\vi{N}_{\mathbb{Q}^{\e}_t}^{\e^{-1}\varphi_\e}(\d t,\d z),
\end{align}
for a.e. $t\in[0,T]$ in $\V^*$. The system \eqref{5.2} is similar to the one given in \eqref{1.1} whose existence of a unique pathwise strong solution is known from Theorem \ref{ER1}. With the above settings, we have the following result on the solution representation:  
\begin{lemma}[Lemma 7.1, \cite{ZBXPJZ}]\label{lem5.3}
	Assume  $\e>0$, for every process $\boldsymbol{q}_\e=(\psi_\e,\varphi_\e)\in \vi{\mathcal{U}}^\Upsilon$ defined on the the probability space $(\bar{\mathbf{V}},\mathcal{B}(\bar{\mathbf{V}}),\{\bar{\mathcal{F}}_t^{\bar{\mathbf{V}}}\},\mb{Q}_T^\e)$, the process $\vi{\Y}^\e$ defined by 
	\begin{align}\label{Vf}
		\vi{\Y}^\e=\mathscr{G}^\e\left(\sqrt{\e}\W(\cdot)+\int_0^{\cdot}\psi_\e(s)\d s,\e N^{\e^{-1}\varphi_\e}\right),
	\end{align}
	is the unique strong solution of the controlled SPDEs \eqref{CSPDE1}. 
\end{lemma}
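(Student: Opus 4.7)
The strategy is an application of Girsanov's theorem combined with the pathwise uniqueness of strong solutions to \eqref{1.1}. The idea is that, under the measure $\mb{Q}_T^\e$ defined by the density $\vi{\mathcal{E}}_T^\e(\psi_\e,\nu_\e)$, the controlled equation \eqref{CSPDE1} for the unknown $\vi{\Y}^\e$ reduces to an equation of the form \eqref{1.1} driven by $\W_{\mb{Q}_t^\e}$ and $\vi{N}_{\mb{Q}_t^\e}^{\e^{-1}\varphi_\e}$, for which we already have existence and uniqueness of a pathwise strong solution by Theorem \ref{ER1}.

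First, I would verify that $\vi{\mathcal{E}}_\cdot^\e(\psi_\e,\nu_\e)$ is a genuine $\{\bar{\mathscr{F}}_t^{\bar{\mbf{V}}}\}$-martingale on $[0,T]$, not merely a local martingale, so that $\mb{Q}_T^\e$ is indeed a probability measure equivalent to $\bar{\P}^{\bar{\mbf{V}}}$. For the Brownian factor $\wi{\mathcal{E}}_t^\e(\psi_\e)$, this follows from Novikov's criterion together with the fact that $\q_\e=(\psi_\e,\varphi_\e)\in\vi{\mathcal{U}}^\Upsilon$ forces $\psi_\e\in\vi{S}^\Upsilon$, i.e. $\int_0^T\|\psi_\e(s)\|^2\d s\le 2\Upsilon$, $\bar{\P}^{\bar{\mbf{V}}}$-a.s. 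For the Poisson factor $\mathcal{E}_t^\e(\nu_\e)$, the restriction $\varphi_\e\in\bar{\mathcal{A}}_b$ (with its bounds and compact-support-of-perturbation structure) gives that $\nu_\e=1/\varphi_\e$ is bounded above and below on a compact set and equals $1$ off that set, from which the martingale property follows by standard arguments for exponential martingales of compensated Poisson integrals (see e.g. Lemma~2.3, \cite{ABPFD}).

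Next, Girsanov's theorem (Theorem~1.29 in \cite{BOAS}) yields that under $\mb{Q}_T^\e$ the process $\W_{\mb{Q}_t^\e}(\cdot)=\W(\cdot)+\e^{-1/2}\int_0^\cdot\psi_\e(s)\d s$ is a cylindrical $\H$-Wiener process with respect to $\{\bar{\mathscr{F}}_t^{\bar{\mbf{V}}}\}$, and $\vi{N}_{\mb{Q}_t^\e}^{\e^{-1}\varphi_\e}(\d t,\d z)=N^{\e^{-1}\varphi_\e}(\d t,\d z)-\e^{-1}\nu(\d z)\d t$ is the $(\{\bar{\mathscr{F}}_t^{\bar{\mbf{V}}}\},\mb{Q}_T^\e)$-compensated Poisson random measure associated with $N^{\e^{-1}\varphi_\e}$ (whose compensator under $\mb{Q}_T^\e$ is $\e^{-1}\lambda_T\otimes\nu$). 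Substituting these into \eqref{CSPDE1} absorbs the drift contributions from $\psi_\e$ and $\varphi_\e-1$, and \eqref{CSPDE1} rewrites as \eqref{5.2}, which is exactly an equation of the form \eqref{1.1} on the stochastic basis $(\bar{\mbf{V}},\mathcal{B}(\bar{\mbf{V}}),\{\bar{\mathscr{F}}_t^{\bar{\mbf{V}}}\},\mb{Q}_T^\e)$ with driving Wiener process $\W_{\mb{Q}_t^\e}$ and compensated Poisson random measure $\vi{N}_{\mb{Q}_t^\e}^{\e^{-1}\varphi_\e}$.

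By Theorem \ref{ER1} applied on this basis, \eqref{5.2} admits a unique probabilistically strong solution, which by the very definition of the measurable map $\mathscr{G}^\e$ is given by $\mathscr{G}^\e(\sqrt{\e}\W_{\mb{Q}_t^\e},\e N^{\e^{-1}\varphi_\e})=\mathscr{G}^\e(\sqrt{\e}\W(\cdot)+\int_0^\cdot\psi_\e(s)\d s,\e N^{\e^{-1}\varphi_\e})$. Since $\mb{Q}_T^\e\sim\bar{\P}^{\bar{\mbf{V}}}$, any statement about almost-sure paths transfers between the two measures; in particular, the process $\vi{\Y}^\e$ defined by \eqref{Vf} is, $\bar{\P}^{\bar{\mbf{V}}}$-a.s., the unique strong solution of \eqref{CSPDE1} in the sense of Definition \ref{SSCPDE}. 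Uniqueness on $(\bar{\mbf{V}},\bar{\P}^{\bar{\mbf{V}}})$ then follows from uniqueness on $(\bar{\mbf{V}},\mb{Q}_T^\e)$ by the same equivalence of measures.

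The main obstacle is the rigorous verification of the martingale property of $\vi{\mathcal{E}}_\cdot^\e(\psi_\e,\nu_\e)$, and the careful bookkeeping needed to argue that the strong-solution notion of Definition \ref{SSCPDE} is preserved under the change of measure (progressive measurability with respect to $\{\bar{\mathscr{F}}_t^{\bar{\mbf{V}}}\}$ is invariant, but one must check that the c\`adl\`ag $\H$-valued paths and the $\L^\beta(0,T;\V)$-integrability hold $\mb{Q}_T^\e$-a.s.\ iff $\bar{\P}^{\bar{\mbf{V}}}$-a.s., which is immediate from equivalence). Once these technicalities are in place, the remaining steps are a direct transcription of the Girsanov computation, exactly as carried out in Lemma~7.1 of \cite{ZBXPJZ}.
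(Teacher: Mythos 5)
Your argument is correct and follows essentially the same route as the paper: the paper likewise establishes the martingale property of $\vi{\mathcal{E}}_t^\e(\psi_\e,\nu_\e)$ (citing Theorem III.3.24 of \cite{JJANS} and Lemma 2.3 of \cite{ABPFD}), applies Girsanov's theorem to identify $\W_{\mb{Q}^\e_t}$ and $\vi{N}_{\mb{Q}^\e_t}^{\e^{-1}\varphi_\e}$, rewrites \eqref{CSPDE1} as \eqref{5.2}, which is of the form \eqref{1.1} solvable by Theorem \ref{ER1}, and then invokes Lemma 7.1 of \cite{ZBXPJZ}. Your added details (Novikov via the deterministic bound $\int_0^T\|\psi_\e(s)\|^2\d s\le 2\Upsilon$, the role of $\varphi_\e\in\bar{\mathcal{A}}_b$, and the transfer of almost-sure statements under the equivalent measures) are exactly the points the paper delegates to the cited references.
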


\subsection{Tightness of the family $\{\mathscr{L}(\vi{\Y}^\e)\}$}
Let us now establish the tightness of the family $\{\mathscr{L}(\vi{\Y}^\e)\}_{\e>0}$.  The following estimates (Lemmas \ref{lemUF2} and \ref{lemUF02}) are helpful in the sequel. 
\begin{lemma}\label{lemUF2}
	For $p=2, 2+\alpha$ or $M$ in the Hypothesis \ref{hyp2} (H.8) and $\boldsymbol{q}_\e=(\psi_\e,\varphi_\e)\in {\mathcal{U}}^{\Upsilon}$  there exists $\e_p,C_p>0$, such that 
	\begin{align}\label{UE1}
		\sup_{\e\in(0,\e_p]}\bar{\E}\left(\sup_{t\in[0,T]}\|\vi{\Y}^\e(t)\|_\H^p\right)+\bar{\E}\left( \int_0^T \|\vi{\Y}^\e(t)\|_\H^{p-2}\|\vi{\Y}^\e(t)\|_\V^\beta\d t\right)\leq C_p(1+\|\x\|_{\H}^p).
	\end{align}
\end{lemma}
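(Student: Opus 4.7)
The plan is to apply It\^o's formula to $\|\vi{\Y}^\e(t)\|_\H^p$ for the controlled SPDE \eqref{CSPDE1}, bound each of the resulting drift and martingale terms using the coercivity (H.3), the Lipschitz / growth hypotheses (H.5), (H.7), (H.8), together with the control integrability bounds supplied by Lemma \ref{lemUF} for $\boldsymbol{q}_\e=(\psi_\e,\varphi_\e)\in\mathcal{U}^\Upsilon$, and then close with a Gronwall argument. Since a strong solution already exists by Lemma \ref{lem5.3} and Theorem \ref{ER1}, it is legitimate to perform the computation directly on the strong solution (alternatively on Galerkin approximants, passing to the limit in the bound).

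First I would write out It\^o's formula for $\|\vi{\Y}^\e(t)\|_\H^p$. It produces six contributions: a coercive drift $p\int_0^t\|\vi{\Y}^\e(s)\|_\H^{p-2}\langle\A(s,\vi{\Y}^\e(s)),\vi{\Y}^\e(s)\rangle\d s$, controlled below by $-pL_\A\int_0^t\|\vi{\Y}^\e(s)\|_\H^{p-2}\|\vi{\Y}^\e(s)\|_\V^\beta\d s$ plus a lower order piece via (H.3); a control-drift $p\int_0^t\|\vi{\Y}^\e(s)\|_\H^{p-2}\big(\vi{\Y}^\e(s),\B(s,\vi{\Y}^\e(s))\psi_\e(s)\big)\d s$ bounded by Cauchy--Schwarz, (H.5) and Young by $C\int_0^t (1+\|\vi{\Y}^\e(s)\|_\H^p)\big(\fk{b}(s)+\|\psi_\e(s)\|^2\big)\d s$; an analogous $\gamma$-control drift, bounded by $C\int_0^t(1+\|\vi{\Y}^\e(s)\|_\H^p)\int_\Z L_\gamma(s,z)|\varphi_\e(s,z)-1|\nu(\d z)\d s$ thanks to (H.8); the It\^o correction $\frac{\e p(p-1)}{2}\int_0^t\|\vi{\Y}^\e(s)\|_\H^{p-2}\|\B(s,\vi{\Y}^\e(s))\|_{\L_2}^2\d s$; a Brownian martingale $M^\W_t=p\sqrt{\e}\int_0^t\|\vi{\Y}^\e(s)\|_\H^{p-2}\big(\vi{\Y}^\e(s),\B(s,\vi{\Y}^\e(s))\d\W(s)\big)$; and the jump contribution, which after separating the compensator splits into a pure-jump martingale $M^N_t$ against $\vi{N}^{\e^{-1}\varphi_\e}$ and a drift
\[
\int_0^t\!\!\int_\Z\!\Big[\|\vi{\Y}^\e(s-)+\e\gamma\|_\H^p-\|\vi{\Y}^\e(s-)\|_\H^p-p\e\|\vi{\Y}^\e(s-)\|_\H^{p-2}\big(\vi{\Y}^\e(s-),\gamma\big)\Big]\varphi_\e(s,z)\nu(\d z)\d s.
\]
By the elementary Taylor estimate $\big||a+b|^p-|a|^p-p|a|^{p-2}(a,b)\big|\le C_p(|a|^{p-2}|b|^2+|b|^p)$ for $p\ge 2$, this drift is dominated by
\[
C_p\int_0^t\!\!\int_\Z\!\Big[\e^2 L_\gamma^2(s,z)(1+\|\vi{\Y}^\e(s)\|_\H^p)+\e^p L_\gamma^p(s,z)(1+\|\vi{\Y}^\e(s)\|_\H^p)\Big]\varphi_\e(s,z)\,\nu(\d z)\d s,
\]
where the factor $\varphi_\e$ is absorbed using Lemma \ref{lemUF} with $h=L_\gamma^2,L_\gamma^p$: both are integrable against $\nu_T(\d z\,\d s)(1+\varphi_\e)$ uniformly in $\e$ since $L_\gamma\in \L^2(\nu_T)\cap\L^p(\nu_T)\cap\mathcal{H}_p$.

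Next, I take $\sup_{r\le t}$, apply Burkholder--Davis--Gundy to $M^\W$ and $M^N$, and absorb sup-terms. For $M^\W$ one gets $\bar\E\sup_{r\le t}|M^\W_r|\le C\sqrt\e\,\bar\E\big(\int_0^t\|\vi{\Y}^\e\|_\H^{2p-2}\fk{b}(1+\|\vi{\Y}^\e\|_\H^2)\d s\big)^{1/2}$, which Young/Cauchy gives $\tfrac14\bar\E\sup_{r\le t}\|\vi{\Y}^\e(r)\|_\H^p+C\e\,\bar\E\int_0^t(1+\|\vi{\Y}^\e\|_\H^p)\fk{b}\d s$. For the jump martingale the analogous BDG bound on the quadratic variation $\e^2\int_0^t\!\int_\Z\big(\|\vi{\Y}^\e+\e\gamma\|_\H^p-\|\vi{\Y}^\e\|_\H^p\big)^2\varphi_\e\,\nu(\d z)\d s$, expanded again by Taylor and controlled by the $\L^2(\nu_T)\cap\L^M(\nu_T)$ integrability of $L_\gamma$ and Lemma \ref{lemUF}, produces a factor of $\e$ (or a positive power of $\e$) in front of the large $\sup$-terms; choosing $\e_p$ sufficiently small lets me absorb every sup-term to the left. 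What remains is an inequality of the form
\[
\bar\E\sup_{r\le t}\|\vi{\Y}^\e(r)\|_\H^p+\bar\E\int_0^t\|\vi{\Y}^\e\|_\H^{p-2}\|\vi{\Y}^\e\|_\V^\beta\d s\le C_p(1+\|\x\|_\H^p)+\int_0^t F_\e(s)\bar\E\sup_{r\le s}\|\vi{\Y}^\e(r)\|_\H^p\d s,
\]
with $F_\e\in\L^1(0,T)$ uniformly in $\e$ by Lemma \ref{lemUF} and the definition of $\mathcal{U}^\Upsilon$. Gronwall's lemma yields \eqref{UE1}.

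The main obstacle is the high-moment cases $p=2+\alpha$ and $p=M$: the Taylor remainder in the compensator and the quadratic variation of $M^N$ produce $L_\gamma^p$-type integrands weighted by $\varphi_\e$, and I must ensure these are integrable uniformly in $\e$. This is precisely the role of the strong integrability $L_\gamma\in \L^2(\nu_T)\cap \L^4(\nu_T)\cap\L^{\alpha+2}(\nu_T)\cap\L^M(\nu_T)\cap\L^{M/2}(\nu_T)\cap\mathcal{H}_p$ in (H.8), so each application of Lemma \ref{lemUF} matches one of the exponents obtained by Taylor expansion. The second subtlety is that $M^N$ is a purely discontinuous martingale, so one must invoke the p-th BDG inequality with $[M^N]_T=\e^2\sum_{s\le T}(\Delta(\|\vi{\Y}^\e\|_\H^p))^2$ and reroute via Taylor rather than using the predictable quadratic variation directly; splitting the Taylor remainder into a part linear in $\sup\|\vi{\Y}^\e\|_\H^p$ (to be absorbed) and a part integrable against the deterministic weight $L_\gamma^q \varphi_\e \nu$ (controlled by Lemma \ref{lemUF}) is where the argument is most delicate, and selecting $\e_p$ small enough to perform the absorption is the key quantitative step.
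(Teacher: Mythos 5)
Your proposal is correct and follows essentially the same route as the paper: Itô's formula for $\|\vi{\Y}^\e\|_\H^p$, coercivity (H.3) to extract the $\|\vi{\Y}^\e\|_\H^{p-2}\|\vi{\Y}^\e\|_\V^\beta$ term, (H.5)/(H.8) with Young's inequality for the control drifts, the elementary Taylor estimate $\big|\|\x+\h\|_\H^p-\|\x\|_\H^p-p\|\x\|_\H^{p-2}(\x,\h)\big|\le C_p(\|\x\|_\H^{p-2}\|\h\|_\H^2+\|\h\|_\H^p)$ for the jump remainder, Burkholder--Davis--Gundy for both martingales with absorption of the sup-terms, Lemma \ref{lemUF} to control the $\varphi_\e$-weighted integrals of powers of $L_\gamma$, a small $\e_p$ to close the absorption, and Gronwall's inequality. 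The only cosmetic difference is bookkeeping of the jump part (you lump the martingale part of the jump of $\|\cdot\|_\H^p$ together, the paper keeps the linear jump martingale and the Taylor remainder against $N^{\e^{-1}\varphi_\e}$ as separate terms), which is an equivalent decomposition.
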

\begin{proof}
	Applying infinite dimensional It\^o's formula (see Theorem 1.2, \cite{IGDS}) to the process $\|\vi{\Y}^\e(\cdot)\|_\H^p$, we find for all $t\in[0,T]$ 
	\begin{align}\label{UE2}
		\|\vi{\Y}^\e(t)\|_\H^p=\|\x\|_\H^p+\sum_{j=1}^{7}J_j(t),
	\end{align}where 
	\begin{align*}
		J_1(t)&=\frac{p}{2}\int_0^t\|\vi{\Y}^\e(s)\|_\H^{p-2}\big[2\langle \A(s,\vi{\Y}^\e(s)),\vi{\Y}^\e(s)\rangle+\|\B(s,\vi{\Y}^\e(s))\|_{\L_2}^2\big]\d s,\\
		J_2(t)&= \frac{p(p-2)}{2}\int_0^t\|\vi{\Y}^\e(s)\|_\H^{p-4}\|\vi{\Y}^\e(s)\circ \B(s,\vi{\Y}^\e(s))\|_\H^2\d s,\\
		J_3(t)&=\sqrt{\e}p\int_0^t \|\vi{\Y}^\e(s)\|_\H^{p-2} \big(\B(s,\vi{\Y}^\e(s))\d\W(s),\vi{\Y}^\e(s)\big),\\
		J_4(t)&=p\int_0^t \|\vi{\Y}^\e(s)\|_\H^{p-2} \big(\B(s,\vi{\Y}^\e(s))\psi_\e(s),\vi{\Y}^\e(s)\big),\\
		J_5(t)&= p\int_0^t\int_\Z  \|\vi{\Y}^\e(s-)\|_\H^{p-2}\big(\e \gamma(s,\vi{\Y}^\e(s-),z),\vi{\Y}^\e(s-)\big)\vi{N}^{\e^{-1}\varphi_\e}(\d s,\d z),\\
		J_6(t)&= \int_0^t\int_\Z\big[\|\vi{\Y}^\e(s-)+\e \gamma(s,\vi{\Y}^\e(s-),z)\|_\H^p-\|\vi{\Y}^\e(s-)\|_\H^p\\&\qquad-p\|\vi{\Y}^\e(s-)\|_\H^{p-2}\big(\e\gamma(s,\vi{\Y}^\e(s-),z),\vi{\Y}^\e(s-)\big)\big]N^{\e^{-1}\varphi_\e}(\d s,\d z),
	\end{align*}and
	\begin{align*}
		J_7(t)&= p\int_0^t\int_\Z \|\vi{\Y}^\e(s)\|_\H^{p-2}\big( \gamma(s,\vi{\Y}^\e(s),z)(\varphi_\e(s,z)-1),\vi{\Y}^\e(s)\big)\nu(\d z)\d s.
	\end{align*}We consider the term $J_1(\cdot)$ and estimate it using Hypothesis \ref{hyp1} (H.3), (H.5), H\"older's and Young's inequalities as
	\begin{align}\label{UE3}\nonumber
		|J_1(t) |&\leq \frac{p}{2}\int_0^t \|\vi{\Y}^\e(s)\|_\H^{p-2}\bigg\{2\big[\fk{a}(s)\big(1+\|\vi{\Y}^\e(s)\|_\H^2\big)-L_\A\|\vi{\Y}^\e(s)\|_\V^\beta\big]+\fk{b}(s)\big(1+\|\vi{\Y}^\e(s)\|_\H^2\big)\bigg\}\d s 
		\\&\nonumber \leq 
		-pL_\A \int_0^t\|\vi{\Y}^\e(s)\|_\H^{p-2}\|\vi{\Y}^\e(s)\|_\V^\beta\d s+C\int_0^t\big[\fk{a}(s)+\fk{b}(s)\big]\d s\\&\quad+C\int_0^t \big[\fk{a}(s)+\fk{b}(s)\big]\|\vi{\Y}^\e(s)\|_\H^p\d s.
	\end{align}We estimate  the term $J_2(\cdot)$ using Hypothesis \ref{hyp1} (H.5), H\"older's and Young's inequalities as 
	\begin{align}\label{UE4}\nonumber
		J_2(t)&\leq \frac{p(p-2)}{2}\int_0^t \|\vi{\Y}^\e(s)\|_\H^{p-2}\big\{ \fk{b}(s)\big(1+\|\vi{\Y}^\e(s)\|_\H^2\big)\big\}\d s \\&\leq   C\int_0^t\fk{b}(s)\d s+C\int_0^t\fk{b}(s)\|\vi{\Y}^\e(s)\|_\H^p\d s. 
	\end{align}Now, we consider the term $J_4(\cdot)$ and estimate it using Hypothesis \ref{hyp1} (H.5) and Young's inequality as
	\begin{align}\label{UE5}\nonumber
		|J_4(t)|&\leq p\int_0^t \|\vi{\Y}^\e(s)\|_\H^{p-1}\|\B(s,\vi{\Y}^\e(s)\|_{\L_2}\|\psi_\e(s)\|_\H\d s \\&\nonumber \leq p\int_0^t \|\vi{\Y}^\e(s)\|_\H^{p-1}\|\psi_\e(s)\|_\H \big[\fk{b}(s)\big(1+\|\vi{\Y}^\e(s)\|_\H^2\big)\big]^\frac{1}{2}\d s \\&\nonumber \leq 
		p \int_0^t \sqrt{\fk{b}(s)}\big(1+2\|\vi{\Y}^\e(s)\|_\H^p\big)\|\psi_\e(s)\|_\H\d s \\& \leq C\bigg(\int_0^t \fk{b}(s)\d s+\int_0^t\|\psi_\e(s)\|_\H^2\d s+ \int_0^t\big[\fk{b}(s)+\|\psi_\e(s)\|_\H^2\big]\|\vi{\Y}^\e(s)\|_\H^p\d s\bigg).
	\end{align}For the term $J_6(\cdot)$, we apply the following inequality 
	\begin{align*}
		\big| \|\x+\h\|_\H^p-\|\x\|_\H^p-p\|\x\|_\H^{p-2}(\x,\h)\big| \leq C_p\big(\|\x\|_\H^{p-2}\|\h\|_\H^2+\|\h\|_\H^2\big),
	\end{align*}to find
	\begin{align}\label{UE6}
		|J_6(t)| \leq C_p\int_0^t\int_\Z \big(\| \vi{\Y}^\e(s-)\|_\H^{p-2}\|\e\gamma(s,\vi{\Y}^\e(s-),z)\|_\H^2+\|\e\gamma(s,\vi{\Y}^\e(s-),z)\|_\H^p\big)N^{\e^{-1}\varphi_\e}(\d s,\d z).
	\end{align}Using Hypothesis \ref{hyp2} (H.8), we find 
	\begin{align}\label{UE7}\nonumber
		|J_7(t)|&\nonumber\leq p\int_0^t\int_\Z \|\vi{\Y}^\e(s)\|_\H^{p-1}\|\gamma(s,\vi{\Y}^\e(s),z)\|_\H|\varphi_\e(s,z)-1|\nu(\d z)\d s \\&\nonumber \leq p\int_0^t\int_\Z \|\vi{\Y}^\e(s)\|_\H^{p-1}\big(1+ \|\vi{\Y}^\e(s)\|_\H\big)L_\gamma(s,z)|\varphi_\e(s,z)-1|\nu(\d z)\d s \\&\leq 
		p\int_0^t\int_\Z L_\gamma (s,z)|\varphi_\e(s,z)-1|\nu(\d z)\d s  \nonumber\\&\quad+2p\int_0^t\int_\Z \|\vi{\Y}^\e(s)\|_\H^pL_\gamma(s,z)|\varphi_\e(s,z)-1|\nu(\d z)\d s.
	\end{align}Substituting \eqref{UE3}-\eqref{UE7} in \eqref{UE2}, we obtain 
	\begin{align}\label{UE8}\nonumber
		&	\|\vi{\Y}^\e(t)\|_\H^p+pL_\A \int_0^t\|\vi{\Y}^\e(s)\|_\H^{p-2}\|\vi{\Y}^\e(s)\|_\V^\beta\d s\\ &\nonumber\leq 
		\|\x\|_\H^p+C\int_0^t\big[\fk{a}(s)+\fk{b}(s)+\|f(s)\|_\H^2\big]\d s+C\int_0^t \big[\fk{a}(s)+\fk{b}(s)+\|f(s)\|_\H^2\big]\|\vi{\Y}^\e(s)\|_\H^p\d s\\&\nonumber\quad+C_p\int_0^t\int_\Z \big(\| \vi{\Y}^\e(s-)\|_\H^{p-2}\|\e\gamma(s,\vi{\Y}^\e(s-),z)\|_\H^2+\|\e\gamma(s,\vi{\Y}^\e(s-),z)\|_\H^p\big)N^{\e^{-1}\varphi_\e}(\d s,\d z)\\&\nonumber\quad +p\int_0^t\int_\Z L_\gamma (s,z)|\varphi_\e(s,z)-1|\nu(\d z)\d s  +2p\int_0^t\int_\Z \|\vi{\Y}^\e(s)\|_\H^pL_\gamma(s,z)|\varphi_\e(s,z)-1|\nu(\d z)\d s\\&\quad +|J_3(t)|+|J_4(t)|,
	\end{align}
	for all $t\in[0,T]$. An application of Gronwall's inequality in \eqref{UE8} yields
	\begin{align}\label{UE9}\nonumber
		&	\|\vi{\Y}^\e(t)\|_\H^p+pL_\A \int_0^t\|\vi{\Y}^\e(s)\|_\H^{p-2}\|\vi{\Y}^\e(s)\|_\V^\beta\d s\\ &\nonumber\leq \bigg(\|\x\|_\H^p+C\int_0^T \big[\fk{a}(s)+\fk{b}(s)\big]\d s+C\Upsilon+\sup_{t\in[0,T]}|J_3(t)|+\sup_{t\in[0,T]}|J_4(t)|+p C_{L_\gamma,\Upsilon}\\&\nonumber\quad+C_p\int_{\Z_T} \big(\| \vi{\Y}^\e(s-)\|_\H^{p-2}\|\e\gamma(s,\vi{\Y}^\e(s-),z)\|_\H^2+\|\e\gamma(s,\vi{\Y}^\e(s-),z)\|_\H^p\big)N^{\e^{-1}\varphi_\e}(\d s,\d z) \bigg)\\&\qquad\times \exp\bigg(C\int_0^T\big[\fk{a}(s)+\fk{b}(s)\big]\d s+C\Upsilon+2p C_{L_\gamma,\Upsilon}\bigg),
	\end{align}where we have used Lemma \ref{lemUF} and the fact that $\boldsymbol{q}_\e=(\psi_\e,\varphi_\e)\in {\mathcal{U}}^{\Upsilon}$. Now, we consider the term $\sup\limits_{t\in[0,T]}|J_3(t)|$ and estimate it using Hypothesis \ref{hyp1} (H.5), the Burkholder-Davis-Gundy, H\"older's and Young's inequalities as 
	\begin{align}\label{UE10}\nonumber
		&	\bar{\E}\bigg[\sup_{s\in[0,T]}|J_3(s)|\bigg] \\&\nonumber \leq \sqrt{\e}C_p\bar{\E}\bigg[\int_0^T\|\vi{\Y}^\e(s)\|^{2p-2}\|\B(s,\vi{\Y}^\e(s))\|_{\L_2}^2\d s\bigg]^\frac{1}{2}\\&\nonumber \leq \sqrt{\e}C_p\bar{\E}\bigg[\sup_{s\in[0,T]}\|\vi{\Y}^\e(s)\|_\H^p \int_0^T\|\vi{\Y}^\e(s)\|_\H^{p-2}\|\B(s,\vi{\Y}^\e(s))\|_{\L_2}^2\d s\bigg]^\frac{1}{2}\\&\nonumber\leq \frac{1}{4}\bar{\E}\bigg[\sup_{s\in[0,T]}\|\vi{\Y}^\e(s)\|_\H^p\bigg] 
		+C_p\e\bar{\E}\bigg[\int_0^T\|\vi{\Y}^\e(s)\|_\H^{p-2}\fk{b}(s)\big(1+\|\vi{\Y}^\e(s)\|_\H^2\big)\d s\bigg]
		\\&\nonumber \leq \frac{1}{4}\bar{\E}\bigg[\sup_{s\in[0,T]}\|\vi{\Y}^\e(s)\|_\H^p\bigg] + \e\bar{\E}\bigg[\int_0^T\fk{b}(s)\|\vi{\Y}^\e(s)\|_\H^p\d s\bigg]+C_{{\e},p}\int_0^T\fk{b}(s)\d s \\&\leq \bigg(\frac{1}{4}+\e\int_0^T\fk{b}(s)\d s\bigg)\bar{\E}\bigg[\sup_{s\in[0,T]}\|\vi{\Y}^\e(s)\|_\H^p\bigg] + C_{{\e},p}\int_0^T\fk{b}(s)\d s.
	\end{align}
	Once again using the Hypothesis \ref{hyp2} (H.8), Lemma \ref{lemUF},  the Burkholder-Davis-Gundy, H\"older's and Young's inequalities, we find 
	\begin{align}\label{UE11}\nonumber
		&\bar{\E}\bigg[\sup_{s\in[0,T]}|J_5(t)|\bigg] \\&\nonumber\leq \e C_p\bar{\E}\bigg[\int_{\Z_T} \|\vi{\Y}^\e(s-)\|_\H^{2p-4}|\big(\gamma(s,\vi{\Y}^\e(s-),z),\vi{\Y}^\e(s-)\big)|^2N^{\e^{-1}\varphi_\e}(\d s,\d z)\bigg]^{\frac{1}{2}} \\&\nonumber\leq \e C_p
		\bar{\E}\bigg[\int_{\Z_T} \|\vi{\Y}^\e(s-)\|_\H^{2p-2}L_\gamma^2(s,z)\big(1+\|\vi{\Y}^\e(s-)\|_\H\big)^2 N^{\e^{-1}\varphi_\e}(\d s,\d z)\bigg]^{\frac{1}{2}} \\&\nonumber \leq \e C_p \bar{\E}\bigg[\bigg(\sup_{s\in[0,T]}\|\vi{\Y}^\e(s)\|_\H^p\bigg)\int_{\Z_T}\|\vi{\Y}^\e(s-)\|_\H^{p-2}L_\gamma^2(s,z)\big(1+\|\vi{\Y}^\e(s-)\|_\H\big)^2N^{\e^{-1}\varphi_\e}(\d s,\d z \bigg]^{\frac{1}{2}} \\&\nonumber\leq 
		\frac{1}{4}\bar{\E}\bigg[\sup_{s\in[0,T]}\|\vi{\Y}^\e(s)\|_\H^p\bigg]+ C_p\e \bar{\E}\bigg[\bigg(1+\sup_{s\in[0,T]}\|\vi{\Y}^\e(s)\|_\H^p\bigg)\int_{\Z_T} L_\gamma^2(s,z)\varphi_\e(s,z)\nu(\d z)\d s\bigg] \\& \leq 
		\bigg(\frac{1}{4}+\e  C_p C_{L_\gamma,2,2,\Upsilon}\bigg)\bar{\E}\bigg[\sup_{s\in[0,T]}\|\vi{\Y}^\e(s)\|_\H^p\bigg]+\e C_p C_{L_\gamma,2,2,\Upsilon}.
	\end{align}Again, using Hypothesis \ref{hyp2} (H.8), H\"older's inequality and Lemma \ref{lemUF}, we find 
	\begin{align}\label{UE12}\nonumber
		&C_p\bar{\E}\bigg[\int_{\Z_T} \| \vi{\Y}^\e(s-)\|_\H^{p-2}\|\e\gamma(s,\vi{\Y}^\e(s-),z)\|_\H^2N^{\e^{-1}\varphi_\e}(\d s,\d z)\bigg] \\&\nonumber \leq 
		\e C_p \bar{\E}\bigg[\int_{\Z_T}\| \vi{\Y}^\e(s)\|_\H^{p-2} L_\gamma^2(s,z)\big(1+\|\vi{\Y}^\e(s)\|_\H\big)^2\varphi_\e(s,z)\nu(\d z)\d s\bigg]\\&\nonumber \leq \e C_p \bar{\E}\bigg[\bigg(1+\sup_{s\in[0,T]}\|\vi{\Y}^\e(s)\|_\H^p\bigg)\int_{\Z_T}  L_\gamma^2(s,z)\varphi_\e(s,z)\nu(\d z)\d s\bigg] \\& \leq \e C_p C_{L_\gamma,2,2,\Upsilon}\bar{\E}\bigg[\sup_{s\in[0,T]}\|\vi{\Y}^\e(s)\|_\H^p\bigg]+\e C_p C_{L_\gamma,2,2,\Upsilon},
	\end{align}and 
	\begin{align}\label{UE13}\nonumber
		&	C_p\bar{\E}\bigg[\int_{\Z_T}\|\e\gamma(s,\vi{\Y}^\e(s-),z)\|_\H^pN^{\e^{-1}\varphi_\e}(\d s,\d z) \bigg]\\&\nonumber =\e^{p-1}C_p\bar{\E}\bigg[\int_{\Z_T}\|\gamma(s,\vi{\Y}^\e(s),z)\|_\H^p\varphi_\e(s,z)\nu(\d z)\d s\bigg] \\&\nonumber\leq 
		\e^{p-1}C_p\bar{\E}\bigg[\bigg(\sup_{s\in[0,T]}\|\vi{\Y}^\e(s)\|_\H^p+1\bigg)\int_{\Z_T}  L_\gamma^p(s,z)\varphi_\e(s,z)\nu(\d z)\d s\bigg] \\& \leq 
		\e^{p-1}C_pC_{L_\gamma,p,p,\Upsilon}\bar{\E}\bigg[\sup_{s\in[0,T]}\|\vi{\Y}^\e(s)\|_\H^p\bigg]+\e^{p-1}C_p C_{L_\gamma,p,p,\Upsilon}.
	\end{align}Finally, combining \eqref{UE9}-\eqref{UE13}, we obtain that there exists a positive $\e_p$, such that 
	\begin{align}\label{UE14}\nonumber
		&	\sup_{\e\in(0,\e_p]}\left\{\bar{\E}\bigg[\sup_{s\in[0,T]}\|\vi{\Y}^\e(s)\|_\H^p\bigg]+pL_\A\bar{\E}\bigg[ \int_0^T\|\vi{\Y}^\e(s)\|_\H^{p-2}\|\vi{\Y}^\e(s)\|_\V^\beta\d s\bigg]\right\}
		\\&\leq C_{\Upsilon,p,T,\int_0^T\left[\fk{a}(s)+\fk{b}(s)\right]\d s,L_\gamma}(1+\|\x\|_{\H}^2), 
	\end{align} so that the proof can be completed.
\end{proof} 
\begin{lemma}\label{lemUF02}
	For $p=\frac{M}{2}$, there exists a constant $C_p$ such that 
	\begin{align}\label{UE014}
		\sup_{\e\in(0,\e_{2p})} \bar{\E}\bigg[\bigg(\int_0^T\|\vi{\Y}^\e(s)\|_\V^\beta\d s\bigg)^p\bigg]\leq C_p,
	\end{align} where $\e_{2p}$ is the constant coming from Lemma \ref{lemUF2}. 
\end{lemma}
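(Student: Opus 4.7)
The plan is to apply It\^o's formula to the process $\|\vi{\Y}^\e(\cdot)\|_\H^2$ (rather than $\|\cdot\|_\H^{2p}$ as in Lemma \ref{lemUF2}) along the controlled SPDE \eqref{CSPDE1}, and to use the coercivity condition \eqref{3.7} to bring the good term $2L_\A\int_0^t\|\vi{\Y}^\e(s)\|_\V^\beta\d s$ to the left-hand side. This yields a pathwise energy inequality of the form
\begin{align*}
2L_\A\int_0^t\|\vi{\Y}^\e(s)\|_\V^\beta\d s\leq \|\x\|_\H^2+\sum_{j}K_j^\e(t),
\end{align*}
in which the $K_j^\e(t)$ collect the deterministic-drift contributions $\int_0^t\fk{a}(s)(1+\|\vi{\Y}^\e(s)\|_\H^2)\d s$, $\int_0^t 2(\B(s,\vi{\Y}^\e(s))\psi_\e(s),\vi{\Y}^\e(s))\d s$, the quadratic-variation term $\e\int_0^t\|\B(s,\vi{\Y}^\e(s))\|_{\L_2}^2\d s$, the Poisson-compensator terms $\int_{(0,t]\times\Z}[2(\vi{\Y}^\e,\gamma)(\varphi_\e-1)+\e\|\gamma\|_\H^2\varphi_\e]\nu(\d z)\d s$, and the two stochastic-integral martingales $M_W^\e(t)$ (against $\sqrt{\e}\,\d\W$) and $M_N^\e(t)$ (against $\vi{N}^{\e^{-1}\varphi_\e}$).

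Raising this inequality to the power $p=M/2$, taking the supremum over $t\in[0,T]$ and the expectation $\bar{\E}$, the problem reduces to bounding $\bar{\E}[\sup_t|K_j^\e(t)|^p]$ for each $j$. The drift pieces are handled by H\"older's inequality, Hypotheses \ref{hyp1} (H.5) and \ref{hyp2} (H.8), and Lemma \ref{lemUF}, producing expressions involving $\bar{\E}[\sup_t\|\vi{\Y}^\e(t)\|_\H^{2p}]$; since $2p=M$, this quantity is finite uniformly in $\e\in(0,\e_M]$ courtesy of Lemma \ref{lemUF2}. For the Brownian martingale, the Burkholder-Davis-Gundy inequality combined with Hypothesis \ref{hyp1} (H.5) and the Cauchy-Schwarz inequality yields
\begin{align*}
\bar{\E}\bigg[\sup_{t\in[0,T]}|M_W^\e(t)|^p\bigg]\leq C_p\e^{p/2}\bigg(\int_0^T\fk{b}(s)\d s\bigg)^{p/2}\bar{\E}\bigg[\sup_{t\in[0,T]}\|\vi{\Y}^\e(t)\|_\H^{2p}\bigg]^{1/2},
\end{align*}
which is again finite by Lemma \ref{lemUF2}.

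The main technical obstacle lies in the compensated Poisson martingale
\begin{align*}
M_N^\e(t)=\int_0^t\int_\Z\big[2\e(\vi{\Y}^\e(s-),\gamma(s,\vi{\Y}^\e(s-),z))+\e^2\|\gamma(s,\vi{\Y}^\e(s-),z)\|_\H^2\big]\vi{N}^{\e^{-1}\varphi_\e}(\d s,\d z),
\end{align*}
for which I plan to invoke the Burkholder-type inequality for Hilbert-space-valued stochastic integrals with respect to compensated Poisson random measures, giving for $p\geq 2$
\begin{align*}
\bar{\E}\bigg[\sup_{t\in[0,T]}|M_N^\e(t)|^p\bigg]&\leq C_p\bar{\E}\bigg[\bigg(\int_{\Z_T}|F^\e|^2\e^{-1}\varphi_\e\nu(\d z)\d s\bigg)^{p/2}\bigg]\\
&\quad+C_p\bar{\E}\bigg[\int_{\Z_T}|F^\e|^p\e^{-1}\varphi_\e\nu(\d z)\d s\bigg],
\end{align*}
where $F^\e:=2\e(\vi{\Y}^\e,\gamma)+\e^2\|\gamma\|_\H^2$. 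Using the elementary bound $|F^\e|^q\leq C_q(\e^q\|\vi{\Y}^\e\|_\H^q\|\gamma\|_\H^q+\e^{2q}\|\gamma\|_\H^{2q})$ for $q\in\{2,p\}$, Hypothesis \ref{hyp2} (H.8) to substitute $\|\gamma\|_\H\leq L_\gamma(1+\|\vi{\Y}^\e\|_\H)$, the integrability $L_\gamma\in\L^2(\nu_T)\cap\L^{M/2}(\nu_T)\cap\L^M(\nu_T)\cap\mathcal{H}_p$, and Lemma \ref{lemUF}, every space integral of a power of $L_\gamma$ against $\varphi_\e\nu$ is uniformly bounded for $\varphi_\e\in S^\Upsilon$; simultaneously every moment of $\vi{\Y}^\e$ encountered is at most of order $2p=M$ and is therefore controlled uniformly in $\e\in(0,\e_M]$ by Lemma \ref{lemUF2}. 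The positive powers of $\e$ that survive after these estimates ensure uniformity for small $\e$, and assembling the individual estimates produces \eqref{UE014} with $\e_{2p}=\e_M$ as stated.
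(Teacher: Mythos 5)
Your proposal is correct and follows essentially the same route as the paper: Itô's formula for $\|\vi{\Y}^\e\|_\H^2$, coercivity (H.3) to isolate $2L_\A\int_0^T\|\vi{\Y}^\e(s)\|_\V^\beta\d s$, raising to the power $p=M/2$, and then Burkholder–Davis–Gundy estimates (continuous and Poissonian, the latter being the Corollary 2.4, \cite{ZBEH} type bound) together with (H.5), (H.8), Lemma \ref{lemUF} and the $M$-th moment bound of Lemma \ref{lemUF2}. The only difference is cosmetic bookkeeping — you compensate the full jump functional $F^\e$ in one martingale while the paper splits the jump contribution into the terms $I_4$, $I_5$, $I_6$ — and your Brownian BDG display silently absorbs a bounded factor $\big(1+\bar{\E}\big[\sup_t\|\vi{\Y}^\e(t)\|_\H^{2p}\big]\big)^{1/2}$, neither of which affects the argument.
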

\begin{proof}
	Applying It\^o's formula to the process $\|\vi{\Y}^\e(\cdot)\|_\H^2$, we get
	\begin{align}\label{UE15}\nonumber
		\|\vi{\Y}^\e(t)\|_\H^2 &\nonumber= 
		\|\x\|_\H^2+\int_0^t\big\{2\langle \A(s,\vi{\Y}^\e(s)),\vi{\Y}^\e(s)\rangle +\e\|\B(s,\vi{\Y}^\e(s))\|_{\L_2}^2\big\}\d s\\&\nonumber\quad+2\sqrt{\e}\int_0^t\big(\B(s,\vi{\Y}^\e(s))\d\W(s),\vi{\Y}^\e(s)\big)+\int_0^t\big(\B(s,\vi{\Y}^\e(s))\psi_\e(s),\vi{\Y}^\e(s)\big)\\&\nonumber\quad+2\int_0^t\int_\Z\big(\e\gamma(s,\vi{\Y}^\e(s-),z),\vi{\Y}^\e(s-)\big)\vi{N}^{\e^{-1}\varphi_\e}(\d s,\d z) \\&\nonumber\quad+\int_0^t\int_\Z\|\e\gamma(s,\vi{\Y}^\e(s-),z)\|_\H^2N^{\e^{-1}\varphi_\e}(\d s,\d z)\\&\nonumber\quad+2\int_0^t
		\int_\Z\big(\gamma(s,\vi{\Y}^\e(s),z)(\varphi_\e(s,z)-1),\vi{\Y}^\e(s)\big)\nu(\d z)\d s
		\\&=:\|\x\|_\H^2+\sum_{i=1}^{6} I_i(t).
	\end{align}We consider the term  $I_1(\cdot)$ and estimate it using the Hypothesis \ref{hyp1} (H.3) and (H.5) as 
	\begin{align}\label{UE16}
		I_1(t) \leq \int_0^t \big\{2\fk{a}(s)\big(1+\|\vi{\Y}^\e(s)\|_\H^2\big)-2L_\A\|\vi{\Y}^\e(s)\|_\V^\beta+ \e\fk{b}(s)\big(1+\|\vi{\Y}^\e(s)\|_\H^2\big) \big\}\d s.
	\end{align}For rest of the calculations, we fix $p=M/2$. Now, we consider $\bar{\E}\big[\sup\limits_{t\in[0,T]}|I_2(t)|^p\big]$ and estimate it using  the Burkholder-Davis-Gundy inequality (see Theorem 1.1, \cite{DLB}), Hypothesis \ref{hyp1} (H.5), H\"older's and Young's inequalities  as
	\begin{align}\label{UE17}\nonumber
		&\bar{\E}\bigg[\sup_{t\in[0,T]}|I_2(t)|^p\bigg] \\&\nonumber\leq \e^\frac{p}{2}C_p \bar{\E}\bigg[\int_0^T\|\vi{\Y}^\e(s)\|_\H^2\|\B(s,\vi{\Y}^\e(s))\|_{\L_2}^2\d s\bigg]^\frac{p}{2} \\&\nonumber\leq \e^\frac{p}{2}C_p \bar{\E}\bigg[\sup_{s\in[0,T]}\|\vi{\Y}^\e(s)\|_\H^{2p}\bigg]+C_p\bar{\E}\bigg[\int_0^T\fk{b}(s)\big(1+\|\vi{\Y}^\e(s)\|_\H^2\big)\d s\bigg]^p\\&\leq  \e^\frac{p}{2}C_p \bar{\E}\bigg[\sup_{s\in[0,T]}\|\vi{\Y}^\e(s)\|_\H^{2p}\bigg]+C_p\bigg(\int_0^T\fk{b}(s)\d s\bigg)^p\bigg(1+\bar{\E}\bigg[\sup_{s\in[0,T]}\|\vi{\Y}^\e(s)\|_\H^{2p}\bigg]\bigg)
	\end{align}
	Let us consider the term $\bar{\E}\big[\sup\limits_{t\in[0,T]}|I_3(t)|^p\big]$ and estimate it using Hypothesis \ref{hyp1} (H.5), the Cauchy-Schwarz, H\"older's and Young's inequalities as 
	\begin{align}\label{UE18}\nonumber
		\bar{\E}\bigg[\sup_{t\in[0,T]}|I_3(t)|^p\bigg] &\nonumber\leq C_p\bar{\E}\bigg[\int_0^T\|\B(s,\vi{\Y}^\e(s))\|_{\L_2}\|\psi_\e(s)\|_\H\|\vi{\Y}^\e(s)\|_\H \d s\bigg]^p \\&\nonumber\leq C_p \bar{\E}\bigg[\int_0^T \sqrt{\fk{b}(s)} \big(1+2\|\vi{\Y}^\e(s)\|_\H^2\big)\|\psi_\e(s)\|_\H \d s\bigg]^p \\& \leq 
		C_p\left(1+\bar{\E}\bigg[\sup_{s\in[0,T]}\|\vi{\Y}^\e(s)\|_\H^{2p}\bigg]\right)\bigg[\int_0^T\fk{b}(s)\d s+\Upsilon\bigg]^p. 
	\end{align}With the help of Corollary 2.4, \cite{ZBEH}, Hypothesis (H.8), Lemmas  \ref{lemUF} and \ref{lemUF2},  we estimate the term $\bar{\E}\big[\sup\limits_{t\in[0,T]}|I_5(t)|^p\big]$ as
	\begin{align}\label{UE19}\nonumber
		&\bar{\E}\bigg[\sup\limits_{t\in[0,T]}|I_5(t)|^p\bigg]\nonumber\\&\nonumber \leq 
		\e^{2p}C_p\bar{\E}\bigg[\bigg(\int_{\Z_T}\|\gamma(s,\vi{\Y}^\e(s-),z)\|_\H^2N^{\e^{-1}\varphi_\e}(\d s,\d z)\bigg)^p\bigg]\\&\nonumber \leq 	\e^{2p-1}C_p\bar{\E}\bigg[\int_{\Z_T} \|\gamma(s,\vi{\Y}^\e(s),z)\|_\H^{2p}\varphi_\e(s,z)\nu(\d z)\d s\bigg]\\&\nonumber\quad + \e^{p}C_p\bar{\E}\bigg[\int_{\Z_T} \|\gamma(s,\vi{\Y}^\e(s),z)\|_\H^2\varphi_\e(s,z)\nu(\d z)\d s\bigg]^p \\&\nonumber\leq \e^{2p-1}C_p\bar{\E}\bigg[\int_{\Z_T}L_\gamma^{2p}(s,z)\big(1+\|\vi{\Y}^\e(s)\|_\H^{2p}\big)\varphi_\e(s,z)\nu(\d z)\d s\bigg]\\&\nonumber\quad + \e^{p}C_p\bar{\E}\bigg[\int_{\Z_T} L_\gamma^2(s,z)\big(1+\|\vi{\Y}^\e(s)\|_\H^2\big)\varphi_\e(s,z)\nu(\d z)\d s\bigg]^p\\&\nonumber\leq \e^{2p-1}C_p\bigg(1+\bar{\E}\bigg[\sup_{s\in[0,T]}\|\vi{\Y}^\e(s)\|_\H^{2p}\bigg]\bigg)\bigg(\sup_{\varphi\in  S^\Upsilon}\int_{\Z_T} L_\gamma^{2p}(s,z)\varphi(s,z)\nu(\d z)\d s\bigg)\\&\nonumber\quad+ \e^{p}C_p\bigg(1+\bar{\E}\bigg[\sup_{s\in[0,T]}\|\vi{\Y}^\e(s)\|_\H^{2p}\bigg]\bigg)\bigg(\sup_{\varphi\in  S^\Upsilon}\int_{\Z_T} L_\gamma^2(s,z)\varphi(s,z)\nu(\d z)\d s\bigg)^p\\&\leq C_p\bigg( \e^{2p-1} C_{L_\gamma,2p,2p,\Upsilon}+\e^{p}C_{L_\gamma,2,2,\Upsilon}^p\bigg)\bigg(1+\bar{\E}\bigg[\sup_{s\in[0,T]}\|\vi{\Y}^\e(s)\|_\H^{2p}\bigg]\bigg).
	\end{align}Now, we consider the term $\bar{\E}\big[\sup\limits_{t\in[0,T]}|I_4(t)|^p\big]$ and estimate it using the Burkholder-Davis-Gundy inequality (see Theorem 1.1, \cite{DLB}) and \eqref{UE19}   as
	\begin{align}\label{UE20}\nonumber
		&\bar{\E}\bigg[\sup_{t\in[0,T]}|I_4(t)|^p\bigg]\\&\nonumber \leq\e^{p} C_p \bar{\E}\bigg[\int_{\Z_T} \|\gamma(s,\vi{\Y}^\e(s),z)\|_\H^2\|\vi{\Y}^\e(s)\|_\H^2N^{\e^{-1}\varphi_\e}(\d s,\d z)\bigg]^{\frac{p}{2}}\\&\nonumber\leq \e^{p} C_p\bigg\{ \bar{\E}\bigg[\sup_{s\in[0,T]}\|\vi{\Y}^\e(s)\|_\H^p\bigg]+\bar{\E}\bigg[ \int_{\Z_T} \|\gamma(s,\vi{\Y}^\e(s),z)\|_\H^2N^{\e^{-1}\varphi_\e}(\d s,\d z)\bigg]^p\bigg\}
		\\&\leq \e^{p} C_p\bar{\E}\bigg[\sup_{s\in[0,T]}\|\vi{\Y}^\e(s)\|_\H^p\bigg]+C_p\bigg( \e^{p-1} C_{L_\gamma,2p,2p,\Upsilon}+C_{L_\gamma,2,2,\Upsilon}^p\bigg)\bigg(1+\bar{\E}\bigg[\sup_{s\in[0,T]}\|\vi{\Y}^\e(s)\|_\H^{2p}\bigg]\bigg).
	\end{align}Using  Hypothesis \ref{hyp2} (H.8), Lemmas  \ref{lemUF} and \ref{lemUF2}, we estimate the term $\bar{\E}\big[\sup\limits_{t\in[0,T]}|I_6(t)|^p\big]$ as
	\begin{align}\label{UE21}\nonumber
		&	\bar{\E}\bigg[\sup_{t\in[0,T]}|I_6(t)|^p\bigg]\\& \nonumber\leq 	C_p\bar{\E}\bigg[\int_{\Z_T} \|\gamma (s,\vi{\Y}^\e(s),z)\|_\H\|\vi{\Y}^\e(s)\|_\H|\varphi_\e(s,z)-1|\nu(\d z)\d s \bigg]^p \\&\nonumber\leq C_p \bar{\E}\bigg[\int_{\Z_T} L_\gamma (s,z)\|\vi{\Y}^\e(s)\|_\H\big(1+\|\vi{\Y}^\e(s)\|_\H\big)|\varphi_\e(s,z)-1|\nu(\d z)\d s \bigg]^p \\&\nonumber\leq C_p \bigg(1+\bar{\E}\bigg[\sup_{s\in[0,T]}\|\vi{\Y}^\e(s)\|_\H^{2p}\bigg]\bigg)\bigg(\sup_{\varphi\in S^\Upsilon}\int_{\Z_T} L_\gamma (s,z)|\varphi_\e(s,z)-1|\nu(\d z)\d s \bigg)^p \\&\leq C_p C_{L_\gamma,\Upsilon}^p\bigg(1+\bar{\E}\bigg[\sup_{s\in[0,T]}\|\vi{\Y}^\e(s)\|_\H^{2p}\bigg]\bigg) .
	\end{align}
	Substituting \eqref{UE16}-\eqref{UE21} in \eqref{UE15} and using Lemmas \ref{lemUF} and \ref{lemUF2}, we obtain the required estimate \eqref{UE014}.
\end{proof}
Let us move to our main result of this subsection, that is, the tightness of the laws of $\{\vi{\Y}^\e\}_{\e>0}$. A similar result is established in \cite{JXJZ} (see Proposition 4.1), where the authors consider SPDEs  with locally monotone coefficients perturbed by pure  jump noise.
\begin{proposition}\label{propUF}
	For some $\e>0$, the family $\{\mathscr{L}(\Y^\e)\}_{\e\in(0,\e_0]}$ is tight in the space $\D([0,T];\V^*)$ with the Skorokhod topology. Furthermore, set
	\begin{align*}
		X_1^\e(t)&= \int_0^t \A(s,\vi{\Y}^\e(s))\d s,\\
		X_2^\e(t)&= \int_0^t\B(s,	\vi{\Y}^\e(s))\psi_\e(s)\d s,\\
		X_3^\e(t)&= \int_0^t\int_\Z \gamma(s,\vi{\Y}^\e(s),z)\big(\varphi_\e(s,z)-1\big)\nu(\d z)\d s,\\
		M_1^\e(t)&=\sqrt{\e}\int_0^t  \B(s,\vi{\Y}^\e(s))\d\W(s),\\
		M_2^\e(t)&= \e\int_0^t\int_\Z \gamma(s,\vi{\Y}^\e(s-),z)\vi{N}^{\e^{-1}\varphi_\e}(\d s,\d z).
	\end{align*}Then 
	\begin{enumerate}
		\item [(1)] $\lim\limits_{\e\to0}\bar{\E}\bigg[\sup\limits_{t\in[0,T]}\|M_1^\e(t)\|_\H^2\bigg]=\lim\limits_{\e\to0}\bar{\E}\bigg[\sup\limits_{t\in[0,T]}\|M_2^\e(t)\|_\H^2\bigg]=0$,
		\item [(2)] $\{X_1^\e(t)\}_{t\in[0,T]}$ is tight in $\C([0,T];\V^*)$,
		\item [(3)] $\{X_2^\e(t)\}_{t\in[0,T]}$ is tight in $\C([0,T];\V^*)$,
		\item [(4)] $\{X_3^\e(t)\}_{t\in[0,T]}$ is tight in $\C([0,T];\V^*)$.
	\end{enumerate}
\end{proposition}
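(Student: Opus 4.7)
My plan is to decompose $\vi{\Y}^\e$ along the identity
\[
\vi{\Y}^\e(t)=\x+X_1^\e(t)+X_2^\e(t)+X_3^\e(t)+M_1^\e(t)+M_2^\e(t),
\]
verify the four listed properties separately, and then deduce tightness of $\{\mathscr{L}(\vi{\Y}^\e)\}$ in $\D([0,T];\V^*)$ from the decomposition. The key inputs are the uniform moment bounds of Lemmas \ref{lemUF2} and \ref{lemUF02}, the growth/coercivity hypotheses (H.3)--(H.5), (H.8) together with Lemma \ref{lemUF}, and the Burkholder--Davis--Gundy (BDG) inequalities in both the Wiener and Poisson settings. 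The only nontrivial topological step is the final assembly, where I will use the fact that tightness in $\C([0,T];\V^*)$ implies tightness in $\D([0,T];\V^*)$, and that if $\xi_\e\to 0$ in $\bar{\E}\bigl[\sup_{t}\|\xi_\e(t)\|_\H^2\bigr]$ then $\xi_\e\Rightarrow 0$ in $\D([0,T];\V^*)$.

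For part (1), an application of the BDG inequality in $\H$ together with (H.5) yields
\[
\bar{\E}\Bigl[\sup_{t\in[0,T]}\|M_1^\e(t)\|_\H^2\Bigr]\leq C\e\,\bar{\E}\Bigl[\int_0^T\fk{b}(s)(1+\|\vi{\Y}^\e(s)\|_\H^2)\,\d s\Bigr],
\]
and the right-hand side is $O(\e)$ by Lemma \ref{lemUF2} with $p=2$. For $M_2^\e$ the BDG-type estimate for compensated Poisson integrals (Theorem 1.1 of \cite{DLB} or Corollary 2.4 of \cite{ZBEH}) gives
\[
\bar{\E}\Bigl[\sup_{t\in[0,T]}\|M_2^\e(t)\|_\H^2\Bigr]\leq C\e\,\bar{\E}\Bigl[\int_{\Z_T}\|\gamma(s,\vi{\Y}^\e(s),z)\|_\H^2\varphi_\e(s,z)\nu(\d z)\d s\Bigr],
\]
which, after using (H.8) to dominate $\|\gamma\|_\H^2$ by $2L_\gamma^2(1+\|\vi{\Y}^\e\|_\H^2)$ and then Lemmas \ref{lemUF} and \ref{lemUF2}, is again $O(\e)$.

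For parts (2)--(4), each $X_i^\e$ is already absolutely continuous, so I will establish tightness in $\C([0,T];\V^*)$ via the standard equicontinuity criterion: pointwise tightness (which is immediate from Lemma \ref{lemUF2} and the embedding $\H\subset\V^*$) together with uniform control of the modulus of continuity. For $0\leq s<t\leq T$, Hölder's inequality and (H.4) give
\[
\|X_1^\e(t)-X_1^\e(s)\|_{\V^*}\leq |t-s|^{1/\beta}\Bigl(\int_s^t(\fk{a}(r)+C\|\vi{\Y}^\e(r)\|_\V^\beta)(1+\|\vi{\Y}^\e(r)\|_\H^\alpha)\,\d r\Bigr)^{(\beta-1)/\beta},
\]
and taking expectations, using Lemmas \ref{lemUF2}, \ref{lemUF02} together with the choice $p=M/2\geq (\alpha+2)/2$ in (H.8), yields a bound of the form $C|t-s|^{1/\beta}$. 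Similarly, (H.5) and Cauchy--Schwarz give
\[
\|X_2^\e(t)-X_2^\e(s)\|_\H\leq \bigl(1+\sup_r\|\vi{\Y}^\e(r)\|_\H\bigr)\Bigl(\int_s^t\fk{b}(r)\d r\Bigr)^{1/2}\Upsilon^{1/2},
\]
while (H.8) and Lemma \ref{lemUF} yield
\[
\|X_3^\e(t)-X_3^\e(s)\|_\H\leq \bigl(1+\sup_r\|\vi{\Y}^\e(r)\|_\H\bigr)\int_s^t\!\!\int_\Z L_\gamma(r,z)|\varphi_\e(r,z)-1|\,\nu(\d z)\d r,
\]
where the last integral is controlled uniformly in $\e$ on small time sets via the absolute continuity statement in Lemma \ref{lemUF3}(3). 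Each of these bounds goes to $0$ with $|t-s|$ uniformly in $\e$, giving equicontinuity in probability and hence the claimed tightness in $\C([0,T];\V^*)$.

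Finally, combining the decomposition with (1)--(4): the deterministic initial datum $\x$ is trivially tight, parts (2)--(4) provide tightness of $X_1^\e,X_2^\e,X_3^\e$ in $\C([0,T];\V^*)\hookrightarrow\D([0,T];\V^*)$, and part (1) implies that $M_i^\e\Rightarrow 0$ in $\D([0,T];\V^*)$. Since the sum of finitely many tight families is tight in $\D([0,T];\V^*)$, tightness of $\{\mathscr{L}(\vi{\Y}^\e)\}_{\e\in(0,\e_0]}$ follows. I anticipate that the most delicate step will be the equicontinuity estimate for $X_1^\e$, since the growth condition on $\A$ involves both $\|\cdot\|_\V^\beta$ and $\|\cdot\|_\H^\alpha$; the choice $p=M/2$ in Lemma \ref{lemUF02} is calibrated precisely to close this estimate.
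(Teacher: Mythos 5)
Your overall strategy coincides with the paper's: BDG together with (H.5)/(H.8) and Lemmas \ref{lemUF}, \ref{lemUF2} for part (1), and equicontinuity/Arzel\'a--Ascoli-type compactness in $\C([0,T];\V^*)$ combined with the compact embedding $\H\hookrightarrow\V^*$ and Markov's inequality for parts (2)--(4); parts (1), (3) and (4) are fine as written. There is, however, one step for $X_1^\e$ that fails as stated: you claim pointwise tightness of $X_1^\e(t)$ in $\V^*$ is ``immediate from Lemma \ref{lemUF2} and the embedding $\H\subset\V^*$''. Lemma \ref{lemUF2} bounds $\vi{\Y}^\e$, not $X_1^\e$; since $\A$ maps only into $\V^*$, your increment estimate yields at best a bound on $\|X_1^\e(t)\|_{\V^*}$, and bounded sets of $\V^*$ are not relatively compact, so no compact containment (hence no Arzel\'a--Ascoli compactness of the candidate sets) follows from it. The missing ingredient is an $\H$-bound on $X_1^\e$, which is exactly what the paper extracts from the decomposition $X_1^\e=\vi{\Y}^\e-\x-X_2^\e-X_3^\e-M_1^\e-M_2^\e$ together with the $\H$-moment bounds on each of the other five terms, giving $\bar{\E}\big[\sup_{t\in[0,T]}\|X_1^\e(t)\|_\H^2\big]\leq C$ uniformly in $\e$ (see \eqref{UE27}--\eqref{UE29}); all these ingredients already appear in your write-up, so the repair is simply to state and use this bound before invoking compactness.

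A second, smaller point: for the modulus of continuity of $X_1^\e$, the phrase ``taking expectations \dots yields a bound of the form $C|t-s|^{1/\beta}$'' is not by itself sufficient, since a first-moment bound on individual increments does not control $\sup_{|t-s|\leq\delta}\|X_1^\e(t)-X_1^\e(s)\|_{\V^*}$. Either keep your bound pathwise --- it factorizes as $|t-s|^{1/\beta}\,\xi_\e$ with $\xi_\e=\big(\int_0^T(\fk{a}(r)+C\|\vi{\Y}^\e(r)\|_\V^\beta)(1+\|\vi{\Y}^\e(r)\|_\H^\alpha)\,\d r\big)^{(\beta-1)/\beta}$ and $\sup_{\e}\bar{\E}[\xi_\e]<\infty$ by Lemmas \ref{lemUF2} and \ref{lemUF02}, which does give the uniform modulus control --- or argue as the paper does, proving $\bar{\E}\|X_1^\e(t)-X_1^\e(s)\|_{\V^*}^p\leq C|t-s|^{p/\beta}$ with $p=\beta+\eta_0$ (so that $p/\beta>1$) and invoking Kolmogorov's criterion as in \eqref{UE26}. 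With these two repairs your argument matches the paper's proof.
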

\begin{proof} 
	The proof is divided into the following steps: 
	
	\vspace{2mm}
	\noindent
	\textbf{Step 1:} \textsf{Proof of (1).} 
	First, we prove the statement (1). Using the Burkholder-Davis-Gundy inequality (see Theorem 1.1, \cite{CMMR}), Hypothesis \ref{hyp1} (H.5), Lemmas \ref{lemUF} and \ref{lemUF2}, we deduce
	\begin{align}\label{UE22}\nonumber
		\bar{\E}\bigg[\sup_{t\in[0,T]}\|M_1^\e(t)\|_\H^2\bigg] &\nonumber\leq {\e}C \bar{\E}\bigg[\int_0^T\|\B(s,\vi{\Y}^\e(s))\|_{\L_2}^2\d s\bigg]\\& \nonumber\leq {\e}C\bar{\E}\bigg[\int_0^T \fk{b}(s)\big(1+\|\vi{\Y}^\e(s)\|_\H^2\big)\d s\bigg]\\&\nonumber \leq 
		{\e}C\bigg\{\bigg(\int_0^T\fk{b}(s)\d s\bigg)\bar{\E}\bigg[\sup_{s\in[0,T]}\big(1+\|\vi{\Y}^\e(s)\|_\H^2\big)\bigg]\bigg\}\\ &
		\to 0\ \text{ as } \ \e\to0.
	\end{align}Now, we consider the term $\bar{\E}\big[\sup\limits_{t\in[0,T]}\|M_2^\e(t)\|_\H^2\big] $ and estimate it using the Burkholder-Davis-Gundy inequality (see Theorem 1.1, \cite{CMMR}), Hypothesis \ref{hyp1} (H.8) and Lemma \ref{lemUF2} as
	\begin{align}\label{UE23} \nonumber
		&	\bar{\E}\bigg[\sup_{t\in[0,T]}\|M_2^\e(t)\|_\H^2\bigg] \\&\nonumber\leq  \e C \bar{\E}\bigg[\int_{\Z_T} \|\gamma(s,\vi{\Y}^\e(s),z)\|_\H^2\varphi_\e(s,z)\nu(\d z)\d s\bigg] \\&\nonumber \leq  \e C \bar{\E}\bigg[\int_{\Z_T} L_\gamma^2(s,z)\big(1+\|\vi{\Y}^\e(s)\|_\H\big)^2\varphi_\e(s,z)\nu(\d z)\d s\bigg] \\& \nonumber\leq \e C \bar{\E}\bigg(1+\bar{\E}\bigg[\sup_{s\in[0,T]}\|\vi{\Y}^\e(s)\|_\H^2\bigg]\bigg)\bigg(\sup_{\varphi\in S^\Upsilon}\int_{\Z_T} L_\gamma^2(s,z)\varphi(s,z)\nu(\d z)\d s\bigg) \\&\nonumber \leq 
		\e C \bar{\E}\bigg(1+\bar{\E}\bigg[\sup_{s\in[0,T]}\|\vi{\Y}^\e(s)\|_\H^2\bigg]\bigg)2 C_{L_\gamma,2,2,\Upsilon} \\&\to0\ \text{ as } \ \e\to0.
	\end{align}In order to prove the tightness property, it is enough to show that for any $\delta>0$, there exists a compact subset $K_\delta\subset \C([0,T];\V^*)$, such that 
	\begin{align*}
		\bar{\P}(X_i^\e\in K_\delta)>1-\delta, \ \text{ for } \ i=1,2,3.
	\end{align*}
	
	\vspace{2mm}
	\noindent
	\textbf{Step 2:} \textsf{Proof of (3) and (4).} 
	Let us define 
	\begin{align*}
		\mathcal{D}_{N,\Upsilon}&:= \bigg\{(\boldsymbol{r}(t),\q(t)): \boldsymbol{r}\in \D([0,T];\H)\cap \L^\beta(0,T;\V), \; \sup_{t\in[0,T]}\|\boldsymbol{r}(t)\|_\H\leq N, \q\in \mathcal{U}^\Upsilon\bigg\},\\
		\mathcal{R}_1(\mathcal{D}_{N,\Upsilon})&:= \bigg\{h_1(\cdot)=\int_0^{\cdot}\B(s,\boldsymbol{r}(s))\psi(s)\d s,\; (\boldsymbol{r},\q)\in \mathcal{D}_{N,\Upsilon}\bigg\}.
	\end{align*} For any $h_1\in	\mathcal{R}_1(\mathcal{D}_{N,\Upsilon})$, by using  Hypothesis \ref{hyp1} (H.5), Cauchy-Schwarz and H\"older's inequalities, we have 
	\begin{align}\label{UE24}\nonumber
		\|h_1(t)-h_1(s)\|_\H& \leq \int_s^t\|\B(\tau,\boldsymbol{r}(\tau))\|_{\L_2}\|\psi(\tau)\|_\H \d \tau\\& \nonumber\leq \sup_{\tau\in[s,t]}\big(1+\|\boldsymbol{r}(\tau)\|_\H\big)\int_s^t\sqrt{\fk{b}(\tau)}\|\psi(\tau)\|_\H\d \tau\\&\nonumber \leq 
		(1+N) \bigg(\int_s^t\fk{b}(\tau)\d \tau\bigg)^{\frac{1}{2}}\bigg(\int_s^t\|\psi(\tau)\|_\H^2\d \tau\bigg)^{\frac{1}{2}}\\&\leq (1+N)\Upsilon^{\frac{1}{2}}\bigg(\int_s^t\fk{b}(\tau)\d \tau\bigg)^{\frac{1}{2}}. 
	\end{align}
	Now, using the fact that $\fk{b}\in\L^1(0,T;\R^+)$, we conclude:
	\begin{itemize}
		\item for any $\xi>0$, there exists a $\varpi>0$ independent of $h$ such that for any $s,t\in[0,T]$ and $|t-s|\leq \varpi$, we have
		\begin{align*}
			\|h_1(t)-h_1(s)\|_\H \leq \xi, \ \text{ for all } \ h\in \mathcal{R}_1(\mathcal{D}_{N,\Upsilon}), 
		\end{align*}
		\item and  \begin{align*}
			\sup_{h_1\in \mathcal{R}(\mathcal{D}_{N,\Upsilon})}\sup_{t\in[0,T]} \|h_1(t)\|_\H = \sup_{h_1\in \mathcal{R}_1(\mathcal{D}_{N,\Upsilon})}\sup_{t\in[0,T]}\|h_1(t)-h_1(0)\|_\H \leq (1+N)\Upsilon^{\frac{1}{2}}\left(\int_0^T\fk{b}(t)\d t\right)^{\frac{1}{2}}. 
	\end{align*}\end{itemize}
	Since, the embedding $\V\hookrightarrow \H$ is compact and hence $\H\hookrightarrow \V^*$  also. By the Arzel\'a-Ascoli theorem (see Appendix A5, \cite{WR}), $\mathcal{R}_1^c(\mathcal{D}_{N,\Upsilon})$ (complement of $\mathcal{R}_1(\mathcal{D}_{N,\Upsilon})$) in $\C([0,T];\V^*)$, is a compact subset of $\C([0,T];\V^*)$.
	
	Therefore, an application of Markov's inequality yields
	\begin{align}\label{529}
		&\bar{\P}\big(X_2^\e\in \mathcal{R}_1^c(\mathcal{D}_{N,\Upsilon})\big) \nonumber\\&\geq\bar{\P}\bigg(\sup_{t\in[0,T]}\|\vi{\Y}^\e(t)\|_\H\leq N\bigg) = 1- \bar{\P}\bigg(\sup_{t\in[0,T]}\|\vi{\Y}^\e(t)\|_\H>N\bigg) \nonumber\\&\geq 1-\frac{1}{N^2}\bar{\E}\bigg[\sup_{t\in[0,T]}\|\vi{\Y}^\e(t)\|_\H^2\bigg]\geq 1-\frac{C}{N^2},
	\end{align}where we have used Lemma \ref{lemUF2}. Hence, we obtain the tightness of $\{X_2^\e\}$ in $\C([0,T];\V^*)$.
	
	Let us now define
	\begin{align*}
		\mathcal{R}_2(\mathcal{D}_{N,\Upsilon})&:= \bigg\{h_2(\cdot)=\int_0^{\cdot}\int_\Z\gamma(s,\boldsymbol{r}(s),z)\big(\varphi(s,z)-1\big)\nu(\d z)\d s,\  (\boldsymbol{r},\q)\in \mathcal{D}_{N,\Upsilon}\bigg\}.
	\end{align*} For any $h_2\in\mathcal{R}_2(\mathcal{D}_{N,\Upsilon})$, we have 
	\begin{align}\label{UE25}\nonumber
		\|h_2(t)-h_2(s)\|_\H &\leq \int_s^t\int_\Z \|\gamma(\tau,\boldsymbol{r}(\tau),z)\|_\H|\varphi(\tau,z)-1|\nu(\d z)\d \tau \\& \nonumber\leq \sup_{\tau\in[s,t]}\big(1+\|\boldsymbol{r}(\tau)\|_\H\big)\int_s^t\int_\Z L_\gamma(\tau,z)|\varphi(\tau,z)-1|\nu(\d z)\d \tau\\&\leq 
		(1+N)\sup_{\varphi\in S^\Upsilon} \int_s^t\int_\Z L_\gamma(\tau,z) |\varphi(\tau,z)-1|\nu(\d z)\d \tau.
	\end{align}Using Lemma \ref{lemUF} in the above inequality, we deduce:
	\begin{itemize}
		\item for any positive $\xi$,  there exists a positive $\varpi$ independent of $h_2$ such that for any $s,t\in[0,T]$ and $|t-s|\leq  \varpi$, we have
		\begin{align*}
			\|h_2(t)-h_2(s)\|_\H \leq \xi, \ \text{ for all } \ h_2\in \mathcal{R}_2(\mathcal{D}_{N,\Upsilon}), 
		\end{align*} 
		\item and
		\begin{align*}
			\sup_{h_2\in \mathcal{R}_2(\mathcal{D}_{N,\Upsilon})}\sup_{t\in[0,T]} \|h_2(t)\|_\H = \sup_{h\in \mathcal{R}(\mathcal{D}_{N,\Upsilon})}\sup_{t\in[0,T]}\|h_2(t)-h_2(0)\|_\H \leq (1+N)C_{L_\gamma,\Upsilon}.
		\end{align*}
	\end{itemize}
	Once again an application of  Arzel\'a-Ascoli theorem yields $\mathcal{R}_2^c(\mathcal{D}_{N,\Upsilon})$ (complement of $\mathcal{R}(\mathcal{D}_{N,\Upsilon})$) in $\C([0,T];\V^*)$, is a compact subset of $\C([0,T];\V^*)$.
	A calculation similar to \eqref{529} provides the tightness of $\{X_3^\e\}$ in $\C([0,T];\V^*)$.

	\vspace{2mm}
	\noindent
	\textbf{Step 3:} \textsf{Proof of (2).} 
	Let us now move to the proof of part (2). Using H\"older's, Young's inequalities, Hypothesis \ref{hyp1} (H.4), Lemmas \ref{lemUF} and \ref{lemUF2}, recall $\eta_0$ in Hypothesis \ref{hyp2} (H.8), for $p=\beta+\eta_0$, we find 
	\begin{align*}
		&	\bar{\E}\big[\|X_1^\e(t)-X_1^\e(s)\|_{\V^*}^p\big] \\&\leq \bar{\E}\bigg[\int_s^t\|\A(\tau,\vi{\Y}^\e(\tau))\|_{\V^*}\d \tau\bigg]^p\\& \leq 
		|t-s|^{\frac{p}{\beta}}\bar{\E}\bigg[\int_s^t \|\A(\tau,\vi{\Y}^\e(\tau))\|_{\V^*}^{\frac{\beta}{\beta-1}}\d \tau\bigg]^{\frac{(\beta-1)p}{\beta}}\\& \leq 
		|t-s|^{\frac{p}{\beta}}\bar{\E}\bigg[\int_s^t   \big(\fk{a}(\tau)+C\|\vi{\Y}^\e(\tau)\|_\V^\beta\big)\big(1+\|\vi{\Y}^\e(\tau)\|_\H^\alpha\big)	\d \tau\bigg]^{\frac{(\beta-1)p}{\beta}} \\& \leq C|t-s|^{\frac{p}{\beta}}\bigg\{\bar{\E}\bigg[\int_s^t\big(1+\|\vi{\Y}^\e(\tau)\|_\H^\alpha\big)	\d \tau\bigg]^{\frac{2p(\beta-1)}{\beta}}+\bar{\E}\bigg[\int_s^t\big(\fk{a}(\tau)+C\|\vi{\Y}^\e(\tau)\|_\V^\beta\big)\d \tau\bigg]^{\frac{2p(\beta-1)}{\beta}}\bigg\}
		\\& \leq C_{p,\beta,\fk{a}}|t-s|^{\frac{p}{\beta}}.
	\end{align*}Using Kolmogorov's criterion (cf. Theorem 3.3, \cite{DaZ}), for every $\varpi\in \big(0,\frac{1}{\beta}-\frac{1}{p}\big)$, there exists a constant $C_{\varpi}$ (independent of $\e$) such that 
	\begin{align}\label{UE26}
		\bar{\E}\bigg[\sup_{t\neq s\in[0,T]}\frac{\|X_1^\e(t)-X_1^\e(s)\|_{\V^*}^p}{|t-s|^{p\varpi}}\bigg]\leq C_{\varpi}.
	\end{align}
	From \eqref{CSPDE1}, we have for a.e. $t\in[0,T]$ in $\V^*$
	\begin{align*}
		\vi{\Y}^\e(t)=\x +X_1^\e(t)+X_2^\e(t)+X_3^\e(t)+M_1^\e(t)+M_2^\e(t).
	\end{align*}Then 
	\begin{align}\label{UE27}\nonumber
		\bar{\E}\bigg[\sup_{t\in[0,T]}\|X_1^\e(t) \|_\H^2 \bigg]&\nonumber\leq C\bigg\{\|\x\|_\H^2+	\bar{\E}\bigg[\sup_{t\in[0,T]}\|		\vi{\Y}^\e(t)\|_\H^2 \bigg]
		+\bar{\E}\bigg[\sup_{t\in[0,T]}\|	X_2^\e(t)\|_\H^2 \bigg]	\\&\quad+\bar{\E}\bigg[\sup_{t\in[0,T]}\|	X_3^\e(t)\|_\H^2 \bigg]+\bar{\E}\bigg[\sup_{t\in[0,T]}\|	M_1^\e(t)\|_\H^2 \bigg]+\bar{\E}\bigg[\sup_{t\in[0,T]}\|	M_2^\e(t)\|_\H^2 \bigg]\bigg\}.
	\end{align}One should note that 
	\begin{align}\label{UE2p8}\nonumber
		\bar{\E}\bigg[\sup_{t\in[0,T]}\|	X_2^\e(t)\|_\H^2\bigg]&\leq  \bar{\E}\bigg[\int_0^T\|\B(s,\vi{\Y}^\e(s))\psi_\e(s)\|_\H\d s\bigg]^2\\&\nonumber\leq \bar{\E}\bigg[ \int_0^T\sqrt{\fk{b}(s)}\big(1+\|\vi{\Y}^\e(s)\|_\H\big)\|\psi_\e(s)\|_\H\d s\bigg]^2\\&\leq 2 \bigg(1+\bar{\E}\bigg[\sup_{s\in[0,T]}\|\vi{\Y}^\e(s)\|_\H^2\bigg]\bigg)\bigg(
		\int_0^T \fk{b}(s)\d s+\Upsilon\bigg)^2,
	\end{align}
	and 
	\begin{align}\label{UE28}\nonumber
		&	\bar{\E}\bigg[\sup_{t\in[0,T]}\|	X_3^\e(t)\|_\H^2\bigg] \\ \nonumber &\leq \bar{\E}\bigg[\int_{\Z_T} \|\gamma(s,\vi{\Y}^\e(s),z)\|_\H\big|\varphi_\e(s,z)-1\big|\nu(\d z)\d s\bigg]^2\\& \nonumber\leq 2\bigg(1+\bar{\E}\bigg[\sup_{s\in[0,T]}\|\vi{\Y}^\e(s)\|_\H^2\bigg]\bigg)\bigg(\sup_{\varphi\in S^\Upsilon}\int_{\Z_T} L_\gamma(s,z)\big|\varphi(s,z)-1\big|\nu(\d z)\d s\bigg)^2\\&\leq  C_{L_\gamma,\Upsilon}\bigg(1+\bar{\E}\bigg[\sup_{s\in[0,T]}\|\vi{\Y}^\e(s)\|_\H^2\bigg]\bigg).
	\end{align}
	Combining \eqref{UE22}, \eqref{UE23}, \eqref{UE2p8}, \eqref{UE28} and Lemma \ref{lemUF2}, we find  
	\begin{align}\label{UE29}
		\bar{\E}\bigg[\sup_{s\in[0,T]}\|X_1^\e(t) \|_\H^2 \bigg]\leq C<\infty,
	\end{align}where a constant $C$ which  is independent of $\e$.
	
	For $\varpi\in(0,1)$ and $R>0$, define 
	\begin{align*}
		K_{\varpi,R} :=\bigg\{\boldsymbol{y}\in\C([0,T];\V^*):\sup_{t\in[0,T]}\|\boldsymbol{y}(t)\|_\H+\sup_{t\neq s\in[0,T]}\frac{\|\boldsymbol{y}(t)-\boldsymbol{y}(s)\|_{\V^*}}{|t-s|^{\varpi}}\leq R\bigg\}.
	\end{align*}Once again using the Arzel\'a-Ascoli theorem, one can see that $	K_{\varpi,R}$ is a compact subset of  $\C([0,T];\V^*)$. Using \eqref{UE26}, \eqref{UE29} and Markov's inequality, for some $\varpi\in(0,1)$ and any $R>0$, we find
	\begin{align*}
		\bar{\P}(X_1^\e(t)\notin	K_{\varpi,R}) \geq \frac{C_{\varpi,T}}{R}.
	\end{align*}Thus, we obtain the tightness of $\{X_1^\e\}$ in the space $\C([0,T];\V^*)$.
	
	Finally, the tightness of laws of $\{\vi{\Y}^\e\}_{\e\in(0,\e_0]}$ in the space $\D([0,T];\V^*)$ follows from the above conclusions.
\end{proof}

\subsection{Convergence of the processes}
The aim of this section is to identify the limit function of the sequence  $\{\vi{\Y}^\e\}_{\e\in(0,\e_0]}$ as a solution of the equation \eqref{Prop.EX1} given below. We will use the tightness property which has been established in the previous section. We have  already proved that the limits of $M_1^\e$ and $M_2^\e$ goes to 0 as $\e\to0 $  in Proposition \ref{propUF}, so we only need to take care of the remaining three terms. Let us denote the limit function of $\{\vi{\Y}^\e\}_{\e\in(0,\e_0]}$  by $\Y$.  In this section, we assume that for a.e. $\omega$, $\q_\e=(\psi_\e,\varphi_\e)(\omega)\in \bar{S}^\Upsilon$ converges to $\q=(\psi,\varphi)(\omega)$ in $\bar{S}^\Upsilon$ weakly and $\vi{\Y}^\e(\omega)$ converges to $\Y({\omega})$ in $\D([0,T];\V^*)$ strongly with respect to the supremum norm. A similar result to the next Lemma  has been obtained in \cite{JXJZ} (Lemma 5.1).
\begin{lemma}\label{Cgslem1}
	There exists a subsequence $\{\e_j\}$, $\bar{\Y}\in\L^\beta(\Omega\times[0,T];\V)\cap \L^{\alpha+2}(\Omega;\L^\infty(0,T;\H))$ and $\wi{\A}(\cdot) \in\L^{\frac{\beta}{\beta-1}}(\Omega\times[0,T];\V^*)$, for $\beta\in(1,\infty)$ such that
	\begin{enumerate}
		\item [(1)] $\vi{\Y}^{\e_j}\xrightharpoonup{}\bar{\Y}$ in $\L^\beta(\Omega\times[0,T];\V)$, for $\beta\in(1,\infty)$; 
		\item   [(2)] $\vi{\Y}^{\e_j}\xrightharpoonup{\ast} \bar{\Y}$  in $\L^p(\Omega;\L^\infty(0,T;\H))$;
		\item [(3)]   $ \A(\cdot,\vi{\Y}^{\e_j})\xrightharpoonup{} \wi{\A}(\cdot)$ in $\L^{\frac{\beta}{\beta-1}}(\Omega\times[0,T];\V^*)$;
		\item [(4)] and  \begin{align*}
			\lim_{\e\to0} \bar{\E}\bigg[\sup_{t\in[0,T]}\|\vi{\Y}^\e(t)-\Y(t)\|_{\V^*}\bigg]=0,
		\end{align*}and for $m=\frac{\beta}{\beta+1}$, 
		\begin{align}\label{Cgs7}
			\lim_{\e\to0} \bar{\E}\bigg[\int_0^T\|\vi{\Y}^\e(t)-\Y(t)\|_\H^{2m}\d t\bigg]=0.
		\end{align}
	\end{enumerate}
\end{lemma}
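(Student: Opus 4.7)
The plan is to extract a suitable subsequence via Banach--Alaoglu compactness from the uniform bounds in Lemmas \ref{lemUF2} and \ref{lemUF02}, identify the weak limit with $\Y$ using the hypothesized $\omega$-wise strong convergence in $\D([0,T];\V^*)$, and finally derive the strong convergence statements in (4) via uniform integrability and a Gelfand-triple interpolation argument.

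First, for (1) and (2): Lemma \ref{lemUF2} with $p=2$ gives $\sup_\e \bar{\E}\!\int_0^T\|\vi{\Y}^\e\|_\V^\beta\d t<\infty$, so $\{\vi{\Y}^\e\}$ is bounded in the reflexive space $\L^\beta(\Omega\times[0,T];\V)$; Lemma \ref{lemUF2} with $p=\alpha+2$ gives $\sup_\e \bar{\E}[\sup_t\|\vi{\Y}^\e\|_\H^{\alpha+2}]<\infty$, so $\{\vi{\Y}^\e\}$ is bounded in $\L^{\alpha+2}(\Omega;\L^\infty(0,T;\H))$, the dual of a separable Banach space. Banach--Alaoglu yields a subsequence $\{\e_j\}$ and $\bar{\Y}$ lying in both spaces for which (1) and (2) hold. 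Since $\vi{\Y}^{\e_j}\to \Y$ strongly in $\C([0,T];\V^*)$ pointwise in $\omega$, testing the weak limit $\bar{\Y}$ against any $\v\in\V$ and using the continuous embedding $\H\hookrightarrow\V^*$ together with uniqueness of limits in $\V^*$ forces $\bar{\Y}=\Y$ as elements of $\L^\beta(\Omega\times[0,T];\V)$ (in particular $\Y$ is a $\V$-valued process in this sense).

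For (3): by Hypothesis~\ref{hyp1}~(H.4),
\begin{align*}
\bar{\E}\!\int_0^T\|\A(t,\vi{\Y}^{\e_j}(t))\|_{\V^*}^{\frac{\beta}{\beta-1}}\d t
&\leq \bar{\E}\!\int_0^T(\fk{a}(t)+C\|\vi{\Y}^{\e_j}(t)\|_\V^\beta)(1+\|\vi{\Y}^{\e_j}(t)\|_\H^\alpha)\d t,
\end{align*}
and splitting via H\"older's inequality with exponents $(\alpha+2)/\alpha$ and $(\alpha+2)/2$ and applying Lemma~\ref{lemUF2} with $p=\alpha+2$ yields a uniform bound in the reflexive space $\L^{\beta/(\beta-1)}(\Omega\times[0,T];\V^*)$. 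A further Banach--Alaoglu extraction gives $\A(\cdot,\vi{\Y}^{\e_j})\rightharpoonup \wi{\A}$ in that space.

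For (4), the first convergence follows from the almost sure strong $\V^*$-convergence together with uniform integrability: by the continuous embedding $\H\hookrightarrow \V^*$ and Lemma~\ref{lemUF2},
$$\sup_\e \bar{\E}\bigl[\sup_{t\in[0,T]}\|\vi{\Y}^\e(t)-\Y(t)\|_{\V^*}^{\alpha+2}\bigr]<\infty,$$
so Vitali's convergence theorem gives $\bar{\E}[\sup_t\|\vi{\Y}^\e-\Y\|_{\V^*}]\to 0$. For the $\H$-norm convergence, the Gelfand-triple interpolation $\|u\|_\H^2\leq \|u\|_\V\|u\|_{\V^*}$ gives, for $m=\beta/(\beta+1)$,
\begin{align*}
\int_0^T\|\vi{\Y}^\e(t)-\Y(t)\|_\H^{2m}\d t
&\leq \int_0^T\|\vi{\Y}^\e(t)-\Y(t)\|_\V^{m}\|\vi{\Y}^\e(t)-\Y(t)\|_{\V^*}^{m}\d t\\
&\leq \Bigl(\int_0^T\!\|\vi{\Y}^\e(t)-\Y(t)\|_\V^\beta\d t\Bigr)^{\!\!m/\beta}\!\!\Bigl(\int_0^T\!\|\vi{\Y}^\e(t)-\Y(t)\|_{\V^*}\d t\Bigr)^{\!\!(\beta-m)/\beta}\!\!,
\end{align*}
by H\"older (the choice of $m$ making $m\beta/(\beta-m)=1$). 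Taking expectation and a final H\"older split, the first factor is uniformly bounded by Lemma~\ref{lemUF2}, while the second tends to zero by the $\V^*$-convergence just established, giving \eqref{Cgs7}.

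The main obstacle is the identification $\bar{\Y}=\Y$: the weak limit $\bar{\Y}$ lives in $\L^\beta(\Omega\times[0,T];\V)$ with no obvious pathwise regularity, while $\Y$ is only known as a $\D([0,T];\V^*)$-pathwise limit. Bridging these requires carefully testing the weak convergence in (1) against $\psi(t)\v$ with $\v\in\V$ and $\psi\in \C_c(0,T)$, and then invoking the strong $\V^*$ convergence together with the uniform bound in (2) and Vitali's theorem, so that the two limits coincide as $\V^*$-valued functions $\d t\otimes\bar{\P}$-a.e., which through (2) upgrades $\Y$ to the required integrability in $\V$ and $\H$.
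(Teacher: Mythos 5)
Your argument is correct and follows essentially the same route as the paper: Banach--Alaoglu applied to the uniform bounds of Lemmas \ref{lemUF2} and \ref{lemUF02} for (1)--(3) (note that the second H\"older factor in (3) requires the higher moment of $\int_0^T\|\cdot\|_\V^\beta\,\d t$ from Lemma \ref{lemUF02}, which you do invoke at the outset), a uniform-integrability argument for the first limit in (4) (the paper splits over $\Omega_\delta^\e$ and uses Cauchy--Schwarz where you cite Vitali --- equivalent), and the same interpolation $\|\cdot\|_\H^2\leq\|\cdot\|_{\V}\|\cdot\|_{\V^*}$ with H\"older and $m=\beta/(\beta+1)$ for \eqref{Cgs7}. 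The identification $\bar{\Y}=\Y$ that you sketch is not part of the lemma's assertion (the paper carries it out only later, after \eqref{Cgs8}, via the limit equation), so that extra step is harmless but not needed here.
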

\begin{proof}
	Using the uniform estimate \eqref{UE1}, an application of the Banach-Alaoglu theorem  implies (1) and (2).

	Using the Hypothesis \ref{hyp1} (H.4), H\"older's inequality, Lemmas \ref{lemUF2} and \ref{lemUF02}, we have
	\begin{align}\label{Cgs1}\nonumber
		&\bar{\E}\bigg[\int_0^T	\|\A(t,\vi{\Y}^\e(t))\|_{\V^*}^{\frac{\beta}{\beta-1}}\d t\bigg] 
		\\&\nonumber \leq \bar{\E}\bigg[\int_0^T\big(\fk{a}(t)+C\|\vi{\Y}^\e(t)\|_\V^\beta\big)\big(1+\|\vi{\Y}^\e(t)\|_\H^\alpha\big)\d t\bigg] \\&\nonumber\leq \bigg\{\bar{\E}\bigg[1+\sup_{t\in[0,T]}\|\vi{\Y}^\e(t)\|_\H^\alpha\bigg]^{\frac{\alpha+2}{\alpha}}\bigg\}^{\frac{\alpha}{\alpha+2}}\bigg\{\bar{\E}\bigg[\int_0^T \big(\fk{a}(t)+C\|\vi{\Y}^\e(t)\|_\V^\beta \big)\d t\bigg]^{\frac{\alpha+2}{2}}\bigg\}^{\frac{2}{\alpha+2}}\\&\leq C<\infty.
	\end{align}By Lemma \ref{lemUF2}, we have 
	\begin{align}\label{Cgs2}
		\bar{\E}\bigg[\sup_{t\in[0,T]}\|\vi{\Y}^\e(t)\|_\H^2\bigg] \leq C, \ \text{ and }\  \bar{\E}\bigg[\int_0^T\|\vi{\Y}^\e(t)\|_\V^\beta\d t\bigg]\leq C.
	\end{align}Hence, by the strong convergence of $\Y^\e(\omega)$ to $\Y(\omega)$ in the space $\D([0,T];\V^*)$ with respect to the supremum norm, Fatou's lemma and \eqref{Cgs2}, we have
	\begin{align}\label{Cgs3}
		\bar{\E}\bigg[\sup_{t\in[0,T]}\|\Y(t)\|_\H^2\bigg]&\leq \liminf_{\e\to0}\bar{\E}\bigg[\sup_{t\in[0,T]}\|\vi{\Y}^\e(t)\|_\H^2\bigg] \leq C,\\\label{Cgs4}
		\bar{\E}\bigg[\int_0^T\|\Y(t)\|_\V^\beta\d t\bigg]&\leq \liminf_{\e\to0}\bar{\E}\bigg[\int_0^T\|\vi{\Y}^\e(t)\|_\V^\beta\d t\bigg]\leq C, 
	\end{align}and 
	\begin{align}\label{Cgs5}
		\lim_{\e\to0}\bar{\E}\bigg[\sup_{t\in[0,T]}\|\vi{\Y}^\e(t)-\Y(t)\|_{\V^*}\bigg]=0.
	\end{align}The above convergence \eqref{Cgs5} can be justified as follows:
	
	Define \begin{align*}
		\Omega_\delta^\e:=\bigg\{\omega:\sup_{t\in[0,T]}\|\vi{\Y}^\e(t)-\Y(t)\|_{\V^*}\geq \delta\bigg\}.
	\end{align*}The strong convergence of $\vi{\Y}^\e(\omega)$ to $\Y(\omega)$ in the space $\D([0,T];\V^*)$ with respect to supremum norm implies 
	\begin{align}\label{Cgs6}
		\lim_{\e\to 0}\bar{\P}(\Omega_\delta^\e)=0, \text{ for all } \delta>0.
	\end{align}Using \eqref{Cgs2}-\eqref{Cgs6}, we deduce 
	\begin{align*}
		&	\lim_{\e\to0} \bar{\E}\bigg[\sup_{t\in[0,T]}\|\vi{\Y}^\e(t)-\Y(t)\|_{\V^*}\bigg]\\& 
		= \lim_{\e\to0} \bigg\{\bar{\E}\bigg[\sup_{t\in[0,T]}\|\vi{\Y}^\e(t)-\Y(t)\|_{\V^*}\cdot \chi_{\Omega_\delta^\e}  \bigg] +\bar{\E}\bigg[\sup_{t\in[0,T]}\|\vi{\Y}^\e(t)-\Y(t)\|_{\V^*}\cdot \chi_{({\Omega_\delta^\e})^c}\bigg]\bigg\} \\& \leq \delta+\lim_{\e\to0} \bigg\{\bar{\E}\bigg[\|\vi{\Y}^\e(t)-\Y(t)\|_{\V^*}^2\bigg]\bigg\}^{\frac{1}{2}}\cdot \big(\bar{\P}(\Omega_\delta^\e)\big)^{\frac{1}{2}} \\& \leq \delta.
	\end{align*}The arbitrariness of $\delta$ gives \eqref{Cgs5}, which is first part of (4).
	
	Choosing  $m=\frac{\beta}{\beta+1}$, we obtain 
	\begin{align}\label{543}
		&	\bar{\E}\bigg[\int_0^T\|\vi{\Y}^\e(t)-\Y(t)\|_\H^{2m}\d t\bigg]\nonumber\\&=\bar{\E}\bigg[\int_0^T|\langle\vi{ \Y}^\e(t)-\Y(t),\Y^\e(t)-\Y(t)\rangle|^m\d t\nonumber\\&\leq  \bar{\E}\bigg[\int_0^T\|\vi{\Y}^\e(t)-\Y(t)\|_{\V^*}^m\|\vi{\Y}^\e(t)-\Y(t)\|_\V^m\d t\bigg] \nonumber\\& \leq \bigg\{\bar{\E}\bigg[\int_0^T\|\vi{\Y}^\e(t)-\Y(t)\|_{\V^*}\d t \bigg]\bigg\}^{\frac{\beta-m}{\beta}}\bigg\{\bar{\E}\bigg[\int_0^T\|\vi{\Y}^\e(t)-\Y(t)\|_\V^\beta\d t \bigg]\bigg\}^{\frac{m}{\beta}}.
	\end{align}Combining \eqref{Cgs2}, \eqref{Cgs4} and \eqref{Cgs5}, one can derive \eqref{Cgs7}. 
\end{proof}
Let us first prove the convergence of the control terms. 
\begin{lemma}\label{Cgslem2}
	For any $\boldsymbol{h}\in\H$, we have 
	\begin{align}\label{Cgslem01}\nonumber
		&	\lim_{\e_j\to0}\bigg( \int_0^t\bigg\{\B(s,\vi{\Y}^{\e_j}(s))\psi_{\e_j}(s)+\int_\Z \gamma(s,\vi{\Y}^{\e_j}(s),z)\big(\varphi_{\e_j}(s,z)-1\big)\nu(\d z)\bigg\}\d s,\boldsymbol{h}\bigg)\\&=\bigg(  \int_0^t\bigg\{\B(s,\Y(s))\psi(s)+\int_\Z \gamma(s,\Y(s),z)\big(\varphi(s,z)-1\big)\nu(\d z)\bigg\}\d s,\boldsymbol{h}\bigg).
	\end{align}
\end{lemma}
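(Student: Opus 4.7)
The plan is to decompose each of the two integrals into a \emph{coefficient-Lipschitz} piece and a \emph{weak-convergence-of-control} piece, and to handle them separately. For the $\B$-integral I would add and subtract to write
\begin{align*}
\B(s,\vi{\Y}^{\e_j}(s))\psi_{\e_j}(s)-\B(s,\Y(s))\psi(s)
= \big(\B(s,\vi{\Y}^{\e_j}(s))-\B(s,\Y(s))\big)\psi_{\e_j}(s) + \B(s,\Y(s))\big(\psi_{\e_j}(s)-\psi(s)\big),
\end{align*}
and for the $\gamma$-integral the analogous splitting
\begin{align*}
&\gamma(s,\vi{\Y}^{\e_j}(s),z)(\varphi_{\e_j}(s,z)-1) - \gamma(s,\Y(s),z)(\varphi(s,z)-1) \\
&\qquad = \big(\gamma(s,\vi{\Y}^{\e_j}(s),z)-\gamma(s,\Y(s),z)\big)(\varphi_{\e_j}(s,z)-1) + \gamma(s,\Y(s),z)(\varphi_{\e_j}(s,z)-\varphi(s,z)).
\end{align*}

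For the Lipschitz summand in the $\B$-term, Cauchy--Schwarz together with Hypothesis \ref{hyp2}(H.7) and the $\vi{S}^\Upsilon$-bound $\|\psi_{\e_j}\|_{\L^2(0,T;\mathcal{I}_2)}^2 \le 2\Upsilon$ gives a majorant of the form $\sqrt{2\Upsilon}\,\|\h\|_\H\big(\int_0^t L_\B(s)\|\vi{\Y}^{\e_j}(s)-\Y(s)\|_\H^2\d s\big)^{1/2}$. To drive this to zero, I would combine the $\L^\infty(0,T;\H)$-bound from Lemma \ref{Cgslem1}(1)--(2) (which, together with \eqref{UE1}, holds a.s.\ along a subsequence) with the $\L^{2m}$-convergence \eqref{Cgs7}: pass to a further subsequence along which $\|\vi{\Y}^{\e_j}(s)-\Y(s)\|_\H \to 0$ for a.e.\ $s$, and apply dominated convergence with integrable dominant $s\mapsto 4K^2 L_\B(s)$; the subsequence principle then upgrades the limit to the full sequence $\{\e_j\}$. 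For the weak-convergence summand, the functional $g \mapsto \int_0^t(\B(s,\Y(s))g(s),\h)\d s$ is weakly continuous on $\L^2(0,T;\mathcal{I}_2)$ because $s \mapsto \B(s,\Y(s))^*\h$ lies in $\L^2(0,T;\mathcal{I}_2)$ by Hypothesis \ref{hyp1}(H.5) and the $\L^\infty(0,T;\H)$-bound on $\Y$; the assumed weak convergence $\psi_{\e_j}\rightharpoonup\psi$ in $\vi{S}^\Upsilon$ therefore closes this half.

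For the Lipschitz summand in the $\gamma$-term, Hypothesis \ref{hyp2}(H.9) gives the majorant $\|\h\|_\H\int_0^t\!\!\int_\Z R_\gamma(s,z)\|\vi{\Y}^{\e_j}(s)-\Y(s)\|_\H|\varphi_{\e_j}(s,z)-1|\nu(\d z)\d s$; Lemma \ref{lemUF} provides the uniform bound $\sup_{\varphi\in S^\Upsilon}\int_{\Z_T} R_\gamma|\varphi-1|\d\nu_T < \infty$, and the same subsequence-plus-DCT device used for the $\B$-term drives the expression to zero. For the weak-convergence summand, set $h(s,z):=(\gamma(s,\Y(s),z),\h)$; Hypothesis \ref{hyp2}(H.8) combined with the $\L^\infty(0,T;\H)$-bound on $\Y$ yields $\int_{\Z_T}|h|^2\d\nu_T < \infty$ (from $L_\gamma\in\L^2(\nu_T)$) and the exponential integrability on sets of finite intensity required by Lemma \ref{lemUF3}(1) (from $L_\gamma\in\mathcal{H}_p$ with $p\ge 2$), so that Lemma \ref{lemUF3}(1) directly delivers $\int_{\Z_T} h(s,z)(\varphi_{\e_j}-\varphi)\d\nu_T \to 0$.

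The main obstacle will be the control of $\int_0^t L_\B(s)\|\vi{\Y}^{\e_j}(s)-\Y(s)\|_\H^2\d s$ and its $\gamma$-analogue. The assumed convergence $\vi{\Y}^{\e_j}\to\Y$ is only in $\D([0,T];\V^*)$ (not in $\H$), while $L_\B$ and $R_\gamma$ are only $\L^1$-integrable in time (resp.\ in $\nu_T$), so any direct H\"older splitting between a supremum-in-$\V^*$ and an $\L^\beta$-in-$\V$ factor fails because the intermediate exponent $\beta/(\beta-1)$ is not available. The device that saves the day is the pathwise $\L^{2m}$-convergence supplied by Lemma \ref{Cgslem1}(4) together with the uniform $\L^\infty(0,T;\H)$-bound, which jointly permit the ``extract a further a.e.-convergent subsequence, then apply DCT'' reduction to a pointwise-in-$s$ argument.
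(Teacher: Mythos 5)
Your decomposition (Lipschitz-in-the-state piece plus weak-convergence-of-the-control piece) is the same as the paper's, and your treatment of the control piece — weak $\L^2$ continuity of $g\mapsto\int_0^t(\B(s,\Y(s))g(s),\h)\d s$ and Lemma \ref{lemUF3} (1) applied to $h(s,z)=(\gamma(s,\Y(s),z),\h)$ — matches the paper's step \eqref{Cgslem02}. The gap is in the Lipschitz piece. Your dominated-convergence argument needs a pathwise dominant $4K^2L_\B(s)$ with $K(\omega)=\sup_j\sup_{t\in[0,T]}\|\vi{\Y}^{\e_j}(t,\omega)\|_\H\vee\sup_t\|\Y(t,\omega)\|_\H<\infty$ for a.e.\ $\omega$. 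Neither Lemma \ref{Cgslem1} (1)--(2) nor \eqref{UE1} gives this: \eqref{UE1} is a moment bound uniform in $\e$, and Lemma \ref{Cgslem1} (2) is weak-$\ast$ convergence in $\L^p(\Omega;\L^\infty(0,T;\H))$; uniform second moments of $\sup_t\|\vi{\Y}^{\e_j}(t)\|_\H$ do not prevent $\sup_j$ of these random variables from being infinite on a set of positive probability, so the a.s.\ uniform-in-$j$ $\H$-bound you invoke is unjustified. The same difficulty hits your $\gamma$-term even harder: there the putative dominant $2K\int_\Z R_\gamma(s,z)|\varphi_{\e_j}(s,z)-1|\nu(\d z)$ depends on $j$, so plain DCT does not apply; one needs the equi-absolute-continuity over $S^\Upsilon$ of Lemma \ref{lemUF3} (3), which you only use for the control piece.

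The paper avoids both problems by never asking for a pathwise uniform-in-$j$ bound. It splits time into $A_{\e_j,\delta}=\{t:\|\vi{\Y}^{\e_j}(t)-\Y(t)\|_\H>\delta\}$ and its complement (see \eqref{Cgslem03}): on the complement the difference is at most $\delta$ and Lemma \ref{lemUF} gives a deterministic constant; on $A_{\e_j,\delta}$ it keeps the factor $\sup_t\|\vi{\Y}^{\e_j}(t)-\Y(t)\|_\H$, takes expectations, applies Cauchy--Schwarz in $\omega$ (\eqref{Cgslem04}), controls the first factor by the uniform moment bounds of Lemma \ref{lemUF2} and \eqref{Cgs3}, and sends the second factor to zero (\eqref{Cgslem05}) via $\lambda_T(A_{\e_j,\delta})\to0$ a.s.\ (\eqref{Adelta}, a consequence of \eqref{Cgs7}), Lemma \ref{lemUF3} (3), and bounded convergence in $\omega$. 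This yields convergence of the Lipschitz piece in $\L^1(\bar{\P})$ (\eqref{Cgslem06}), and only then does one pass to a further subsequence to get the a.s.\ statement \eqref{Cgslem07}. To repair your proof you should replace the pathwise DCT by this expectation-level argument (or otherwise establish the convergence in probability of $\int_0^TL_\B(s)\|\vi{\Y}^{\e_j}(s)-\Y(s)\|_\H^2\d s$ without the a.s.\ uniform bound); as written, the domination step fails.
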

\begin{proof}
	Since $\vi{\Y}^\e$ converges to $\Y$ in $\D([0,T];\V^*)$ strongly $ \bar{\P}$-a.s., we obtain  $\sup\limits_{s\in[0,T]}\|\Y(s)\|_\H<\infty,\ \bar{\P}$-a.s.  Also since $L_\gamma\in\mathcal{H}_2$ and $L_\B\in \L^1(0,T;\R^+)$, it follows from \eqref{LDP001} and Lemma \ref{lemUF2}  that 
	\begin{align}\label{Cgslem02}\nonumber
		&	\lim_{\e_j\to0}\bigg( \int_0^t\bigg\{\B(s,\Y(s))\psi_{\e_j}(s)+\int_\Z \gamma(s,\Y(s),z)\big(\varphi_{\e_j}(s,z)-1\big)\nu(\d z)\bigg\}\d s,\boldsymbol{h}\bigg)\\&=\bigg(  \int_0^t\bigg\{\B(s,\Y(s))\psi(s)+\int_\Z \gamma(s,\Y(s),z)\big(\varphi(s,z)-1\big)\nu(\d z)\bigg\}\d s,\boldsymbol{h}\bigg).
	\end{align}For any $\delta>0$. Define 
	$
	A_{\e,\delta}:=\big\{s\in[0,T]:\|\vi{\Y}^{\e}(s)-\Y(s)\|_\H>\delta\}$.  From \eqref{543}, we infer 
	\begin{align*}
		\lim_{\e\to 0}\bar{\E}\left[\lambda_{T}\left(A_{\e,\delta}\right)\right]\leq\frac{1}{\delta^{2m}}\lim_{\e\to 0}\bar{\E}\bigg[\int_0^T\|\vi{\Y}^\e(t)-\Y(t)\|_\H^{2m}\d t\bigg]=0. 
	\end{align*}
	Therefore there exists a subsequence $\{\e_j\}$ (still denoted by the same index for simplicity) such that 
	\begin{align}\label{Adelta}
		\lim_{\e_j\to0} \lambda_T(A_{\e_j,\delta})=0, \ \; \bar{\P}\text{-a.s.}
	\end{align}
	Using Hypothesis \ref{hyp2} (H.7), (H.9) and Lemma \ref{lemUF2}, we find
	\begin{align}\label{Cgslem03}\nonumber
		&\int_0^T\bigg\{\|\B(t,\vi{\Y}^{\e_j}(t))-\B(t,\Y(t))\|_{\L_2}\|\psi_{\e_j}(t)\|_\H\\&\nonumber\quad+\int_\Z \|\gamma(t,\vi{\Y}^{\e_j}(t),z)-\gamma(t,\Y(t),z)\|_\H\big|\varphi_{\e_j}(t,z)-1\big|\nu(\d z)\bigg\}\d t\\&\nonumber\leq \int_0^T \bigg\{\sqrt{L_\B(t)}\|\psi_{\e_j}(t)\|_\H+\int_\Z R_\gamma (t,z)\big|\varphi_{\e_j}(t,z)-1\big|\nu(\d z)\bigg\}\|\vi{\Y}^{\e_j}(t)-\Y(t)\|_\H\d t\\&\nonumber
		\leq \delta\int_{A_{\e_j,\delta}^c} \bigg\{\frac{1}{2}\big(L_\B(t)+\|\psi_{\e_j}(t)\|_\H^2\big)+\int_\Z R_\gamma (t,z)\big|\varphi_{\e_j}(t,z)-1\big|\nu(\d z) \bigg\}	\d t\\&\nonumber\quad + \sup_{t\in[0,T]} \|\vi{\Y}^{\e_j}(t)-\Y(t)\|_\H \int_{A_{\e_j,\delta}}\bigg\{\frac{1}{2}\big(L_\B(t)+\|\psi_{\e_j}(t)\|_\H^2\big)+\int_\Z R_\gamma (t,z)\big|\varphi_{\e_j}(t,z)-1\big|\nu(\d z) \bigg\}\d t
		\\& \nonumber\leq \delta\sup_{\q\in\mathcal{U}^\Upsilon}\bigg(\int_{A_{\e_j,\delta}^c} \bigg\{\frac{1}{2}\big(L_\B(t)+\|\psi(t)\|_\H^2\big)+\int_\Z R_\gamma (t,z)\big|\varphi(t,z)-1\big|\nu(\d z) \bigg\}	\d t\bigg)\\&\nonumber\quad +\sup_{t\in[0,T]} \|\vi{\Y}^{\e_j}(t)-\Y(t)\|_\H\sup_{\q\in\mathcal{U}^\Upsilon}\bigg(  \int_{A_{\e_j,\delta}}\bigg\{\frac{1}{2}\big(L_\B(t)+\|\psi(t)\|_\H^2\big)\\&\nonumber\qquad+\int_\Z R_\gamma (t,z)\big|\varphi(t,z)-1\big|\nu(\d z) \bigg\}\d t\bigg)\\&\nonumber \leq 
		\delta C_{L_\B,L_\gamma,\Upsilon}+\sup_{t\in[0,T]} \|\vi{\Y}^{\e_j}(t)-\Y(t)\|_\H\sup_{\q\in\mathcal{U}^\Upsilon}\bigg(  \int_{A_{\e_j,\delta}}\bigg\{\frac{1}{2}\big(L_\B(t)+\|\psi(t)\|_\H^2\big)\\&\qquad+\int_\Z R_\gamma (t,z)\big|\varphi(t,z)-1\big|\nu(\d z) \bigg\}\d t\bigg). 
	\end{align}One should note that 
	\begin{align}\label{Cgslem04}\nonumber
		&\bar{\E}\left[\sup_{t\in[0,T]} \|\vi{\Y}^{\e_j}(t)-\Y(t)\|_\H\right. \nonumber\\&\quad\left.\times\sup_{\q\in\mathcal{U}^\Upsilon}\bigg(  \int_{A_{\e_j,\delta}}\bigg\{\frac{1}{2}\big(L_\B(t)+\|\psi(t)\|_\H^2\big)+\int_\Z R_\gamma (t,z)\big|\varphi(t,z)-1\big|\nu(\d z) \bigg\}\d t\bigg)\right]\nonumber\\&\nonumber \leq \left\{\bar{\E}\bigg[\sup_{t\in[0,T]} \|\vi{\Y}^{\e_j}(t)-\Y(t)\|_\H^2\bigg]\right\}^{\frac{1}{2}}\\&\quad\times\left\{\bar{\E}\bigg[\sup_{\q\in\mathcal{U}^\Upsilon} \int_{A_{\e_j,\delta}}\bigg\{\frac{1}{2}\big(L_\B(t)+\|\psi(t)\|_\H^2\big)+\int_\Z R_\gamma (t,z)\big|\varphi(t,z)-1\big|\nu(\d z) \bigg\}\d t\bigg]^2\right\}^\frac{1}{2}. 
	\end{align}
	Using the Dominated Convergence Theorem, \eqref{Adelta}, Lemmas \ref{lemUF3} (3) and \ref{lemUF2}, we obtain 
	\begin{align}\label{Cgslem05}
		\lim_{\e_j\to0}\bar{\E}\bigg[\sup_{\q\in\mathcal{U}^\Upsilon} \int_{A_{\e_j,\delta}}\bigg\{\frac{1}{2}\big(L_\B(t)+\|\psi(t)\|_\H^2\big)+\int_\Z R_\gamma (t,z)\big|\varphi(t,z)-1\big|\nu(\d z) \bigg\}\d t\bigg]^2=0.
	\end{align}By \eqref{Cgs2}, \eqref{Cgslem03} and \eqref{Cgslem05}, we find
	\begin{align}\label{Cgslem06}\nonumber
		&\lim_{\e_j\to0}\bar{\E}\bigg[	\int_0^T\bigg\{\|\B(t,\vi{\Y}^{\e_j}(t))-\B(t,\Y(t))\|_{\L_2}\|\psi_{\e_j}(t)\|_\H\\&\quad+\int_\Z \|\gamma(t,\vi{\Y}^{\e_j}(t),z)-\gamma(t,\Y(t),z)\|_\H\big|\varphi_{\e_j}(s,z)-1\big|\nu(\d z)\bigg\}\d t\bigg]=0.
	\end{align}Hence, there exists a subsequence $\{\e_j\}$ (still denoted  by the same index) such that 
	\begin{align}\label{Cgslem07}\nonumber
		&\lim_{\e_j\to0}\int_0^T \bigg\{\|\B(t,\vi{\Y}^{\e_j}(t))-\B(t,\Y(t))\|_{\L_2}\|\psi_{\e_j}(t)\|_\H\\&\quad+\int_\Z \|\gamma(t,\vi{\Y}^{\e_j}(t),z)-\gamma(t,\Y(t),z)\|_\H\big|\varphi_{\e_j}(s,z)-1\big|\nu(\d z)\bigg\}\d t=0,\;\bar{\P}\text{-a.s.}
	\end{align}Combining \eqref{Cgslem07} with \eqref{Cgslem02}, we obtain the required result \eqref{Cgslem01}.
\end{proof}

Let us fix 
\begin{align}\label{Cgs8}
	\vi{\Y}(t):=\x+\int_0^t \wi{\A}(s)\d s+\int_0^t\B(s,\Y(s))\psi(s)\d s+\int_0^t\gamma(s,\Y(s),z)\big(\varphi(s,z)-1\big)\nu(\d z)\d s, 
\end{align}
where $\wi{\A}(\cdot)$ is the same limit  appearing in Lemma \ref{Cgslem1}. By the weak limit of \eqref{CSPDE1}, one can show that  (cf. \cite{AKMTM4})
\begin{align*}
	\vi{\Y}(t,\omega)=\bar{\Y}(t,\omega)=\Y(t,\omega),\text{ for } \d t\otimes\bar{\P}\text{-a.e. }(t,\omega).
\end{align*}
For any  $\Psi\in\L^\infty(0,T;\R^+)$, define 
\begin{align*}
	\mathcal{G}	(\Y,\q,\bfX):= &\bar{\E}\bigg[\int_0^T\Psi(t)\int_0^t \bigg(\big(\B(s,\Y(s))\psi(s),\bfX(s)\big)\\&\qquad + \bigg(\int_\Z\gamma(s,\Y(s),z)(\varphi(s,z)-1)\nu(\d z),\bfX(s)\big)\bigg)\d s\d t\bigg].
\end{align*}
\begin{lemma}\label{Cgslem3}
	The following limit holds true: 
	\begin{align}\label{Cgslem3.1}
		\lim_{\e_j\to0} \mathcal{G}(\vi{\Y}^{\e_j},\q_{\e_j},\vi{\Y}^{\e_j})=\mathcal{G}(\Y,\q,\Y).
	\end{align}
\end{lemma}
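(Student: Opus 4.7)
The plan is to exploit the linearity of $\mathcal{G}$ in its third argument and write
\begin{align*}
\mathcal{G}(\vi{\Y}^{\e_j},\q_{\e_j},\vi{\Y}^{\e_j}) - \mathcal{G}(\Y,\q,\Y)
&= \mathcal{G}(\vi{\Y}^{\e_j},\q_{\e_j},\vi{\Y}^{\e_j} - \Y) \\
&\quad + \bigl[\mathcal{G}(\vi{\Y}^{\e_j},\q_{\e_j},\Y) - \mathcal{G}(\Y,\q_{\e_j},\Y)\bigr]\\
&\quad + \bigl[\mathcal{G}(\Y,\q_{\e_j},\Y) - \mathcal{G}(\Y,\q,\Y)\bigr],
\end{align*}
and show that each of the three summands vanishes as $\e_j\to 0$.

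For the first summand, I would apply Cauchy--Schwarz in $\H$ together with the growth bounds (H.5) and (H.8) to obtain
$$|\mathcal{G}(\vi{\Y}^{\e_j},\q_{\e_j},\vi{\Y}^{\e_j} - \Y)| \leq C\bar{\E}\Bigl[\int_0^T (1+\|\vi{\Y}^{\e_j}(s)\|_\H)\, G_{\e_j}(s)\, \|\vi{\Y}^{\e_j}(s)-\Y(s)\|_\H\,\d s\Bigr],$$
with $G_{\e_j}(s):=\sqrt{\fk{b}(s)}\|\psi_{\e_j}(s)\|+\int_\Z L_\gamma(s,z)|\varphi_{\e_j}(s,z)-1|\nu(\d z)$. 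H\"older's inequality in $(s,\omega)$, combined with Lemma \ref{lemUF} (giving an $\e_j$-uniform bound on the $L_\gamma$-part of $G_{\e_j}$), the uniform moment estimates of Lemma \ref{lemUF2}, and the convergence \eqref{Cgs7} from Lemma \ref{Cgslem1}(4) (together with interpolation against the higher moment bounds to upgrade the $L^{2m}$ convergence of $\|\vi{\Y}^{\e_j}-\Y\|_\H$ to a stronger integrability), drive this bound to zero. The second summand is treated analogously but uses the sharper Lipschitz estimates (H.7) and (H.9) to factor $\|\vi{\Y}^{\e_j}-\Y\|_\H$ out of the $\B$- and $\gamma$-differences, while the trailing factor $\|\Y(s)\|_\H$ is pathwise bounded since $\sup_{t\in[0,T]}\|\Y(t)\|_\H<\infty$ $\bar\P$-a.s.

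For the third summand I would invoke Fubini with $\tilde\Psi(s):=\int_s^T\Psi(t)\d t$ to rewrite it as
$$\bar{\E}\Bigl[\int_0^T \tilde\Psi(s)\Bigl\{\bigl(\B(s,\Y)(\psi_{\e_j}-\psi)(s),\Y(s)\bigr) + \Bigl(\int_\Z\gamma(s,\Y,z)(\varphi_{\e_j}-\varphi)(s,z)\nu(\d z),\Y(s)\Bigr)\Bigr\}\,\d s\Bigr].$$
Pathwise, the Brownian piece equals the duality pairing of $\psi_{\e_j}-\psi$ against $s\mapsto \tilde\Psi(s)\B(s,\Y(s))^{\ast}\Y(s)\in L^2(0,T;\mathcal{I}_2)$ (integrability follows from (H.5) together with the pathwise boundedness of $\|\Y\|_\H$), and hence tends to zero by the weak convergence $\psi_{\e_j}\to\psi$ in $\mathcal{L}_2$; the Poisson piece tends to zero pathwise by Lemma \ref{lemUF3}(1) applied to the scalar function $h(s,z):=\tilde\Psi(s)(\gamma(s,\Y(s),z),\Y(s))$, whose membership in $\mathcal{H}_2\cap L^2(\nu_T)$ is guaranteed by (H.8) and the pathwise essential boundedness of $\|\Y\|_\H$.

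The main obstacle is transferring this $\bar\P$-a.s. convergence under $\bar\E$: one must exhibit an $\e_j$- and $\omega$-uniform dominating function so that dominated convergence applies. Combining the $L^p(\Omega;L^\infty_t\H)$ estimates of Lemma \ref{lemUF2} (for $p$ large enough, in particular for $p=M$) with Lemma \ref{lemUF}, which provides uniform-in-$\q\in\mathcal{U}^\Upsilon$ integrability of the $\B$- and $\gamma$-weighted control terms, one obtains uniform integrability of products such as $\sup_t\|\Y\|_\H\cdot\int_{\Z_T}L_\gamma|\varphi_{\e_j}-1|\nu(\d z)\d s$; this is precisely what is needed to justify passage to the limit inside the expectation, and is the technical heart of the argument.
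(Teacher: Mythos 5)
Your three-term decomposition is in substance the same as the paper's (by linearity of $\mathcal{G}$ in its third argument, $\mathcal{G}(\vi{\Y}^{\e_j},\q_{\e_j},\vi{\Y}^{\e_j}-\Y)$ is exactly the paper's first difference), and your treatment of the third summand --- pointwise use of the weak convergence $\psi_{\e_j}\rightharpoonup\psi$, Lemma \ref{lemUF3}(1) for the Poisson part, then dominated convergence justified by a bound of the form $C_{\Psi,L_\gamma,\Upsilon,\fk{b}}\big(1+\sup_{t}\|\Y(t)\|_\H^2\big)$ --- is essentially the paper's argument for \eqref{Cgs9}. The gap lies in your mechanism for the first two summands. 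You propose H\"older in $(s,\omega)$ against the convergence \eqref{Cgs7}, upgraded by interpolation. But \eqref{Cgs7} combined with the moment bounds of Lemma \ref{lemUF2} only yields convergence of $\|\vi{\Y}^{\e_j}-\Y\|_\H$ in $\L^{q}(\d t\otimes\bar{\P})$ for \emph{finite} $q$ (sup-in-time convergence in $\H$ is only obtained later, in Lemma \ref{Cgslem6}, whose proof uses the present lemma, so it cannot be invoked here). H\"older then requires the complementary factor $(1+\|\vi{\Y}^{\e_j}\|_\H)G_{\e_j}$, with your $G_{\e_j}(s)=\sqrt{\fk{b}(s)}\|\psi_{\e_j}(s)\|+\int_\Z L_\gamma(s,z)|\varphi_{\e_j}(s,z)-1|\nu(\d z)$, to be bounded uniformly in $j$ in $\L^{q'}(\d t\otimes\bar{\P})$ for some $q'>1$; this is not available, since $\fk{b}$, $L_\B$ are only assumed in $\L^1(0,T)$, $\psi_{\e_j}$ is only bounded in $\L^2(0,T;\mathcal{I}_2)$, and Lemma \ref{lemUF} controls the $L_\gamma$- and $R_\gamma$-weights only in $\L^1$ in time. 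With these ingredients alone the conclusion genuinely fails: a weight of unit mass concentrated on a time set $B_j$ with $\lambda_T(B_j)\to0$, paired with a difference equal to $1$ on $B_j$ and tiny elsewhere, is compatible with all your stated bounds and with the $\L^{2m}$-convergence, yet the weighted integral does not vanish. The same objection applies to your "analogous" treatment of the second summand via (H.7), (H.9).

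What is missing is the structural input that rules out such concentration, and it is exactly how the paper argues in \eqref{Cgs10}--\eqref{Cgs15}: split $[0,T]$ into $A_{\e_j,\delta}=\{s:\|\vi{\Y}^{\e_j}(s)-\Y(s)\|_\H>\delta\}$ and its complement, note $\lambda_T(A_{\e_j,\delta})\to0$ $\bar{\P}$-a.s. along a subsequence by \eqref{Adelta}, bound the contribution off $A_{\e_j,\delta}$ by $\delta$ times a quantity controlled uniformly via Lemma \ref{lemUF}, and on $A_{\e_j,\delta}$ use the uniform-over-$S^\Upsilon$ absolute continuity of the control-weighted time integrals over small time sets --- Lemma \ref{lemUF3}(3) for the jump weights (this is where $L_\gamma, R_\gamma\in\mathcal{H}_2\cap\L^2(\nu_T)$ enters) and Cauchy--Schwarz together with absolute continuity of $\int\fk{b}\,\d t$ and $\int L_\B\,\d t$ for the Gaussian weights --- before taking expectations with the fourth-moment bounds of Lemma \ref{lemUF2}. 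Your closing paragraph on uniform integrability addresses passing to the limit in the third summand, but it does not supply this equi-integrability-over-shrinking-time-sets argument, which is the actual technical heart of the first two summands.
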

\begin{proof}
	Using Lemma \ref{lemUF} and the fact that $\sup\limits_{s\in[0,T]}\|\Y(s)\|_\H<\infty,\;\bar{\P}$-a.s., we have for all $(t,\omega)\in[0,T]\times \Omega$, 
	\begin{align*}
		&	\lim_{\e_j\to0} \Psi(t) \int_0^t\bigg(\big(\B(s,\Y(s))\psi_{\e_j}(s),\Y(s)\big)+\int_\Z \gamma(s,\Y(s),z)\big(\varphi_{\e_j}(s,z)-1\big)\nu(\d z),\Y(s)\bigg)\d s\\&=\Psi(t) \int_0^t\bigg(\big(\B(s,\Y(s))\psi(s),\Y(s)\big)\int_\Z \gamma(s,\Y(s),z)\big(\varphi(s,z)-1\big)\nu(\d z),\Y(s)\bigg)\d s.
	\end{align*}Using Lemma \ref{lemUF2}, we find 
	\begin{align*}
		&\sup_{\q\in\mathcal{U}^\Upsilon}\bigg|\Psi(t)\int_0^t\bigg\{  \big(\B(s,\Y(s))\psi(s),\Y(s)\big)+\bigg(\int_\Z\gamma(s,\Y(s),z)\big(\varphi(s,z)-1\big)\nu(\d z)\d s,\Y(s)\bigg)   \bigg|\\& \leq C_\Psi \sup_{\q\in\mathcal{U}^\Upsilon}\int_0^T \bigg(\|\B(s,\Y(s))\|_{\L_2}\|\psi(s)\|_\H\|\Y(s)\|_\H\\&\qquad+\int_\Z\|\gamma(s,\Y(s),z)\|_\H\|\Y(s)\|_\H\big|\varphi(s,z)-1\big|\nu(\d z)\d s\bigg) \\&\leq
		C_\Psi\bigg(1+2\sup_{s\in[0,T]}\|\Y(s)\|_\H^2\bigg)\sup_{\q\in\mathcal{U}^\Upsilon}\bigg\{\int_0^T\fk{b}(s)\d s+\int_0^T\|\psi(s)\|_\H^2\d s\\&\qquad +\bigg(\int_{\Z_T} L_\gamma(s,z)\big|\varphi(s,z)-1\big|\nu(\d z)\d s\bigg)\bigg\}
		\\& \leq C_{\Psi,L_\gamma,\Upsilon,\fk{b}}\bigg(1+2\sup_{s\in[0,T]}\|\Y(s)\|_\H^2\bigg),
	\end{align*}where we have used a calculation similar to  \eqref{UE18} for the first term and Hypothesis \ref{hyp2} (H.8) for the second term. 
	Applying the Dominated Convergence Theorem, we get
	\begin{align}\label{Cgs9}
		\lim_{\e_j\to0}\mathcal{G}(\Y,\q_{\e_j},\Y)=\mathcal{G}(\Y,\q,\Y).
	\end{align}For any $\delta>0$, recall 
	$	A_{\e_j,\delta}:=\big\{s\in[0,T]:\|\vi{\Y}^{\e_j}(s)-\Y(s)\|_\H>\delta\}$, and by \eqref{Adelta} there exists a subsequence $\{\e_j\}$ (still denoted  by the same index) such that 
	\begin{align*}
		\lim_{\e_j\to0} \lambda_T(A_{\e_j,\delta})=0,\; \bar{\P}\text{-a.s.}
	\end{align*} Then, we have 
	\begin{align}\label{Cgs10}\nonumber
		&	\big|\mathcal{G}(\vi{\Y}^{\e_j},\q_{\e_j},\vi{\Y}^{\e_j})-\mathcal{G}(\vi{\Y}^{\e_j},\q_{\e_j},\Y)\big| 
		\\&\nonumber\leq C_{\Psi}\bar{\E}\bigg[\int_0^T\bigg(\|\B(s,\vi{\Y}^{\e_j}(s))\|_{\L_2}\|\psi_{\e_j}(s)\|_\H\|\vi{\Y}^{\e_j}(s)-\Y(s)\|_\H\\&\qquad+\int_\Z\|\gamma(s,\vi{\Y}^{\e_j}(s),z)\|_\H\big|\varphi_{\e_j}(s,z)-1\big|\|\vi{\Y}^{\e_j}(s)-\Y(s)\|_\H\nu(\d z)\bigg)\d s\bigg]. 
	\end{align}We consider the first term in the right hand side of the inequality \eqref{Cgs10} and estimate it using Hypothesis \ref{hyp1} (H.5), H\"older's and Young's inequalities as
	\begin{align}\label{Cgs11}\nonumber
		&	\bar{\E}\bigg[\int_0^T\|\B(s,\vi{\Y}^{\e_j}(s))\|_{\L_2}\|\psi_{\e_j}(s)\|_\H\|\vi{\Y}^{\e_j}(s)-\Y(s)\|_\H\d s\bigg]\\& \nonumber
		\leq C\bar{\E}\bigg[\int_0^T \sqrt{\fk{b}(s)}\big(1+\|\vi{\Y}^{\e_j}(s)\|_\H\big)\|\psi_{\e_j}(s)\|_\H\|\vi{\Y}^{\e_j}(s)-\Y(s)\|_\H\d s\bigg]\\&\nonumber
		\leq C\delta \bar{\E}\bigg[\int_{A_{\e_j,\delta}^c}\sqrt{\fk{b}(s)}\big(1+\|\vi{\Y}^{\e_j}(s)\|_\H\big)\|\psi_{\e_j}(s)\|_\H\d s\bigg]\\&\nonumber\quad+ 
		C \bar{\E}\bigg[\int_{A_{\e_j,\delta}}\sqrt{\fk{b}(s)}\big(1+\|\vi{\Y}^{\e_j}(s)\|_\H\big)\|\psi_{\e_j}(s)\|_\H\|\vi{\Y}^{\e_j}(s)-\Y(s)\|_\H\d s\bigg]\\&\nonumber
		\leq C\delta \bar{\E}\bigg[\sup_{s\in[0,T]}\big(1+\|\vi{\Y}^{\e_j}(s)\|_\H\big)\sup_{\psi\in \mathcal{U}^\Upsilon}\bigg(\int_0^T \big\{\fk{b}(s)+\|\psi(s)\|_\H^2\big\}\d s\bigg)\bigg]\\&\nonumber\quad+ 
		C \bar{\E}\bigg[\sup_{s\in[0,T]}\left\{\big(1+\|\vi{\Y}^{\e_j}(s)\|_\H\big)\|\vi{\Y}^{\e_j}(s)-\Y(s)\|_\H\right\}\sup_{\psi\in \vi{S}^\Upsilon}\bigg(\int_{A_{\e_j,\delta}}\big\{\fk{b}(s)+\|\psi(s)\|_\H^2\big\}\d s\bigg)\bigg]\\&\nonumber\leq 
		\delta C_{\fk{b},\Upsilon}+ C\bigg\{1+\bar{\E}\bigg[\sup_{s\in[0,T]}\|\vi{\Y}^{\e_j}(s)\|_\H^4\bigg]\bigg\}^{\frac{1}{4}}\bigg\{\bar{\E}\bigg[\sup_{s\in[0,T]}\|\vi{\Y}^{\e_j}(s)-\Y(s)\|_\H^4\bigg]\bigg\}^{\frac{1}{4}}\\&\qquad \times \bigg\{\bar{\E}\bigg[\sup_{\psi\in \vi{S}^\Upsilon}\int_{A_{\e_j,\delta}}\big\{\fk{b}(s)+\|\psi(s)\|_\H^2\big\}\d s\bigg]^2\bigg\}^{\frac{1}{2}}.
	\end{align}We estimate the final term in the right hand side of \eqref{Cgs10}, using Hypothesis \ref{hyp2} (H.8), and H\"older's inequality as
	\begin{align}\label{Cgs12}\nonumber
		&\bar{\E}\bigg[\int_0^T	\int_\Z\|\gamma(s,\vi{\Y}^{\e_j}(s),z)\|_\H\big|\varphi_{\e_j}(s,z)-1\big|\|\vi{\Y}^{\e_j}(s)-\Y(s)\|_\H\nu(\d z)\d s\bigg] \\&\nonumber\leq 
		C\bar{\E}\bigg[\int_0^T	\int_\Z L_\gamma(s,z)\big(1+\|\vi{\Y}^{\e_j}(s)\|_\H\big)\big|\varphi_{\e_j}(s,z)-1\big|\|\vi{\Y}^{\e_j}(s)-\Y(s)\|_\H\nu(\d z)\d s\bigg]\\&\nonumber \leq  C\delta \bar{\E}\bigg[\int_{A_{\e_j,\delta}^c}	\int_\Z L_\gamma(s,z)\big(1+\|\vi{\Y}^{\e_j}(s)\|_\H\big)\big|\varphi_{\e_j}(s,z)-1\big|\nu(\d z)\d s\bigg]\\&\nonumber\quad+C\bar{\E}\bigg[\int_{A_{\e_j,\delta}}	\int_\Z L_\gamma(s,z)\big(1+\|\vi{\Y}^{\e_j}(s)\|_\H\big)\big|\varphi_{\e_j}(s,z)-1\big|\|\vi{\Y}^{\e_j}(s)-\Y(s)\|_\H\nu(\d z)\d s\bigg] \\&\nonumber\leq C\delta\bigg(1+\bar{\E} \bigg[\sup_{s\in[0,T]}\|\vi{\Y}^{\e_j}(s)\|_\H\bigg]\bigg)\bigg(\sup_{\varphi\in S^\Upsilon}\int_{\Z_T} L_\gamma(s,z)\big|\varphi(s,z)-1\big|\nu(\d z)\d s\bigg)\\&\nonumber\quad+C \bar{\E}\bigg[\sup_{s\in[0,T]}\big(1+\|\vi{\Y}^{\e_j}(s)\|_\H\big)\big(\|\vi{\Y}^{\e_j}(s)-\Y(s)\|_\H\big)\\&\nonumber\qquad\times\bigg(\sup_{\varphi\in S^\Upsilon}\int_{A_{\e_j,\delta}}\int_\Z L_\gamma(s,z)\big|\varphi(s,z)-1\big|\nu(\d z)\d s\bigg)\bigg]\\&\nonumber\leq 
		\delta C_{L_\gamma,\Upsilon}+C \bigg\{1+\bar{\E}\bigg[\sup_{s\in[0,T]}\|\vi{\Y}^{\e_j}(s)\|_\H^4\bigg]\bigg\}^{\frac{1}{4}}\bigg\{\bar{\E}\bigg[\sup_{s\in[0,T]}\|\vi{\Y}^{\e_j}(s)-\Y(s)\|_\H^4\bigg]\bigg\}^{\frac{1}{4}}\\&\qquad \times \bigg\{\bar{\E}\bigg[\sup_{\varphi\in S^\Upsilon}\int_{A_{\e_j,\delta}}\int_\Z L_\gamma(s,z)\big|\varphi(s,z)-1\big|\nu(\d z)\d s\bigg]^2\bigg\}^{\frac{1}{2}}.
	\end{align}Substituting \eqref{Cgs11} and \eqref{Cgs12} in \eqref{Cgs10}, we obtain 
	\begin{align}\label{Cgs13}
		\lim_{\e_j\to0} \big|\mathcal{G}(\vi{\Y}^{\e_j},\q_{\e_j},\vi{\Y}^{\e_j})-\mathcal{G}(\vi{\Y}^{\e_j},\q_{\e_j},\Y)\big| =0.
	\end{align}On the other hand, we have 
	\begin{align}\label{Cgs14}\nonumber
		&	\big|\mathcal{G}(\vi{\Y}^{\e_j},\q_{\e_j},\Y)-\mathcal{G}(\Y,\q_{\e_j},\Y)\big| \\& \nonumber\leq  C\bar{\E}\bigg[\int_0^T\bigg\{\|\B(s,\vi{\Y}^{\e_j}(s))-\B(s,\Y(s))\|_{\L_2}\|\psi_{\e_j}(s)\|_\H \|\Y(s)\|_\H\\&\nonumber\qquad+\int_\Z\|\gamma(s,\vi{\Y}^{\e_j}(s),z)-\gamma(s,\Y(s),z)\|_\H\big|\varphi_{\e_j}(s,z)-1\big|\|\Y(s)\|_\H\nu(\d z)\bigg\}\d s\bigg] 
		\\&\nonumber\leq C\bar{\E}\bigg[\int_0^T\bigg\{L_\B(s)\|\vi{\Y}^{\e_j}(s)-\Y(s)\|_\H\|\psi_{\e_j}(s)\|_\H \|\Y(s)\|_\H\\&\qquad+\int_\Z R_\gamma(s,z)\|\vi{\Y}^{\e_j}(s)-\Y(s)\|_\H \big|\varphi_{\e_j}(s,z)-1\big|\|\Y(s)\|_\H\nu(\d z)\bigg\}\d s\bigg] .
	\end{align}A similar calculation to \eqref{Cgs13} implies 
	\begin{align}\label{Cgs15}
		\lim_{\e_j\to0}\big|\mathcal{G}(\vi{\Y}^{\e_j},\q_{\e_j},\Y)-\mathcal{G}(\Y,\q_{\e_j},\Y)\big| =0.
	\end{align}Combining \eqref{Cgs9}, \eqref{Cgs13} and \eqref{Cgs15}, we obtain \eqref{Cgslem3.1}, which completes the proof.
\end{proof}

\begin{remark}
	Since $\Psi\in\mathrm{L}^{\infty}(0,T;\R^+)$ is arbitrary, from Lemma \ref{Cgslem3}, along a further subsequence (still denoted by the same index for simplicity) one can obtain  
	\begin{align}\label{561}
		&	\lim_{\e_j\to 0}	\bar{\E}\bigg[\int_0^T\bigg\{\big(\B(s,\vi{\Y}^{\e_j}(s))\psi_{\e_j}(s),\vi{\Y}^{\e_j}(s)\big)\nonumber\\&\qquad+\int_\Z\bigg( \gamma(s,\vi{\Y}^{\e_j}(s),z)\big(\varphi_{\e_j}(s,z)-1\big),\vi{\Y}^{\e_j}(s)\bigg)\nu(\d z)\bigg\}\d s\bigg]\nonumber\\&= \bar{\E}\bigg[\int_0^T\bigg\{\big(\B(s,\Y(s))\psi(s),\Y(s)\big)+\int_\Z\bigg( \gamma(s,\Y(s),z)\big(\varphi(s,z)-1\big),\Y(s)\bigg)\nu(\d z)\bigg\}\d s\bigg]. 
	\end{align}
\end{remark}

Our next aim is to identify the limit function $\wi{\A}(\cdot)$. In view of Lemma \ref{Cgslem00}, it is enough to show the condition \eqref{Cgslem001}. Now, our goal is to verify the condition \eqref{Cgslem001}.
\begin{lemma}\label{Cgslem4}
	We have 
	\begin{align*}
		\wi{\A}(\cdot)=\A(\cdot,\Y(\cdot)),\;\d t\otimes\bar{\P}\text{-a.e. }
	\end{align*}
\end{lemma}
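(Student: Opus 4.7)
\textbf{Proof plan for Lemma \ref{Cgslem4}.} The strategy is to verify the third condition in the pseudo-monotonicity criterion of Lemma \ref{Cgslem00}, namely
\begin{align*}
\liminf_{\e_j\to 0}\bar{\E}\bigg[\int_0^T\langle\A(t,\vi{\Y}^{\e_j}(t)),\vi{\Y}^{\e_j}(t)\rangle\d t\bigg]\geq \bar{\E}\bigg[\int_0^T\langle\wi{\A}(t),\Y(t)\rangle\d t\bigg].
\end{align*}
Once this is proved, the weak convergences $\vi{\Y}^{\e_j}\rightharpoonup\Y$ in $\L^\beta(\Omega\times[0,T];\V)$ and $\A(\cdot,\vi{\Y}^{\e_j})\rightharpoonup\wi{\A}(\cdot)$ in $\L^{\beta/(\beta-1)}(\Omega\times[0,T];\V^*)$ established in Lemma \ref{Cgslem1} combine with Lemma \ref{Cgslem00} to give the identification $\wi{\A}(\cdot)=\A(\cdot,\Y(\cdot))$.

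First, I would use the It\^o expansion \eqref{UE15} of $\|\vi{\Y}^\e(T)\|_\H^2$ (evaluated at $t=T$). Taking expectations kills the stochastic integrals $I_2$ and $I_4$, while $I_1$ yields
\begin{align*}
2\bar{\E}\int_0^T\langle\A(s,\vi{\Y}^\e(s)),\vi{\Y}^\e(s)\rangle\d s
&=\bar{\E}[\|\vi{\Y}^\e(T)\|_\H^2]-\|\x\|_\H^2 -\e\,\bar{\E}\int_0^T\|\B(s,\vi{\Y}^\e(s))\|_{\L_2}^2\d s \\
&\quad -2\bar{\E}\int_0^T\big(\B(s,\vi{\Y}^\e(s))\psi_\e(s),\vi{\Y}^\e(s)\big)\d s -\bar{\E}[I_5^\e(T)]-\bar{\E}[I_6^\e(T)],
\end{align*}
where the Poisson-term expectations $\bar{\E}[I_5^\e(T)]$ and $\bar{\E}[I_6^\e(T)]$ converge, as $\e_j\to 0$, respectively to $0$ (by the bounds in \eqref{UE19} and \eqref{UE12}-\eqref{UE13}) and to $2\bar{\E}\int_0^T\int_\Z(\gamma(s,\Y(s),z)(\varphi(s,z)-1),\Y(s))\nu(\d z)\d s$ (by arguments entirely analogous to Lemmas \ref{Cgslem2}--\ref{Cgslem3}, using Hypothesis \ref{hyp2} (H.8)--(H.9) and Lemma \ref{lemUF3}). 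The $\e\bar{\E}\int_0^T\|\B\|_{\L_2}^2\d s$ term vanishes by Hypothesis \ref{hyp1} (H.5) and Lemma \ref{lemUF2}; the $\B\psi_\e$ term converges to the corresponding limit for $\Y,\psi$ by Lemma \ref{Cgslem3} (see \eqref{561}).

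Next, since $\Y$ is identified with $\vi{\Y}$ from \eqref{Cgs8} (a deterministic evolution in $\V^*$ with $\Y\in\L^\beta(\Omega\times[0,T];\V)\cap\L^\infty(0,T;\H)$ and $\partial_t\Y\in \L^{\beta/(\beta-1)}(\Omega\times[0,T];\V^*)+\L^1(\Omega\times[0,T];\H)$), the standard integration-by-parts formula (cf.\ Proposition 1.7.2 of \cite{PCAM}, as used in Claim 1 of Theorem \ref{CPDE1}) gives
\begin{align*}
\bar{\E}[\|\Y(T)\|_\H^2]&=\|\x\|_\H^2+2\bar{\E}\int_0^T\langle\wi{\A}(s),\Y(s)\rangle\d s +2\bar{\E}\int_0^T\big(\B(s,\Y(s))\psi(s),\Y(s)\big)\d s\\
&\quad +2\bar{\E}\int_0^T\int_\Z\big(\gamma(s,\Y(s),z)(\varphi(s,z)-1),\Y(s)\big)\nu(\d z)\d s.
\end{align*}
Subtracting these two identities, the matching lower-order terms cancel in the limit, so it remains only to compare $\bar{\E}[\|\vi{\Y}^{\e_j}(T)\|_\H^2]$ with $\bar{\E}[\|\Y(T)\|_\H^2]$. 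For this, I would exploit the strong convergence of $\vi{\Y}^{\e_j}(T)$ to $\Y(T)$ in $\V^*$ (from part (4) of Lemma \ref{Cgslem1}) together with the uniform $\H$-bound in Lemma \ref{lemUF2}: along a further subsequence $\vi{\Y}^{\e_j}(T)\rightharpoonup\Y(T)$ weakly in $\H$ almost surely, so by lower semicontinuity of the $\H$-norm and Fatou's lemma,
\begin{align*}
\bar{\E}[\|\Y(T)\|_\H^2]\leq\liminf_{\e_j\to 0}\bar{\E}[\|\vi{\Y}^{\e_j}(T)\|_\H^2].
\end{align*}
Combining this with the two displayed identities yields the desired liminf inequality, and Lemma \ref{Cgslem00} concludes the proof.

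The main technical obstacle is the treatment of the quadratic-variation Poisson term $\bar{\E}[I_5^\e(T)]=\bar{\E}\int_0^T\int_\Z\|\e\gamma(s,\vi{\Y}^\e(s-),z)\|_\H^2 N^{\e^{-1}\varphi_\e}(\d s,\d z)$: after compensation by $\e^{-1}\varphi_\e\nu\otimes\lambda_T$ this yields the order-$\e$ deterministic bound from \eqref{UE12}--\eqref{UE13}, which vanishes as $\e\to 0$ thanks to Hypothesis \ref{hyp2} (H.8) and the uniform bound $C_{L_\gamma,2,2,\Upsilon}$ of Lemma \ref{lemUF}. The other delicate point is justifying the a.s.\ weak $\H$-convergence of $\vi{\Y}^{\e_j}(T)$ from the $\V^*$-strong convergence plus the $\H$-uniform bound, which is standard via a diagonal subsequence argument.
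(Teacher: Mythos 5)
Your proposal is correct and follows essentially the same route as the paper: apply the It\^o formula to $\|\vi{\Y}^{\e_j}(\cdot)\|_\H^2$, take expectations to remove the martingale terms, show the $\e$-weighted Gaussian and Poisson quadratic terms vanish (via Hypotheses (H.5), (H.8) and Lemmas \ref{lemUF}--\ref{lemUF2}), pass to the limit in the control terms using \eqref{561}, compare with the energy identity for the limit equation \eqref{Cgs8}, and use lower semicontinuity of the $\H$-norm in $\V^*$ together with Fatou's lemma to obtain the liminf inequality \eqref{ALA78}, which is exactly condition \eqref{Cgslem001} of Lemma \ref{Cgslem00}. No gaps beyond inessential constants; this is the paper's argument.
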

\begin{proof}
	
	Applying infinite dimensional It\^o's formula to the process $\|\vi{\Y}^{\e_j}(\cdot)\|_\H^2$ (see Theorem 1.2, \cite{IGDS}), we get
	\begin{align}\label{ALA1}\nonumber
		&\|\vi{\Y}^{\e_j}(t)\|_\H^2-\|\x\|_\H^2 \\&\nonumber
		=2\int_0^t \langle \A(s,\vi{\Y}^{\e_j}(s)),\vi{\Y}^{\e_j}(s)\rangle 
		+\big(\B(s,\vi{\Y}^{\e_j}(s))\psi_{\e_j}(s),\vi{\Y}^{\e_j}(s)\big)	\\&\qquad\nonumber+\int_\Z\bigg( \gamma(s,\vi{\Y}^{\e_j}(s),z)\big(\varphi_{\e_j}(s,z)-1\big),\vi{\Y}^{\e_j}(s)\bigg)\bigg\}\d s\\&\nonumber\quad+\e_j\int_0^t \|\B(s,\vi{\Y}^{\e_j}(s))\|_{\L_2}^2\d s+\sqrt{\e_j}\int_0^t \big(\B(s,\vi{\Y}^{\e_j}(s))\d\W(s),\vi{\Y}^{\e_j}(s)\big)\\&\nonumber\quad 
		+\e_j^2\int_0^t \int_\Z \|\gamma(s,\vi{\Y}^{\e_j}(s-),z)\|_\H^2N^{\e_j^{-1}\varphi_{\e_j}}(\d s,\d z)\\&\quad + 
		2\e_j\int_0^t \int_\Z \big(\gamma(s,\vi{\Y}^{\e_j}(s-),z),\vi{\Y}^{\e_j}(s-)\big)\vi{N}^{\e_j^{-1}\varphi_{\e_j}}(\d s,\d z),\ \bar{\P}\text{-a.s.},
	\end{align}
	for all $t\in[0,T]$. Notice that 
	\begin{align*}
		M_1^{\e_j}(t)&:=\sqrt{\e_j}\int_0^t \big(\B(s,\vi{\Y}^{\e_j}(s))\d\W(s),\vi{\Y}^{\e_j}(s)\big), \ \text{ and  }\\
		M_2^{\e_j}(t)&:=2\e_j\int_0^t \int_\Z \big(\gamma(s,\vi{\Y}^{\e_j}(s-),z),\vi{\Y}^{\e_j}(s-)\big)\vi{N}^{\e_j^{-1}\varphi_{\e_j}}(\d s,\d z),
	\end{align*}
	are square integrable martingales. Taking expectations on both sides of the expression \eqref{ALA1}, we find
	\begin{align}\label{ALA2}\nonumber
		&\bar{\E}\big[\|\vi{\Y}^{\e_j}(T)\|_\H^2\big]-\|\x\|_\H^2 
		\\&\nonumber=
		2\bar{\E}\bigg[\int_0^T \bigg\{\langle \A(s,\vi{\Y}^{\e_j}(s)),\vi{\Y}^{\e_j}(s)\rangle  \bigg\}\d s\bigg]
		\\&\nonumber\quad+2\bar{\E}\bigg[\int_0^T\bigg\{\big(\B(s,\vi{\Y}^{\e_j}(s))\psi_{\e_j}(s),\vi{\Y}^{\e_j}(s)\big)\nonumber\\&\qquad+\int_\Z\bigg( \gamma(s,\vi{\Y}^{\e_j}(s),z)\big(\varphi_{\e_j}(s,z)-1\big),\vi{\Y}^{\e_j}(s)\bigg)\nu(\d z)\bigg\}\d s\bigg]\nonumber\\&\nonumber\quad+
		\e_j\bar{\E}\bigg[\int_0^T    \|\B(s,\vi{\Y}^{\e_j}(s))\|_{\L_2}^2\d s\bigg]
		+\e_j\bar{\E}\bigg[\int_{\Z_T} \|\gamma(s,\vi{\Y}^{\e_j}(s),z)\|_\H^2\varphi_{\e_j}(s,z)\nu(\d z)\d s\bigg] \\&=: \sum_{i=1}^{4}J_i(t).
	\end{align}Using Hypothesis \ref{hyp1} (H.5), H\"older's inequality and Lemma \ref{lemUF2}, we estimate the term $J_3(\cdot)$ as
	\begin{align}\label{ALA3}\nonumber
		J_3(t) &\leq \e_j \bar{\E}\bigg[\int_0^T\fk{b}(s)\big(1+\|\vi{\Y}^{\e_j}(s)\|_\H^2\big)\d s\bigg] \\&\nonumber\leq \e_j \bigg(1+\bar{\E}\bigg[\sup_{s\in[0,T]}\|\vi{\Y}^{\e_j}(s)\|_\H^2\bigg]\bigg)\bigg(\int_0^T\fk{b}(s)\d s\bigg) \\&\leq \e_j C.
	\end{align}Now, we consider $J_4(\cdot)$, and estimate it using  Hypothesis \ref{hyp2} (H.8), H\"older's inequality, Lemmas \ref{lemUF} and \ref{lemUF3} as
	\begin{align}\label{ALA4}\nonumber
		J_4(t) &\leq \e_j \bar{\E}\bigg[\int_{\Z_T}L^2_\gamma(s,z)\big(1+\|\vi{\Y}^{\e_j}(s)\|_\H\big)^2\varphi_{\e_j}(s,z)\nu(\d z)\d s\bigg] \\&\nonumber\leq \e_j \bigg(1+\bar{\E}\bigg[\sup_{s\in[0,T]}\|\vi{\Y}^{\e_j}(s)\|_\H^2\bigg]\bigg)\bigg(\sup_{\varphi\in S^\Upsilon}\int_0^TL^2_\gamma(s,z)\varphi(s,z)\nu(\d z)\d s\bigg) \\&\leq \e_j C_{L_\gamma,2,2,\Upsilon}.
	\end{align}
	Substituting \eqref{ALA3} and \eqref{ALA4} in \eqref{ALA2}, and using Fubini's theorem and  \eqref{561}, we obtain
	\begin{align}\label{ALA5}\nonumber
		&	\liminf_{\e_j\to0}\bar{\E}\big[\|\vi{\Y}^{\e_j}(T)\|_\H^2-\|\x\|_\H^2\big] \\&\nonumber=2
		\liminf_{\e_j\to0}\bar{\E}\bigg[\int_0^T\langle \A(s,\vi{\Y}^{\e_j}(s)),\vi{\Y}^{\e_j}(s)\rangle \d s\bigg] \\&\nonumber\quad
		+2\bar{\E}\bigg[\int_0^T\bigg\{\big(\B(s,\Y(s))\psi(s),\Y(s)\big)\\&\qquad+\int_\Z\bigg( \gamma(s,\Y(s),z)\big(\varphi(s,z)-1\big),\Y(s)\bigg)\nu(\d z)\bigg\}\d s\bigg].
	\end{align}
	Taking the inner product with $\Y(\cdot)$ in \eqref{Cgs8}, we have 
	\begin{align}\label{ALA6}\nonumber
		\bar{\E}\big[	\|\Y(T)\|_\H^2 \big]&\nonumber=\|\x\|_\H^2+2\bar{\E}\bigg[\int_0^T \bigg\{\langle\wi{\A}(t),\Y(s)\rangle+\big(\B(s,\Y(s))\psi(s),\Y(s)\big)\\&\quad +\bigg(\int_\Z\gamma(t,\Y(s),z)\big(\varphi(s,z)-1\big)\nu(\d z),\Y(s)\bigg)\bigg\}\d s \bigg].
	\end{align}
	Using the fact that $\|\cdot\|_{\H}$ and $\|\cdot\|_{\V}$  are lower semicontinuous in $\V^*$, the convergence $\vi{\Y}^\e(\omega)\to\Y({\omega})$ in $\D([0,T];\V^*)$  and Fatou's lemma, yield 
	\begin{align}\label{ALA7}
		\bar{\E}\big[\|\Y(T)\|_\H^2\big] -\|\x\|_\H^2&\leq \liminf_{\e_j\to0}\bar{\E}\big[\|\vi{\Y}^{\e_j}(T)\|_\H^2\big]-\|\x\|_\H^2.
	\end{align} Combining \eqref{ALA5}-\eqref{ALA7},  we arrive at
	\begin{align}\label{ALA78}
		\bar{\E}\bigg[\int_0^T \langle \wi{\A}(s),\Y(s)\rangle \d s\bigg] \leq \liminf_{\e_j\to0} \bar{\E}\bigg[\int_0^T \langle \A(s,\vi{\Y}^{\e_j}(s)),\vi{\Y}^{\e_j}(s)\rangle \d s\bigg],
	\end{align}which verify the required condition \eqref{Cgslem001} of Lemma \ref{Cgslem00}.
	Therefore $\wi{\A}(\cdot)=\A(\cdot,\Y(\cdot)),\; \d t\otimes \bar{\P}$-a.e., as required. 
\end{proof}

\begin{proposition}\label{Prop.EX}
	The process $\Y(\cdot,\omega)$ solves the following equation:
	\begin{align}\label{Prop.EX1}\nonumber
		\Y(t,\omega)&=\x+\int_0^t \A(s,\Y(s,\omega))\d s +\int_0^t\B(s,\Y(s,\omega))\psi(s)\d s\\&\quad +\int_0^t\int_\Z \gamma(s,\Y(s,\omega),z)\big(\varphi(s,z)-1\big)\nu(\d z)\d s,
	\end{align}which has a unique solution in $\C([0,T];\H)\cap \L^\beta(0,T;\V)$, for $\beta\in(1,\infty)$.
\end{proposition}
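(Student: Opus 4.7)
The plan is to identify $\Y(\cdot,\omega)$ as a solution of \eqref{Prop.EX1} by passing to the limit $\e_j\to 0$ in the integral form of the controlled SPDE \eqref{CSPDE1}, and then to apply Theorem \ref{CPDE1} pathwise to obtain uniqueness and the stated regularity. Concretely, for $\boldsymbol{h}\in\V$ and $\Psi\in\L^\infty(0,T;\R)$, I would pair the integral form of \eqref{CSPDE1} with $\Psi(t)\boldsymbol{h}$ and take expectation, yielding
\begin{align*}
\bar{\E}\!\int_0^T\!\Psi(t)\langle\vi{\Y}^{\e_j}(t),\boldsymbol{h}\rangle\d t
&=\bar{\E}\!\int_0^T\!\Psi(t)\Big(\langle\x,\boldsymbol{h}\rangle+\int_0^t\langle\A(s,\vi{\Y}^{\e_j}(s)),\boldsymbol{h}\rangle\d s\\
&\quad+\int_0^t\big(\B(s,\vi{\Y}^{\e_j}(s))\psi_{\e_j}(s),\boldsymbol{h}\big)\d s\\
&\quad+\int_0^t\!\int_\Z\big(\gamma(s,\vi{\Y}^{\e_j}(s),z)(\varphi_{\e_j}(s,z)-1),\boldsymbol{h}\big)\nu(\d z)\d s\\
&\quad+\langle M_1^{\e_j}(t),\boldsymbol{h}\rangle+\langle M_2^{\e_j}(t),\boldsymbol{h}\rangle\Big)\d t.
\end{align*}

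Next, I would pass to the limit in each term using the convergence results already established. The left-hand side converges to $\bar{\E}\int_0^T\Psi(t)\langle\Y(t),\boldsymbol{h}\rangle\d t$ by Lemma \ref{Cgslem1}(1). The drift term converges to $\bar{\E}\int_0^T\Psi(t)\int_0^t\langle\A(s,\Y(s)),\boldsymbol{h}\rangle\d s\d t$ by Lemma \ref{Cgslem1}(3) together with the pseudo-monotonicity identification $\wi{\A}(\cdot)=\A(\cdot,\Y(\cdot))$ of Lemma \ref{Cgslem4}. The two control terms pass to their natural limits by Lemma \ref{Cgslem2} (with dominated convergence in $(t,\omega)$ justified by the uniform bound in Lemma \ref{lemUF2} and the estimates behind Lemma \ref{Cgslem3}). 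The martingale pieces $M_1^{\e_j}, M_2^{\e_j}$ vanish in $\L^2(\bar{\P};\C([0,T];\H))$ by Proposition \ref{propUF}(1), so their contribution drops out. By the arbitrariness of $\boldsymbol{h}\in\V$ and $\Psi$, $\Y$ satisfies \eqref{Prop.EX1} in $\V^*$ for $\d t\otimes\bar{\P}$-a.e.\ $(t,\omega)$.

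Finally, \eqref{Cgs3}--\eqref{Cgs4} combined with Lemma \ref{Cgslem1}(1)--(2) ensure $\Y(\cdot,\omega)\in\L^\infty(0,T;\H)\cap\L^\beta(0,T;\V)$ for $\bar{\P}$-a.e.\ $\omega$. For such $\omega$, $\q(\omega)=(\psi(\omega),\varphi(\omega))\in\bar{S}^\Upsilon\subset\mb{S}$, so Theorem \ref{CPDE1} applies pathwise and delivers a unique weak solution $\vi{\Y}^{\q(\omega)}\in\C([0,T];\H)\cap\L^\beta(0,T;\V)$ of \eqref{Prop.EX1} with initial datum $\x$. Since $\Y(\cdot,\omega)$ satisfies the same equation with the same initial condition, the uniqueness clause of Theorem \ref{CPDE1} forces $\Y(\cdot,\omega)=\vi{\Y}^{\q(\omega)}(\cdot)$, from which the announced regularity (and in particular continuity in time with values in $\H$) is inherited.

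The main obstacle is the passage to the limit in the nonlinear drift $\A(\cdot,\vi{\Y}^{\e_j})$: this rests on the pseudo-monotonicity identification in Lemma \ref{Cgslem4}, which in turn requires the energy identity derived by It\^o's formula and the coordinated convergence of the control and martingale parts recorded in Proposition \ref{propUF}(1) and Lemmas \ref{Cgslem2}--\ref{Cgslem3}. Once the drift limit is identified, all remaining steps are direct, and the pathwise appeal to Theorem \ref{CPDE1} closes both uniqueness and regularity.
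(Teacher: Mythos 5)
Your proposal is correct and follows essentially the same route as the paper: the paper's (very terse) proof likewise combines the limit passage and identification $\wi{\A}(\cdot)=\A(\cdot,\Y(\cdot))$ from Lemmas \ref{Cgslem1}--\ref{Cgslem4} (with the vanishing of $M_1^{\e},M_2^{\e}$ from Proposition \ref{propUF}) and then invokes the deterministic well-posedness of the skeleton equation, Theorem \ref{CPDE1}, applied pathwise for $\q(\omega)\in\bar{S}^{\Upsilon}$, to get uniqueness and the regularity $\C([0,T];\H)\cap\L^\beta(0,T;\V)$. Your write-up merely makes explicit the weak formulation tested against $\Psi(t)\boldsymbol{h}$, which is exactly what the paper leaves implicit.
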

\begin{proof}
	The proof follows from Lemmas \ref{Cgslem1}-\ref{Cgslem4} (see the proof of Theorem \ref{CPDE1} also). 
\end{proof}For simplicity of notations, in this step, we suppress the dependency of the second variable  in each of the processes.
\begin{lemma}\label{Cgslem6}
	There exists a subsequence indexed by $\varpi_j$, such that the following holds:
	\begin{align}\label{Cgslem61}
		\lim_{\varpi_j\to0}\sup_{t\in[0,T]}\|\vi{\Y}^{\varpi_j}(t)-\Y(t)\|_\H^2=0,\;\bar{\P}\text{-a.s.}
	\end{align}
\end{lemma}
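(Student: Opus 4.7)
The plan is to establish $\sup_{t\in[0,T]}\|\vi{\Y}^{\e}(t)-\Y(t)\|_\H^2\to 0$ in probability as $\e\to 0$, and then extract an almost sure convergent subsequence indexed by $\varpi_j$ by a standard diagonal/Borel--Cantelli argument. Set $\vi{\bfX}^\e(t):=\vi{\Y}^\e(t)-\Y(t)$. Subtracting \eqref{Prop.EX1} from the equation in Definition \ref{SSCPDE} and applying the infinite dimensional It\^o formula (Theorem 1.2, \cite{IGDS}) to $\|\vi{\bfX}^\e(\cdot)\|_\H^2$, I would split the drift differences
\begin{align*}
	\B(s,\vi{\Y}^\e)\psi_\e-\B(s,\Y)\psi &=(\B(s,\vi{\Y}^\e)-\B(s,\Y))\psi_\e+\B(s,\Y)(\psi_\e-\psi),\\
	\int_\Z\gamma(s,\vi{\Y}^\e,z)(\varphi_\e-1)\nu(\d z)&-\int_\Z\gamma(s,\Y,z)(\varphi-1)\nu(\d z)\\
	=\int_\Z(\gamma(s,\vi{\Y}^\e,z)-\gamma(s,\Y,z))&(\varphi_\e-1)\nu(\d z)+\int_\Z\gamma(s,\Y,z)(\varphi_\e-\varphi)\nu(\d z),
\end{align*}
and combine $2\langle\A(s,\vi{\Y}^\e)-\A(s,\Y),\vi{\bfX}^\e\rangle$ with the second-order It\^o corrections $\e\|\B(s,\vi{\Y}^\e)\|_{\L_2}^2$ and $\e^2\int_\Z\|\gamma(s,\vi{\Y}^\e,z)\|_\H^2 N^{\e^{-1}\varphi_\e}(\d s,\d z)$. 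Using Hypothesis \ref{hyp1} (H.2) on the regrouped expression $2\langle\A(s,\vi{\Y}^\e)-\A(s,\Y),\vi{\bfX}^\e\rangle+\|\B(s,\vi{\Y}^\e)-\B(s,\Y)\|_{\L_2}^2+\int_\Z\|\gamma(s,\vi{\Y}^\e,z)-\gamma(s,\Y,z)\|_\H^2\nu(\d z)$, together with Hypothesis \ref{hyp2} (H.7), (H.9) and Young's inequality to bound the first piece of each splitting, this yields a pathwise inequality of the form
\begin{align*}
	\|\vi{\bfX}^\e(t)\|_\H^2 \leq \int_0^t G^\e(s)\|\vi{\bfX}^\e(s)\|_\H^2\,\d s + R^\e(t),
\end{align*}
where
\begin{align*}
	G^\e(s)=\fk{a}(s)+\rho(\vi{\Y}^\e(s))+\eta(\Y(s))+L_\B(s)+\|\psi_\e(s)\|_\H^2+\int_\Z R_\gamma(s,z)|\varphi_\e(s,z)-1|\nu(\d z),
\end{align*}
and $R^\e(t)$ collects the two external drift increments $2\int_0^t(\B(s,\Y)(\psi_\e-\psi),\vi{\bfX}^\e)\d s$ and $2\int_0^t(\int_\Z\gamma(s,\Y,z)(\varphi_\e-\varphi)\nu(\d z),\vi{\bfX}^\e)\d s$, the $\sqrt{\e}$ Wiener martingale, the $\e$-scaled compensated Poisson martingale, and the remaining It\^o jump corrections of order $\e$.

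Gronwall's inequality then gives
\begin{align*}
	\sup_{t\in[0,T]}\|\vi{\bfX}^\e(t)\|_\H^2 \leq \bigg(\sup_{t\in[0,T]}|R^\e(t)|\bigg)\exp\bigg(\int_0^T G^\e(s)\d s\bigg).
\end{align*}
By (H.2), the growth controls \eqref{3.5}, Lemmas \ref{lemUF2}--\ref{lemUF02}, and Lemma \ref{lemUF}, the total Gronwall exponent $\int_0^T G^\e(s)\d s$ is bounded in probability uniformly in $\e$. Meanwhile $\bar\E[\sup_{t\in[0,T]}|R^\e(t)|]\to 0$: the two martingale terms are handled by the Burkholder--Davis--Gundy inequality together with the $\sqrt{\e}$/$\e$ prefactors and the uniform bounds in Lemma \ref{lemUF2}; the two external drift terms are handled by a repetition of the argument in Lemma \ref{Cgslem2}, which uses Lemma \ref{Cgslem1} (4) (the $\D([0,T];\V^*)$ strong convergence and the $\L^{2m}(\d t\otimes\bar{\P};\H)$ convergence) together with Lemma \ref{lemUF3} (3) and (H.8), (H.9). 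The Markov-type bound
\begin{align*}
	\bar{\P}\bigg(\sup_{t\in[0,T]}\|\vi{\bfX}^\e(t)\|_\H^2>\delta\bigg)\leq\bar{\P}\bigg(\int_0^T G^\e(s)\d s>K\bigg)+\frac{e^K}{\delta}\bar{\E}\bigg[\sup_{t\in[0,T]}|R^\e(t)|\bigg]
\end{align*}
then gives convergence in probability by sending $\e\to 0$ for each fixed $K$ and afterwards $K\to\infty$, from which a subsequence $\{\varpi_j\}$ realising \eqref{Cgslem61} is extracted.

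The main obstacle is that the Gronwall exponent $\exp(\int_0^T G^\e(s)\d s)$ is in general \emph{not} $\bar{\P}$-integrable, since Hypothesis \ref{hyp1} (H.2) only provides polynomial-in-$\|\cdot\|_\V^\beta$ control of $\rho$ and $\eta$, and one cannot afford a direct $\L^1(\Omega)$ bound. This is precisely what forces the indirect probabilistic route above; an equivalent alternative is to localize via stopping times $\tau_N^\e:=\inf\{t:\|\vi{\Y}^\e(t)\|_\H^{2+\alpha}+\int_0^t\|\vi{\Y}^\e\|_\V^\beta\d s>N\}\wedge T$, on which $e^{\int_0^{\tau_N^\e}G^\e\d s}\leq e^{C N}$, conclude convergence on $[0,\tau_N^\e]$, and then use Lemmas \ref{lemUF2} and \ref{lemUF02} to let $N\to\infty$. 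A secondary care-point is the identification of the quadratic forms produced by It\^o's formula applied to a c\`adl\`ag process paired against a continuous one: since $\Y\in\C([0,T];\H)$ by Proposition \ref{Prop.EX}, the jumps of $\vi{\bfX}^\e$ coincide with those of $\vi{\Y}^\e$, and the second-order jump correction contributes exactly $\e^2\int_0^t\int_\Z\|\gamma(s,\vi{\Y}^\e(s-),z)\|_\H^2 N^{\e^{-1}\varphi_\e}(\d s,\d z)$, whose expectation is $O(\e)$ by Hypothesis \ref{hyp2} (H.8) and Lemma \ref{lemUF}, and is therefore absorbable into $R^\e$.
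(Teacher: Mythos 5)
Your proposal is essentially correct, but it takes a genuinely different route from the paper's proof. The paper never invokes the local monotonicity condition (H.2) or Gronwall's lemma at this stage: after applying It\^o's formula to $\|\vi{\Y}^{\e_j}-\Y\|_\H^2$, it disposes of the operator term $2\int_0^t\langle \A(s,\vi{\Y}^{\e_j}(s))-\A(s,\Y(s)),\vi{\Y}^{\e_j}(s)-\Y(s)\rangle\,\d s$ by appealing to the identification $\wi{\A}(\cdot)=\A(\cdot,\Y(\cdot))$ from Lemma \ref{Cgslem4}, and then shows that the supremum of each remaining term has vanishing expectation (Burkholder--Davis--Gundy with the $\sqrt{\e_j}$, $\e_j$ prefactors, the uniform bounds of Lemmas \ref{lemUF2}--\ref{lemUF02}, and the $A_{\e_j,\delta}$-splitting already used in Lemma \ref{Cgslem3} for the control terms), arriving at the stronger conclusion $\bar{\E}\big[\sup_{t\in[0,T]}\|\vi{\Y}^{\e_j}(t)-\Y(t)\|_\H^2\big]\to0$ in \eqref{Cgslem70} before extracting the a.s.\ subsequence. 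You instead run the uniqueness-type estimate of Step 2 of Proposition \ref{VCond1} pathwise, via (H.2), (H.7), (H.9) and Gronwall, and, since $\exp\big(\int_0^T G^\e(s)\,\d s\big)$ is only a.s.\ finite and not integrable, you correctly downgrade to convergence in probability through the truncation on $\{\int_0^T G^\e\,\d s\le K\}$ (or a stopping-time localization), which indeed suffices for \eqref{Cgslem61}. What each route buys: yours is more robust at the drift term, since it never pairs the weakly convergent sequence $\A(\cdot,\vi{\Y}^{\e_j})$ against the merely weakly null difference $\vi{\Y}^{\e_j}-\Y$ (a step the paper treats quite tersely), and in the remainder it only uses the uniform $\bar{S}^\Upsilon$ bounds on $(\psi_\e,\varphi_\e)$ rather than their convergence; the price is the extra truncation layer and a weaker mode of convergence than \eqref{Cgslem70}. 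Two caveats: your reference to ``a repetition of Lemma \ref{Cgslem2}'' for the cross terms $2\int_0^t\big(\B(s,\Y(s))(\psi_\e(s)-\psi(s)),\vi{\bfX}^\e(s)\big)\d s$ and its jump analogue is not quite the right pointer---Lemma \ref{Cgslem2} concerns pairings against a fixed $\boldsymbol{h}\in\H$ and exploits weak convergence of the controls, which is unavailable against the moving factor $\vi{\bfX}^\e$; what actually closes these terms is the $A_{\e,\delta}$-splitting of \eqref{Cgs11}--\eqref{Cgs12} and \eqref{Cgslem03}, using Lemma \ref{Cgslem1} (4), Lemma \ref{lemUF3} (3) and the fourth moments from Lemma \ref{lemUF2}, so the ingredients you cite do suffice but the mechanism should be stated as such. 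Also note that you still need Lemma \ref{Cgslem4} and Proposition \ref{Prop.EX} to know that $\Y$ solves \eqref{Prop.EX1} at all, so the pseudo-monotonicity identification is not avoided globally, merely not re-used inside the convergence estimate.
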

\begin{proof}
	Setting $\vi{\bfZ}^{\e_j}(\cdot)=\vi{\Y}^{\e_j}(\cdot)-\Y(\cdot)$. Applying It\^o's formula to the process $\|\vi{\bfZ}^{\e_j}(\cdot)\|_\H^2$, we obtain 
	\begin{align}\label{Cgslem62}\nonumber
		&	\|\vi{\bfZ}^{\e_j}(t)\|_\H^2\\&\nonumber= 2\int_0^t\langle \A(s,\vi{\Y}^{\e_j}(s))-\A(s,\Y(s)) ,\vi{\bfZ}^{\e_j}(s)\rangle \d s\\&\nonumber\quad +2\int_0^t\big(\B(s,\vi{\Y}^{\e_j}(s))\psi_{\e_j}(s)-\B(s,\Y(s))\psi(s),\vi{\bfZ}^{\e_j}(s)\big)+\e_j\int_0^T\|\B(s,\vi{\Y}^{\e_j}(s))\|_{\L_2}^2\d s \\&\nonumber\quad +2\sqrt{\e_j}\int_0^t\big(\B(s,\vi{\Y}^{\e_j}(s))\d \W(s),\vi{\bfZ}^{\e_j}(s)\big) \\&\nonumber\quad+2\int_0^t\int_\Z\bigg( \gamma(s,\vi{\Y}^{\e_j}(s),z)\big(\varphi_{\e_j}(s,z)-1\big)-\gamma(s,\Y(s),z)\big(\varphi(s,z)-1\big),\vi{\bfZ}^{\e_j}(s)\bigg)\nu(\d z)\d s\\&\nonumber\quad 
		+ 2\e_j\int_0^t\int_\Z\big(\gamma(s,\vi{\Y}^{\e_j}(s),z),\vi{\bfZ}^{\e_j}\big)\vi{N}^{\e_j^{-1}\varphi_{\e_j}}(\d s,\d z)\nonumber\\&\quad+ \e^2_j \int_0^t\int_\Z\|\gamma(s,\vi{\Y}^{\e_j}(s),z)\|_\H^2N^{\e_j^{-1}\varphi_{\e_j}}(\d s,\d z)\nonumber\\&:=\sum_{i=1}^{7}J_i(t).
	\end{align}Since we know that $\lim\limits_{\e_j\to0}\A(\cdot,\vi{\Y}^{\e_j}(\cdot))=\wi{\A}(\cdot)=\A(\cdot,\Y(\cdot)),\ \d t\otimes\bar{\P}\text{-a.e.}$, then 
	\begin{align}\label{Cgslem63}
		J_1(t)\to0,\text{ as } \e_j\to0.
	\end{align}Using \eqref{Cgs11}-\eqref{Cgs13}, we obtain 
	\begin{align}\label{Cgslem64}\nonumber
		&	\bar{\E}\bigg[\sup_{t\in[0,T]}\bigg|\int_0^t\bigg(\B(s,\vi{\Y}^{\e_j}(s))\psi_{\e_j}(s)+\int_\Z\gamma(s,\vi{\Y}^{\e_j}(s),z)\big(\varphi(s,z)-1\big)\nu(\d z), \vi{\bfZ}^{\e_j}(s) \bigg)\d s\bigg| \bigg] \\&\nonumber \leq \bar{\E}\bigg[ \int_0^T\bigg(\|\B(s,\vi{\Y}^{\e_j}(s))\|_{\L_2}\|\psi_{\e_j}(s)\|_\H+\int_\Z \|\gamma(s,\vi{\Y}^{\e_j}(s),z)\|_\H\big|\varphi(s,z)-1\big|\nu(\d z)\bigg)\|\vi{\bfZ}^{\e_j}(s)\|_\H \d s \bigg]
		\\&\nonumber \leq \bar{\E}\bigg[ \int_0^T\bigg(\sqrt{\fk{b}(s)}\|\psi_{\e_j}(s)\|_\H+\int_\Z L_\gamma\big|\varphi(s,z)-1\big|\nu(\d z)\bigg)\big(1+\|\vi{\Y}^{\e_j}(s)\|_\H\big)\|\vi{\bfZ}^{\e_j}(s)\|_\H \d s \bigg]\\&\to 0\ \text{ as } \ \e_j\to0.
	\end{align}Thus, we obtain 
	\begin{align}\label{Cgslem65}
		\lim_{\e_j\to0}\bar{\E}\bigg[\sup_{t\in[0,T]}|J_2(t)|\bigg]=0 \ \text{ and }\  \lim_{\e_j\to0}\bar{\E}\bigg[\sup_{t\in[0,T]}|J_5(t)|\bigg]=0.
	\end{align}We can pass   $\e_j\to 0$ in $J_3(\cdot)$ and $J_7(\cdot)$ as follows:
	\begin{align}\label{Cgslem66}\nonumber
		\bar{\E}\bigg[\sup_{t\in[0,T]}|J_3(t)|\bigg] &\leq \e_j\bar{\E}\bigg[\int_0^T \fk{b}(s)\big(1+\|\vi{\Y}^{\e_j}(s)\|^2\big)\d s\bigg]\\&\nonumber\leq \e_j\bigg(1+\bar{\E}\bigg[\sup_{s\in[0,T]}\|\vi{\Y}^{\e_j}(s)\|^2\bigg]\bigg) \bigg(\int_0^T \fk{b}(s)\d s\bigg)\\&\to 0\  \text{ as } \ \e_j\to0,\\\nonumber
		\bar{\E}\bigg[\sup_{t\in[0,T]}|J_7(t)|\bigg]  &\leq \e_j \bar{\E}\bigg[\int_{\Z_T} \|\gamma(s,\vi{\Y}^{\e_j}(s),z)\|_\H^2\varphi_{\e_j}(s,z)\nu(\d z)\d s\bigg]\\&\nonumber\leq \e_j \bigg(1+\bar{\E}\bigg[\sup_{s\in[0,T]}\|\vi{\Y}^{\e_j}(s)\|^2\bigg]\bigg)  \bigg(\sup_{\varphi\in S^\Upsilon} \int_{\Z_T} L^2_\gamma(s,z)\varphi(s,z)\nu(\d z)\d s\bigg)\\&\label{Cgslem67}
		\to 0\ \text{ as }\ \e_j\to0.
	\end{align}For $J_4(\cdot)$ and $J_6(\cdot)$, we use the Burkholder-Davis-Gundy and H\"older's inequalities as follows:
	\begin{align}\label{Cgslem68}\nonumber
		&	\bar{\E}\bigg[\sup_{t\in[0,T]}|J_4(t)|\bigg]\nonumber\\& \leq \sqrt{\e_j}C\bar{\E}\bigg[\int_0^T\|\B(s,\vi{\Y}^{\e_j}(s))\|_{\L_2}^2\|\vi{\bfZ}^{\e_j}(s)\|_\H^2\d s\bigg]^{\frac{1}{2}}\\&\nonumber \leq 
		\sqrt{\e_j}C\bar{\E}\bigg[\sup_{s\in[0,T]}\|\vi{\bfZ}^{\e_j}(s)\|_\H\bigg(\int_0^T\fk{b}(s)\big(1+\|\vi{\Y}^{\e_j}(s)\|_\H^2\big)\d s\bigg)^{\frac{1}{2}}\bigg]
		\\&\nonumber \leq 
		\sqrt{\e_j}C\bigg(\int_0^T\fk{b}(s)\d s\bigg)^{\frac{1}{2}}\bigg\{\bar{\E}\bigg[\sup_{s\in[0,T]}\|\vi{\bfZ}^{\e_j}(s)\|_\H^2\bigg]\bigg\}^{\frac{1}{2}}\bigg\{1+\bar{\E}\bigg[\sup_{s\in[0,T]}\|\vi{\Y}^{\e_j}(s)\|_\H^2\bigg]\bigg\}^{\frac{1}{2}}\\&\to0\ \text{ as } \ \e_j\to0,
	\end{align}and 
	\begin{align}\label{Cgslem69}\nonumber
		\bar{\E}\bigg[\sup_{t\in[0,T]}|J_6(t)|\bigg]&\leq \e_jC \bar{\E}\bigg[\int_{\Z_T} \|\gamma(s,\vi{\Y}^{\e_j}(s),z)\|_\H^2\|\vi{\bfZ}^{\e_j}(s)\|_\H^2 N^{\e_j^{-1}\varphi_{\e_j}}(\d s,\d z) \bigg]^{\frac{1}{2}}\\&\nonumber \leq 
		\e_jC \bigg\{\bar{\E}\bigg[\sup_{t\in[0,T]}\|\vi{\bfZ}^{\e_j}(s)\|_\H^2\bigg]\bigg\}^{\frac{1}{2}}\bigg\{\bigg(1+\bar{\E}\bigg[\sup_{s\in[0,T]}\|\vi{\Y}^{\e_j}(s)\|_\H^2\bigg]\bigg)\\&\nonumber\qquad\times\bigg(\sup_{\varphi\in S^\Upsilon}\int_{\Z_T}L^2_\gamma(s,z)\varphi(s,z)\nu(\d z)\d s \bigg)\bigg\}^{\frac{1}{2}}	\\&\to0\  \text{ as } \ \e_j\to0.		
	\end{align}Substituting \eqref{Cgslem63}-\eqref{Cgslem69} in \eqref{Cgslem62}, we obtain 
	\begin{align}\label{Cgslem70}
		\lim_{\e_j\to0} \bar{\E}\bigg[\sup_{t\in[0,T]}\|\vi{\bfZ}^{\e_j}(t)\|_\H^2\bigg]=0.
	\end{align}Therefore, there exists a subsequence indexed by $\varpi_j$ such that $\vi{\Y}^{\varpi_j}$ converges to $\Y,\; \bar{\P}$-a.s., for all $t\in[0,T]$. 
\end{proof}
\subsection{Verification of Condition \ref{Cond1} (2)}
Let us now verify the condition \ref{Cond1} (2). First we   recall \eqref{Vf}, the solution representation. 
\begin{theorem}\label{VCond2}
	Fixed $\Upsilon\in\N$, and let $\q_\e=(\psi_\e,\varphi_\e)$ and $\q=(\psi,\varphi)\in \mathcal{U}^\Upsilon$ be such that $\q_\e$ converges  in distribution  to $\q$ as $\e\to0$. Then, 
	\begin{align*}
		\mathscr{G}^\e\left(\sqrt{\e}\W(\cdot)+\int_0^{\cdot}\psi_\e\d s,\e N^{\e^{-1}\varphi_\e}\right) \Rightarrow \mathscr{G}^0\left(\int_0^{\cdot}\psi(s)\d s,\nu_T^\varphi\right).
	\end{align*}
\end{theorem}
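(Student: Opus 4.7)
The plan is to deduce this weak convergence from the tightness and identification results already established in Lemmas \ref{Cgslem1}--\ref{Cgslem6} and Proposition \ref{Prop.EX}, together with an application of the Skorokhod representation theorem. By Lemma \ref{lem5.3}, the random variable $\vi{\Y}^\e := \mathscr{G}^\e(\sqrt{\e}\W(\cdot)+\int_0^{\cdot}\psi_\e\d s,\e N^{\e^{-1}\varphi_\e})$ is the unique probabilistically strong solution of the controlled SPDE \eqref{CSPDE1} driven by $\q_\e=(\psi_\e,\varphi_\e)$. By Proposition \ref{propUF}, the family $\{\mathscr{L}(\vi{\Y}^\e)\}_{\e\in(0,\e_0]}$ is tight in $\D([0,T];\V^*)$. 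Since $\bar{S}^\Upsilon$ is compact (as a product of two compact spaces under the topologies recalled in Section \ref{Sec2}), the joint family $\{\mathscr{L}(\vi{\Y}^\e,\q_\e)\}_{\e\in(0,\e_0]}$ is tight on the Polish space $\D([0,T];\V^*)\times \bar{S}^\Upsilon$.

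First I would invoke Skorokhod's representation theorem to obtain a new probability space $(\wi{\Omega},\wi{\mathscr{F}},\wi{\P})$ and random variables $(\wi{\Y}^\e,\wi{\q}_\e)$ and $(\wi{\Y},\wi{\q})$ with $\wi{\q}=(\wi{\psi},\wi{\varphi})$, having the same joint laws as $(\vi{\Y}^\e,\q_\e)$ and $(\Y,\q)$ respectively, and such that along a subsequence $\e_j\downarrow 0$,
\begin{equation*}
\wi{\Y}^{\e_j}\to\wi{\Y}\ \text{in}\ \D([0,T];\V^*),\qquad \wi{\q}_{\e_j}\to\wi{\q}\ \text{in}\ \bar{S}^\Upsilon,\quad \wi{\P}\text{-a.s.}
\end{equation*}
On the new stochastic basis one builds, via a standard martingale characterization, a Brownian motion $\wi{\W}$ and Poisson random measure $\wi{N}^{\e_j^{-1}\wi{\varphi}_{\e_j}}$ so that $\wi{\Y}^{\e_j}$ satisfies on $\wi{\Omega}$ the same evolution equation \eqref{CSPDE1} as $\vi{\Y}^{\e_j}$. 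The a priori estimates of Lemmas \ref{lemUF2} and \ref{lemUF02} transfer verbatim to $\wi{\Y}^{\e_j}$ (they depend only on the laws).

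Next I would apply Lemmas \ref{Cgslem1}--\ref{Cgslem6} on this new probability space, with $\vi{\Y}^\e,\q_\e$ replaced by $\wi{\Y}^{\e_j},\wi{\q}_{\e_j}$. Lemma \ref{Cgslem1} yields weak/weak-$*$ convergence in the reflexive spaces and, crucially, the strong $\L^{2m}(0,T;\H)$-convergence \eqref{Cgs7}. Lemmas \ref{Cgslem2} and \ref{Cgslem3} handle the convergence of the drift and noise-control terms. Lemma \ref{Cgslem4}, combined with the pseudo-monotonicity statement of Lemma \ref{Cgslem00}, identifies the weak limit of $\A(\cdot,\wi{\Y}^{\e_j}(\cdot))$ as $\A(\cdot,\wi{\Y}(\cdot))$. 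Proposition \ref{Prop.EX} then shows that $\wi{\Y}$ solves, $\wi{\P}$-a.s., the skeleton equation \eqref{SE1} with control $\wi{\q}=(\wi{\psi},\wi{\varphi})$, i.e.
\begin{equation*}
\wi{\Y}=\mathscr{G}^0\left(\int_0^{\cdot}\wi{\psi}(s)\d s,\nu_T^{\wi{\varphi}}\right),\quad\wi{\P}\text{-a.s.}
\end{equation*}
Finally Lemma \ref{Cgslem6} upgrades the $\V^*$ convergence to $\sup_{t\in[0,T]}\|\wi{\Y}^{\e_j}(t)-\wi{\Y}(t)\|_\H^2\to 0$, $\wi{\P}$-a.s., hence convergence in $\D([0,T];\H)$ with the Skorokhod topology (and in fact with the uniform topology, since $\wi{\Y}\in\C([0,T];\H)$).

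Since the laws of $\wi{\Y}^{\e_j}$ and $\vi{\Y}^{\e_j}$ coincide, this yields
\begin{equation*}
\mathscr{G}^{\e_j}\left(\sqrt{\e_j}\W(\cdot)+\int_0^{\cdot}\psi_{\e_j}\d s,\e_j N^{\e_j^{-1}\varphi_{\e_j}}\right)\Rightarrow \mathscr{G}^0\left(\int_0^{\cdot}\psi(s)\d s,\nu_T^{\varphi}\right)
\end{equation*}
in $\D([0,T];\H)$. A standard subsequence argument (together with uniqueness of weak solutions of \eqref{SE1} proved in Theorem \ref{CPDE1}, which guarantees the limit is independent of the extracted subsequence) promotes convergence along a subsequence to convergence along $\e\to 0$, completing the verification. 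The most delicate step is the identification of the nonlinear limit $\wi{\A}(\cdot)=\A(\cdot,\wi{\Y}(\cdot))$: the passage requires the pseudo-monotonicity argument of Lemma \ref{Cgslem00}, the energy identity \eqref{ALA2} together with the liminf inequality \eqref{ALA78}, and the joint convergence of the control-dependent terms obtained in Lemma \ref{Cgslem3}; this is where Hypothesis \ref{hyp2} (in particular the stronger Lipschitz-type assumptions (H.7) and (H.9)) is essential, since they allow one to exchange limits and integrals without exploiting monotonicity in the noise arguments.
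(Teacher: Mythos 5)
Your proposal is correct and follows essentially the same route as the paper: Lemma \ref{lem5.3} to identify $\mathscr{G}^\e$ with the solution of the controlled SPDE \eqref{CSPDE1}, tightness from Proposition \ref{propUF}, Skorokhod's representation theorem to pass to almost sure convergence, the identification Lemmas \ref{Cgslem1}--\ref{Cgslem4} together with Proposition \ref{Prop.EX} to recognize the limit as the skeleton solution $\mathscr{G}^0\big(\int_0^{\cdot}\psi(s)\d s,\nu_T^{\varphi}\big)$, and Lemma \ref{Cgslem6} plus uniqueness from Theorem \ref{CPDE1} to upgrade to convergence in $\H$ and remove the subsequence. The only notable difference is that the paper applies Skorokhod's theorem to the enlarged tuple $(\q_\e,M_1^\e,M_2^\e,X_1^\e,X_2^\e,X_3^\e,\W,\bar{N})$, thereby carrying the driving noises along and avoiding your intermediate step of reconstructing the Wiener process and controlled Poisson random measure on the new probability space via a martingale characterization, which is standard but is exactly the technicality the paper's choice of tuple is designed to bypass.
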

\begin{proof} Recall $\bar{\mb{M}}$ from Section \ref{Sec2}, and the notations from Proposition \ref{propUF}. Denote 
	\begin{align*}
		\Pi= \big(\vi{\mathcal{U}}^\Upsilon,\D([0,T];\V^*),\C([0,T];\V^*),\C([0,T];\V^*),\C([0,T];\V^*),\C([0,T];\H_1),\bar{\mb{M}}\big),
	\end{align*}where $\H_1$ is a Hilbert space such that the embedding $\H\subset \H_1$ is Hilbert-Schmidt.
	From Proposition \ref{propUF}, we know that the laws of $\big\{\big(\q_\e,M_1^\e,M_2^\e,X_1^\e,X_2^\e,X_3^\e, \W,\bar{N}\big)\}$ is tight in $\Pi$. Let $\big(\q,0,0,X_1,X_2,X_3, \W,\bar{N}\big)$ be any limit point of the family defined above. Using Skorokhod's representation theorem (see Theorem A.1, \cite{PNKTRT}, or Theorem C.1, \cite{ZBWLJZ}), the construction of a new probability space is possible and we have the following convergences:
	\begin{align}\label{582}
		\big(\q_\e,M_1^\e,M_2^\e,X_1^\e,X_2^\e,X_3^\e, \W,\bar{N}\big)\to \big(\q,0,0,X_1,X_2,X_3, \W,\bar{N}\big),\text{ in } \Pi,\;\bar{\P}\text{-a.s.}
	\end{align}Set $	\vi{\Y}^\e(t)=\x +X_1^\e(t)+X_2^\e(t)+X_3^\e(t)+M_1^\e(t)+M_2^\e(t)$ and $	\vi{\Y}(t)=\x +X_1(t)+X_2(t)+X_3(t)$.  From the equation satisfied by the original processes, we may still assume that $\vi{\Y}$ also satisfies \eqref{CSPDE1}.

	Using that fact that if $f_n\in\D([0,T];\R)$ and $\lim\limits_{n\to\infty}f_n=0$ with the Skorokhod topology of $\D([0,T];\R)$, then $\lim\limits_{n\to\infty}\sup\limits_{t\in[0,T]}|f_n(t)|=0$. From  this fact and \eqref{582},  we have 
	\begin{align*}
		\lim_{\e\to0}\sup_{t\in[0,T]}\|M_2^\e(t)\|_{\V^*}=0,\ \;\bar{\P}\text{-a.s.},
	\end{align*}and 
	\begin{align*}
		\lim_{\e\to0}\sup_{t\in[0,T]}\|M_1^\e(t)\|_{\V^*}=0, \ \;\bar{\P}\text{-a.s.}
	\end{align*}
	Moreover, we deduce  
	\begin{align*}
		\lim_{\e\to0}\sup_{t\in[0,T]}\|X_1^\e(t)-X_1(t)\|_{\V^*}=0, \ \;\bar{\P}\text{-a.s.},\\
		\lim_{\e\to0}\sup_{t\in[0,T]}\|X_2^\e(t)-X_2(t)\|_{\V^*}=0,\  \;\bar{\P}\text{-a.s.},\\
		\lim_{\e\to0}\sup_{t\in[0,T]}\|X_3^\e(t)-X_3(t)\|_{\V^*}=0,\  \;\bar{\P}\text{-a.s.},
	\end{align*}and 
	\begin{align*}
		\lim_{\e\to0}\sup_{t\in[0,T]}\|\vi{\Y}^\e(t)-\vi{\Y}(t)\|_{\V^*}=0, \ \;\bar{\P}\text{-a.s.}
	\end{align*}Finally, using Proposition \ref{Prop.EX} and Lemma \ref{Cgslem6}, we can find the unique solution $\vi{\Y}(\cdot)$ of \eqref{Prop.EX1}, and therefore there exists a subsequence indexed by $\varpi$ such that 
	\begin{align*}
		\lim_{\varpi\to0}\sup_{t\in[0,T]}\|\vi{\Y}^{\varpi}(t)- \vi{\Y}(t)\|_{\H}=0, \ \;\bar{\P}\text{-a.s.},
	\end{align*}which completes the proof of this theorem.
\end{proof}
We established the proof of verification of Condition \ref{Cond1} (1) in Proposition \ref{VCond1} and (2) in Theorem \ref{VCond2}. Therefore, the proof of Theorem \ref{thrm5} is completed.

\subsection{Applications}\label{app} The LDP results obtained in this work are applicable to the stochastic versions of hydrodynamic models like (cf. \cite{HBEH,ICAM1,WL4,WLMR1,WLMR2,EM2,CPMR,MRSSTZ}, etc.) Burgers equations, 2D Navier-Stokes equations, 2D magneto-hydrodynamic equations, 2D Boussinesq model for the B\'enard convection, 2D Boussinesq system, 2D magnetic B\'enard equations, 3D Leray-$\alpha$-model, the Ladyzhenskaya model, some shell models of turbulence like GOY, Sabra, dyadic, etc.,  porous media equations,  $p$-Laplacian equations, fast-diffusion equations, power law fluids,  Allen-Cahn equations,    Kuramoto-Sivashinsky equations and 3D tamed Navier-Stokes equations. The paper \cite{MRSSTZ} provided a detailed framework of the models, which comes under the mathematical setting of this work,  like quasilinear SPDEs, convection-diffusion equations, Cahn-Hilliard equations, 2D Liquid crystal model, 2D Allen-Cahn-Navier-Stokes model, etc., and references therein.

\medskip\noindent
\textbf{Acknowledgments:} The first author would like to thank Ministry of Education, Government of India - MHRD for financial assistance. M. T. Mohan would  like to thank the Department of Science and Technology (DST), India for Innovation in Science Pursuit for Inspired Research (INSPIRE) Faculty Award (IFA17-MA110).

\end{document}